\def\@maketitle{%
  \newpage
  \null
  \let \footnote \thanks
    {\normalfont\sffamily\bfseries\Large\noindent\@title \par}%
    \vskip 1em%
    {\normalfont\sffamily\large
        \noindent
        \@author
        \par}
  \par
  \vskip 4em}
\def\@seccntformat#1{\csname the#1\endcsname{.\ }}
\renewcommand\section{\@startsection {section}{1}{\z@}%
                                   {-3.0ex \@plus -1ex \@minus -.2ex}%
                                   {1.5ex \@plus.2ex}%
                                   {\normalfont\large\bfseries}}
\renewcommand\subsection{\@startsection{subsection}{2}{\z@}%
                                     {-2.75ex\@plus -1ex \@minus -.2ex}%
                                     {1.5ex \@plus .2ex}%
                                   {\normalfont\normalsize\bfseries}}
\def\abstract{\topsep=0pt\partopsep=0pt\parsep=0pt\itemsep=0pt\relax
\trivlist\item[\hskip\labelsep
{\bfseries\abstractname}.]\if!\abstractname!\hskip-\labelsep\fi}
\theoremstyle{plain}
\newtheorem{theorem}[equation]{Theorem}
\newtheorem{corollary}[equation]{Corollary}
\newtheorem{lemma}[equation]{Lemma}
\newtheorem{proposition}[equation]{Proposition}
\newtheorem*{pmt}{Positive Mass Theorem}
\newtheorem*{theoremA}{Theorem A}
\newtheorem*{theoremB}{Theorem B}
\newtheorem*{theoremC}{Theorem C}
\newtheorem*{theoremD}{Theorem D}
\theoremstyle{definition}
\newtheorem{definition}[equation]{Definition}
\newtheorem{remark}[equation]{Remark}
\newcommand{\sA}{\mathcal{A}}
\newcommand{\sB}{\mathcal{B}}
\newcommand{\sC}{\mathcal{C}}
\newcommand{\sH}{\mathcal{H}}
\newcommand{\sO}{\mathcal{O}}
\newcommand{\sU}{\mathcal{U}}
\newcommand{\sW}{\mathcal{W}}
\newcommand{\IR}{\mathbb{R}}
\newcommand{\IZ}{\mathbb{Z}}
\newcommand{\Hom}{\mathrm{Hom}}
\newcommand{\spin}{\rm Spin}
\newcommand{\Spin}{\rm Spin}
\newcommand{\SO}{\rm SO}
\newcommand{\Ker}{\mathrm{Ker \,}}
\newcommand{\Coker}{\mathrm{Coker \,}}
\renewcommand\dim{{\rm dim\,}}
\renewcommand\Im{\mathrm{Im \,}}
\newcommand\Range{\mathrm{Range}}
\newcommand\iso{{\cong}}
\newcommand\union{\bigcup} 
\newcommand\onehalf{\frac{1}{2}}
\newcommand\widebar{\overline}
\newcommand{\<}{\langle}
\renewcommand{\>}{\rangle}
\def\d/{/\mspace{-6.0mu}/}
\newcommand{\db}{\bar{\partial}}
\newcommand{\p}{\partial}
\newcommand{\supp}{\mathrm{supp}}
\newcommand{\sing}{\mathrm{sing}}
\newcommand{\Dmin}{\mathrm{Dom}_{\mathrm{min}}}
\newcommand{\Dmax}{\mathrm{Dom}_{\mathrm{max}}}
\newcommand{\oner}{\frac{1}{r}}
\newcommand{\mass}{\operatorname{mass}}
\newcommand{\ti}{\pitchfork} 
\newcommand\TRC[1]{\operatorname{TRC}#1} 
\def\e{\epsilon}
\renewcommand{\div}{\operatorname{div}}
\begin{document}

\title{Witten spinors on nonspin manifolds}
\author{Anda Degeratu\footnote{University of Freiburg, Mathematics Institute; email: anda.degeratu@math.uni-freiburg.de}
~and Mark Stern\footnote{Duke University, Department of Mathematics;  
email: stern@math.duke.edu}}
\date{\today}

\maketitle

\setcounter{section}{0}

\medskip
\begin{abstract}
  Motivated by Witten's spinor proof of the positive mass theorem, we
  analyze asymptotically constant harmonic spinors on complete
  asymptotically flat nonspin manifolds with nonnegative scalar curvature.
\end{abstract}

\medskip
\medskip
\section{Introduction}\label{sec:introduction}
Techniques from spin geometry have proved very powerful in the study
of positive scalar curvature. However, not all manifolds are spin.
The topological obstruction to the existence of a spin structure is
the second Stiefel-Whitney class.  In this article we explore, in the
context of the positive mass theorem, whether it is possible to graft
some of the machinery of spin geometry onto the study of nonspin
manifolds.

The fundamental idea is to excise a representative of the Poincar\'e
dual of the second Stiefel-Whitney class and apply spin techniques on
the complement. We explore this idea by trying to adapt Witten's proof
of the positive mass theorem to the nonspin case.  The difficulties in
executing this approach arise from the fact that this complement is
incomplete, greatly complicating analytic arguments.
Ultimately, we do not succeed in this endeavor, but  we hope that our
analysis of  harmonic spinors on  incomplete spin manifolds  may prove
useful in other contexts.
A prior examination of incomplete spin structures
appeared in~\cite{baldwin}.

\subsection{The positive mass theorem}\label{sec:pmt}
\begin{definition}\label{def:asy_flat}
 A complete non-compact Riemannian manifold $(M^n, g)$ is called
 {\em asymptotically flat} of order $\tau>0$ if there exists a compact
 set, $K \subset M$, whose complement is a disjoint union of
 subsets $M_1, \ldots, M_L$ -- called the {\em ends} of $M$ -- such that
 for each end there exists a diffeomorphism
 $$
   Y_l: \IR^n \setminus B_{T} (0) \to M_l,
 $$
 so that 
 $ Y_l^* g =: g_{ij}dx^idx^j$ satisfies for $\rho = |x|$, 
\begin{equation*}
 g_{ij} = \delta_{ij} + \sO(\rho^{-\tau}),\quad
 \p_k g_{ij} = \sO(\rho^{-\tau-1}), \quad
 \p_k\p_l g_{ij} = \sO(\rho^{-\tau-2}),
\end{equation*}
with $\delta$ the Euclidean metric on $\IR^n$, and $T>1$.
We call such a coordinate chart $(M_l,Y_l)$ {\em asymptotically flat}.
On each asymptotically flat end $(M_{l},Y_l)$ we have the induced
coordinate system $\{x_i\}$ and the corresponding radial coordinate
$\rho (x) = |x|$ obtained by pulling-back the cartesian coordinates on
$\IR^n$ under $Y_l$. We extend $\rho$ smoothly over the interior of
$M$ so that it is bounded from below by $1$. 
\end{definition}

For an end $(M_l,g_l)$ of the asymptotically flat manifold $(M,g)$,
the {\em mass} is defined to be
 \begin{equation}\label{eq:adm_mass}
   \mass(M_l, g): 
    =\frac{1}{c(n)} 
        \lim_{L \to \infty}
        \int_{S^{n-1}_{L}}  \frac{x^j}{\rho}(\partial_i g_{ij} - \partial_j g_{ii}) d\sigma, 
 \end{equation}
if the limit exists. Here $c(n)$ is a normalizing constant depending
only on the dimension of the manifold, and
$S_{L}^{n-1}$ is the sphere of radius $L$ in the asymptotically
flat coordinate chart $(M_l, Y_l)$.
The {\em mass} of the manifold $(M^n, g)$ is the sum of the masses of its ends
 $$
   \mathrm{mass}(M, g): = \sum_{l=1}^L \; m(M_l, g).
 $$
Bartnik showed that if $(M,g)$ satisfies the {\em mass
decay conditions},
\begin{equation}\label{eq:wdm}
 \tau > \frac{n-2}{2}
 \qquad \text{and} \qquad 
 R \in L^1(M),
\end{equation}
then the mass is well-defined  and is a Riemannian
invariant,~\cite{bartnik}. Here $R$ denotes the scalar curvature of
$g$.
\begin{pmt}
 Let $(M,g)$ be an asymptotically flat Riemannian manifold of
 dimension $n\geq 3$ satisfying the mass decay
 conditions~\eqref{eq:wdm} and having nonnegative scalar curvature. Then
 the mass is nonnegative. Furthermore $\mass(M,g) =0$ if and
 only if $(M,g)$ is isometric to the Euclidean space.
\end{pmt}
The positive mass theorem has a long history. Arnowitt, Deser, and
Misner introduced the notion of mass of an asymptotically flat
spacelike hypersurface in space-time and conjectured its positivity
for $3$-dimensional spacelike hypersurfaces. This conjecture was
proved by Schoen and Yau, using minimal surface techniques.  Their
proof extends readily to dimensions $n\leq 7$.  
In fact, it gives the stronger result that the mass of {\em each
  asymptotically flat end} is nonnegative and that it vanishes if and
only if $(M,g)$ is the Euclidean space.
Witten subsequently gave a different proof using spinors, which
yields this strengthened form of the theorem
for all {\em spin} manifolds of arbitrary dimension.  The requisite
analysis was provided by Parker and Taubes~\cite{pt} and
Bartnik~\cite{bartnik}. After this work, the positive mass theorem was
open for higher dimensional {\em nonspin} manifolds.

Recently, Schoen~\cite{SchoenGergen} and Lohkamp~\cite{Lohkamp} have
each announced programs for extending to higher dimension the minimal
surface approach to proving the positive mass theorem.
On the other hand, our analysis of harmonic spinors on nonspin manifolds is motivated by
Witten's proof, which we now briefly recall.
\subsection{Witten's proof of the positive mass theorem}\label{sec:WittenProof}
Let $(M,g)$ be an asymptotically flat Riemannian spin manifold of
dimension $n$ which satisfies the mass decay conditions~\eqref{eq:wdm}
and has nonnegative scalar curvature. For simplicity, we assume here
that $M$ has only one end.
Let $\psi_0$ be a smooth spinor on $M$ which is constant near infinity with
respect to the chosen asymptotically flat coordinate system and normalized by
$|\psi_0|^2 \to 1$ at infinity. Then there exists a unique spinor $u$,
with  $Du\in L^2(M,S)$ and $\frac{u}{\rho}\in L^2(M,S)$ solving
\begin{equation}\label{eq:u}
  D^2 u = - D \psi_0.
\end{equation}
Let 
\begin{equation}\label{eq:psi}
  \psi : = Du + \psi_0.
\end{equation}
Applying the Lichnerowicz formula to the harmonic spinor $\psi$
one computes (see the proof of Proposition~\ref{prop:mass}) that 
\begin{equation}
 \int_M |\nabla \psi|^2 + \frac{R}{4} |\psi|^2 - |D\psi|^2 
 = \frac{c(n)}{4} \mass(M,g).
\end{equation}
Since the spinor $\psi$ is harmonic and the scalar curvature $R$ is
nonnegative and $L^1$, the mass of $(M, g)$ is finite and  nonnegative.
 
\subsection{Our main results}
In this work we study a natural extension of Witten's argument to
nonspin manifolds.  Suppose that $(M,g)$ is a nonspin Riemannian
manifold that is asymptotically flat of order $\tau >0$.  By passing
to an oriented double cover when necessary, it suffices to consider
the case of orientable manifolds. Therefore in this article $M$ is
orientable and nonspin. Moreover, since every orientable $3$-manifold
is spin, it follows that the dimension of the manifolds we are
considering here is $n \geq 4$.
In Theorem~\ref{thm:V} we show that we
can choose a compact subset $V$ in the compact part $K$ of $M$,
stratified by smooth submanifolds $V^{k_b}$ of codimension $k_b =
b(b-1)$ with $b\geq 2$, so that $M \setminus V$ admits a spin
structure, and this spin structure does not extend over the lowest
codimension stratum $V^2$. We fix such a $V$ and such a spin structure on $M\setminus V$. 
This spin structure restricts to the trivial spin structure on each of
the asymptotically flat ends of $M$.  We denote by $S$ the
corresponding spinor bundle on $M\setminus V$.  Similar to the spin
case, we say that a spinor on $M\setminus V$ is {\em constant near
  infinity} if it is constant with respect to each of the chosen
asymptotically flat coordinate systems on the ends of $M$.

To implement Witten's proof, we first need to construct an
asymptotically constant harmonic spinor, which we call a ``Witten
spinor''. The properties of this spinor are exactly those properties
of the spinor $\psi$ constructed above in Witten's proof on a
complete spin manifold.
\begin{definition}\label{def:ws}
  Let $\psi_0$ be a spinor on $M\setminus V$, constant near infinity
  on each of the asymptotically flat ends of $M$.
  We say that a spinor $\psi$ on $M\setminus V$ is a {\em Witten
    spinor} asymptotic to $\psi_0$, if the following conditions are satisfied :
 \begin{enumerate}
 \item $\frac{\psi-\psi_0}{\rho}\in L^2(M\setminus V,S)$,
  \item $\psi$ is strongly harmonic, i.e. $D\psi =0$, and
 \item $\nabla (\psi-\psi_0) \in L^2(M_l, S\lvert_{M_l})$ for each
 asymptotically flat end $M_l$ of $M$.
 \end{enumerate}  
 In particular, we say that the Witten spinor is {\em associated to the end
 $M_l$} if it is asymptotically nonvanishing in $M_l$ and asymptotically vanishing for all ends $M_i$
 with $i \neq l$.
\end{definition}
We show that Witten spinors exist on $M\setminus V$. 
\begin{theoremA}
  Let $(M,g)$ be a nonspin Riemannian manifold which is asymptotically
  flat of order $\tau >\frac{n-2}{2}$ and which has nonnegative scalar
  curvature.  Given a smooth spinor $\psi_0$ on $M\setminus V$ that is
  constant near infinity on each of the asymptotically flat ends of
  $M$ and vanishes in a neighborhood of $V$, there exists a Witten
  spinor on $M\setminus V$ asymptotic to $\psi_0$.
\end{theoremA}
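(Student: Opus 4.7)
The plan is to adapt Witten's variational argument to the incomplete manifold $M\setminus V$. Set $\omega:=-D\psi_0$. Because $\psi_0$ is constant on each asymptotically flat end and vanishes in a neighborhood of $V$, $\omega$ is smooth and compactly supported in $M\setminus V$. I will solve $D^2 u=\omega$ in a weighted sense and then set $\psi := \psi_0 + Du$. Condition (ii) of Definition~\ref{def:ws} is then automatic from $D\psi = D\psi_0 + D^2 u = 0$, while (i) and (iii) reduce to asymptotic estimates for $u$ and $Du$ on the ends.

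For the variational setup, let $\sH$ be the Hilbert space completion of $C_c^\infty(M\setminus V, S)$ under the Dirichlet norm $\|u\|_\sH^2 := \int_{M\setminus V}|\nabla u|^2\,dV$ (this is a norm because $M\setminus V$ is connected and non-compact). Since $n\geq 4$, the Hardy inequality $\int_{M_l}|u|^2/\rho^2 \le C\int_{M_l}|\nabla u|^2$ holds on each asymptotically flat end $M_l$, so elements of $\sH$ restrict to elements with $u/\rho \in L^2(M_l)$ on each end. The Lichnerowicz identity $D^2=\nabla^*\nabla + R/4$ together with $R\geq 0$ yields the coercive bilinear form
\begin{equation*}
  Q(u,v) \;:=\; \int_{M\setminus V}\left(\langle\nabla u,\nabla v\rangle + \tfrac{R}{4}\langle u,v\rangle\right) dV, \qquad Q(u,u) \geq \|u\|_\sH^2.
\end{equation*}
Because $\omega$ has compact support in $M\setminus V$ and $\rho$ is bounded on that support, the functional $v\mapsto\int\langle\omega,v\rangle\,dV$ is bounded on $\sH$, so Lax-Milgram produces a unique $u\in\sH$ with $Q(u,v)=\int\langle\omega,v\rangle$ for every $v\in C_c^\infty(M\setminus V,S)$. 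Interior elliptic regularity upgrades $u$ to a smooth spinor on $M\setminus V$ satisfying $D^2 u=\omega$ pointwise there.

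For the asymptotic behavior on the ends, observe that $\omega$ vanishes outside a compact set in each $M_l$, so $D^2 u=0$ at infinity. The weighted $L^2$ elliptic theory for the Dirac Laplacian on asymptotically flat manifolds, in the form developed by Bartnik~\cite{bartnik} and Parker-Taubes~\cite{pt}, applied to $u$ on a neighborhood of infinity in $M_l$, yields $\nabla u\in L^2(M_l)$ and $u/\rho\in L^2(M_l)$. Bootstrapping the same estimates for $D^2(Du)=0$ gives $\nabla(Du)\in L^2(M_l)$ and $Du/\rho\in L^2(M_l)$. Setting $\psi:=\psi_0+Du$, one verifies all three conditions of Definition~\ref{def:ws}.

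The hard part is the variational step: the spin structure on $M\setminus V$ has nontrivial holonomy around the codimension-2 stratum $V^2$ (this is exactly why it does not extend across $V$), so the Dirac operator is not essentially self-adjoint on $C_c^\infty(M\setminus V,S)$, and the Hardy inequality fails near a codimension-2 singularity. The variational approach implicitly selects the Friedrichs extension: the resulting $u$ is controlled only in terms of $\|\nabla u\|_{L^2}$ and may behave wildly as one approaches $V$. What saves the argument is that Definition~\ref{def:ws} imposes no constraint at $V$ beyond $D\psi=0$ in $M\setminus V$, which holds classically by interior elliptic regularity. The remaining delicate points — that abstract elements of $\sH$ can be realized as distributions on $M\setminus V$ solving $D^2u=\omega$, that no $L^2$-harmonic spinor concentrated near $V$ obstructs Lax-Milgram, and that passage from Cauchy sequences in $C_c^\infty(M\setminus V)$ to genuine $H^1_{\mathrm{loc}}$ representatives is compatible with the Hardy estimate at infinity — all hinge on a careful analysis of the Friedrichs form near the singular locus, and this is where the real work lies.
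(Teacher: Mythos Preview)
The central error is the claim that $\omega = -D\psi_0$ is compactly supported. The phrase ``constant near infinity'' means $\psi_0$ is constant with respect to a frame induced by the asymptotically flat chart, not covariantly constant. Since the spin connection coefficients in such a frame are $\sO(\rho^{-\tau-1})$, one has $|D\psi_0| = \sO(\rho^{-\tau-1})$ on each end, which is nonzero. (This is precisely~\eqref{eq:cs} in the paper.) Thus $\omega$ vanishes near $V$ but is \emph{not} compactly supported at infinity.

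This matters for your Lax--Milgram step. The linear functional $v\mapsto \int\langle\omega,v\rangle$ is bounded on your space $\sH$ only if $\rho\,\omega\in L^2$, since the Hardy inequality gives you $v/\rho\in L^2$ and nothing better. With $|\omega|\sim\rho^{-\tau-1}$, the condition $\rho\,\omega\in L^2$ is $\int\rho^{-2\tau+n-1}\,d\rho<\infty$, i.e.\ $\tau>n/2$. So your argument, once the compact-support claim is corrected, covers only the range $\tau>n/2$, not the full hypothesis $\tau>(n-2)/2$ of Theorem~A. The paper treats this easy range exactly as you do (Corollary~\ref{cor:invt} plus Corollary~\ref{cor:scws}), but for $\tau\in\big(\tfrac{n-2}{2},\tfrac{n}{2}\big]$ it needs a genuinely different argument: when $n\ge 5$ it first solves $D^2w=-D^2\psi_0$ (since $\rho\,D^2\psi_0\in L^2$ in this range), uses decay estimates for the strongly harmonic spinor $DW$ (Proposition~\ref{udecay2}) to bootstrap, and only then runs the Lax--Milgram step; for $n=4$ it replaces $H^1_\rho$ by a space with weight shifted by one power of $\rho$ (Theorem~\ref{thm:2-coerc}). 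Your proposal is missing both of these refinements.

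A secondary point: your bootstrapping claim ``$D^2(Du)=0$ at infinity'' also rests on the false compact-support premise, since in fact $D^2(Du)=-D^2\psi_0=\sO(\rho^{-\tau-2})$ there. The paper's route to $\nabla(Du)\in L^2(M_l)$ is Proposition~\ref{prop:bochend}, which only needs $Dv\in L^2$ and $v/\rho\in L^2$, not $D^2v=0$.
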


Next one needs to use the integral form of
the Lichnerowicz formula on the {\em incomplete }manifold $M\setminus
V$. In this case, integration by parts may introduce unwanted boundary
terms from the ideal boundary $V$ into the formula. This is a problem
familiar in the study of the Hodge theory of $L^2$-cohomology of
singular varieties (see~\cite{ps2}), which is resolved in that context
by proving that, as they approach the singularities, harmonic forms
decay sufficiently rapidly to introduce no extra boundary terms when
integrating by parts.

To analyze the behavior of the Witten spinors near $V$, we study the growth of $\psi$ near each of the strata $V^{k_b}$ of $V$ separately. Unless
$V^{k_b}$ is a closed stratum, there are no tubular neighborhoods of uniform radius over
the entire stratum. Hence, for uniform estimates involving separation of variables,  we formulate our estimates in tubular
neighborhoods over relatively compact subsets of $V^{k_b}$ that do
not intersect the higher codimension strata. We denote by
$\TRC(V^{k_b})$ the set of all these good neighborhoods around points
in $V^{k_b}$. Letting $r$ denote the distance to $V^2$ the lowest
codimension stratum, and $r_b$ denote the distance to the higher
codimension strata $V^{k_b}$ of $V$, we have:
\begin{theoremB}
 Let $\psi$ be a Witten spinor constructed as in Theorem A. Then
 \begin{enumerate}
  \item for all $W\in\TRC(V^2)$
     \begin{equation}\label{eq:r2}
        \frac{\psi}{r^{1/2}\ln^{1/2+a}(\frac{1}{r})} \in L^2
        (W\setminus V, S\lvert_{W\setminus V}),
             \quad \text{for all $a>0$},
      \end{equation}
  \item for all $W\in \TRC(V^{k_b})$ with $k_b > 2$
      \begin{equation}\label{eq:rb}
        \frac{\psi}{r_b^{(k_b-2)/2}\ln^{1/2+a}(\frac{1}{r_b})} \in L^2
        (W\setminus V, S\lvert_{W\setminus V}),
             \quad \text{for all $a>0$}.
      \end{equation}
 \end{enumerate}
\end{theoremB}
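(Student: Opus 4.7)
The plan is to combine the Lichnerowicz formula with weighted test functions and Hardy-type inequalities adapted to the codimension $k_b$. The starting point is that $\psi \in L^2_{\mathrm{loc}}$ near $V$: since $\psi_0$ vanishes in a neighborhood of $V$ and $(\psi - \psi_0)/\rho \in L^2(M \setminus V, S)$ by Theorem A, while $\rho$ is bounded above on bounded sets, $\psi$ is locally $L^2$ on any $W$ intersecting $V$.

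Fix $W \in \TRC(V^{k_b})$, so that $W$ has uniform tubular structure with fiber radial coordinate $r = r_b$ over a relatively compact part of $V^{k_b}$ disjoint from the higher-codimension strata. From $D\psi = 0$ and the Lichnerowicz identity $\nabla^*\nabla \psi + \tfrac{R}{4}\psi = 0$, pairing with $f^2 \psi$ for a smooth test function $f$ compactly supported in $W \setminus V$, integrating by parts, and absorbing via Cauchy--Schwarz with a weight, yields the Caccioppoli-type inequality
\begin{equation*}
 \int f^2 |\nabla \psi|^2 + \int f^2 \tfrac{R}{4}|\psi|^2 \;\leq\; C \int |df|^2\, |\psi|^2,
\end{equation*}
which is where the nonnegativity of $R$ from the hypothesis enters.

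I would then choose $f = \chi_\e(r)\, \eta(s)$, with $\eta$ a cutoff in the $V^{k_b}$-direction supported in $W$, and $\chi_\e$ a cutoff separating from $V$. For $k_b > 2$ the standard linear cutoff $\chi_\e(r) = \min(1, r/\e)$ has $|\chi_\e'|^2 \sim \e^{-2}$ supported on an annulus of $k_b$-dimensional volume $\sim \e^{k_b}$, so the right-hand side is $O(\e^{k_b-2}) \to 0$ as $\e \to 0$; combining the resulting control of $\nabla \psi$ with the Hardy inequality $\int |u|^2/r^2 \leq \tfrac{4}{(k_b-2)^2}\int |\nabla u|^2$ on the fiber, and iterating, produces \eqref{eq:rb} together with its $\ln^{1/2+a}$ refinement. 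For the critical codimension $k_b = 2$ the linear cutoff has $O(1)$ Dirichlet error; I would replace it by the logarithmic cutoff $\chi_\e(r) = \min(1, \ln(r/\e)/\ln(\delta/\e))$, whose Dirichlet energy $\int |\chi_\e'|^2\, r\, dr = O(1/\ln(1/\e))$ vanishes as $\e \to 0$, and pair it with the endpoint (logarithmic) Hardy inequality on $\IR^2$ to obtain \eqref{eq:r2}.

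The hardest step will be the codimension-$2$ case, where one sits precisely at the threshold of Hardy's inequality: the logarithmic loss is unavoidable, and the weight $\ln^{1/2+a}(1/r)$ with any $a > 0$ is essentially sharp. The bootstrap iteration of Caccioppoli paired with the logarithmic Hardy inequality must be carried out carefully, gaining a small amount of logarithmic power at each step until the exponent stabilizes. A secondary obstacle is controlling the geometric error terms from the fact that the metric on $W$ is only diffeomorphic, not isometric, to a flat bundle; the restriction $W \in \TRC(V^{k_b})$ is precisely what keeps these errors, together with the curvatures of the induced connection on the link, uniform over the base of the tubular neighborhood.
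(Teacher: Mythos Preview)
Your proposal has a genuine gap, and it also misses a key structural ingredient that the paper exploits.

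\textbf{The gap.} Your Caccioppoli inequality
\[
\int f^2|\nabla\psi|^2 \;\leq\; C\int |df|^2|\psi|^2
\]
is valid only for $f$ compactly supported in $W\setminus V$. To learn anything about $\psi$ near $V$ you must let $f$ limit to a function that is positive up to $V$; but then $|df|^2$ concentrates near $V$ and the right-hand side is \emph{not} controlled by the only information you have, namely $\psi\in L^2_{\mathrm{loc}}$. Your claim that the right-hand side is $O(\e^{k_b-2})$ tacitly assumes $|\psi|$ is bounded, which is exactly what is at stake: the Witten spinor may well behave like $r^{-1/2}$ near $V^2$ (and Theorem~D produces $L^2$ harmonic spinors that do), so no $L^\infty$ bound is available. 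The same circularity obstructs the logarithmic cutoff in codimension~$2$.

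\textbf{What the paper does instead.} The paper never works with $\psi$ directly. By the construction in Theorem~A one has, in a neighborhood of $V$, $\psi = Du$ with $u$ in the \emph{minimal} domain of $D$ and $D^2u=0$ there. Membership in $\Dmin(D)$ immediately yields $u/r_b\in L^2$ for every stratum (Corollary~\ref{cor:Dmin}); this is the starting regularity that your argument lacks. The paper then applies the Agmon identity $(D^2u,\mu_m^2\zeta^2u)=\|D(\mu_m\zeta u)\|^2-\|[D,\mu_m\zeta]u\|^2$ with truncated weights $\mu_m(r)=r^{-\alpha}$ (or $r^{-1/2}\ln^{-a}(1/r)$), bootstrapping the exponent on $u$ (Lemmas~\ref{lem:u_est}, \ref{lem:u_estb}), and only afterwards passes to $v=Du=\psi$ (Lemmas~\ref{lem:v_est}, \ref{lem:v_estb}).

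\textbf{The missing angular estimate.} In codimension $2$ you propose to use only the logarithmic Hardy inequality. That is not enough: with $\mu_m\sim r^{-1/2}\ln^{-a}(1/r)$ one has $|d\mu_m|^2\sim \tfrac14\mu_m^2/r^2$ to leading order, and this must be cancelled by an equally strong lower bound on $\|D(\mu_m\zeta u)\|^2$. The radial Hardy inequality only furnishes $\tfrac14\|\mu_m\zeta u/(r\ln(1/r))\|^2$, which is too weak. The paper obtains the required $\tfrac14\|\mu_m\zeta u/r\|^2$ from the \emph{angular} estimate (Proposition~\ref{prop:angular}): because the spin structure does not extend over $V^2$, spinors have only half-integer Fourier modes in the normal circles, so $\|\nabla_{e_\theta}\phi\|^2\ge \tfrac14\|\phi/r\|^2 - C\|\phi/r^{1/2}\|^2$. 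This holonomy input is indispensable and does not appear in your outline.
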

However, the decay estimates in~\eqref{eq:r2} are borderline for our
purposes. For any class of manifolds for which we could set $a = 0$
in~\eqref{eq:r2}, Witten's proof of the positive mass theorem extends.
\begin{theoremC}
  Let $(M,g)$ be an asymptotically flat nonspin manifold which
  satisfies the hypothesis of the Positive Mass Theorem.  Let $M_l$ be
  an asymptotically flat end of $M$, and let $\psi_{0l}$ be a spinor 
  constant at infinity which is supported on $M_l$ and is asymptotically nonvanishing.
Let $\psi$ be the Witten spinor associated to $M_l$ and
  asymptotic to $\psi_{0l}$ constructed in Theorem A. If this spinor
  satisfies
  \begin{equation}\label{eq:wish}
   \frac{\psi}{r^{1/2}\ln^{1/2}(\frac{1}{r})} \in L^2(W\setminus V, S\lvert_{W\setminus V})
 \end{equation}
 for all $W \in \TRC(V^2)$, then the mass of the end $M_l$ is
 positive. 
 Moreover, if there exists a Witten spinor whose norm is asymptotic to
 $1$ on every end and which satisfies (\ref{eq:wish}), then the mass
 of $(M,g)$ is positive.
\end{theoremC}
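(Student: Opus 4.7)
My approach would be to apply the integrated Lichnerowicz--Weitzenb\"ock formula to the Witten spinor $\psi$ on an exhausting family of compact domains in $M\setminus V$, and then carefully handle the resulting boundary terms as the domains fill out $M\setminus V$. Concretely I take $\Omega_{\varepsilon,L} := \{\rho\leq L\}\cap\{r\geq\varepsilon\}\cap\bigcap_{b>2}\{r_b\geq\varepsilon\}$; since $D\psi=0$, Stokes' theorem gives
\begin{equation*}
 \int_{\Omega_{\varepsilon,L}}\left(|\nabla\psi|^2 + \tfrac{R}{4}|\psi|^2\right)
 = \int_{\partial\Omega_{\varepsilon,L}} \mathrm{Re}\langle\psi,(\nabla_\nu+\nu\cdot D)\psi\rangle\, dS,
\end{equation*}
with $\nu$ the outward unit normal. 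On the outer sphere $\{\rho=L\}$, the asymptotic constancy of $\psi$ from Definition~\ref{def:ws}, together with the standard computation in the proof of Proposition~\ref{prop:mass}, identifies the $L\to\infty$ limit of this boundary integral as $\tfrac{c(n)}{4}\mass(M_l,g)$; contributions from the other ends vanish because $\psi$ is asymptotically vanishing there.

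The main technical obstacle is controlling the inner boundary contributions from the strata of $V$. Near a higher codimension stratum $V^{k_b}$ with $k_b>2$, the tube $\{r_b=\varepsilon\}$ has $(n-1)$-volume of order $\varepsilon^{k_b-1}$, and the decay estimate~(\ref{eq:rb}) from Theorem B with any $a>0$, combined with Cauchy--Schwarz against $\nabla\psi\in L^2$, suffices to send the boundary term to zero along a sequence $\varepsilon_n\to 0$. The delicate case is the codimension-two stratum $V^2$: here the tube has linear measure in $\varepsilon$, and the generic bound~(\ref{eq:r2}) with $a>0$ is insufficient. The hypothesis~(\ref{eq:wish}) supplies exactly the borderline improvement needed. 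I would introduce a logarithmic cutoff $\chi_\varepsilon$ equal to $0$ near $V^2$ and to $1$ outside a fixed tubular neighborhood, chosen so that $|\nabla\chi_\varepsilon|\leq C/(r\ln(1/\varepsilon))$ on its support. Inserting $\chi_\varepsilon\psi$ into the Lichnerowicz identity converts the boundary term at $V^2$ into interior errors of the form $\int|\nabla\chi_\varepsilon|^2|\psi|^2$ and $\int|\nabla\chi_\varepsilon||\psi||\nabla\psi|$, and both are dominated by $\int_{\supp\nabla\chi_\varepsilon}|\psi|^2/(r\ln(1/r))$ times bounded data; by~(\ref{eq:wish}) and absolute continuity of the integral, these errors tend to zero as $\varepsilon\to 0$.

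Combining the above yields the clean global identity
\begin{equation*}
 \int_{M\setminus V}\left(|\nabla\psi|^2 + \tfrac{R}{4}|\psi|^2\right) = \tfrac{c(n)}{4}\mass(M_l,g),
\end{equation*}
and the nonnegativity $\mass(M_l,g)\geq 0$ is immediate from $R\geq 0$. To upgrade to strict positivity, suppose $\mass(M_l,g)=0$. Then $\nabla\psi\equiv 0$ on $M\setminus V$ and hence $|\psi|$ is constant; since $\psi$ is asymptotically nonvanishing on $M_l$ this common value is nonzero, so $\psi$ is a nowhere-vanishing parallel section of $S$ over $M\setminus V$. Now pick a small loop $\gamma\subset M\setminus V$ that bounds a disk in $M$ meeting $V^2$ transversely at a single smooth point: its $\SO(n)$-holonomy is trivial (contract along the disk), but by the defining property from Theorem~\ref{thm:V} that the spin structure on $M\setminus V$ does not extend across $V^2$, the $\Spin(n)$-holonomy of $\gamma$ is $-1$. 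Parallel transport of $\psi$ around $\gamma$ is then simultaneously $\psi$ and $-\psi$, forcing $\psi\equiv 0$ and contradicting the nonvanishing. Therefore $\mass(M_l,g)>0$.

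The second assertion, for a Witten spinor whose norm tends to $1$ on every end, follows by the identical argument: the outer boundary contribution now sums over all ends to $\tfrac{c(n)}{4}\mass(M,g)$, the inner boundary analysis is unchanged under~(\ref{eq:wish}), and the same holonomy obstruction to the existence of a nowhere-vanishing parallel spinor on $M\setminus V$ delivers strict positivity.
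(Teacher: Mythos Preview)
Your overall architecture is right, and your rigidity argument via the $-1$ holonomy of the spin structure around loops linking $V^2$ is a clean alternative to the paper's. But the heart of the proof --- killing the contribution near $V^2$ --- has a real gap.

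You claim that with a cutoff satisfying $|\nabla\chi_\varepsilon|\leq C/(r\ln(1/\varepsilon))$, the errors $\int|\nabla\chi_\varepsilon|^2|\psi|^2$ and $\int|\nabla\chi_\varepsilon||\psi||\nabla\psi|$ are dominated by $\int_{\supp\nabla\chi_\varepsilon}|\psi|^2/(r\ln(1/r))$. Neither claim holds. For the first, a direct computation gives
\[
\int|\nabla\chi_\varepsilon|^2|\psi|^2\,d\mathrm{vol}\ \le\ \frac{C^2}{\ln^2(1/\varepsilon)}\int_\varepsilon^\delta\frac{|\psi|^2}{r}\,dr\,d\theta\,dy,
\]
and the pointwise comparison $\frac{1}{r\ln^2(1/\varepsilon)}\leq \frac{C'}{\ln(1/r)}$ needed to dominate this by the quantity in~\eqref{eq:wish} fails near $r=\varepsilon$ (it would force $\varepsilon\ln(1/\varepsilon)$ to be bounded below). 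A profile such as $|\psi|^2\sim (r\ln^{1+\epsilon}(1/r))^{-1}$ satisfies~\eqref{eq:wish} but makes the error diverge. For the second term you invoke Cauchy--Schwarz against $\nabla\psi\in L^2$; but $\nabla\psi\in L^2$ near $V$ is precisely what is at stake, so this is circular. The same circularity affects your treatment of the hard boundary integral $\int_{\{r=\varepsilon\}}\langle\psi,\nabla_\nu\psi\rangle$.

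What the paper does instead is not a direct boundary estimate but a \emph{bootstrap}: Proposition~\ref{prop:v_min} shows that for a harmonic spinor the hypothesis~\eqref{eq:wish} actually implies the stronger $\psi/r\in L^2$ near $V^2$, hence (via Lemma~\ref{lem:conv}) that $\chi\psi\in\Dmin(D)$, and then Proposition~\ref{prop:mass} applies without any boundary bookkeeping. The key ingredient you are missing is the angular estimate of Proposition~\ref{prop:angular}: because the spin structure does not extend over $V^2$, every spinor has half-integer Fourier modes in the normal circle, which yields the extra $\tfrac{1}{4}\|\psi/r\|^2$ in~\eqref{eq:Lich}. It is this gain, combined with $D\psi=0$, that upgrades~\eqref{eq:wish} to $\psi/r\in L^2$; a cutoff argument using only~\eqref{eq:wish} cannot do it. Your proof would be repaired by inserting this step (or equivalently citing Proposition~\ref{prop:v_min}) before attempting to pass to the limit.
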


To construct the Witten spinor in Theorem A, we proceed similarly to
Witten's proof: we find $u$ in the minimal domain of the Dirac
operator solving $D^2 u = - D\psi_0$, and set $\psi = \psi_0 + Du$.
Since the spin structure on $M\setminus V$ does not extend over $V^2$,
spinors have nontrivial holonomy around small circles normal to $V^2$.
The $L^2$-harmonic spinors near $V^2$ have a Fourier decomposition in
these normal circles whose leading order modes in polar coordinates
may behave like $r^{-1/2}e^{\pm i\theta/2}.$ Such modes prevent direct
application of the Lichnerowicz formula.  If a spinor satisfies the
hypotheses~\eqref{eq:wish} of Theorem C, these modes vanish, giving
that its product with any element of $C_0^\infty(M)$ is in the minimal
domain of the Dirac operator. However, the decay obtained in Theorem B
near $V^2$ is not sufficient to remove them. In fact in our
construction of the Witten spinor $\psi$, the spinor $u$ satisfies the
decay conditions in~\eqref{eq:wish}, but $Du$ and $\psi$ need
not. Therefore we cannot conclude the positive mass theorem this way.

On the other hand, if we knew that there are no nontrivial strongly
harmonic $L^2$-spinors, then it would be possible to prove the
positive mass theorem in the following manner: Find $w$ sufficiently
regular near $V$ solving $D^2 w = - D^2\psi_0$ and so that $Dw \in
L^2(M\setminus V, S)$. Then the spinor $\Phi: = \psi_0 + w$ is weakly
harmonic (by which we mean $D^2 \Phi =0$), while $D\Phi$ is strongly
harmonic and in $L^2(M\setminus V, S)$.  If zero was the only strongly
harmonic $L^2$-spinor, it would follow that $D\Phi =0$. Then $\Phi$
would be a Witten spinor satisfying the estimate~\eqref{eq:wish}, and
the positive mass theorem would follow (see Remark~\ref{adumbration}
for more details).  However, nonzero strongly harmonic $L^2$-spinors
not only exist but, in fact, form an infinite-dimensional space.

\begin{theoremD}
  Let $(M,g)$ be an asymptotically flat nonspin manifold which
  satisfies the hypothesis of the Positive Mass Theorem. Assume $V^2
  \neq \emptyset$. Then, the space of strongly harmonic square
  integrable spinors is infinite dimensional. In particular, Witten
  spinors are not unique.
\end{theoremD}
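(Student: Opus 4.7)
The plan is to produce an infinite-dimensional family of strongly harmonic $L^2$-spinors $\{\psi_s\}$ indexed by smooth compactly supported sections $s$ over a piece of $V^2$, exploiting the fact that the twisted spin structure on $M\setminus V$ allows modes of the form $r^{-1/2}e^{i\theta/2}$ which lie in $L^2$ across the two-dimensional transverse slice to $V^2$.

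First I would fix an open set $U$ relatively compact in $V^2$ and disjoint from the higher codimension strata, together with a tubular neighborhood $W\in\TRC(V^2)$ parametrised by polar coordinates $(p,r,\theta)$ on the normal fibres. For each compactly supported smooth section $s$ over $U$ of the appropriate bundle (the one encoding the leading $e^{i\theta/2}$ mode of the twisted spin bundle), I would build a formal approximate harmonic spinor
\begin{equation*}
 \phi_s^{(0)}(p,r,\theta) = r^{-1/2}e^{i\theta/2}s(p) + \sum_{j\geq 1} r^{j-1/2}\Phi_j(p,\theta),
\end{equation*}
whose higher Taylor coefficients $\Phi_j$ are determined iteratively by transport equations in the normal direction so that $D\phi_s^{(0)}$ vanishes to arbitrary order as $r\to 0$. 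Truncating by a smooth cutoff $\chi$ equal to $1$ near $V^2$ and supported in $W$ yields a global spinor $\phi_s:=\chi\phi_s^{(0)}$, and the remainder $\eta_s:=D\phi_s$ lies in $L^2(M\setminus V, S)$ with compact support in $M\setminus V$.

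Next, following the solvability construction employed in the proof of Theorem A, I would solve $D^2 u_s = -D\phi_s$ with $u_s$ in the minimal domain of $D$, so that both $u_s$ and $Du_s$ are in $L^2(M\setminus V, S)$. Setting
\begin{equation*}
 \psi_s := \phi_s + Du_s
\end{equation*}
gives $D\psi_s = D\phi_s + D^2 u_s = 0$ strongly, and $\psi_s\in L^2(M\setminus V, S)$ since $\phi_s$ is compactly supported and $Du_s\in L^2$. To prove linear independence of $\{\psi_s\}$, I would extract the leading $r^{-1/2}e^{i\theta/2}$ Fourier coefficient near $V^2$. The separation-of-variables analysis underlying Theorem B shows that spinors in the minimal domain of $D$ have no $r^{-1/2}e^{i\theta/2}$ contribution (their $e^{i\theta/2}$ component begins at order $r^{1/2}$), while a direct model computation on the punctured 2-disc with twisted spin structure shows that $D$ applied to a mode of type $r^{1/2}e^{i\theta/2}a(p)$ in the relevant chirality annihilates its potentially singular $r^{-1/2}e^{i\theta/2}$ part. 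Hence the leading mode of $\psi_s$ recovers $s$, so the map $s\mapsto\psi_s$ is injective on the infinite-dimensional space of smooth compactly supported sections over $U$.

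The main obstacle is this last linear independence step: one has to identify the correct space of "boundary values" at $V^2$ through the half-integer Fourier analysis of $D$ in the transverse 2-disc, and then match the minimal-domain regularity of $u_s$ with this decomposition to rule out cancellation between the singular leading mode of $\phi_s$ and any stray $r^{-1/2}$ contribution from $Du_s$. This reduces to the explicit model computation on the punctured disc together with the leading-order decay estimates for minimal-domain spinors established in the proofs of Theorems A and B.
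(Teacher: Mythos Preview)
Your approach is genuinely different from the paper's, and the construction of the approximate singular spinors $\phi_s$ is essentially the same explicit local model the paper uses. The divergence is in how each argument passes from ``there are many spinors in $\Dmax(D)\setminus\Dmin(D)$'' to ``there are many strongly harmonic $L^2$-spinors''.

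The paper never attempts to correct $\phi_s$ to an exact harmonic spinor directly. Instead it makes an abstract reduction (its Step~1): writing $\sB=\{\psi\in\Dmax(D):D^2\psi=0\}$, one has $\Ker(D|_\sB)\subset H_{\sing}$ and $\Range(D|_\sB)\subset H_{\sing}$, so $\dim H_{\sing}\geq\dim\sB$. A short Lax--Milgram argument then identifies $\sB\cong\Dmax(D)/\Dmin(D)$, so it suffices to exhibit infinitely many independent classes in $\Dmax(D)/\Dmin(D)$. This last step needs \emph{only} the $\phi_s$ (with disjoint supports), not any correction term, and linear independence is read off immediately from $\nabla\phi_s\notin L^2$.

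Your route, by contrast, has to analyze the correction $Du_s$ near $V^2$. The step you flag as ``the main obstacle'' is a real one, and it is not resolved by the tools you cite. Theorem~B gives only the integral bound $Du_s/(r^{1/2}\ln^{1/2+a}(1/r))\in L^2$ for $a>0$, which \emph{permits} $r^{-1/2}$ behaviour in $Du_s$; this is precisely the borderline that the paper itself cannot cross (it is why Theorem~C is conditional). Your chirality remark is morally correct---in the flat model the $r^{-1/2}e^{i\theta/2}$ contribution to $Du_s$ lands in the eigenspace of $ic(e_r)c(e_\theta)$ complementary to the one forced on $s$ by the requirement that $D\phi_s$ be locally $L^2$---but turning this into a rigorous statement about the actual solution $u_s\in H^1_\rho$ requires a polyhomogeneous-type expansion for $D^2$-harmonic minimal-domain spinors near $V^2$, not just the weighted $L^2$ estimates established in the paper. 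Without that, you cannot exclude that the leading mode of $\phi_s$ is cancelled by $Du_s$, so injectivity of $s\mapsto\psi_s$ is unproved.

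In short: your $\phi_s$ already give infinitely many independent elements of $\Dmax(D)/\Dmin(D)$; the paper's abstract Step~1 converts that directly into $\dim H_{\sing}=\infty$ and thereby bypasses exactly the boundary-regularity question you are left with.
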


\subsection{Plan of the Paper}

It is well-known that the obstruction to having a spin structure on an
orientable manifold $M$ is the second Stiefel-Whitney class $w_2(M)
\in H^2(M, \IZ_2)$,~\cite{lm}.

In Section~\ref{sec:incomplete_spin}, starting from the fact that the
second Stiefel-Whitney class vanishes on any bundle admitting a rank
$(n-1)$ trivial subbundle, we construct the stratified space $V$ whose
complement is an incomplete spin manifold.  We then analyze the
geometric structure of $V$ and introduce the set $\TRC(V^{k_b})$ of
good tubular neighborhoods around points in each stratum $V^{k_b}$ of
$V$.

In Section~\ref{sec:pe} we gather Hardy inequalities for spinors on
the asymptotically flat ends of $M$ and near each stratum of $V$. Near
the codimension $2$ stratum of $V$ we have a stronger {\em angular
  estimate}. This is a consequence of the fact that the spin structure
on $M\setminus V$ does not extend over $V^2$ and the resulting
nontrivial holonomy around small normal circles to $V^2$.

We follow with a preliminary study of the Dirac operator $D$ on
$M\setminus V$ in Section~\ref{sec:Do}. We introduce the weighted
Sobolev spaces necessary for our analysis, state the Lichnerowicz
formula on these spaces, and derive a vanishing result in
Proposition~\ref{prop:boch1}. We also introduce the maximal and the
minimal domains of $D$.
We continue with an analysis of the growth condition of the Witten
spinors on the asymptotically flat ends, and conclude this section
with Proposition~\ref{prop:mass}, which recapitulates Bartnik's proof
that the asymptotic boundary contribution to the Lichnerowicz formula
for strongly harmonic spinors does yield the product of a universal
constant and the mass.

In Section~\ref{sec:lbe} we derive estimates near each stratum of
$V$ for spinors $u$ and $v:= Du$, with $u$ in the minimal domain of $D$. 
In Lemma~\ref{lem:conv}, we also derive a sufficient condition for a
spinor to be in the minimal domain of $D$.
In Section~\ref{sec:iie} we sharpen these estimates when
$D^2 u =0$ near $V$. Both these sections rely on the estimates established in
Section~\ref{sec:pe} and a technique of Agmon which we use
repeatedly,~\cite{ag}. 

In Section~\ref{sec:af} we derive estimates for spinors on the
asymptotically flat ends of $M$. 

In Section~\ref{sec:coercive} we prove a coercivity result for the
Dirac operator, which we then use in Section~\ref{sec:main} together
with Corollary~\ref{cor:scws} to prove Theorem A. We then follow with
the proofs of Theorem B and C. We conclude this section with the 
proof of Theorem D.   

\paragraph*{Acknowledgements:}

We would like to thank Hubert Bray for helpful discussions, and 
the referees for giving detailed
constructive suggestions for improving the paper.  The first author
would like to thank Tom Mrowka and Richard Melrose for support and
useful discussions, and also acknowledge the support of NSF
DMS-0505767 and the Max Planck Institute for Gravitational Physics in
Potsdam, Germany during the time this work was carried out.  The work
of the second author was supported partially by NSF grant DMS-1005761.

\tableofcontents

\section{Incomplete Spin Structures}\label{sec:incomplete_spin}
Let $M$ be an oriented $n$-dimensional Riemannian manifold which is
nonspin. Since an oriented manifold is spin if and only if its second
Stiefel-Whitney class vanishes, it follows that $w_2(M) \in
H^2(M,\IZ_2)$ is non-vanishing.  Based on the definition of $w_2(M)$,
we construct a compact stratified subset $V \subset M$ so that $M
\setminus V$ admits a spin structure.
We start by recalling generalities about stratified sets, after which
we construct $V$, describe its properties and those of the spin
structure on $M\setminus V$.

\subsection{Generalities about stratified sets}\label{subsec:str}

Let $M$ be a smooth manifold and let $S\subset M$ be a closed
subset. A {\em stratification} $\Sigma$ of $S$ is a decomposition of
$S$ into disjoint smooth connected submanifolds of $M$, called the
{\em strata} of $M$. The stratification is {\em locally finite} if
each point has a neighborhood intersecting only a finite number of
strata.

Given another manifold $N$, a smooth map $f:N \to M$ is {\em
  transverse to $\Sigma$}, which we write $f \ti \Sigma$, if $f$ is
transverse to each of the strata of $\Sigma$. We need a stratified
version of Thom's transversality theorem,~\cite{hirsch}. A sufficient
condition for this result to apply in the stratified context is that
the stratification $\Sigma$ satisfies {\em Whitney (a)-regularity},
that is, for any pair of strata $X, Y$ of $\Sigma$ with $Y \subset
\bar X \setminus X$, and given a sequence of points $\{x_i \} \subset
X$ such that the $x_i$ converge to $y \in Y $ and the tangent spaces
$T_{x_i} X$ converge to $\tau\subset T_yM$, we have $T_yY \subset
\tau$.

\begin{theorem}[Thom transversality in the stratified setting]\label{thm:Thom}
  Let $M, N$ be smooth manifolds. Let $S$ be a closed subset of $M$
  and let $\Sigma$ be a locally finite stratification of $S$ which
  satisfies Whitney's (a)-regularity condition. Then the set $\{f\in
  \sC^{\infty}(N, M) \mid f \ti \Sigma\}$ is open and dense in
  $\sC^{\infty}(N,M)$.
\end{theorem}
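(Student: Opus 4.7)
The plan is to treat openness and density separately, reducing each to a local statement via the local finiteness of $\Sigma$, and then invoking Whitney (a)-regularity precisely where the classical single-manifold argument fails.

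For density, I would enumerate the strata $\{X_\alpha\}$ of $\Sigma$ using local finiteness, choose a locally finite cover of $N$ by precompact open sets, and argue stratum by stratum. On each stratum $X_\alpha$, the stratum is a smooth submanifold of $M$, so the classical Thom transversality theorem produces arbitrarily small perturbations of $f$ (supported in one chart of $N$ at a time and supported away from $f^{-1}(\bar{X_\beta} \setminus X_\beta)$ when needed for the lower strata) that are transverse to $X_\alpha$. Because only finitely many strata meet any compact subset, I can iterate and take a limit in the Whitney topology; the key bookkeeping point is that, once transversality to a lower-dimensional stratum is achieved on a given compact set, subsequent perturbations targeting higher-dimensional strata can be made small enough to preserve it, since transversality to a fixed smooth submanifold is itself an open condition in the classical sense.

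For openness, the claim is that if $f \pitchfork \Sigma$ then any sufficiently small perturbation $g$ of $f$ in $\sC^\infty(N,M)$ also satisfies $g \pitchfork \Sigma$. The only delicate issue is that a perturbation of $f$ could push a point $p \in N$ with $f(p) \in Y$ (a low-dimensional stratum) off $Y$ and onto a higher stratum $X$ with $Y \subset \bar{X}\setminus X$, and at $g(p)$ we must verify transversality to $X$. This is exactly where Whitney (a)-regularity enters: if $g_k \to f$ and $p_k \to p$ with $g_k(p_k) \in X$ and $g_k(p_k) \to f(p) = y \in Y$, then along a subsequence $T_{g_k(p_k)} X \to \tau$ with $T_yY \subset \tau$ by (a)-regularity, so $dg_k(T_{p_k}N) + T_{g_k(p_k)} X \to df_p(T_pN) + \tau \supset df_p(T_pN) + T_yY = T_yM$, contradicting the assumption that $g_k$ fails transversality to $X$ at $p_k$. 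A standard compactness argument within a precompact coordinate neighborhood of $p$ (using local finiteness to restrict to finitely many strata adjacent to $Y$) then shows that $g \pitchfork X$ on that neighborhood for all $g$ in a $\sC^1$-small neighborhood of $f$; patching over a locally finite cover of $N$ produces the required open neighborhood of $f$.

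The main obstacle I anticipate is making the openness argument uniform in the face of noncompactness of $N$ and the fact that the higher-codimension strata may not be closed in $M$; this is handled by working in the strong (Whitney) topology and using the locally finite structure of $\Sigma$ so that in any precompact region only finitely many strata are relevant. Once that reduction is in place, the density step is a straightforward sequential application of classical transversality, and the openness step reduces to the single contradiction-by-extraction argument above, in which Whitney (a) does essentially all the work.
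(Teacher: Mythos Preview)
The paper does not prove this theorem at all: immediately after the statement it simply cites Feldman \cite[Proposition~3.6]{feldman1965} for the result and Trotman \cite{trotman1979} for the necessity of Whitney (a)-regularity. So there is no ``paper's own proof'' to compare against.

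That said, your sketch is essentially the standard argument (and in spirit is Feldman's). Density by iterated classical transversality over the strata, using local finiteness to reduce to finitely many strata over each compact piece of $N$, is exactly right; the bookkeeping you describe is the usual one. Your openness argument is also the correct one, and you have located precisely the role of Whitney (a): without it, a sequence $g_k \to f$ could fail transversality to a higher stratum $X$ at points $g_k(p_k)$ converging to a point $y$ of a lower stratum $Y\subset \bar X\setminus X$, and there is no control on $T_{g_k(p_k)}X$; condition (a) supplies $T_yY\subset\tau=\lim T_{g_k(p_k)}X$, which together with $f\pitchfork Y$ at $y$ forces the contradiction. One small point worth tightening: in the density step, when you say subsequent perturbations ``can be made small enough to preserve'' transversality to previously handled strata, you are implicitly using openness of transversality to those strata on the compact set in question---but those strata need not be closed submanifolds, so this is again an appeal to the Whitney (a) openness argument (or, more simply, to the fact that you have already arranged transversality to \emph{all} lower strata in the closure, so the same limiting-tangent-space argument applies). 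Once you make that dependency explicit, the sketch is complete.
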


This theorem was proved by Feldman~\cite[Proposition
3.6]{feldman1965}. The necessity of the Whitney condition was proved
by Trotman~\cite{trotman1979}. Once we have this transversality result
in the stratified setting, a consequence of the inverse image theorem
gives:

\begin{corollary}\label{cor:tr-inv}
  Let $M, N$ be smooth manifolds. Let $S$ be a closed subset of $M$
  and let $\Sigma$ be a locally finite stratification of $S$. If $f
  \in \sC^{\infty}(N,M)$ is transverse to $\Sigma$, then $f^{-1}(S)$
  is a locally finite stratified subset of $N$ with stratification
  given by the preimages of the strata of $\Sigma$. Moreover, if $X$
  is a strata of $\Sigma$, the codimension of $f^{-1}(X)$ in $N$ is
  the same as the codimension of $X$ in $M$.
\end{corollary}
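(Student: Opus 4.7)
The plan is to apply the classical preimage theorem for transverse maps stratum by stratum, and then verify that the resulting decomposition of $f^{-1}(S)$ satisfies the local finiteness requirement of a stratification as defined in Section~\ref{subsec:str}.

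First, $f^{-1}(S)$ is closed in $N$ because $S$ is closed in $M$ and $f$ is continuous. For each stratum $X$ of $\Sigma$, the hypothesis $f\ti X$ combined with the standard implicit function theorem argument yields that $f^{-1}(X)$ is a smooth submanifold of $N$ with $\operatorname{codim}_N f^{-1}(X)=\operatorname{codim}_M X$. Since the strata $\{X\}$ partition $S$, the preimages $\{f^{-1}(X)\}$ partition $f^{-1}(S)$. If any $f^{-1}(X)$ is disconnected, I would replace it by its connected components, each of which is again a smooth submanifold of the same codimension, in order to conform to the definition of a stratification.

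To verify local finiteness of the resulting decomposition, fix $y\in N$. By local finiteness of $\Sigma$ there is an open neighborhood $U$ of $f(y)$ in $M$ meeting only finitely many strata $X_1,\dots,X_k$; set $W:=f^{-1}(U)$, which is an open neighborhood of $y$ in $N$. Any stratum of the inverse image decomposition intersecting $W$ must then be a connected component of one of the $f^{-1}(X_i)$. Shrinking $W$ to a relatively compact neighborhood of $y$ and invoking that each $f^{-1}(X_i)$ is a smooth submanifold of $N$, I would conclude that only finitely many connected components can meet this smaller neighborhood.

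The main obstacle is precisely this last assertion: a priori, connected components of $f^{-1}(X_i)$ could accumulate at points of $f^{-1}(\overline{X}_i\setminus X_i)$, so finiteness within a relatively compact neighborhood is not automatic from transversality alone. This is controlled by the Whitney (a)-regularity that underpins Theorem~\ref{thm:Thom} in its stratified form, or equivalently by appealing to a local product model for $\Sigma$ near each lower-codimension stratum. Granting this point, the codimension statement is immediate from the transverse preimage theorem and the proof is complete.
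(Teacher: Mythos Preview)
Your approach is exactly what the paper intends: it states the corollary as ``a consequence of the inverse image theorem'' and gives no proof beyond that remark. Your stratum-by-stratum application of the transverse preimage theorem is the whole content of the paper's argument, and the codimension claim is immediate from it.

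You go further than the paper by worrying about local finiteness after passing to connected components. This is an honest subtlety that the paper simply does not address: as stated, Corollary~\ref{cor:tr-inv} assumes only a locally finite stratification, not Whitney (a)-regularity, so the accumulation phenomenon you describe is not ruled out in general. For the paper's sole application, however, the source stratification $\sH = \bigcup_{b\geq 2}\sH^{k_b}$ has finitely many strata to begin with, each $\sH^{k_b}$ is connected, and the iterated-cone structure established later in Section~\ref{sec:incomplete_spin} gives exactly the local product model you invoke. So your flagged obstacle is real at the level of generality stated, but harmless in context; the paper is tacitly working under the Whitney hypothesis (or the finite-strata hypothesis) even though the corollary does not list it.
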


\subsection{The construction of $V$}\label{subsec:construct_V}
The second Stiefel-Whitney class $w_2(M)$ of a manifold $M$ is the
topological obstruction to extending $(n-1)$ linearly independent
vector fields from the $1$-skeleton to the $2$-skeleton of
$M$,~\cite[Section 39]{steenrod}.  In particular, it vanishes for any
manifold admitting $(n-1)$ pointwise linearly independent vector
fields.  With this in mind, we consider the vector bundle $\Hom
(\IR^{n-1}, TM)$. The fiber at each point $x$ is identified with the
space of $(n-1)$-tuples of vectors in $T_xM$. In this fiber we take
the subset $\sH_x$ of maps which are not of maximal rank.  Set $\sH
:=\cup_{x\in M}\sH_x$.
\begin{lemma}\label{lem:str-H}
  The subset $\sH \subset \Hom(\IR^{n-1}, TM)$ is a locally finite
  stratified set,
  \begin{equation}\label{eq:str-H}
    \sH = \union_{b\geq 2} \sH^{k_b},
  \end{equation}
  with stratum $\sH^{k_b}$ consisting of all maps of rank $n-b$.
  Moreover, the stratum $\sH^{k_b}$ has codimension $k_b:=b(b-1)$ in
  $\Hom(\IR^{n-1}, TM)$. 
\end{lemma}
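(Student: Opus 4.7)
The plan is to reduce everything to the classical rank stratification of a linear space of matrices, then globalize over $M$. The map $(n-1)$-tuple of vectors in $T_xM$ can be viewed as an $n \times (n-1)$ matrix after choosing a basis, and failing to have maximal rank $n-1$ means having rank at most $n-2$. Writing the rank as $n-b$, the range $b \in \{2, 3, \ldots, n\}$ exhausts precisely the non-maximal ranks, and this is how I would index the strata.

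For the fiberwise model, I would invoke (and give a short proof of) the standard fact that in $\Hom(\IR^{n-1}, \IR^n)$ the subset $\Sigma_{n-b}$ of matrices of rank exactly $n-b$ is a smooth submanifold of codimension $b(b-1)$. The local argument uses the Schur complement: near a point $A_0 \in \Sigma_{n-b}$, choose an $(n-b) \times (n-b)$ minor of $A_0$ that is invertible, and reorder rows and columns so that it sits in the top-left block. For nearby $A$ the corresponding block $A_{11}$ is still invertible, so the block decomposition
\begin{equation*}
 A = \begin{pmatrix} A_{11} & A_{12} \\ A_{21} & A_{22} \end{pmatrix}
\end{equation*}
has automatic rank $\geq n-b$, and rank exactly $n-b$ iff the Schur complement $A_{22} - A_{21} A_{11}^{-1} A_{12}$ vanishes. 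This gives $b(b-1)$ equations whose differential at $A_0$ is evidently surjective onto matrices of size $b \times (b-1)$ (vary $A_{22}$ freely), so $\Sigma_{n-b}$ is locally cut out transversally with the claimed codimension.

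To globalize, I would observe that the rank of an element of $\Hom(\IR^{n-1}, T_xM)$ is independent of the choice of a basis of $T_xM$, so the fiberwise stratification is invariant under the $GL_n$-action induced by changes of local trivialization of $TM$. Picking a trivialization of $TM$ over an open $U \subset M$ identifies $\Hom(\IR^{n-1}, TM)\lvert_U \cong U \times \Hom(\IR^{n-1}, \IR^n)$ and $\sH^{k_b}\lvert_U \cong U \times \Sigma_{n-b}$, which is a smooth submanifold of codimension $b(b-1)$ in the total space. These local pieces patch, giving the globally defined smooth stratum $\sH^{k_b}$ of codimension $k_b = b(b-1)$ in $\Hom(\IR^{n-1}, TM)$. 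Their disjoint union over $b = 2, \ldots, n$ is exactly $\sH$.

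Finally, local finiteness is automatic since there are only $n-1$ strata. I do not expect a serious obstacle: the entire content is the rank stratification of matrices, carried along the fibers of the bundle $\Hom(\IR^{n-1}, TM)$. The only small bookkeeping point is the index substitution $b = n - r$ which turns the classical codimension $(n-r)(n-1-r)$ into the form $b(b-1)$ used in the statement. It is worth noting (for later applications of Theorem \ref{thm:Thom}) that this rank stratification is well known to be Whitney regular, though that refinement is not needed to prove the present lemma.
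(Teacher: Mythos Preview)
Your proposal is correct and follows essentially the same approach as the paper: both reduce to the fiberwise rank stratification of $\Hom(\IR^{n-1},\IR^n)$ and use the Schur complement (equivalently, row reduction on a block decomposition) to show that the rank-$(n-b)$ locus is locally cut out by the vanishing of a $b\times(b-1)$ matrix, hence has codimension $b(b-1)$. Your explicit globalization via local trivializations of $TM$ and your remark on local finiteness are fine additions; the paper leaves these implicit.
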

\begin{proof}
  We need to show that each $\sH^{k_b}$ is a smooth submanifold of
  $\Hom(\IR^{n-1}, TM)$ of codimension $k_b = b(b-1)$. In fact, it is
  enough to show this claim fiberwise.  This is well-known
  (see~\cite[Lecture 12]{mrowka}, for example), but we include it here
  for completeness.

  Consider $\Hom(\IR^{n-1},\IR^n)$ and let $H^{k_b}\subset
  \Hom(\IR^{n-1}, \IR^n)$ be the subspace of linear maps of rank
  $n-b$. For $L\in H^{k_b}$, $\dim \Ker L = b-1$ and $\dim \Coker L =
  b$. Throughout this section, we identify $\Coker L $ with $(\Im
  L)^{\perp}$ and write
 \[
   \IR^{n-1} = (\Ker L)^{\perp} \oplus \Ker L,
   \qquad 
   \IR^n = \Im L \oplus  \Coker L. 
 \]
 (Without loss of generality, we can equip $\IR^{n-1}$ and $\IR^n$
 with inner products.) With respect to this decomposition, $L$ has the
 form $L = \left[\begin{smallmatrix} \bar L & 0 \\ 0 &
     0\end{smallmatrix}\right]$.

 Let $A \in \Hom(\IR^{n-1}, \IR^n)$ be of the form $A =
 \left[\begin{smallmatrix} e & f \\ g & h\end{smallmatrix}\right]$
 with respect to the above decomposition.  We claim that for $A$
 sufficiently small, $L+A\in H^{k_b}$ if and only if
 \begin{equation}\label{eq:leq}
    h - g (\bar L +e)^{-1} f =0.
 \end{equation}
 To see this, row reduce $\left[\begin{smallmatrix}\bar L+ e & f \\ g
     & h\end{smallmatrix}\right]$ to $\left[\begin{smallmatrix}\bar L+
     e & f \\ 0 & h-g(\bar L+ e)^{-1}f\end{smallmatrix}\right]$. This
 has the desired rank if and only if~\eqref{eq:leq} holds.  From
 here, it is clear that $H^{k_b}$ is smooth near $L$.  Moreover,
 equation~\eqref{eq:leq} shows that the tangent space to the stratum
 $H^{k_b}$ at $L$ is
 \begin{equation}\label{eq:ts}
   T_{L} H^{k_b} = \{ W \in \Hom (\IR^{n-1}, \IR^n) \mid
   \pi_{\Coker L} \circ W \circ \iota_{\Ker L} = 0\},
 \end{equation}
 with $\pi_{\Coker L}: \IR^n \to \Coker L$ the projection onto $\Coker
 L$ and $\iota_{\Ker L}: \Ker L \to \IR^{n-1}$ the inclusion. Hence,
 the normal space to the stratum $H^{k_b}$ is
 \[
   N_L(H^{k_b}) = \Hom (\Ker L, \Coker L),
 \]
 and this has codimension $k_b = b (b-1)$.
\end{proof}

We next show that the stratification~\eqref{eq:str-H} is Whitney
(a)-regular. A general result of Whitney~\cite[Theorem~19.2]{Wh} on
the stratification of algebraic varieties gives the weaker outcome
that our stratification has a Whitney (a)-regular refinement.

\begin{lemma}
 The stratification $\sH = \union_{b \geq 2} \sH^{k_b}$ is Whitney (a)-regular.
\end{lemma}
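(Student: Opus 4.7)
The plan is to reduce the verification of Whitney (a)-regularity to the fiberwise statement for the model stratification $H = \bigcup_{b\geq 2} H^{k_b}$ of $\Hom(\IR^{n-1},\IR^n)$, and then to use the explicit description of the tangent spaces given in equation~(2.3) of the preceding lemma.

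\textbf{Step 1 (Reduction to fibers).} The bundle $\Hom(\IR^{n-1},TM)$ is locally trivial: for any sufficiently small open $U\subset M$, there is a diffeomorphism $\Phi: \Hom(\IR^{n-1},TM)|_U \to U\times \Hom(\IR^{n-1},\IR^n)$ carrying $\sH^{k_b}|_U$ to $U\times H^{k_b}$. Therefore at any point $(x,L)\in \sH^{k_b}$ we have, in the trivialization, $T_{(x,L)}\sH^{k_b} = T_xU \oplus T_L H^{k_b}$. For a sequence $(x_i,L_i)\in \sH^{k_b}$ converging to $(x,L_\infty)\in \sH^{k_{b'}}$ with $b'>b$, the tangent spaces converge to $T_xU\oplus \tau$ where $\tau = \lim T_{L_i}H^{k_b}$ (after passing to a subsequence to ensure convergence in the Grassmannian). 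The condition $T_{(x,L_\infty)}\sH^{k_{b'}} \subset T_xU\oplus \tau$ thus reduces to showing $T_{L_\infty}H^{k_{b'}} \subset \tau$ in the fiber.

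\textbf{Step 2 (Limits of kernels and cokernels).} Pass to a subsequence so that $\Ker L_i$ converges to a $(b-1)$-dimensional subspace $K_\infty\subset\IR^{n-1}$ and $\Coker L_i = (\Im L_i)^\perp$ converges to a $b$-dimensional subspace $C_\infty\subset \IR^n$ in the appropriate Grassmannians. If $k_i\in\Ker L_i$ with $k_i\to k_\infty\in K_\infty$, then $L_\infty k_\infty = \lim L_i k_i = 0$, so $K_\infty\subset \Ker L_\infty$. Dually, $\Im L_i\to \Im_\infty$ in the Grassmannian of $(n-b)$-planes, and $\Im L_\infty\subset \Im_\infty$ because any $L_\infty v = \lim L_i v$ lies in $\Im_\infty$; taking orthogonal complements gives $C_\infty\subset\Coker L_\infty$.

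\textbf{Step 3 (Comparison of tangent spaces).} From~(2.3) in the preceding lemma,
\[
\tau = \{W\in\Hom(\IR^{n-1},\IR^n)\mid \pi_{C_\infty}\circ W\circ \iota_{K_\infty}=0\},
\]
while
\[
T_{L_\infty} H^{k_{b'}} = \{W\mid \pi_{\Coker L_\infty}\circ W\circ \iota_{\Ker L_\infty}=0\}.
\]
Let $W\in T_{L_\infty} H^{k_{b'}}$. Since $K_\infty\subset\Ker L_\infty$, we have $\iota_{\Ker L_\infty}\circ j = \iota_{K_\infty}$ for the inclusion $j:K_\infty\hookrightarrow\Ker L_\infty$. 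Since $C_\infty\subset\Coker L_\infty$, the orthogonal projections satisfy $\pi_{C_\infty} = \pi_{C_\infty}\circ \pi_{\Coker L_\infty}$. Composing both sides of the defining equation for $W$ with $\pi_{C_\infty}$ on the left and $j$ on the right gives $\pi_{C_\infty}\circ W\circ \iota_{K_\infty} = 0$, so $W\in\tau$. This establishes $T_{L_\infty}H^{k_{b'}}\subset\tau$ and hence the Whitney (a)-regularity.

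There is no serious obstacle here; the main thing to be careful about is the direction of the inclusions between $K_\infty,\Ker L_\infty$ and between $C_\infty,\Coker L_\infty$, which go the right way precisely because higher codimension strata have larger kernels and cokernels, and this is exactly what makes the normal space to $T_{L_\infty}H^{k_{b'}}$ project into the limiting normal direction.
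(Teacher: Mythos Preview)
Your argument is correct and is genuinely different from the paper's. The paper fixes $\Theta$ in a deeper stratum, introduces the auxiliary submersion $\Upsilon(L) = h(L) - g(L)(\bar\Theta+e(L))^{-1}f(L)$ onto $\Hom(\Ker\Theta,\Coker\Theta)$, rewrites $T_LH^{k_b}$ via (2.3) applied to $\Upsilon(L)$, and then for each $\Lambda\in T_\Theta H^{k_{b+1}}$ builds an explicit continuous tangent field $\sW_L=\Lambda - d\Upsilon_L(\Lambda)$ on the nearby shallow stratum with $\sW_L\to\Lambda$. You instead work directly with the description $T_L H^{k_b}=\ker\bigl(W\mapsto \pi_{\Coker L}\,W\,\iota_{\Ker L}\bigr)$, pass to limits of kernels and cokernels, and compare the resulting linear conditions. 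Your route is shorter and more conceptual: it makes transparent that Whitney (a) holds precisely because the normal space $\Hom(\Ker L,\Coker L)$ to the shallow stratum limits to a \emph{subspace} of the normal space $\Hom(\Ker L_\infty,\Coker L_\infty)$ to the deep stratum. The paper's approach, by contrast, produces an explicit extension of each tangent vector, which is more constructive but heavier.

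One point you should make explicit: in Step~3 you assert, citing (2.3), that $\tau=\{W:\pi_{C_\infty}W\iota_{K_\infty}=0\}$. Formula (2.3) gives $T_{L_i}H^{k_b}$ for each $i$, not the limit; you still need the one-line observation that $T_{L_i}H^{k_b}$ is the kernel of the surjection $\Phi_i(W)=\pi_{\Coker L_i}W\iota_{\Ker L_i}$, that these $\Phi_i$ have constant rank $b(b-1)$ and (after identifying the targets via $\Ker L_i\to K_\infty$, $\Coker L_i\to C_\infty$) converge to the rank-$b(b-1)$ map $\Phi_\infty(W)=\pi_{C_\infty}W\iota_{K_\infty}$, so that $\ker\Phi_i\to\ker\Phi_\infty$ in the Grassmannian. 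With that sentence added, the proof is complete.
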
 

\begin{proof}
  It is enough to show that the stratification $\{H^{k_b}\}_{b\geq 2}$
  of the set of linear maps in $\Hom(\IR^{n-1},\IR^n)$ which are not
  of maximal rank is Whitney (a)-regular.

  Let $\Theta \in H^{k_{b+1}}$. With respect to the decompositions
  $\IR^{n-1} = (\Ker \Theta )^{\perp} \oplus \Ker \Theta$ and $\IR^n =
  \Im \Theta \oplus \Coker \Theta$, we have $\Theta =
  \left[\begin{smallmatrix} \bar \Theta & 0 \\ 0 &
      0\end{smallmatrix}\right]$. Let $\sU$ be a neighborhood of
  $\Theta$ in $\Hom(\IR^{n-1}, \IR^n)$ in which each point $L$ has the
  form
 \begin{equation}\label{eq:L}
 L= \left[\begin{smallmatrix} 
            \bar \Theta + e(L) & f(L) \\
             g(L) & h(L)
           \end{smallmatrix}\right] 
   = \left[\begin{smallmatrix} 
             I & 0 \\
             g(L)\left(\bar \Theta + e(L)\right)^{-1} & I
           \end{smallmatrix}\right]
     \left[\begin{smallmatrix} 
             \bar \Theta + e(L) & f(L) \\ 
             0 &  \Upsilon(L)\
           \end{smallmatrix}\right]
 \end{equation}
 with \begin{equation}\label{eq:U} \Upsilon(L): = h(L) - g(L) \left(\bar
     \Theta + e(L)\right)^{-1}f(L)\end{equation} and $|g(L)|$,
 $|e(L)|$, $|f(L)|$ smaller than some constant $\delta \ll \max\{1,
 |(\bar \Theta)^{-1}|\}.$ Then $L\in \sU \cap H^{k_b}$ if and only if $
  \Upsilon(L)$ has rank $1$. 

  Let $H^{k_{b}}_{\Theta}$ denote the space of maps of rank $1 =
 (b+1) - b$ in $\Hom (\Ker \Theta, \Coker\Theta)$. With
  this,   $\sU \cap H^{k_b} = \sU \cap \Upsilon^{-1}(H^{k_b}_{\Theta})$.
  It is easy to see that the map $\Upsilon: \sU \to \Hom(\Ker \Theta,
  \Coker \Theta)$ is a submersion. Therefore a vector 
  $W\in T_L\Hom(\IR^{n-1}, \IR^n)$ is tangent to $\sU\cap
  H^{k_b}$ if and only if $d\;\!\Upsilon_L(W)$ is tangent to
  $H^{k_{b}}_{\Theta}$. By (\ref{eq:ts}) applied to $\Hom (\Ker
  \Theta, \Coker\Theta)$, we have $d\;\!\Upsilon_L(W)$ is tangent to
  $H^{k_{b}}_{\Theta}$ if and only if $\pi_{\Coker \Upsilon(L)} \circ
  d\;\!\Upsilon_L(W) \circ \iota_{\Ker \Upsilon(L)} = 0$. Hence, for $L \in \sU \cap
  H^{k_b}$
 \begin{equation}\label{eq:W} 
   T_LH^{k_b} = \{W \in \Hom(\IR^{n-1}, \IR^n) 
   \mid  \pi_{\Coker \Upsilon(L)} \circ d\;\!\Upsilon_L(W) \circ \iota_{\Ker \Upsilon(L)} = 0\}.
\end{equation}  

 Let $\Lambda \in T_{\Theta} H^{k_{b+1}}$. By~\eqref{eq:ts}, $\Lambda \in
 \Hom(\IR^{n-1}, \IR^n)$ with $\pi_{\Coker \Theta} \circ \Lambda \circ
 \iota_{\Ker \Theta} =0$.  We want to extend $\Lambda$ to a continuous
 vector field in $\sU\cap H^{k_b}$.    For this, we simply take 
\begin{equation}\label{eq:WL}
   \sW_L: = \Lambda - d\;\!\Upsilon_L (\Lambda).
\end{equation}
Since $d\;\!\Upsilon_L (\Lambda)\in \Hom(\Ker \Theta, \Coker \Theta) $,
formula~\eqref{eq:U} gives $d\;\!\Upsilon_L (d\;\!\Upsilon_L(\Lambda)) = dh_{L}
(d\;\!\Upsilon_L(\Lambda))$.  Since $h(L)$ can be written as $\pi_{\Coker
  \Theta}\circ L\circ \iota_{\Ker \Theta} $, we have $dh_{L}(Y) =
\pi_{\Coker \Theta}\circ Y\circ \iota_{\Ker \Theta}$ for all $Y \in
\Hom(\IR^{n-1}, \IR^n)$. Thus $dh_{L}$ acts as the identity
on $\Hom(\Ker \Theta, \Coker \Theta) $, and hence $d\;\!\Upsilon_L(\sW_L) =
0$. Using~\eqref{eq:W} we conclude $\sW_L \in T_{L}H^{k_b}$.
Moreover, $\sW_L \to \Lambda$ as $L \to \Theta$, showing that the
stratification is Whitney (a)-regular.
\end{proof}

Since our stratification~\eqref{eq:str-H} satisfies the Whitney
(a)-regularity condition, it follows that Thom's transversality
Theorem holds (see Theorem~\ref{thm:Thom}) and thus any section of the
bundle $\Hom(\IR^{n-1}, TM)$ can be perturbed to be transverse to the
stratified subset $\sH$.  We choose such a section $s$ of
$\Hom(\IR^{n-1},TM)$, transverse to $\sH$; it corresponds to an
$(n-1)$-tuple of vector fields on $M$.  Let $\Sigma = \Sigma (s) =
s^{-1}(\sH)$ be the set where these vector fields fail to be linearly
independent. Since $s$ was transverse to $\sH$,
Corollary~\ref{cor:tr-inv} gives that this is a stratified space, with
strata $\Sigma^{k_b}$ (possibly empty) only in codimension $k_b$ :
\begin{equation*}\label{eq:initial_strata}
  \Sigma = \, \union_{b\geq 2} \Sigma^{k_b} \, 
         = \,  \Sigma^2 \cup \Sigma^6 \cup \Sigma^{12} \cup \ldots.
\end{equation*}
By construction, $M \setminus \Sigma$ admits a spin structure. Let
$P_{\Spin} (M \setminus \Sigma)$ be the associated lifting of the
bundle of orthonormal frames, $P_{\SO}(M\setminus \Sigma)$. However, it might be
possible to extend this spin structure over certain connected
components of $\Sigma^2$. In the next Lemma we do exactly this: we
extend the spin structure as much as possible over $\Sigma^2$.
\begin{lemma}\label{lem:set_V}
 There exists $V \subset \Sigma$, a closed stratified space
 \begin{equation}\label{eq:strata}
   V = \, \union_{b \geq 2} V^{k_b} \,
     = \, V^2 \cup V^{6} \cup V^{12} \cup \ldots
 \end{equation}
such that $M\setminus V$ is spin, but the spin structure 
 $P_{\Spin} (M \setminus \Sigma)$ cannot be extended over 
 $\Sigma^2 \setminus V^{2}$. 
\end{lemma}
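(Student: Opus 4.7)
The plan is to construct $V^2$ via the codimension-$2$ spin-extension obstruction and then to close it up by adjoining the entire higher-codimension strata of $\Sigma$.

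First I would observe that $\Sigma^{\geq 6} := \union_{b\geq 3}\Sigma^{k_b}$ is closed in $M$, since it is the complement of the top stratum $\Sigma^2$ inside the closed set $\Sigma$; hence $\Sigma^2$ is a smooth codimension-$2$ submanifold of the open set $M':= M\setminus \Sigma^{\geq 6}$. For each connected component $C$ of $\Sigma^2\subset M'$, a small loop in a normal $2$-disk to $C$ is a meridian, and the holonomy of $P_{\Spin}(M\setminus\Sigma)$ around this meridian takes values in $\Ker(\Spin(n)\to \SO(n))\cong \IZ_2$, producing an invariant $\epsilon_C\in\{\pm 1\}$.

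Next I would set
\begin{equation*}
V^2 := \union_{C \,:\, \epsilon_C = -1} C, \qquad V^{k_b} := \Sigma^{k_b} \text{ for all } b\geq 3, \qquad V := \union_{b\geq 2} V^{k_b}.
\end{equation*}
Then $V$ is closed in $M$: $V^2$ is closed in $M'$ as a union of connected components of $\Sigma^2$, and $\Sigma^{\geq 6}$ is closed in $M$. Each $V^{k_b}$ is a smooth submanifold of codimension $k_b$ (automatic for $k_b\geq 6$ and from the component definition for $k_b=2$), and local finiteness is inherited from $\Sigma$. By standard obstruction theory for lifting $\SO(2)$-bundles to $\Spin(2)$-bundles, $P_{\Spin}(M\setminus \Sigma)$ extends uniquely across each component of $\Sigma^2$ with $\epsilon = +1$. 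Gluing these local extensions with $P_{\Spin}(M\setminus\Sigma)$ yields a spin structure on $M\setminus V = M'\setminus V^2$, so $M\setminus V$ is spin. On the other hand, the spin structure does not extend across any component of $V^2$: any such extension would force the corresponding meridional holonomy to be $+1$, contradicting $\epsilon_C=-1$. In other words, $\Sigma^2\setminus V^2$ is precisely the maximal subset of $\Sigma^2$ across which $P_{\Spin}(M\setminus\Sigma)$ extends, and the extension cannot be pushed any further into $\Sigma^2$.

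The main technical step is the well-definedness and local constancy of $\epsilon_C$. The rank-$2$ normal bundle of $C\subset M'$ need not be globally orientable, but the $\IZ_2$-valued holonomy is insensitive to reversal of the meridian's orientation (since $-1$ is its own inverse in $\Ker(\Spin(n)\to \SO(n))$), so $\epsilon_C$ is well-defined independent of the choice of meridian. Local constancy along $C$ follows from the covering-space description of holonomy transported along paths in $C$, and connectedness of $C$ then gives the constancy needed to define $V^2$ unambiguously as a union of components.
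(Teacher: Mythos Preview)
Your proposal is correct and follows essentially the same approach as the paper: define $V^2$ as the union of the connected components of $\Sigma^2$ on which the meridional holonomy is $-1$, keep all higher-codimension strata unchanged, and extend the spin structure across the remaining $+1$ components. Your write-up is in fact more careful than the paper's on the well-definedness and local constancy of $\epsilon_C$ and on why $V$ is closed.
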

\begin{proof}
  Let $x \in \Sigma^2$. The holonomy of $P_{\Spin} (M \setminus
  \Sigma)$ on infinitesimally small loops in the transverse slice to
  $\Sigma$ at $x$ is either $+1$ or $-1$. If the holonomy is $+1$ then
  it is so for the entire connected component of $\Sigma^2$ in which
  $x$ lies. The spin structure extends over this connected component.

 We take $V^2$ to consist of those connected components of $\Sigma^2$
 around which the holonomy of an infinitesimal loop is $-1$; and then
 define $V^{k_b}: = \Sigma^{k_b}$ for the higher codimension strata.
 The fact that $V$ is closed follows since $\Sigma$ and $\sH$ are
 closed, as locally $\sH$ is given by the vanishing of the
 determinants of $(n-1) \times (n-1)$ minors of a $(n-1)\times n$
 matrix.
\end{proof}
This Lemma allows us to choose a spin structure over $M \setminus
V$ with the property that it does not extend over any component of
$V^2$. We call such a spin structure {\em maximal}. 

Moreover, since $M$, as an asymptotically flat manifold, has a natural
spin structure on the asymptotically flat end, the set $V$ can be
chosen to lie in the compact part of $M$.
\begin{lemma}
 Let $M$ be an asymptotically flat manifold which is not spin. Then 
 the stratified space $V$ can be chosen so that it lies in the compact 
 part $K$ of $M$. The resulting spin bundle is trivial on each end.
\end{lemma}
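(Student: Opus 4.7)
The plan is to exploit the fact that each asymptotically flat end carries a canonical $(n-1)$-tuple of linearly independent vector fields coming from the coordinate chart, so that we can choose the section $s$ of $\Hom(\IR^{n-1},TM)$ used in Subsection~\ref{subsec:construct_V} to land outside the singular stratum $\sH$ on all the ends. Concretely, on each end $M_l$ with chart $Y_l:\IR^n\setminus B_T(0)\to M_l$, set $s_l:=(Y_l)_*(\partial_1,\ldots,\partial_{n-1})$; this is an everywhere-linearly-independent $(n-1)$-frame on $M_l$, so $s_l(M_l)\cap\sH=\emptyset$.

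I would then build a global section $s$ as follows. Fix a smooth cutoff $\chi$ equal to $1$ on a neighborhood of infinity (supported in $\bigcup_l M_l$ away from $K$) and take any smooth extension $\hat s$ of $\chi\cdot(s_1,\ldots,s_L)$ to a section of $\Hom(\IR^{n-1},TM)$ over all of $M$. By construction $\hat s$ agrees with $s_l$ on a neighborhood of infinity in each end. Now apply the stratified Thom transversality theorem (Theorem~\ref{thm:Thom}) to perturb $\hat s$ to a transverse section $s$; since transverse sections are dense and open in $\sC^\infty(M,\Hom(\IR^{n-1},TM))$, and since $\hat s$ is already transverse to $\sH$ on the open neighborhood of infinity where it coincides with the rank-$(n-1)$ sections $s_l$ (it misses $\sH$ entirely there), we may choose the perturbation to be supported in an arbitrarily small enlargement of $K$ inside $M\setminus\{\hat s^{-1}(\sH\text{-neighborhood})\}$. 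Because $\sH$ is closed and $s_l(M_l)$ has positive distance from $\sH$ on the ends, any sufficiently small compactly supported perturbation preserves the property $s(M_l)\cap\sH=\emptyset$. The main technical point to check is precisely this localization of the perturbation, which amounts to applying transversality on a relatively compact open set containing $K$ while keeping $s$ fixed outside — a standard consequence of the openness of the transversality condition in the Whitney topology.

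With such an $s$, the non-maximal rank locus $\Sigma=\Sigma(s)=s^{-1}(\sH)$ is disjoint from every end $M_l$, hence $\Sigma\subseteq K$. Applying Lemma~\ref{lem:set_V} to this $\Sigma$ produces a closed stratified subset $V\subseteq\Sigma\subseteq K$ lying in the compact part of $M$, and yields a maximal spin structure on $M\setminus V$.

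Finally, to see that the spinor bundle $S$ is trivial on each end, observe that on $M_l$ the $(n-1)$-tuple $s_l$ is linearly independent everywhere, so completing it with the oriented unit normal $\nu_l$ in the ambient frame bundle produces a smooth global section of $P_{\SO}(M_l)$ and hence a global trivialization. Pulling back the maximal spin structure along this trivialization exhibits $P_{\Spin}(M_l)$ as a trivial principal $\Spin(n)$-bundle (there is no obstruction since $M_l$ is simply connected for $n\ge 4$, so the lift is uniquely determined once one specifies the sign on a single fiber, and the identification with the standard spin structure on $\IR^n\setminus B_T(0)$ fixes this sign). Consequently $S|_{M_l}=P_{\Spin}(M_l)\times_{\Spin(n)}\Delta_n$ is trivial on each end, as claimed.
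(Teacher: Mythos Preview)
Your proof is correct and follows essentially the same approach as the paper: both use the coordinate vector fields $\partial_1,\ldots,\partial_{n-1}$ on each end to define a rank-$(n-1)$ section there, extend over $K$, and perturb to transversality while keeping $\Sigma\subset K$. Your treatment of the perturbation localization is more explicit than the paper's, and for triviality on the ends the paper invokes $H^1(\IR^n\setminus B_T(0),\IZ_2)=0$ (for $n>2$) to conclude uniqueness of the spin structure, which is equivalent to your simple-connectedness argument.
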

\begin{proof}
 We show that we can choose a generic section $s$ of the bundle 
 $\Hom (\IR^{n-1}, TM)$ so that $\Sigma (s) \subset K$ and closed.
 Choose asymptotically flat coordinates $x$ on each end. Choose a
 section $\bar s := \langle \frac{\p}{\p x^1},\ldots,\frac{\p}{\p
   x^{n-1}}\rangle$ in this chart at infinity, and then extend $\bar
 s$ to a section $s$ of $\Hom (\IR^{n-1}, TM)$, transverse to $\sH$.
 By construction, $\Sigma (s) \subset K$.

The distinct spin structures on each end are parametrized by
$H^1(\IR^n\setminus B_T(0),\IZ_2) = 0$, for $n>2$. Hence the spin
bundle must agree with the trivial spin bundle on each end.
\end{proof}
\subsection{A remark about higher codimension}\label{sec:hcodim}
We saw in Lemma~\ref{lem:set_V} that the spin structure can be
extended over those connected components of $\Sigma^2$ where the
holonomy of $P_{\spin}(M\setminus \Sigma)$ is $+1$. We show now that
the spin structure can be extended over any of the higher codimension
components of $\Sigma^{k_b}$ with $b>2$ which is a smooth closed
submanifold.  A similar result was obtained in the PhD thesis of
Baldwin~\cite[Section 2.1.2]{baldwin}. 

Presumably, it should also be possible to extend the spin structure
over any of the connected components of $V$ that do not intersect
$V^2$. In particular, if $V^2=\emptyset$ then we expect that $M$ is
spin.  If this is not, in fact the case, then our construction of
Witten spinors and our attendant regularity results show that the
positive mass theorem holds for such  nonspin manifolds with 'small'
singular set $V$.
\begin{proposition}
  Let $X$ be an $n$-dimensional smooth orientable manifold. Let $Y$
  be a smooth connected closed submanifold of codimension $k$. Assume that
  $X\setminus Y$ is spin.  If $k\geq 3$, then $X$ is spin.
\end{proposition}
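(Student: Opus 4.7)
The plan is to show $w_2(X)=0$ using the long exact sequence of the pair $(X, X\setminus Y)$ in $\mathbb{Z}_2$-cohomology, together with the Thom isomorphism. Since the tangent bundle $TX$ restricts to $T(X\setminus Y)$ on the complement, and $X\setminus Y$ is spin by hypothesis, we have $w_2(X)|_{X\setminus Y} = w_2(X\setminus Y) = 0$.

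First, I would write down the relevant segment of the long exact sequence of the pair $(X,X\setminus Y)$ with $\mathbb{Z}_2$ coefficients,
\begin{equation*}
H^2(X,X\setminus Y;\mathbb{Z}_2) \longrightarrow H^2(X;\mathbb{Z}_2) \overset{j^*}{\longrightarrow} H^2(X\setminus Y;\mathbb{Z}_2).
\end{equation*}
Since $j^*w_2(X)=0$, the class $w_2(X)$ lies in the image of $H^2(X,X\setminus Y;\mathbb{Z}_2)$. It therefore suffices to show that this relative group vanishes.

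Next, let $\nu(Y)$ denote an open tubular neighborhood of $Y$ in $X$, which exists since $Y$ is a smooth closed submanifold. By excision,
\begin{equation*}
 H^2(X,X\setminus Y;\mathbb{Z}_2) \cong H^2(\nu(Y),\nu(Y)\setminus Y;\mathbb{Z}_2).
\end{equation*}
The pair on the right is homotopy equivalent to the Thom space pair of the normal bundle of $Y$ in $X$, which is a rank-$k$ real vector bundle. Since we are working with $\mathbb{Z}_2$ coefficients, every real vector bundle is orientable, and the Thom isomorphism yields
\begin{equation*}
 H^2(\nu(Y),\nu(Y)\setminus Y;\mathbb{Z}_2) \cong H^{2-k}(Y;\mathbb{Z}_2).
\end{equation*}

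Finally, because $k\geq 3$, we have $2-k\leq -1<0$, so $H^{2-k}(Y;\mathbb{Z}_2)=0$. Running the preceding identifications backward, the relative group $H^2(X,X\setminus Y;\mathbb{Z}_2)$ vanishes, and hence $w_2(X)=0$, so $X$ is spin. The only place where something nontrivial is being used is the Thom isomorphism with $\mathbb{Z}_2$ coefficients, and since no orientability of the normal bundle is required in that setting, there is no genuine obstacle — the proof is essentially a dimension count on the Thom space. The hypothesis $k\geq 3$ is sharp, since for $k=2$ one gets $H^0(Y;\mathbb{Z}_2)=\mathbb{Z}_2$, which is exactly the holonomy obstruction that was used to define $V^2$ in Lemma~\ref{lem:set_V}.
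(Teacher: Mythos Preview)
Your proof is correct and takes a genuinely cleaner route than the paper. The paper argues via the Gysin sequence for the normal sphere bundle $S(Y)$ together with Mayer--Vietoris for the cover $\{D(Y), X\setminus Y\}$: it first uses the Gysin sequence to see that $p^*\colon H^i(Y;\IZ_2)\to H^i(S(Y);\IZ_2)$ is an isomorphism for $i=1$ and injective for $i=2$ (this is where $k\geq 3$ enters), deduces that the connecting map $\delta$ in Mayer--Vietoris vanishes, and then must separately check that the image of $w_2(X)$ in $H^2(D(Y);\IZ_2)$ is zero, using injectivity of $p^*$ and the fact that $S(Y)$ bounds the spin manifold $X\setminus D(Y)$.

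Your argument bypasses all of this by working directly with the long exact sequence of the pair and the $\IZ_2$-Thom isomorphism, which kills $H^2(X,X\setminus Y;\IZ_2)\cong H^{2-k}(Y;\IZ_2)$ outright for $k\geq 3$. This shows in one stroke that $H^2(X;\IZ_2)\to H^2(X\setminus Y;\IZ_2)$ is injective, so there is no need to analyze the $H^2(D(Y))$ factor or invoke that $TX|_{S(Y)}$ is spin. The underlying topological input is the same (the Gysin sequence is built from the Thom isomorphism), but your packaging is more economical. Your closing remark on $k=2$ is also apt: $H^0(Y;\IZ_2)\cong\IZ_2$ is precisely the receptacle for the normal holonomy obstruction.
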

\begin{proof}
 Choose a Riemannian metric on $X$, and let $  p: D(Y)\to Y$ be the  normal  disk bundle            
 to $Y$ and $S(Y)$  the corresponding sphere bundle. We identify
 $D(Y)$ with a tubular neighborhood of $Y$ with boundary $S(Y)$. 
 Let $w_{k}(N)$ denote the $k$th Stiefel-Whitney class of the normal
 bundle $NY$ to $Y$ in $X$. Consider the Gysin sequence with mod 2
 coefficients for the unoriented sphere bundle $S(Y)$ (see~\cite[p.144]{MS}):
 \begin{equation}\label{eq:gysin}
  \xymatrix{
    \cdots \ar[r]& H^{i-k}(Y,\IZ_2)
           \ar[r]^-{\cup w_k(N)}& H^{i}(Y,\IZ_2)
           \ar[r]^-{p^*}& H^i(S(Y), \IZ_2)
           \ar[r]& H^{i+1-k}(Y, \IZ_2) 
           \ar[r]& \ldots .
  }
 \end{equation}
 Since $k\geq 3$, we see that for $i=2$, the map $p^*:H^{2}(Y,\IZ_2) \rightarrow H^{2}(S(Y),\IZ_2)$ is an injection, while for $i=1$,
 $p^*:H^{1}(Y,\IZ_2) \rightarrow H^{1}(S(Y),\IZ_2)$ is an isomorphism.

 Next, we consider the Mayer-Vietoris sequence
\begin{equation*}\label{eq:mv}
  \xymatrix{
   \cdots \ar[r]& H^1(X,\IZ_2) 
          \ar[r]& H^{1}(D(Y),\IZ_2)\oplus H^{1}(X\setminus Y,\IZ_2)
          \ar[r]& H^{1}(S(Y),\IZ_2) \ar[r]^-{\delta}&\\
          \ar[r]^-{\delta}& H^2(X,\IZ_2)
          \ar[r]& H^{2}(D(Y),\IZ_2)\oplus H^{2}(X\setminus Y,\IZ_2)
          \ar[r]& H^{2}(S(Y),\IZ_2) 
          \ar[r]& \ldots
 }
\end{equation*}
In this sequence, upon identifying $H^{i}(D(Y), \IZ_2) \equiv
H^i(Y,\IZ_2)$, the map $H^i(D(Y),\IZ_2) \to H^i(S(Y), \IZ_2)$ is the
same as the map $p^*$ in the Gysin
sequence~\eqref{eq:gysin}. Hence the map $H^{1}(D(Y),\IZ_2)
\rightarrow H^{1}(S(Y),\IZ_2)$ is surjective, and the map $
H^{2}(D(Y),\IZ_2)\rightarrow H^{2}(S(Y),\IZ_2)$ is injective. This
also implies that the boundary map $\delta$ is identically zero.

By the naturality of the Stiefel-Whitney classes, the image of $w_2(X)$ in $H^2(X\setminus Y, \IZ_2)$ is
$w_2(TX\lvert_{X\setminus Y})$. This vanishes since $X\setminus Y$ is
spin.  On the other hand, the image of $w_2(X)$ in $H^2(D(Y), \IZ_2)$
is zero by the aforementioned injectivity and the fact that
$TX\lvert_{S(Y)}$ is spin, as $S(Y)$ is the boundary of the spin
manifold $X\setminus D(Y)$. Hence $w_2(X) =0$, and thus $X$ is spin.
\end{proof}
\begin{remark}
  Note that in codimension $k =2$ this proof breaks at two stages:
  First of all, for $i=1$ in the Gysin sequence~\eqref{eq:gysin}, the
  map $p^*:H^1(Y,\IZ_2) \to H^1(S(Y),\IZ_2)$ is not necessarily
  surjective, and thus we cannot conclude that $\delta =0$ in the
  Mayer-Vietoris sequence. Secondly, since for $i=2$ the map $p^*:
  H^2(Y,\IZ_2) \to H^2(S(Y), \IZ_2)$ is not injective, we
  cannot conclude that the image of $w_2(X)$ in $H^2(D(Y), \IZ_2)$
  vanishes.
\end{remark}

\subsection{The conical structure of the singularities of $V$}
The singular structure of the set $V$ defined in Lemma~\ref{lem:set_V}
is easily deduced from the geometry of the subset $\sH$ of
$\Hom(\IR^{n-1}, TM)$. Each stratum $\sH^{k_b}$ lies in the closure of
each of the higher dimensional strata (the strata $\sH^{k_a}$ with $a
< b$). For $(x, T) \in \sH^{k_b}$, the normal bundle to this stratum
within $\Hom (\IR^{n-1}, TM)$ can be identified with the stratified
space of maps in $\Hom (\Ker T, \Coker T)$. The elements which are not
of maximal rank can be identified with a subcone of the normal
bundle. Elements in this subcone exponentiate to $\sH$. A choice of
coordinates in a neighborhood $U$ of $x$ allows us to locally
trivialize these structures and to identify, via the exponential map,
a neighborhood of $(x,T)$ in $\sH$ with the product of a neighborhood
of $(x,T)$ in $\sH^{k_b}$ and a small cone of non-maximal rank elements
in $\Hom (\IR^{b-1}, \IR^{b})$.  The cone can be realized as a cone
over a subvariety of non-maximal rank elements in a small sphere in
$\Hom (\IR^{b-1}, \IR^{b})$. This subvariety is again a stratified
space. Therefore, the point $(x, T)$ has a neighborhood which is a
product of a manifold and a cone over the stratified space $S^{k_b -1}
\cap \sH$. Inducting both on the dimension of the manifold and the
dimension of the strata, we see that $\sH$ is  locally 
quasi-isometric to an iterated cone. This is the familiar cone over
cone topological structure of singularities of projective varieties
arising here as a geometric structure.
This geometric cone structure is preserved under pull-back by
transversal maps, and thus inherited by $V$.

We collect the discussion so far in the following theorem.
\begin{theorem}\label{thm:V}
  Let $(M,g)$ be an oriented asymptotically flat Riemannian manifold which is
  nonspin. Then there exists a closed stratified subset $V$,
   locally  quasi-isometric to an iterated cone and lying in
  the compact part $K$ of $M$, so that the spin structure on
  $M\setminus V$ is maximal, in the sense that it does not extend over
  any of the codimension $2$ strata of $V$.  The strata of $V$,
 $$
  V = V^{k_2} \cup  V^{k_3} \cup  \ldots V^{k_{d-1}} \cup V^{k_d},
 $$
 have codimensions $k_b= b(b-1)$ in $M$. Moreover, the maximal spin
 structure on $M\setminus V$ is trivial on each asymptotically flat
 end of $M$.
\end{theorem}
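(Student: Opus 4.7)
The theorem is essentially an aggregation of the lemmas already developed in this section, so the plan is to assemble them in the correct order and verify that each stated property of $V$ indeed follows from the relevant constituent result.

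First, I would fix asymptotically flat coordinates on each end of $M$ and define the frame $\bar s = (\partial/\partial x^1,\ldots,\partial/\partial x^{n-1})$ on those ends. Extending $\bar s$ to a global section $s$ of the bundle $\Hom(\IR^{n-1},TM)$ and then perturbing $s$ in the interior (keeping it fixed at infinity), I would invoke the stratified Thom transversality result (Theorem~\ref{thm:Thom}) to arrange $s \pitchfork \sH$. This is legitimate because Lemma~\ref{lem:str-H} identifies $\sH = \bigcup_{b\ge 2}\sH^{k_b}$ as a locally finite stratification with codimensions $k_b = b(b-1)$, and the subsequent lemma shows this stratification is Whitney (a)-regular, which is precisely the hypothesis needed for Theorem~\ref{thm:Thom}.

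Next, I would set $\Sigma := s^{-1}(\sH)$. By Corollary~\ref{cor:tr-inv}, $\Sigma$ inherits a locally finite stratification $\Sigma = \bigcup_b \Sigma^{k_b}$ with $\Sigma^{k_b} = s^{-1}(\sH^{k_b})$ of codimension $k_b = b(b-1)$ in $M$. Because $s$ is the frame coming from the asymptotic coordinates on each end, $\Sigma$ is disjoint from the ends and hence is contained in the compact set $K$ (this is Lemma~\ref{lem:set_V}'s companion result). On the complement $M\setminus\Sigma$ the chosen $(n-1)$-frame trivializes the obstruction to the existence of $(n-1)$ linearly independent vector fields, so $w_2$ vanishes there and a spin structure $P_{\Spin}(M\setminus\Sigma)$ exists. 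I would then apply Lemma~\ref{lem:set_V} to let $V^2$ consist of those connected components of $\Sigma^2$ around which the $P_{\Spin}(M\setminus\Sigma)$-holonomy of an infinitesimal transverse loop is $-1$, and set $V^{k_b} := \Sigma^{k_b}$ for $b\ge 3$. By the construction, the resulting spin structure on $M\setminus V$ is maximal, and $V$ remains closed because the defining condition (non-maximal rank of $s$) is closed.

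For the iterated-cone geometric structure, I would invoke the local description of $\sH$ developed in Subsection~2.4: near any point $(x,T)\in\sH^{k_b}$ the subvariety $\sH$ looks like the product of a neighborhood of $(x,T)$ in $\sH^{k_b}$ with the cone over $S^{k_b-1}\cap\sH$, and an induction on both the dimension and the stratum depth shows that $\sH$ is locally quasi-isometric to an iterated cone. Transversality of $s$ then pulls this local model back diffeomorphically to $V$, preserving the quasi-isometric iterated-cone structure. Finally, for triviality of the spin structure on each end $M_l \cong \IR^n\setminus B_T(0)$, I would note that $H^1(\IR^n\setminus B_T(0),\IZ_2)=0$ for $n\geq 4$, so there is a unique spin structure on each end and it must coincide with the trivial one induced from the Euclidean frame we used to build $s$.

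The only nontrivial pieces in this assembly are already done: Whitney (a)-regularity (which powers transversality), and the extension of $P_{\Spin}$ exactly across the $+1$-holonomy components of $\Sigma^2$. The one point that requires a bit of care is to verify that the perturbation producing $s\pitchfork\sH$ can be carried out relative to the fixed Euclidean frame on the ends; this is a standard application of relative transversality, enabled once again by the open-dense conclusion of Theorem~\ref{thm:Thom}. I expect no substantive obstacle beyond bookkeeping.
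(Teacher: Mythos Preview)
Your proposal is correct and follows essentially the same approach as the paper: Theorem~\ref{thm:V} is explicitly presented there as a summary of the preceding discussion (``We collect the discussion so far in the following theorem''), and you have correctly identified and ordered all the constituent ingredients---Lemma~\ref{lem:str-H} and the Whitney (a)-regularity lemma feeding into Theorem~\ref{thm:Thom} and Corollary~\ref{cor:tr-inv}, then Lemma~\ref{lem:set_V} for maximality, the subsequent lemma for $V\subset K$ and triviality on the ends, and the conical-structure subsection for the iterated-cone geometry. Your remark about needing relative transversality (perturbing $s$ while fixing the Euclidean frame at infinity) is in fact slightly more careful than the paper's own treatment, which simply asserts the extension can be made transverse.
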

For the remainder of the paper we fix such a $V$ and the associated
maximal spin structure. We denote by $S$ the corresponding spinor
bundle.

\subsection{The geometric structure near $V$}\label{ssec:gV}
To simplify our analysis near $V,$ we introduce a special
set of tubular neighborhoods adapted to the geometry of the stratified set $V$.
 \begin{remark}
   Here we assume that all $V^{k_b}$ are nonempty, $2\leq b\leq d$.
   This is the general case for $M$ and the choice of transversal
   section $s \in \Hom (\IR^{n-1}, TM)$ to $\sH$. However, there could
   be choices of $M$ and $s$ for which the strata of $\sH$ might give
   empty strata of $V$. This affects the following discussion only
   notationally.
 \end{remark}
We employ the convention that all tubular neighborhoods are geodesic,
have constant radius, and do not intersect the higher codimension
strata of $V$.  Unless $V^{k_b}$ is a closed stratum, there
are no tubular neighborhoods over the whole stratum. Hence, we must
restrict attention to tubular neighborhoods over relatively compact
subsets $Y$ of $V^{k_b}$. The larger we take $Y$, the smaller we
must take the radius of the tube.

Consider $V^{k_b}$, one of the strata of $V$.
Let $T^{k_b}$ be a rotationally symmetric neighborhood
of the zero section of the normal bundle $N^{k_b}$ of $V^{k_b}$
in $M$ on which the exponential map is a diffeomorphism. 
Let $\sW^{k_b}: = \exp(T^{k_b})$. On $\sW^{k_b}$ we define a normal distance function 
$r_b$, by setting
\begin{equation}\label{eq:rb-0}
  r_b(\exp_x(v)):= |v|
\end{equation}
for each $x \in V^{k_b}$ and  $v\in N^{k_b}_x\cap T^{k_b}$.

Given any relatively compact subset $ Y\subset V^{k_b}$, there exists
$0<\e(Y)<\frac{1}{2}$ so that the tubular neighborhood of any
radius $\e<\e(Y)$ over $Y$ is contained in $\sW^{k_b}$
and does not intersect $\overline{V^{k_{b+1}}}$. Denote this
neighborhood $B_{\epsilon}(Y)$.  

Set
\begin{equation}\label{eq:TRC}
\TRC(V^{k_b}): = \{B_{\e}(Y) \subset \sW^{k_b} \mid Y \text{ is a
  relatively compact subset of }V^{k_b} \text{ and }\e<\e(Y) \}.
\end{equation}
Note that we have
$W \cap \overline{V^{k_{b+1}}} = \emptyset$ for all $W\in \TRC(V^{k_b})$.
\begin{remark}\label{rem:rb}
On each $W\in \TRC(V^{k_b})$ we have a well-defined
normal distance function $r_b$. Our choice $\epsilon(Y)<\frac{1}{2}$ implies that $r_b$ is always less than $\frac{1}{2}$.
\end{remark}
We now study the metric on these tubular
neighborhoods. The discussion is similar to the discussion
in~\cite[Section 2]{gray}. Consider $W = B_{\e}(Y)$, with $(Y,\phi)$ a
coordinate neighborhood in $V^{k_b}$ with coordinate functions $\phi (y) =
(y^1,\ldots y^{n-k_b})$. Also choose a trivialization of the normal bundle
$N^{k_b}$ on $Y$, with  $\{n_{\alpha}\}_{\alpha=1}^{k_b}$ an orthonormal
frame on $N^{k_b}\lvert_{Y}$. Using this, we have the following
coordinates $(y^i, t^{\alpha})$ on $W$:
\[
  \Psi:\phi(Y)\times B_{\epsilon}(0)\subset
  \IR^{n-k_b}\times\IR^{k_b}
  \to  W \subset M,
\]
with
\begin{equation}\label{normal}
 \Psi(y^i,t^{\alpha}) = \exp_{y}(\sum_{\alpha=1}^{k_b}t^{\alpha} \, n_{\alpha}(y)).
\end{equation}
Using these coordinates, we write the metric near $Y$ as
$$
  g = g_{ij}dy^idy^j + g_{\alpha\beta}dt^{\alpha}dt^{\beta} 
    + g_{i\alpha}dy^i\circ dt^{\alpha},
$$
where 
\begin{equation}\label{eq:ov}
   g_{i\alpha} = \sO(|t|) 
  \quad \text{and} \quad
  g_{\alpha\beta} - \delta_{\alpha\beta} = \sO(|t|^2).
\end{equation}
Moreover, the distance function $r_b$ defined in~\eqref{eq:rb-0} is $r_b
= |t|$.  With this, we define the {\em radial vector} of this tubular
neighborhood
\begin{equation}\label{eq:rb-1}
\frac{\p}{\p r_b}:=
\sum_{\alpha=1}^{k_b}\frac{t^\alpha}{|t|}\frac{\partial}{\partial  t^{\alpha}}.
\end{equation}
It satisfies $\nabla_{\frac{\partial}{\partial r_b}} \frac{\partial}{\partial r_b}
= 0$, and
\begin{equation}\label{drb}
 |dr_b|^2 = 1.
\end{equation}
It also follows that
$$ \frac{\partial}{\partial r_b}\langle \frac{\partial}{\partial y^i},  \frac{\partial}{\partial r_b}\rangle = \langle \nabla_{ \frac{\partial}{\partial r_b}}\frac{\partial}{\partial y^i},  \frac{\partial}{\partial r_b}\rangle = -\langle \nabla_{\frac{\partial}{\partial y^i}} \frac{\partial}{\partial r_b},  \frac{\partial}{\partial r_b}\rangle = 0,$$
and hence 
\begin{equation}\label{eq:rp}
\langle \frac{\partial}{\partial y^i},  \frac{\partial}{\partial
  r_b}\rangle = 0,
\end{equation}
and
$$\sum_{\alpha} t^{\alpha}g_{i\alpha} = 0.$$

\subsection{Behavior of the spin structure near the codimension $2$
  stratum of $V$}\label{ssec:near2}

Let $W \in \TRC(V^2)$.  From~\eqref{eq:TRC}, it follows that there
exists $Y \subset V^{2}$ relatively compact so that $W= B_{\e}(Y)$ for
some $\e<\e(Y)$, and that $W$ does not intersect $\overline{V^{6}}$,
the higher codimension strata of $V$. We denote by $r: = r_2$, the
normal distance function to $V^2$. Let $(r,\theta)$ be the polar
coordinates in each normal disk $D_y \subset W$ to $y\in Y$.

We choose $W$ to be contractible. It admits a trivial spin
structure, with spinor bundle $S_0$. The two spinor bundles
$S\lvert_{W\setminus Y}$ and $S_0$ become trivial when lifted to a
connected double cover of $W\setminus Y$. We use this lifting to
identify sections of $S$ with multivalued sections of $S_0$. In
particular, the maximality of the spin structure near $V$ implies the
following lemma.
\begin{lemma}\label{hol}
  Let $W= B_{\e}(Y)\in \TRC(V^2)$ be contractible. Then with respect
  to the above identification on the double cover of $W\setminus Y$,
  each spinor $\psi$ on $W\setminus Y$ satisfies
 \begin{equation}\label{eq:hol}
   \psi(y, r, \theta + 2\pi) \, = \, - \psi(y, r, \theta),
 \end{equation}
 on each disk $D_y \subset W$, the normal disk to $y \in Y$.
 Therefore the Fourier decomposition of $\psi$ in this transverse disk
 has the form
 \begin{equation}\label{eq:fourier_modes}
   \psi (y, r, \theta) 
  = \sum_{k \in \, \frac{1}{2} + \IZ } \psi^{k}(y,r) \, e^{i k \theta}.
 \end{equation}
\end{lemma}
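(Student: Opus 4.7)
The plan is to deduce the antiperiodicity \eqref{eq:hol} directly from the maximality of the chosen spin structure (Lemma~\ref{lem:set_V}) and then read off the Fourier decomposition by elementary means.

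First I would exploit the fact that $W$ is contractible. Since $H^2(W,\IZ_2)=0$, the frame bundle over $W$ admits a trivial spin structure; denote its spinor bundle by $S_0$. On $W\setminus Y$ we then have two spin structures on the same Riemannian manifold, namely $S|_{W\setminus Y}$ and $S_0|_{W\setminus Y}$. Because $W$ deformation retracts onto $Y$, and $Y$ is contractible (being itself a deformation retract of the contractible $W$), the complement $W\setminus Y$ is homotopy equivalent to $S^1$, so $\pi_1(W\setminus Y)\cong\IZ$ is generated by the loop $\theta\mapsto(y,r_0,\theta)$ in any normal disk $D_y$, and $H^1(W\setminus Y,\IZ_2)\cong\IZ_2$. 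The set of spin structures on $W\setminus Y$ is a torsor over this group, so $S|_{W\setminus Y}$ and $S_0|_{W\setminus Y}$ either coincide or differ by the unique nontrivial class.

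Next, by the maximality asserted in Lemma~\ref{lem:set_V}, the spin-connection holonomy on $S$ around the small normal loop is $-1$, whereas $S_0$ extends across $Y$ on the simply connected $W$ and therefore has holonomy $+1$ around the same loop. Hence $S$ and $S_0$ differ by the nontrivial $\IZ_2$-class on $W\setminus Y$. Let $\pi\colon\widetilde W\to W\setminus Y$ denote the connected double cover classified by this class. On $\widetilde W$ both $\pi^*S$ and $\pi^*S_0$ become isomorphic spin structures (the obstruction is killed by pullback), and I would fix an isomorphism $\pi^*S\cong\pi^*S_0$ intertwining the Levi-Civita lifts.

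Then any $\psi\in\Gamma(S|_{W\setminus Y})$ pulls back to $\tilde\psi\in\Gamma(\pi^*S)$, which under the chosen identification becomes a section of $\pi^*S_0$; since $S_0$ is defined on all of $W$, this is equivalent to a multivalued section of $S_0$ on $W\setminus Y$. The nontrivial deck transformation $\sigma$ of the cover corresponds to traversing the generating loop $\theta\mapsto\theta+2\pi$ once, and comparing the two trivializations along this loop picks up precisely the relative holonomy of $S$ with respect to $S_0$, which is $-1$. Hence $\sigma^*\tilde\psi=-\tilde\psi$, which is exactly \eqref{eq:hol}. A Fourier expansion of $\psi$ in $\theta$ on each disk $D_y$, combined with \eqref{eq:hol}, kills all integer modes and leaves the half-integer decomposition \eqref{eq:fourier_modes}.

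The only subtlety I anticipate is the canonicity of the identification $\pi^*S\cong\pi^*S_0$; any two choices differ by an overall global sign, which is absorbed into the normalization of $\psi$ and so does not affect the antiperiodicity relation. Beyond that, the argument is purely topological and local, and the non-flatness of the metric on $W$ plays no role.
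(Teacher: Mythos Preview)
Your argument is correct and follows essentially the same approach as the paper's proof: the antiperiodicity comes from the $-1$ holonomy guaranteed by maximality (Lemma~\ref{lem:set_V}), and the half-integer Fourier decomposition is then immediate. The paper's proof is terser---it simply asserts that the first claim follows from the holonomy being $-1$ and then writes out the Fourier coefficients explicitly in a frame $\{f^a\}$ of $S_0$---whereas you supply more of the underlying topology (the $H^1(W\setminus Y,\IZ_2)$ torsor structure, the explicit double cover, the deck-transformation comparison); conversely, the paper is more explicit about the Fourier step, defining $\psi^k(y,r)$ componentwise, which matters for the next lemma on change of trivialization.
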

\begin{proof}
 The first claim follows from the fact that the holonomy of the spin connection is $-1$.

 Let now $\{f^a\}$ be a frame giving a trivialization of the trivial
 spinor bundle $S_0$ over $W$. In this frame
 \[
  \psi(y,r,\theta) = \sum_{a} \psi_{a}(y,r,\theta) f^a 
  =\sum_a \sum_{k \in \frac{1}{2}+\IZ}
   \psi^k_a(y,r) e^{ik\theta} f^a,
 \]
 with $\psi^k_a(y,r): = \frac{1}{2\pi} \int_0^{2\pi} \psi_a(y,r,s) e^{-i k s} ds$.
 Defining 
 \[
  \psi^{k}(y, r): = \sum_{a} \psi^k_a(y,r) f^a
 \]
 we obtain~\eqref{eq:fourier_modes}.
\end{proof}
 The individual Fourier components $\psi^k (y,r)$ in the above
 expansion are dependent on the choice of the trivialization of $S_0$ and 
 so are not globally defined.  
 The next lemma gives the dependence of this expansion on the
 trivialization.
\begin{lemma}\label{arbfour}
 Let $W=B_{\e}(Y)\in\TRC(V^2)$ be contractible. Let $\psi$ be a spinor on
 $W\setminus Y$, and let 
 $\psi^k (y,r)$ and $\hat\psi^k (y ,r)$ be the Fourier components of $\psi$  with respect to two different
 trivializations of $S_0$.
 Then 
  $$
    \|\psi^k -\hat{\psi}^k\|_{L^2(W\setminus
      Y,S\lvert_{W\setminus Y})}\leq C\|r
    \psi\|_{L^2(W\setminus Y, S\lvert_{W\setminus Y})}.
  $$
  Here the constant $C>0$ is independent of $\psi$ but does depend essentially on the two
  trivializations.
\end{lemma}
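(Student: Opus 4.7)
The plan is to exploit that any two trivializations of the trivial spinor bundle $S_0$ on the contractible neighborhood $W$ differ by a smooth gauge transformation, and that this transformation is essentially $\theta$-independent along $Y$ because the normal disks over $Y$ degenerate to points at $r=0$.

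Concretely, let $B\colon W\to GL(\IC^N)$ be the smooth transition matrix relating the two frames $\{f^a\}$ and $\{\hat f^a\}$, so that the coefficient vectors of $\psi$ satisfy $\hat\psi_a(y,r,\theta) = \sum_b B^b_a(y,r,\theta)\psi_b(y,r,\theta)$. I would express both $\psi^k$ and $\hat\psi^k$ as sections of $S\lvert_{W\setminus Y}$ in the single (first) trivialization. A direct computation using $\psi^k_b(y,r) = \frac{1}{2\pi}\int_0^{2\pi}\psi_b(y,r,s)e^{-iks}\,ds$ and its analogue for $\hat\psi^k$ gives
\begin{equation*}
(\psi^k - \hat\psi^k)(y,r,\theta) = \frac{1}{2\pi}\int_0^{2\pi}\bigl[(B(y,r,\theta) - B(y,r,s))B^{-1}(y,r,\theta)\bigr]^{\!T}\psi(y,r,s)\,e^{-iks}\,ds.
\end{equation*}

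The crucial geometric point is that at $r=0$ all angles $\theta$ describe the same point $y\in Y$, so smoothness of $B$ on $W$ forces $B(y,0,\theta)$ to be $\theta$-independent; a Taylor expansion in Cartesian normal coordinates around $Y$ therefore gives $B(y,r,\theta) - B(y,r,s) = O(r)$ uniformly on any relatively compact piece of $W$. Combined with the uniform boundedness of $B^{-1}$ there, this makes the bracket in the integrand pointwise $O(r)$, and Cauchy--Schwarz in the $s$-integral yields
\begin{equation*}
|\psi^k - \hat\psi^k|^2(y,r,\theta) \leq C r^2 \int_0^{2\pi}|\psi(y,r,s)|^2\,ds.
\end{equation*}
Integrating against the polar volume element on $W\setminus Y$ (which contributes an extra factor of $r$) and performing the trivial $\theta$-integration then produces
\begin{equation*}
\|\psi^k - \hat\psi^k\|_{L^2(W\setminus Y,S\lvert_{W\setminus Y})}^2 \leq C'\int_{W\setminus Y} r^2|\psi|^2\,dV = C'\|r\psi\|_{L^2(W\setminus Y,S\lvert_{W\setminus Y})}^2.
\end{equation*}

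The hard part will be mostly bookkeeping: tracking which trivialization each object lives in, converting $\hat\psi^k$ back to the first trivialization in its $\theta$-dependent way, and correctly extracting the matrix difference $B(y,r,\theta)-B(y,r,s)$. Once this identity is in place, the $O(r)$ decay is built into the geometry at $Y$, and the final estimate is immediate from Cauchy--Schwarz.
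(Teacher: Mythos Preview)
Your proposal is correct and follows essentially the same approach as the paper's proof: both compute the difference $\psi^k-\hat\psi^k$ in a single frame, reduce it to an integral involving a matrix factor of the form (transition matrix at one point minus transition matrix at another), observe that smoothness of the transition matrix on $W$ forces this factor to be $O(r)$, and then finish with Cauchy--Schwarz in the $s$-integral followed by integration over $W\setminus Y$. The only cosmetic difference is that the paper splits the matrix difference via the value at $(y,0,0)$, writing $q^{-1}(y,r,s)q(y,r,\theta)-I$ as a sum of two pieces each $O(r)$, whereas you work directly with $B(y,r,\theta)-B(y,r,s)$; your route is if anything slightly cleaner.
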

\begin{proof}
 We continue with the set-up in the proof of Lemma~\ref{hol}.
 Consider another frame $\{h^b\}$ giving a trivialization of the
 spinor bundle $S_0$ over $W$.
 In this other trivialization, we have
 \[
  \psi(y,r,\theta) = \sum_{k\in \frac{1}{2} + \IZ} \hat{\psi}^k (y,r) e^{ik\theta},
 \]
 with 
 \[
   \hat{\psi}^k(y,r): = \sum_b \hat{\psi}_ b^k(y,r) h^b
                     = \frac{1}{2\pi} \sum_b 
                      \left(
                        \int_{0}^{2\pi} \hat{\psi}_b(y,r,s)e^{-iks} ds
                      \right)h^b
 \]
 There exists a matrix valued map $q$ on $W$, so that the two frames
 $\{f^a\}$ and $\{h^b\}$ are related via $h^b = \sum_{a} q^{b}_a f^a$.
 With this, the above becomes
 \begin{align*}
   \hat{\psi}^k(y,r)
   &= \frac{1}{2\pi} \sum_{a,b} 
                      \left(
                        \int_{0}^{2\pi} \hat{\psi}_b(y,r,s)\ e^{-iks} ds
                      \right) q^{b}_a(y,r,\theta) f^a\\
   &= \frac{1}{2\pi} \sum_{a,b,c} 
                      \left(
                        \int_{0}^{2\pi} \psi_c(y,r,s)
                        \left(q^{-1}(y,r,s)\right)^c_b\ e^{-iks} ds
                      \right) q^{b}_a(y,r,\theta) f^a\\
   &= \frac{1}{2\pi} \sum_{a} 
                      \left(
                        \int_{0}^{2\pi} \psi_a(y,r,s) e^{-iks} ds
                      \right) f^a\\
  & \qquad \quad + \frac{1}{2\pi} \sum_{a,b,c} 
                      \left(
                        \int_{0}^{2\pi} \psi_c(y,r,s) 
                         \left(
                           \left(q^{-1}(y,r,s)\right)^{c}_bq(y,r,\theta)^b_a 
                             - \delta_a^c
                          \right)\ e^{-iks}ds
                      \right) f^a.
 \end{align*}
Thus, the last term above is exactly $\hat \psi^k(y,r) -
\psi^k(y,r)$. We rewrite it as
\begin{multline*}
  \hat{\psi}^k(y,r)- \psi^k(y,r) 
  =  \frac{1}{2\pi}  \sum_{a,b,c} 
      \left( \int_{0}^{2\pi} \psi_c(y,r,s) 
         \left(q^{-1}(y,r,s)-q^{-1}(y,0,0)\right)^{c}_b\ 
           q(y,r,\theta)^b_a  e^{-iks}ds
      \right)f^a\\
  + \frac{1}{2\pi} \sum_{a,b,c}
     \left(
       \int_{0}^{2\pi} \psi_c(y,r,s) q^{-1}(y,0,0)^{c}_b\       
             \left(q(y,r,\theta)- q(y,0,0)\right)^b_a e^{-iks}ds
      \right)f^a
\end{multline*}
Using the Taylor expansion for the components of $q(y,r,\theta)$ and
$q^{-1}(y,r,\theta)$ around $(y,0,0)$, it follows that both terms on
the right-hand side are of the form $\sO(r \int_{0}^{2\pi}
|\psi(y,r,\theta)|d\theta)$.  We then have
\[
  \|\psi^k(y,r) - \hat{\psi}^k(y,r)\|^2_{L^2(W\setminus Y,    S\lvert_{W\setminus Y})}
  = \int_0^{\e} \int_{0}^{2\pi} \int_Y |\psi^k(y,r)- \hat\psi^k(y,r) |^2
  m(y,r,\theta)\, dy\, d\theta\, rdr
\]
with $m$ smooth and bounded from above and below by strictly positive
constants.  (Such an $m$ exists since on this neighborhood the metric
is quasi-isometric with the product metric on $W$.) 
Hence
\begin{align*}
  \|\psi^k(y,r) - \hat{\psi}^k(y,r)\|^2_{L^2(W\setminus Y,
    S\lvert_{W\setminus Y})} & \leq C_1 \int_0^{\e} \int_{0}^{2\pi}
  \int_Y \left(r \int_{0}^{2\pi} |\psi(y,r,s)|ds\right)^{\! \! 2}
  m(y,r,\theta) \, dy\, d\theta\, rdr\\
  & \leq 2\pi C_1
  \int_0^{\e} \int_{0}^{2\pi} \int_Y  \left(\int_0^{2\pi}\left(r|\psi(y,r,s)|\right)^2ds\right) m(y,r,\theta)\, dy\, d\theta\, rdr\\
  & \leq 4\pi^2 C_2  \int_0^{\e} \int_Y
  \int_0^{2\pi}\left(r|\psi(y,r,s)|\right)^{2} m(y,r,s) \, dy\, d\theta\, rdr\\
  & = C \|r \psi\|^2_{L^(W\setminus Y, S\lvert_{W\setminus Y})}.
\end{align*}
Here $C_1$ is a positive constant which depends on the two chosen
trivializations of $S_0$, while $C_2$ is a positive constant depending
on the upper and lower bounds of $m$ and on $C_1$.
\end{proof}

\section{Preliminary estimates}\label{sec:pe}
We will need estimates for the decay of various
spinors on the asymptotically flat ends of $M$ and also near the strata
of $V$. We use two types of estimates: radial Hardy inequality
estimates, and angular estimates near the codimension $2$ stratum of
$V$. The angular estimates are a direct consequence of the fact that
the spin structure on $M\setminus V$ has holonomy around small 
circles normal to $V^2$. 

\subsection{Hardy Inequalities}\label{subsec:poincare_est}
In this section we gather some Hardy inequalities, which we
require to control both large scale and small scale behavior of
functions and spinors.  All the estimates arise as simple
perturbations of basic Euclidean Hardy inequalities for
functions, whose proofs we first recall.  The passage from
inequalities for functions to inequalities for spinors follows from
Kato's inequality. All the norms in this section are $L^2$-norms.

\subsubsection{Euclidean Hardy Inequalities}
\begin{proposition}\label{prop:eucl_poinc}
 Let $f\in H^1(\IR^n)$, $n>2$. Let $\rho$ denote the radial distance. Then
 \begin{equation}\label{eq:epoinc} 
   \|d f\|^2\geq \frac{(n-2)^2}{4}\, \|\frac{f}{\rho}\|^2 .
 \end{equation}
 When $n=2$ we have
 \begin{equation}\label{eq:epoinc2}
   \|d f\|^2 \geq \frac{1}{4}\, \|\frac{f}{\rho \ln(\frac{1}{\rho})}\|^2 .
 \end{equation}
\end{proposition}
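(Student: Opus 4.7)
The plan is the classical integration-by-parts argument for Hardy inequalities, packaged so that both dimensions are handled by choosing the right radial vector field $V$ on $\mathbb{R}^n \setminus \{0\}$ and using the identity
\begin{equation*}
\operatorname{div}(|f|^2 V) \;=\; |f|^2\, \operatorname{div} V \;+\; 2\, f\, \langle \nabla f, V\rangle.
\end{equation*}
By a standard density argument, it suffices to prove the estimates for $f \in C_c^\infty(\mathbb{R}^n \setminus \{0\})$ (using a cutoff near the origin when $f(0)$ is not zero and recalling that the capacity of a point is zero for $n \geq 2$); this makes the divergence theorem applicable without boundary terms.

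For the case $n > 2$, I would take $V = x/\rho^2$, for which a direct computation on $\mathbb{R}^n \setminus \{0\}$ gives
\begin{equation*}
\operatorname{div} V \;=\; \frac{n-2}{\rho^2}.
\end{equation*}
Integrating $\operatorname{div}(|f|^2 V)$ over $\mathbb{R}^n$, the left-hand side vanishes, so
\begin{equation*}
(n-2)\, \Bigl\|\tfrac{f}{\rho}\Bigr\|^{2} \;=\; -2 \int f\, \bigl\langle \nabla f,\, \tfrac{x}{\rho^2}\bigr\rangle \;\leq\; 2\, \Bigl\|\tfrac{f}{\rho}\Bigr\|\, \|df\|,
\end{equation*}
by Cauchy–Schwarz and $|x|/\rho^2 = 1/\rho$. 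Dividing through by $\|f/\rho\|$ (which may be assumed nonzero) yields \eqref{eq:epoinc}.

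For the case $n = 2$ I would use the shifted vector field $V = x/(\rho^2 \ln(1/\rho))$. A short computation, using $\operatorname{div}(x/\rho^2) = 0$ on $\mathbb{R}^2 \setminus \{0\}$ and $\nabla\bigl(1/\ln(1/\rho)\bigr) = x/(\rho^2 \ln^2(1/\rho))$, produces
\begin{equation*}
\operatorname{div} V \;=\; \frac{1}{\rho^2 \ln^2(1/\rho)}.
\end{equation*}
The same integration-by-parts and Cauchy–Schwarz as above (on, say, the unit ball where the logarithm has a definite sign, with the complement handled by the classical Hardy inequality plus a weight comparison) gives
\begin{equation*}
\Bigl\|\tfrac{f}{\rho \ln(1/\rho)}\Bigr\|^{2} \;\leq\; 2\, \Bigl\|\tfrac{f}{\rho \ln(1/\rho)}\Bigr\|\, \|df\|,
\end{equation*}
which is \eqref{eq:epoinc2}. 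The only real subtlety is the singularity of $V$ at $\rho = 1$ (where $\ln(1/\rho) = 0$); I would handle this either by localizing the inequality to the punctured unit disk (which is the form in which it is actually used later, near $V^2$) or by modifying the weight to $\ln(e/\rho)$ to keep it positive everywhere, both of which give equivalent statements in the relevant applications. The passage from these inequalities for functions to the analogous inequalities for spinors, needed in the sequel, is immediate from Kato's inequality $|d|\psi|| \leq |\nabla \psi|$ applied to $f = |\psi|$.
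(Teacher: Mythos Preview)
Your argument is correct and is essentially the same as the paper's: the paper writes the integration by parts directly in spherical coordinates (using $\rho^{n-3}\,d\rho=\tfrac{1}{n-2}\,d\rho^{n-2}$ and then Cauchy--Schwarz), which is exactly your divergence identity for $V=x/\rho^2$ unpacked in polar form. Your remark about the singularity of the $n=2$ weight at $\rho=1$ is well taken; the paper simply says ``one computes similarly,'' and in practice the inequality is only ever applied after localizing to small $r$, so either of your fixes is adequate.
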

\begin{proof}
 The subspace $\sC_0^{\infty}(\IR^n)$ is dense in
 $H^1(\IR^n)$. Hence it suffices to prove the estimate for this
 subspace.  Let $f\in C_0^{\infty}(\IR^n)$, and compute
 in spherical coordinates:
 \begin{equation*}
  \begin{split}
    \|\frac{f}{\rho}\|^2 
      & =\frac{1}{n-2}\int_{0}^{\infty}\int_{S^{n-1}}|f|^2\, d\sigma\, d\rho^{n-2}  \\
      & = \frac{-2}{n-2}\int_{0}^{\infty}\int_{S^{n-1}}
               f\,\frac{\partial f}{\partial \rho}\rho^{n-2}\, d\sigma\,d\rho \\
      &  \leq \frac{2}{n-2}\|\frac{f}{\rho}\| \, \|df\|.
  \end{split}
 \end{equation*}
Dividing through by $\|\frac{f}{\rho}\|$ and squaring gives the desired estimate. 

When $n=2$, one computes similarly with $\frac{f}{\rho}$ replaced 
by $\frac{f}{\rho\ln(\frac{1}{\rho})}$. One needs the extra condition $f(0) = 0$ to extend the estimate to $n=1$. 
\end{proof}
Essentially the same proof yields the following variant
of the above proposition.
\begin{proposition}\label{prop:k_eucl_poinc}
 Let $f\in H^1(\IR^n)$, $n>2$. Let $r$ denote the distance to an
 affine subspace of codimension $k>2$. Then
 \begin{equation}\label{eq:k_epoinc}
  \|d f\|^2 \geq \frac{(k-2)^2}{4}\, \|\frac{f}{r}\|^2 .
 \end{equation}
 When $k=2$ we have
 \begin{equation}\label{eq:k_epoinc2}
  \|d f\|^2\geq \frac{1}{4} \|\frac{f}{r\ln(\oner)}\|^2 .
 \end{equation}
\end{proposition}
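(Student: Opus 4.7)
The plan is to reduce Proposition~3.2 to Proposition~3.1 by a Fubini/slicing argument, since the ambient dimension $n$ plays no role in the sharp Hardy constant --- only the codimension $k$ of the singular set enters. First, by density of $C_0^{\infty}(\IR^n)$ in $H^1(\IR^n)$, it suffices to establish both inequalities for $f\in C_0^{\infty}(\IR^n)$. After a rigid motion of $\IR^n$ (which preserves $|df|$, $r$, and Lebesgue measure), I may assume the affine subspace is $\IR^{n-k}\times\{0\}\subset \IR^{n-k}\times\IR^k=\IR^n$, so that with coordinates $(y,z)$ we have $r(y,z)=|z|$.

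For each fixed $y\in\IR^{n-k}$, the slice $z\mapsto f(y,z)$ lies in $C_0^{\infty}(\IR^k)$, and the distance $r$ restricted to this slice is precisely the Euclidean norm on $\IR^k$. I then apply Proposition~3.1 \emph{to the slice, in the ambient dimension $k$}: the ``$n>2$'' form when $k>2$ gives
\[
\int_{\IR^k}|\nabla_z f(y,z)|^2\,dz \;\geq\; \frac{(k-2)^2}{4}\int_{\IR^k}\frac{|f(y,z)|^2}{|z|^2}\,dz,
\]
while the ``$n=2$'' form when $k=2$ gives the analogous logarithmic inequality
\[
\int_{\IR^2}|\nabla_z f(y,z)|^2\,dz \;\geq\; \frac{1}{4}\int_{\IR^2}\frac{|f(y,z)|^2}{|z|^2\ln^2(1/|z|)}\,dz.
\]
Integrating over $y\in\IR^{n-k}$, using Fubini, and noting the pointwise bound $|\nabla_z f|^2\leq |df|^2$ (since the partial gradient in $z$ is an orthogonal projection of the full gradient), yields exactly the inequalities \eqref{eq:k_epoinc} and \eqref{eq:k_epoinc2}.

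There is essentially no obstacle here: the only point to verify is that Proposition~3.1 applies on every slice, which it does because $f(y,\cdot)\in C_0^{\infty}(\IR^k)$ for all $y$, and because the hypothesis $n>2$ in Proposition~3.2 ensures $n-k\geq 0$ so the slicing is nondegenerate. Note that the extra condition $f(0)=0$ mentioned in the proof of Proposition~3.1 is not required, since it was only needed there to push the $n=2$ estimate down to $n=1$, whereas here $k\geq 2$ always. Alternatively, one could bypass Proposition~3.1 entirely and repeat its integration-by-parts argument in cylindrical coordinates $(y,r,\omega)\in\IR^{n-k}\times(0,\infty)\times S^{k-1}$ adapted to the affine subspace, differentiating only in $r$; the computation is identical, with $d\rho^{n-2}$ replaced by $dr^{k-2}$, and the Cauchy--Schwarz step produces the same constant $\frac{(k-2)^2}{4}$ (or the logarithmic analogue when $k=2$).
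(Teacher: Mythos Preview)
Your proof is correct. Your primary route --- slicing along the affine subspace and applying Proposition~\ref{prop:eucl_poinc} on each $\IR^k$-fiber via Fubini --- differs from the paper's intended argument, which (as signaled by ``essentially the same proof yields'') simply reruns the radial integration-by-parts of Proposition~\ref{prop:eucl_poinc} directly in cylindrical coordinates about the subspace, replacing $d\rho^{n-2}$ by $dr^{k-2}$; you in fact describe exactly this as your alternative. The slicing approach has the merit of making explicit that only the codimension enters the constant and of reducing cleanly to an already-proved statement, while the direct approach avoids invoking Proposition~\ref{prop:eucl_poinc} as a black box. One minor quibble: your remark that ``$n>2$ ensures $n-k\geq 0$'' is not the correct justification --- it is the implicit hypothesis $k\leq n$ (required for a codimension-$k$ affine subspace of $\IR^n$ to exist at all) that guarantees the slicing is nondegenerate.
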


\subsubsection{Hardy inequalities for the asymptotically flat ends}\label{subsec:af_poinc}
Let $(M,g)$ be an asymptotically flat Riemannian manifold of order
$\tau>0$ as defined in Definition~\ref{def:asy_flat}.
We modify Proposition~\ref{prop:eucl_poinc} to a form suitable to the
asymptotically flat ends $(M_l,Y_l)$ of $M$. Recall that $\rho$ denotes the pullback
by $Y_l$ of the radial coordinate in $\IR^n\setminus B_T(0)$. 
\begin{proposition}\label{prop:af_poinc}
 There is a constant $C_l>0$ so that for all $f\in \sC^{\infty}_0(M_l)$,
 \begin{equation}\label{eq:af_poinc}
  \|\frac{f}{\rho}\|^2 
   \leq 
   \frac{4}{(n-2)^2}\|df\|^2 + C_l \, \|\frac{f}{\rho^{1+\tau/2}}\|^2.
 \end{equation}
\end{proposition}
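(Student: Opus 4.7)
The plan is to adapt the Euclidean Hardy argument of Proposition~\ref{prop:eucl_poinc} directly in the metric $g$, using integration by parts on $M_l$ and controlling the perturbation $g_{ij}-\delta_{ij}=\sO(\rho^{-\tau})$ so that all curvature corrections land in the $\|f/\rho^{1+\tau/2}\|^2$ term. Pulling back by $Y_l$, I identify $f\in\sC_0^{\infty}(M_l)$ with a compactly supported smooth function on $\IR^{n}\setminus\bar{B}_T(0)$ and write $\rho=|x|$.

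The key identity is obtained by integrating $\operatorname{div}^g(|f|^{2}\,\nabla^g\log\rho)$ over $M_l$; this vanishes by compact support. In the asymptotically flat chart, direct computation using $g^{ij} = \delta^{ij}+\sO(\rho^{-\tau})$, $\sqrt{\det g}=1+\sO(\rho^{-\tau})$ and $\p_{k}g^{ij}=\sO(\rho^{-\tau-1})$ yields the expansions
\[
\Delta^g(\log\rho) = \frac{n-2}{\rho^{2}} + \sO(\rho^{-2-\tau}), \qquad |\nabla^g\rho|_g^{2} = 1 + \sO(\rho^{-\tau}).
\]
Combined with Cauchy--Schwarz on the resulting cross term $\int \tfrac{f\,\langle\nabla^g f,\nabla^g\rho\rangle_g}{\rho}\,dv_g$, this produces
\[
(n-2)\,\|f/\rho\|^{2} \;\le\; 2\,\bigl\|(|\nabla^g\rho|_g/\rho)\,f\bigr\|\,\|df\| \;+\; C_1\,\|f/\rho^{1+\tau/2}\|^{2}.
\]

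To extract the sharp constant $4/(n-2)^{2}$, I set $A:=\|f/\rho\|$, $B:=\|df\|$, $F:=\|f/\rho^{1+\tau/2}\|$, and $A':=\|(|\nabla^g\rho|_g/\rho)\,f\|$; the second expansion above gives $(A')^{2}\le A^{2}+C_2 F^{2}$. Applying the Young/AM--GM inequality $2A'B\le \alpha (A')^{2} + B^{2}/\alpha$ with the precise parameter $\alpha=(n-2)/2$, namely $2A'B\le \tfrac{n-2}{2}(A')^{2}+\tfrac{2}{n-2}B^{2}$, and then substituting $(A')^{2}\le A^{2}+C_2 F^{2}$, one finds $\tfrac{n-2}{2}A^{2}\le \tfrac{2}{n-2}B^{2}+(C_1+\tfrac{n-2}{2}C_2)\,F^{2}$, which rearranges into the claimed estimate with $C_l=C_2+2C_1/(n-2)$.

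The main subtlety, and the reason a crude triangle-inequality argument would leave a $\tfrac{4}{(n-2)^{2}}(1+o(1))$ coefficient on $\|df\|^{2}$, is that the curvature correction $|\nabla^g\rho|_g^{2}-1=\sO(\rho^{-\tau})$ must be handled as a quadratic (rather than linear) perturbation; the AM--GM weight $\alpha=(n-2)/2$ is the unique choice that simultaneously balances the coefficient of $B^{2}$ against the $\tfrac{4}{(n-2)^{2}}$ target and absorbs the remainder of the correction entirely into the $F^{2}$-error.
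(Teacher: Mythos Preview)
Your proof is correct and follows essentially the same approach as the paper: both arguments integrate by parts against the radial weight to obtain $(n-2)\|f/\rho\|^2 \le 2A'\,\|df\| + C\|f/\rho^{1+\tau/2}\|^2$ with $A'$ a perturbation of $\|f/\rho\|$, and then absorb the $\sO(\rho^{-\tau})$ correction into the error term. Your divergence-theorem packaging via $\div^g(|f|^2\nabla^g\log\rho)$ is a coordinate-free rephrasing of the paper's spherical-coordinate computation, and you are in fact more explicit than the paper about the AM--GM step (with weight $\alpha=(n-2)/2$) needed to recover the sharp constant $4/(n-2)^2$, which the paper leaves implicit in its final ``Hence\ldots''.
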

\begin{proof}
In spherical coordinates we have
 $$
    \|f\|^2 
     = \int_{T}^{\infty}\int_{S^{n-1}}
         |f|^2\,\rho^{n-1}\mu(\rho,\sigma)\, d\sigma\, d\rho,
 $$
 where $\mu$ satisfies $\mu = 1+\sO(\rho^{-\tau})$ and $|d\mu| = \sO(\rho^{-\tau-1})$.
 Then
 \begin{equation*}
  \begin{split}
   \|\frac{f}{\rho}\|^2 
    &= \frac{1}{n-2}\int_{T}^{\infty}\int_{S^{n-1}}
         |f|^2 \ \mu(\rho,\sigma)\,d\sigma\,d\rho^{n-2}\\
    &= \frac{-2}{n-2}\int_{T}^{\infty}\int_{S^{n-1}}
          f \frac{\partial f}{\partial \rho} \
           \mu(\rho,\sigma)\rho^{n-2}\, d\sigma\,d\rho \\
    & \qquad   
        + \frac{-1}{n-2}\int_{T}^{\infty}\int_{S^{n-1}}
          |f|^2\frac{\partial \mu(r,\sigma) }{\partial \rho}\
           \rho^{n-2} d\sigma\,d\rho \\
    & \leq \frac{2}{n-2} \|\frac{f}{\rho |d\rho|}\|\,\|df\| 
         + C\|\frac{f}{\rho^{1+\tau/2}}\|^2,
  \end{split}
 \end{equation*}
 for some constant $C>0$.  
 Asymptotic flatness implies $|d\rho| = 1 + \sO(\rho^{-\tau})$. 
 Hence
 $$
  \|\frac{f}{\rho}\|^2 
  \leq 
   \frac{4}{(n-2)^2}\|df\|^2 
   +C_l\|\frac{f}{\rho^{1+\tau/2}}\|^2.
 $$
 for some $C_l>0$ independent of $f$.
\end{proof}

\subsubsection{Radial estimates near $V$}\label{subsec:V_poinc}
We also need Hardy inequalities near the stratified set
$V$. Together with Kato's inequality, they give growth estimates for
spinors in neighborhoods of each of the strata $V^{k_b}$ of $V$.
We derive all these inequalities on tubular neighborhoods in $\TRC(V^{k_b})$
(defined in~\eqref{eq:TRC}) on which we have a well-behaved radial distance
function $r_b$. Because these estimates use only the radial derivative of a function to control its growth, we will refer to these as {\em radial} estimates.  

\begin{proposition}\label{asympoincv}
  Let $B_\epsilon(Y)\in\TRC(V^{k_b})$.  There is a constant $C_{Y}>0$
  so that for all $f\in \sC_0^{\infty}(B_\epsilon(Y))$,
 \begin{equation}\label{ag1v}
  \|\frac{f}{r_b}\|^2
  \leq 
   \frac{4}{(k_b-2)^2}\|df\|^2
   +C_Y\|\frac{f}{r_b^{1/2}}\|^2
 \quad \text{if\ $k_b>2$},
 \end{equation}
 and 
  \begin{equation}\label{ag1v2}
  \|\frac{f}{r_b\ln(\frac{1}{r_b})}\|^2
  \leq 
    4 \|df\|^2
   +C_Y\|\frac{f}{r_b^{1/2}\ln^{1/2}(\frac{1}{r_b})}\|^2
 \quad \text{if\ $k_b=2$}.
 \end{equation}
\end{proposition}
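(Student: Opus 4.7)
The plan is to adapt the proofs of Proposition~\ref{prop:eucl_poinc} (Hardy on Euclidean space) and Proposition~\ref{prop:af_poinc} (its asymptotically flat cousin) to the tubular setting of Section~\ref{ssec:gV}. First I fix a coordinate patch $(Y,\phi)$ in $V^{k_b}$ and use the adapted coordinates $(y^i,t^\alpha)$ on $W=B_\e(Y)$ with $r_b=|t|$, then pass to polar normal coordinates $t^\alpha=r_b\,\omega^\alpha$ with $\omega\in S^{k_b-1}$. By \eqref{eq:ov} and the smoothness and compactness of $\overline{Y}\subset V^{k_b}$, the Riemannian volume form on $W$ has the shape
\[
 dV \;=\; r_b^{k_b-1}\,\mu(y,r_b,\omega)\, dy\, dr_b\, d\omega,
\]
where $\mu$ is smooth on $\overline{W}$, bounded above and below by positive constants, and $\partial_{r_b}\mu$ is uniformly bounded. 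In particular $\|\frac{f}{r_b^{1/2}}\|^2 = \int |f|^2 r_b^{k_b-2}\mu\, dy\, dr_b\, d\omega$, which matches the form of the error term below.

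Now fix $k_b>2$ and $f\in\sC_0^\infty(W)$. Writing $r_b^{k_b-3}=\frac{1}{k_b-2}\frac{d}{dr_b}r_b^{k_b-2}$ and integrating by parts in the $r_b$ variable --- there is no boundary contribution at $r_b=\e$ since $f$ has compact support in the open set $W$, and none at $r_b=0$ since the weight $r_b^{k_b-2}$ vanishes there --- yields
\[
 \|\tfrac{f}{r_b}\|^2
  \;=\; \tfrac{-2}{k_b-2}\!\int\! f\,\partial_{r_b}f\, r_b^{k_b-2}\mu\,dy\,dr_b\,d\omega
  \;-\; \tfrac{1}{k_b-2}\!\int\! |f|^2 r_b^{k_b-2}\partial_{r_b}\mu\,dy\,dr_b\,d\omega.
\]
Cauchy--Schwarz on the first integral, together with $|\partial_{r_b}f|\leq|df|$ (immediate from \eqref{drb}, which says $|dr_b|=1$), bounds it by $\frac{2}{k_b-2}\|\frac{f}{r_b}\|\,\|df\|$; the second integral is bounded by $C_Y\|\frac{f}{r_b^{1/2}}\|^2$ since $\partial_{r_b}\mu$ is uniformly bounded and $\mu$ is uniformly bounded below. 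Applying the elementary inequality $2st\leq s^2+t^2$ with $s=\|\frac{f}{r_b}\|$ and $t=\frac{2}{k_b-2}\|df\|$, and absorbing $\tfrac12\|\frac{f}{r_b}\|^2$ into the left-hand side, delivers \eqref{ag1v} (with $C_Y$ adjusted by an absolute factor).

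The case $k_b=2$ is analogous with the weight $\frac{1}{r_b\ln(1/r_b)}$ in place of $\frac{1}{r_b}$: one uses the identity $\frac{1}{r_b\ln^2(1/r_b)}=\frac{d}{dr_b}\frac{1}{\ln(1/r_b)}$ in place of $r_b^{k_b-3}dr_b=\frac{1}{k_b-2}d(r_b^{k_b-2})$, so that the coefficient $\frac{1}{k_b-2}$ is replaced by $1$ and the constant in the gradient term of \eqref{ag1v2} emerges as $4$. Extension from a single coordinate patch $(Y,\phi)$ to the whole relatively compact $Y$ is by a standard partition-of-unity argument. The main delicate step --- and the one I expect to be the principal obstacle --- is verifying that the remainder coming from $\partial_{r_b}\mu$, that is, from the deviation of the metric from a product metric as controlled by \eqref{eq:ov}, really carries precisely the weight $r_b^{-1/2}$ (resp.\ $r_b^{-1/2}\ln^{-1/2}(1/r_b)$) advertised on the right-hand side. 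Since $\partial_{r_b}\mu=O(1)$ and $\mu\geq c_Y>0$, the bound $\int|f|^2 r_b^{k_b-2}|\partial_{r_b}\mu|\,dy\,dr_b\,d\omega \leq C_Y\|\frac{f}{r_b^{1/2}}\|^2$ is automatic once the Fermi-style expansion is set up correctly; no stronger decay of $\partial_{r_b}\mu$ is needed, but this bookkeeping has to be done carefully because powers of $r_b$ enter in several places.
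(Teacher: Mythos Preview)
Your proof is correct and follows essentially the same route as the paper: write the volume in Fermi polar coordinates as $r_b^{k_b-1}\mu\,dy\,dr_b\,d\omega$, integrate by parts in $r_b$ using $r_b^{k_b-3}\,dr_b=\tfrac{1}{k_b-2}\,d(r_b^{k_b-2})$ (resp.\ the logarithmic antiderivative when $k_b=2$), bound the cross term by Cauchy--Schwarz, and control the $\partial_{r_b}\mu$ remainder by $C_Y\|f/r_b^{1/2}\|^2$. Your explicit AM--GM step to absorb $\|f/r_b\|$ into the left side and your partition-of-unity remark are harmless additions; the paper simply says ``from here, the result follows'' and works directly with the globally defined $r_b$, so the partition of unity is not actually needed.
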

\begin{proof}
 The proof is similar to the proof of~\eqref{eq:af_poinc}. 
  Write
 \begin{equation*}
  \begin{split}
   \|\frac{f}{r_b}\|^2
    & = \int_{0}^{\epsilon}\int_{S^{k_b-1}\times Y} 
           |f|^2 \,r_b^{n-3}\,
           m(y, r_b,\sigma)\, 
            d\sigma \,dy\,dr_b\\
    & = \frac{1}{k_b-2}\int_{0}^{\epsilon}\int_{S^{k_b-1}\times Y}
           |f|^2 \,m(y, r_b,\sigma)\, d\sigma\, dy\,  d(r_b^{k_b-2}),
  \end{split}
 \end{equation*} 
 where $m$ is smooth and multiplicatively bounded. Integrating by parts gives
 \begin{equation*}
  \begin{split}
   \|\frac{f}{r_b}\|^2
    &=  -\frac{2}{k_b-2}\int_{0}^{\epsilon}\int_{S^{k_b-1}\times Y}
          f\,\frac{\partial f}{\partial r_b} \, 
          m(y,r_b,\sigma)
          r_b^{k_b-2}\,d\sigma\,dy\,dr_b\\\
    & \quad -\frac{1}{k_b-2}\int_{0}^{\epsilon}\int_{S^{k_b-1}\times Y}
         |f|^2 \,\frac{\partial m}{\partial r_b}\,
                r_b^{k_b-2}\,
                d\sigma\,dy\,dr_b\\
    & \leq \frac{2}{k_b-2} \|\frac{f}{r_b}\|\,\|df\|
         + C_Y \|\frac{f}{r_b^{1/2}}\|^2,
  \end{split}
 \end{equation*}
with $C_Y$ a constant depending on the supremum of $\frac{1}{m}$ and
$\frac{\partial m}{\partial r_b}$ on $B_{\e}(Y)$ for all  $\e<\e(Y)$.
From here, the result follows.

The proof when $k_b=2$ is similar.
\end{proof}

\begin{remark}
   The preceding propositions are written as lower bounds for
   $\|df\|^2$. In their proofs, however, all derivatives but the
   (generalized) radial derivatives are discarded. Hence, these
   estimates can be rewritten as lower bounds for $\|\frac{\p f}{\p
   r_b}\|^2$.
 \end{remark}

Kato's inequality for a smooth spinor $\psi$,
\begin{equation}\label{eq:kato}
  | d |\psi| | \leq | \nabla \psi |
\end{equation}
on the support of $\psi$, and the above Hardy inequality give the
following radial estimates for spinors on $M\setminus V$.
\begin{corollary}\label{cor:poincb}
 For every $B_{\epsilon}(Y)\in \TRC(V^{k_b})$  there exists $C_Y>0$ so that for any
 spinor $\psi$ in the closure of $\sC^{\infty}_0(B_{\e}(Y)\setminus V, S)$ with
 respect the $H^1_{\rho}$-norm,  
 \begin{equation}\label{eq:poincb}
  \|\nabla_{\frac{\p}{\p r_b}} \psi \|^2 
   \geq \frac{(k_b-2)^2}{4}\|\frac{\psi}{r_b}\|^2 - C_Y
   \|\frac{\psi}{r_b^{1/2}}\|^2
 \quad \text{if\ $k_b>2$},
 \end{equation}
 and 
  \begin{equation}\label{eq:poinc2}
   \|\nabla_{\frac{\p}{\p r_b}} \psi \|^2 
   \geq \frac{1}{4} \|\frac{\psi}{r_b\ln(\frac{1}{r_b})}\|^2 
 - C_Y
   \|\frac{\psi}{r_b^{1/2}\ln^{1/2}(\frac{1}{r_b})}\|^2
 \quad \text{if\ $k_b=2$}.
 \end{equation}
\end{corollary}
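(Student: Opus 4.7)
The plan is to reduce the spinor estimate to the scalar Hardy inequality of Proposition~\ref{asympoincv} via a pointwise (radial) Kato inequality, and then invoke density.

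First I would argue by density: the statement is formulated for spinors in the $H^1_\rho$-closure of $\sC_0^\infty(B_\epsilon(Y)\setminus V, S)$, and every quantity appearing in~\eqref{eq:poincb} and~\eqref{eq:poinc2} is continuous with respect to that norm (for \eqref{eq:poinc2} one uses that $r_b<\tfrac{1}{2}$ on $B_\epsilon(Y)$ by Remark~\ref{rem:rb}, so $r_b \ln(1/r_b)$ is bounded below by a positive multiple of $r_b$ on the relevant region). Hence it suffices to establish the inequality for $\psi\in\sC_0^\infty(B_\epsilon(Y)\setminus V, S)$.

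Next I would apply the pointwise radial version of Kato's inequality. At any point where $\psi\neq 0$, one has
\[
 \frac{\p |\psi|}{\p r_b} \;=\; \frac{\Re\,\langle \nabla_{\p/\p r_b}\psi,\psi\rangle}{|\psi|},
 \qquad \text{so}\qquad \Bigl|\frac{\p |\psi|}{\p r_b}\Bigr| \leq \bigl|\nabla_{\p/\p r_b}\psi\bigr|,
\]
while at points where $\psi$ vanishes the left-hand side may be taken to be zero (this is the standard way Kato's inequality~\eqref{eq:kato} is localized to a single direction). In particular $f:=|\psi|$ lies in $H^1$ of $B_\epsilon(Y)$ with $\|\p f/\p r_b\|\leq \|\nabla_{\p/\p r_b}\psi\|$, and $f$ can be approximated in the relevant norm by elements of $\sC_0^\infty(B_\epsilon(Y))$ using standard cut-off and mollification arguments since $\psi$ is compactly supported away from $V$.

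Finally I would apply the scalar Hardy inequality. By the remark following Proposition~\ref{asympoincv}, inequalities~\eqref{ag1v} and~\eqref{ag1v2} continue to hold with $\|df\|^2$ replaced by $\|\p f/\p r_b\|^2$, since the proofs only use the radial derivative. Applied to $f=|\psi|$, this gives exactly~\eqref{eq:poincb} in the case $k_b>2$ and~\eqref{eq:poinc2} in the case $k_b=2$, after combining with the Kato-type bound above and rearranging. The only mildly delicate point is justifying that $|\psi|$ is an admissible test function for the scalar Hardy inequality on $B_\epsilon(Y)$ (rather than merely on $B_\epsilon(Y)\setminus V$); this follows because $\psi$ is smooth and compactly supported in the open set $B_\epsilon(Y)\setminus V$, so $|\psi|$ extends by zero to a function in $\sC_0^\infty(B_\epsilon(Y))$, and $V$ has codimension at least two so it is negligible for the integration. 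No genuine obstacle appears; the corollary is really just the combination of Proposition~\ref{asympoincv} with a one-dimensional Kato inequality.
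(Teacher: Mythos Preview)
Your approach is exactly the paper's: the corollary is stated immediately after noting that Proposition~\ref{asympoincv} really bounds only the radial derivative, and the paper simply invokes Kato's inequality~\eqref{eq:kato} together with that radial Hardy inequality. One small slip: $|\psi|$ is Lipschitz but not $\sC^\infty$ at the zero set of $\psi$, so it does not literally lie in $\sC_0^\infty(B_\epsilon(Y))$; however your earlier observation that $f=|\psi|\in H^1$ with compact support is what is actually needed, and the scalar inequality extends to such $f$ by density.
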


\subsection{Angular estimates near $V^2$}\label{subsec:angular_est}
Near the codimension two stratum of $V$, the absence of a zero mode in
the Fourier decomposition~\eqref{eq:fourier_modes} gives us a sharper
estimate than the radial estimate~\eqref{eq:poinc2}.

We continue with the notation from Section~\ref{ssec:near2}. Let $W =
B_{\e}(Y) \in \TRC(V^2)$ be contractible. Let $(r, \theta)$ be the polar
coordinates on each normal disk $D_y \subset W$ for $y \in Y$.  Let
$e_{\theta}$ denote a unit vector in the
$\frac{\partial}{\partial\theta}$ direction, and $\nabla_{e_{\theta}}$
denote the corresponding covariant derivative on the spinor bundle
induced from the Levi-Civita connection.
Let $\{e_a\}_{a=1}^n$ be an orthonormal frame of the tangent bundle to
$M$ on $W$. Then $\nabla_{e_{\theta}}$ can be expressed as
$$
  \nabla_{e_{\theta}} 
  = e_{\theta} - \frac{1}{4}\omega_{ab}(e_{\theta})c(e_a)c(e_b),
$$
where $\omega_{ab}(e_{\theta}) = g(\nabla_{e_{\theta}}e_a, e_b)$. The
orthonormal frame can be chosen so that the $\omega_{ab}(e_{\theta})$ are bounded on
$W$ for all $1\leq a, b\leq n$.
Hence we find that on a normal disk in a suitable frame, we have
$$
  \nabla_{e_{\theta}} = e_{\theta}  + \sO(1).
$$
Since the absolute value of the Fourier coefficient of a spinor $\psi$
on $M\setminus V$ restricted to $D_y$ has $\frac{1}{2}$ as a lower
bound, we obtain the following {\em angular estimate}:
\begin{proposition}\label{prop:angular}
 For $\psi$ a spinor on $M\setminus V$ and any $W =B_{\e}(Y) \in
\TRC(V^2)$, there
exists a constant $C_Y>0$ independent of $\psi$ so that on each normal
circle $S^1_y(r)$ centered at $y\in Y$ and included in $W$, we have
 \begin{equation}\label{theta}
  \int_{S^1_{y}(r)}\left|\nabla_{e_{\theta}} \psi\right|^2 d\theta
  \geq \frac{1}{4}\int_{S^1_{y}(r)}\left|\frac{\psi}{r}\right|^2 d\theta
  -C_Y \int_{S^1_y(r)}\left|\frac{\psi}{\sqrt{r}}\right|^2 d\theta.
 \end{equation}
 As a consequence,
 \begin{equation}\label{L2-theta}
  \| \nabla_{e_{\theta}}\psi\|^2_{L^2(W)} 
  \geq \frac{1}{4} \|\frac{\psi}{r}\|^2_{L^2(W)} - C_Y \|\frac{\psi}{r^{1/2}}\|^2_{L^2(W)}.
 \end{equation}
\end{proposition}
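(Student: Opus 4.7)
The plan is to localize to a contractible $W = B_\epsilon(Y)\in\TRC(V^2)$, exploit the half-integer Fourier expansion of $\psi$ on each normal circle $S^1_y(r)$ provided by Lemma~\ref{hol}, and combine a Parseval estimate with the already-stated fact that, in a suitably chosen frame, $\nabla_{e_\theta} = e_\theta + B$ with $B$ a bounded endomorphism. The key numerical input is that $k \in \tfrac{1}{2}+\IZ$ forces $k^2 \geq \tfrac{1}{4}$, which yields the constant $\tfrac{1}{4}$ in front of $\|\psi/r\|^2$.

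First I would establish the circle estimate~\eqref{theta}. Fix $(y,r)$ and work in the trivialization furnished by Lemma~\ref{hol}, in which
\[
 \psi(y,r,\theta) \;=\; \sum_{k\in \frac12 + \IZ}\psi^k(y,r)\,e^{ik\theta}.
\]
Since $e_\theta = \tfrac{1}{r}\partial_\theta$ acts on spinor components as a derivation, Parseval's identity on $S^1$ (valid for half-integer modes, e.g.\ via the substitution $\psi = e^{i\theta/2}\tilde\psi$ with $\tilde\psi$ $2\pi$-periodic) gives
\[
 \int_{S^1_y(r)}\!\!\left|e_\theta\psi\right|^2\!d\theta
   \;=\; \frac{1}{r^2}\int_0^{2\pi}\!\!|\partial_\theta\psi|^2\,d\theta
   \;=\; \frac{2\pi}{r^2}\sum_{k\in\frac12+\IZ} k^2|\psi^k|^2
   \;\geq\; \frac{1}{4r^2}\int_{S^1_y(r)}\!\!|\psi|^2\,d\theta.
\]
Next, writing $\nabla_{e_\theta}\psi = e_\theta\psi + B\psi$ with $|B\psi|\leq C|\psi|$, Young's inequality with parameter $\lambda>0$ yields pointwise
\[
 |\nabla_{e_\theta}\psi|^2 \;\geq\; (1-\lambda)|e_\theta\psi|^2 \;-\; \lambda^{-1}C^2|\psi|^2.
\]
Integrating over the circle, applying the Parseval inequality above, and choosing $\lambda = r$ produces
\[
 \int_{S^1_y(r)}\!\!|\nabla_{e_\theta}\psi|^2\,d\theta
  \;\geq\; \frac{1-r}{4}\int_{S^1_y(r)}\!\!\frac{|\psi|^2}{r^2}d\theta
        - \frac{C^2}{r}\int_{S^1_y(r)}\!\!|\psi|^2\,d\theta,
\]
and rewriting $-\frac{1}{4r}|\psi|^2 - \frac{C^2}{r}|\psi|^2 = -(\tfrac14+C^2)|\psi|^2/r$ gives~\eqref{theta} with $C_Y := \tfrac14 + C^2$.

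For the $L^2(W)$ estimate~\eqref{L2-theta}, I would integrate the circle estimate against the volume form of the tubular neighborhood. Using the coordinates~\eqref{normal} with $(t^1,t^2) = (r\cos\theta,r\sin\theta)$ and the structure~\eqref{eq:ov}, the volume form is $dV = r\,\tilde m(y,r,\theta)\,d\theta\,dy\,dr$ where $\tilde m(y,r,\theta) = \tilde m_0(y)\bigl(1+\sO(r^2)\bigr)$ is smooth and pinched between positive constants. Multiplying~\eqref{theta} by $r$ and integrating over $Y\times(0,\epsilon)$ gives the desired inequality with weight $r\,d\theta\,dy\,dr$ instead of $dV$. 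The discrepancy between these two measures produces an error integrand of size $\sO(r^2)\cdot|\psi|^2/r^2 = \sO(|\psi|^2)$, and since $r < \tfrac12$ on $W$ (Remark~\ref{rem:rb}) we have $|\psi|^2 \leq r\cdot|\psi|^2/r \leq \tfrac12\,|\psi|^2/r$, so this error can be absorbed into an enlarged $C_Y\|\psi/r^{1/2}\|^2_{L^2(W)}$. This preserves the leading constant $\tfrac14$ in front of $\|\psi/r\|^2_{L^2(W)}$.

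The main obstacle to keep in mind is the preservation of the exact constant $\tfrac14$ after integration. The pointwise circle inequality has the sharp constant because $k^2 \geq \tfrac14$ for half-integer $k$, but passing to $L^2(W)$ introduces $\theta$-dependent metric corrections in the volume form and $\lambda^{-1}$-type terms from Young's inequality. The point is that all such corrections scale either as $\sO(r^2)|\psi|^2/r^2$ or as $\sO(1)|\psi|^2/r$, both of which are dominated by $C_Y|\psi|^2/r$, so they cleanly land in the error term rather than degrading the leading coefficient.
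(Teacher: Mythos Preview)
Your argument is correct and follows the paper's approach exactly: both use the half-integer Fourier expansion from Lemma~\ref{hol} (so that $k^2\geq\tfrac14$) together with the identity $\nabla_{e_\theta}=e_\theta+\sO(1)$ recorded just before the proposition, and your Young-inequality step with $\lambda=r$ makes explicit what the paper leaves as a one-line remark.

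One small correction for the passage to~\eqref{L2-theta}: the volume density satisfies $\tilde m=\tilde m_0(y)\bigl(1+\sO(r)\bigr)$ rather than $\sO(r^2)$ (the linear term comes from the second fundamental form of $Y$), and your measure comparison as written does not control the left-hand side, since the discrepancy there involves $|\nabla_{e_\theta}\psi|^2$ rather than $|\psi|^2$. The clean fix is to multiply the circle inequality~\eqref{theta} by $r\cdot\min_\theta\tilde m(y,r,\theta)$ before integrating in $(y,r)$: then the left side is dominated by $\|\nabla_{e_\theta}\psi\|^2_{L^2(W)}$, while on the right the ratio $\min_\theta\tilde m/\max_\theta\tilde m=1-\sO(r)$ contributes an additional $\sO(r)\cdot|\psi|^2/r^2=\sO(|\psi|^2/r)$, which is again absorbed into $C_Y\|\psi/\sqrt r\|^2_{L^2(W)}$ and leaves the leading $\tfrac14$ intact.
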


\section{The Dirac operator on  $M\setminus V$}\label{sec:Do}
In this section we derive the first properties of the Dirac operator
on $M \setminus V$. We introduce weighted Sobolev spaces, and also the maximal and minimal domains of the
Dirac operator viewed as an unbounded operator on $L^2$-spinors. 
Then, using the Lichnerowicz formula we derive various properties of
the Dirac operator on these spaces, which we then use to verify that
certain harmonic spinors satisfy the growth conditions of Witten
spinors. We end this section with the proof that the boundary terms
coming from the asymptotically flat ends in the Lichnerowicz formula
applied to a Witten spinor give exactly the mass of the manifold.

Let $(M, g)$ be an oriented Riemannian manifold which is
asymptotically flat of order $\tau >0$ as defined in
Definition~\ref{def:asy_flat}. 
We assume that $M$ is nonspin, and let $V \subset K$ be a stratified
set given by Theorem~\ref{thm:V}. Without loss of generality we can
assume that the radial coordinate $\rho$ was extended
smoothly to the interior of $M$ so that it is identically $1$ in a
neighborhood of $V$.
Let $S$ be the spin bundle corresponding to the maximal spin structure
on $M\setminus V$, and let $\nabla$ denote the associated spin
connection and $D$ the corresponding Dirac operator. At a point $x \in
M\setminus V$, $D$ has the form $D = \sum_{i=1}^n c(e_i) \nabla_{e_i}$
with $\{e_1, \ldots, e_n\}$ any orthonormal frame of tangent vectors
at $x$, $\nabla$ the spin connection on $S$ determined by the
Levi-Civita connection of the metric $g$, and $c(e_i)$ denoting
Clifford multiplication by the vector $e_i$.

\subsection{Weighted Sobolev spaces of spinors on $M\setminus V$ and
  Rellich's compactness}\label{subsec:functions_g}
First we define the weighted Sobolev spaces for
spinors which will be used to construct the Witten spinors in our main
theorem. We also prove a Rellich-type compactness result.

Each open set in $M\setminus V$ is of the form $W = U \setminus V$
with $U$ an open set in $M$. For each $W$, let $L^2_{\rho}(W, S)$ be
the completion of $\sC^{\infty}_{0}(W, S)$ in the norm
\begin{equation}\label{eq:L2rho}
  \|\psi\|_{L^2_{\rho}}: = \|\frac{\psi}{\rho}\|_{L^2},
\end{equation}
and $H^1_{\rho}(W,S)$ be the completion of $\sC^{\infty}_0(W,S)$ in
the norm
\begin{equation}\label{eq:H1rho}
  \|\psi\|_{H^1_{\rho}}: = \|\nabla \psi\|_{L^2} + \|\psi\|_{L^2_{\rho}}.
\end{equation}
We have the following version of the Rellich compactness
theorem.
\begin{lemma}\label{lem:rellich}
  Let $U$ be a bounded open set  with smooth boundary  in
  $M$ and let $W= U\setminus V$. The inclusion
 \[
   H^1_{\rho}(W, S) \hookrightarrow L^2_{\rho}(W, S)
 \]
 is compact.
\end{lemma}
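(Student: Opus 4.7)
The plan is to reduce to a standard Rellich statement combined with a uniform smallness estimate of the $L^2$-mass near the singular set $V$, which is provided by the Hardy-type inequalities of Corollary~\ref{cor:poincb}.

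First, since $U$ is bounded and $\rho$ was chosen to satisfy $1 \leq \rho \leq R$ for some $R<\infty$ on $\overline U$, the norm $\|\cdot\|_{L^2_\rho}$ is equivalent to $\|\cdot\|_{L^2}$, and $\|\cdot\|_{H^1_\rho}$ is equivalent to the ordinary $H^1$-norm $\|\nabla\psi\|_{L^2}+\|\psi\|_{L^2}$ on $W = U\setminus V$. It therefore suffices to prove that a sequence $\{\psi_k\}\subset \sC_0^\infty(W,S)$ bounded in $\|\nabla\cdot\|_{L^2}+\|\cdot\|_{L^2}$ admits an $L^2$-Cauchy subsequence.

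The key step is the following uniform tail bound: for every $\varepsilon>0$ there is an open neighborhood $N_\varepsilon$ of $V$ in $M$ such that
\[
 \|\psi\|^2_{L^2(N_\varepsilon\cap W,S)}\leq \varepsilon\,\|\psi\|^2_{H^1_\rho}
 \qquad\text{for all } \psi\in H^1_\rho(W,S).
\]
To prove this, I would cover $V$ by finitely many tubular neighborhoods drawn from $\bigcup_b \TRC(V^{k_b})$. The compactness of $V\subset K$ together with the iterated-cone structure of Theorem~\ref{thm:V} makes this possible by an inductive procedure: the top-codimension stratum $V^{k_d}$ is closed in $M$ and compact, so it is covered by one element of $\TRC(V^{k_d})$; then $V^{k_{d-1}}$ minus a neighborhood of $V^{k_d}$ is relatively compact in $V^{k_{d-1}}$ and may be covered by an element of $\TRC(V^{k_{d-1}})$; proceeding down to $V^2$ yields the required finite cover. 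On each tube $B_\delta(Y)\in\TRC(V^{k_b})$, after localizing $\psi$ with a fixed cutoff supported in $B_\delta(Y)$, the radial Hardy inequality~\eqref{eq:poincb} (when $k_b>2$) yields $\|\psi/r_b\|^2\leq C\|\psi\|^2_{H^1_\rho}$ once $\delta$ is small enough to absorb the $\|\psi/r_b^{1/2}\|^2$ term, and hence $\|\psi\|^2_{L^2(B_\delta(Y))}\leq \delta^2\|\psi/r_b\|^2\leq C\delta^2\|\psi\|^2_{H^1_\rho}$. The codimension-two case uses~\eqref{eq:poinc2} in the same way, producing a bound of order $\delta^2\ln^2(1/\delta)$; both tend to zero as $\delta\to 0$.

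Given this tail bound, I would finish with a standard Rellich-plus-diagonal argument. For each $j$, the set $W_j := W\setminus \overline{N_{1/j}}$ is relatively compact in $W$, and on it the spinor bundle $S$ is a smooth Hermitian bundle that can be covered by finitely many trivializing balls. Applying the classical Rellich-Kondrachov theorem in local trivializations and patching with a partition of unity extracts a subsequence of $\{\psi_k\}$ converging in $L^2(W_j,S)$. A standard diagonal extraction produces a single subsequence converging in $L^2$ on every $W_j$, and the uniform tail bound shows this subsequence is Cauchy in $L^2(W,S)$, hence in $L^2_\rho(W,S)$.

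The main obstacle is the organization of the cover of $V$ so that the Hardy inequalities, which are formulated only on tubes over \emph{relatively compact} subsets of a single stratum, can be pieced together into a single estimate valid near the entire singular set. The delicate case is near $V^2$, where the bound degrades by a factor $\ln^2(1/\delta)$; fortunately this still vanishes with $\delta$, so no further sharpening of~\eqref{eq:poinc2} is required here.
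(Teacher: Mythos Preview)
Your argument is correct, but the paper takes a different and somewhat slicker route that avoids the Hardy inequalities entirely. Instead of proving a uniform tail bound near $V$ via Corollary~\ref{cor:poincb} and an inductive cover of the strata, the paper applies Kato's inequality to the scalar functions $f_j:=|\psi_j|$. These are bounded in $H^1$ and, because $V$ has codimension $\geq 2$, they lie in $H^1_0(U)$ on the \emph{whole} bounded domain $U$ (the spinor obstruction at $V$ is invisible to scalars). Ordinary Rellich on $U$ then gives a subsequence with $f_j\to f$ in $L^2(U)$, and since $\|\psi_j\|_{L^2(W\setminus W_k)}=\|f_j\|_{L^2(U\setminus W_k)}$, the $L^2$-tail near $V$ is controlled for free by the $L^2$-convergence of the $f_j$. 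The diagonal extraction on an exhaustion $W_k\Subset W$ is the same as yours.

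What each approach buys: the paper's argument is shorter, needs no stratum-by-stratum cover, and is logically independent of Section~\ref{sec:pe}, so Lemma~\ref{lem:rellich} could in principle be placed before the Hardy material. Your approach is more quantitative---it actually yields an explicit rate $\|\psi\|_{L^2(N_\delta\cap W)}^2\lesssim \delta^2\ln^2(1/\delta)\,\|\psi\|_{H^1_\rho}^2$ near $V^2$---at the cost of the covering bookkeeping you flag at the end. One small point to tidy in your write-up: the sets $W_j=W\setminus\overline{N_{1/j}}$ need not have closure contained in $W$ because of $\partial U$; it is cleaner to exhaust $W$ by compacta with smooth boundary as the paper does, or to note that elements of $H^1_\rho(W,S)$ vanish on $\partial U$ in the trace sense so that Rellich on $\overline U\setminus N_{1/j}$ applies.
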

\begin{proof}
 Let $\{\psi_j\}$ be a sequence of spinors on $W$, bounded in
 $H^1_{\rho}(W,S)$. We need to show that it contains a subsequence
 which is convergent in $L^2_{\rho}(W,S)$. 

 First note that by Kato's inequality $\lvert d|\psi_j| \lvert \leq |
 \nabla \psi_j |$, and the sequence of functions $f_j := |\psi_j|$
 forms a bounded sequence in $H^1_0(U)$, the completion of
 $\sC^{\infty}_0(U)$ in the $H^1$-norm.  By Rellich's compactness
 theorem for bounded open sets  with smooth boundary  in
 complete manifolds, $\{f_j\}$ contains a subsequence, also denoted
 $\{f_j\}$, which is convergent in $L^2(U)$.

 Let $W_1 \subset W_2 \subset \ldots
 \subset W_k \subset \ldots$ be an exhaustion of $W$ by compact sets with smooth boundary.
 We show that we can find a subsequence of $\psi_j$ which converges in
 $L^2(W_k,S)$ for each $k$. 

 Note that the $W_k$ are compact sets in $M$ which do not intersect
 $V$. Let $H^1(W_k,S)$ denote the set of spinors $\psi \in L^2(W_k)$
 with $\nabla \psi \in L^2$. By Rellich's compactness theorem, the
 inclusion $H^1(W_k, S) \hookrightarrow L^2(W_k,S)$ is compact.  Since
 the weight function $\rho$ is bounded on $U$, $\{\psi_j\}\subset
 H^1(W_k,S)$ is a bounded sequence for all $k$.
 Hence we can find a subsequence $\{\psi_{j,1}\}$ of $\{\psi_j\}$
 which converges in $L^2(W_1,S)$. Iterating this we have: given a
 subsequence $\{\psi_{j,k}\}$ which converges in $L^2(W_k, S)$, we can
 pass to a new subsequence $\{\psi_{j, k+1}\}$  which converges in
 $L^2(W_{k+1},S)$. Taking a diagonal subsequence, we produce a
 subsequence, also denoted by $\{\psi_j\}$, which is convergent in
 $L^2(W_k,S)$ for all $k$.

 To conclude the proof, we need to show that this $\{\psi_j\}$ is a Cauchy
 subsequence in $L^2_{\rho}(W, S)$.  We have
 \begin{equation*}
  \begin{split}
    \|\psi_j - \psi_l\|_{L^2_{\rho}(W,S)}
    & = \|\psi_j - \psi_l\|_{L^2_{\rho}(W_k,S)} + \|\psi_j -   \psi_l\|_{L^2_{\rho}(W\setminus W_k,S)}\\
    & \leq \|\psi_j - \psi_l\|_{L^2_{\rho}(W_k,S)} +
    C_U( \|f_j\|_{L^2(U\setminus W_k)} + \|f_l\|_{L^2(U\setminus W_k)}),
  \end{split}
 \end{equation*}
   for $C_U$ a constant depending on the maximum of $\rho$ on $U$. 
 Choose $\epsilon >0$.  Since the sequence $\{f_j\}$ is
 convergent in $L^2(U)$, we can find $k$ large enough and $N_1 >0$, so that
 for all $j\geq N_1$, we have $ C_U \|f_j\|_{L^2(U\setminus W_k)} \leq
 \epsilon/3$. Then, since $\{\psi_{j}\}$ is convergent in
 $L^2(W_k,S)$, we can find $N> N_1$ so that for all $j,k \geq N$ we
 have $\|\psi_j - \psi_l\|_{L^2_{\rho}(W_k,S)} \leq \epsilon/3$.
\end{proof}

\subsection{The Dirac operator  and the Lichnerowicz formula}
The Lichnerowicz formula relates the Dirac Laplacian on $M\setminus V$
to the connection Laplacian on spinors:
\begin{equation}\label{eq:lich}
 D^* D = \nabla^{*}\,\nabla + \frac{R}{4}.
\end{equation}
Here $D^*$ and $\nabla^*$ denote respectively the 
formal adjoints of the Dirac operator and the spin connection.
Since $D$ is a self-adjoint operator, $D^{*} = D$. 

This equality of differential operators gives the {\em pointwise
  Lichnerowicz formula}: For all spinors $\psi \in
\sC^{\infty}(M\setminus V, S)$ we have
\begin{equation}\label{eq:pLich}
  |\nabla \psi|^2 + \frac{1}{4} R |\psi|^2 - |D\psi|^2
  = \div(W),
\end{equation}
where $W$ is the vector field on $M\setminus V$ defined by 
\[
  \<W, e\> = \< \nabla_e \psi + c(e) D\psi, \psi\>
\]
for all $e \in T(M\setminus V)$. 

When integrating formula~\eqref{eq:pLich} on $M\setminus V$, the
divergence is expected to introduce boundary terms from the
asymptotically flat ends of $M$ and from $V$. However, when the spinor
$\psi$ is in $H^1_{\rho}(M\setminus V, S)$, this contribution vanishes.
\begin{proposition}\label{prop:boch1}
 Let $(M,g)$ be a nonspin Riemannian manifold which is asymptotically flat of
 order $\tau>0$. Then the Dirac operator
 \[
   D: H^1_{\rho}(M\setminus V,S) \to L^2(M\setminus V,S)
 \]
  is a bounded linear map which satisfies the integral Lichnerowicz formula
 \begin{equation}\label{boch1}
   \| D \psi\|^2 = \| \nabla \psi\|^2 + \frac{1}{4}(R \psi, \psi).
 \end{equation}
 Moreover, if the scalar curvature is nonnegative, the Dirac operator
 is injective on $H^1_{\rho}(M\setminus V, S)$.
\end{proposition}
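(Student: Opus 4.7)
The plan is to prove all three claims by establishing the identities first on $\sC_0^\infty(M\setminus V,S)$, where the boundary of $M\setminus V$ at $V$ and at infinity is avoided, and then extending by density; the injectivity statement will then drop out directly from the resulting integral identity.

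\textbf{Boundedness and integral Lichnerowicz.} For $\psi \in \sC_0^\infty(M\setminus V,S)$, the pointwise identity $D\psi = \sum_i c(e_i)\nabla_{e_i}\psi$, together with the fact that Clifford multiplication by a unit vector is an isometry, gives $|D\psi|^2 \leq n|\nabla\psi|^2$. Hence $\|D\psi\|_{L^2} \leq \sqrt{n}\,\|\psi\|_{H^1_\rho}$, and $D$ extends to a bounded map $H^1_\rho \to L^2$. For such $\psi$, integrating~\eqref{eq:pLich} over $M\setminus V$ gives the stated identity with no boundary contribution since the support is compact in $M\setminus V$. To extend the identity to all of $H^1_\rho$, I verify continuity of each term in the $H^1_\rho$ norm: $\|D\psi\|^2$ is continuous by the boundedness just established, $\|\nabla\psi\|^2$ by the definition of the norm, and for $(R\psi,\psi)$ I would use that $\rho^2 R \in L^\infty(M\setminus V)$. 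The latter follows because asymptotic flatness of order $\tau>0$ yields $R = \sO(\rho^{-\tau-2})$ at infinity, while on the compact core $R$ is smooth and bounded and $\rho \geq 1$; so $|(R\psi,\psi)| \leq \|\rho^2 R\|_\infty\,\|\psi\|_{L^2_\rho}^2 \leq \|\rho^2 R\|_\infty\,\|\psi\|_{H^1_\rho}^2$.

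\textbf{Injectivity.} Assume $R \geq 0$ and $D\psi = 0$ with $\psi \in H^1_\rho(M\setminus V,S)$. The Lichnerowicz identity forces $\|\nabla\psi\|^2 = 0$ since both $\|\nabla\psi\|^2$ and $\tfrac{1}{4}(R\psi,\psi)$ are nonnegative. Because $D$ is a first-order elliptic operator with smooth coefficients on $M\setminus V$, elliptic regularity gives $\psi \in \sC^\infty(M\setminus V,S)$, and therefore $\nabla\psi = 0$ pointwise, so $\psi$ is parallel and $|\psi|$ is locally constant. Since $V$ is a stratified subset of codimension at least $2$, its complement $M\setminus V$ is connected, and thus $|\psi|$ equals a single constant $c$ on all of $M\setminus V$. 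On any asymptotic end $M_l$ the metric is Euclidean to leading order, so $\int_{M_l}\rho^{-2}\,dV$ behaves like $\int_T^\infty r^{n-3}\,dr$, which diverges for $n \geq 4$; together with $\psi/\rho \in L^2$ this forces $c = 0$, hence $\psi \equiv 0$.

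The only real subtlety is the density step in the Lichnerowicz identity. Near $V$ the weight $\rho$ is bounded below by $1$ and therefore offers no local enhancement, so continuity of $\psi \mapsto (R\psi,\psi)$ must be obtained through a genuine global $L^\infty$ bound on $\rho^2 R$, which the asymptotic-flatness hypothesis supplies. The remaining ingredients (the pointwise bound for $D$, connectedness of $M\setminus V$, and the integrability obstruction on the ends) are routine once the dimension constraint $n \geq 4$ imposed in Section~\ref{sec:introduction} is invoked.
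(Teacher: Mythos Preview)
Your proposal is correct and follows essentially the same approach as the paper: establish the Lichnerowicz identity on $\sC_0^\infty(M\setminus V,S)$ and extend by density using the decay $R=\sO(\rho^{-\tau-2})$ (equivalently $\rho^2 R\in L^\infty$), then deduce injectivity from covariant constancy and the $L^2_\rho$ constraint. You are simply more explicit than the paper in spelling out the elliptic regularity, the connectedness of $M\setminus V$, and the divergence of $\int \rho^{-2}$ on the ends, all of which the paper compresses into a single sentence.
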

\begin{proof}
 The boundedness of the Dirac operator follows immediately from its
 definition.
 To prove the integral Lichnerowicz formula, note that since the metric
 $g$ is asymptotically flat of order $\tau >0$, then in the induced frame
 $\{x_i\}$ on each of the asymptotically flat ends $M_{l}$ of $M$ we have
 the scalar curvature
 \begin{align}\label{eq:sc-decay} 
   R &= g^{jk} \left(\partial_i \Gamma^{i}_{jk}-\partial_k\Gamma^{i}_{ij} +
     \Gamma^{i}_{il}\Gamma^{l}_{jk} -
     \Gamma^{i}_{kl}\Gamma^{l}_{ij}\right)\nonumber \\
     & = \partial_j(\partial_i g_{ij} -\partial_j g_{ii}) + \sO(\rho^{-2\tau-2}),
 \end{align}
 and thus $R = \sO(\rho^{-\tau -2})$. From here it follows that both
 sides of the formula~\eqref{boch1} define continuous functionals on
 $H^1_{\rho}(M\setminus V, S)$ which agree on the dense
 subspace $\sC^{\infty}_0 (M\setminus V, S)$.

 The injectivity statement is a consequence of the Lichnerowicz
 formula.  Let $\psi \in H^1_{\rho}(M\setminus V,S)$ so that $D\psi
 =0$. Since $R \geq 0$, formula~\eqref{boch1} implies that $\psi$ is
 covariantly constant.  Thus it must be identically $0$ in order to be
 in $L^2_{\rho}(M\setminus V,S)$.
\end{proof}

The weighted Sobolev spaces $H^1_{\rho}(M\setminus V, S)$ and
$L^2_{\rho}(M\setminus V, S)$ are well adapted for the coercivity results
which we prove in Section~\ref{sec:coercive}. Because $M\setminus V$ is incomplete, we also need to take care in defining the domain of the Dirac operator. As an operator on the smooth compactly supported
sections of $M\setminus V$, the Dirac operator has two natural
extensions as an unbounded operator on $L^2(M\setminus V,S)$. The {\em
  minimal extension} of $D$ has as domain the {\em minimal domain}
$\Dmin(D)$, the completion of $\sC^{\infty}_0(M\setminus V, S)$ in the
graph norm, $\|\psi\| + \|D\psi\|$.  The {\em maximal extension} has
as domain the {\em maximal domain} $\Dmax(D)$, which consists of those
$\psi \in L^2(M\setminus V, S)$ so that $D\psi \in L^2(M\setminus V,
S)$.

We have the following properties of the minimal extension of the Dirac
operator on $M\setminus V$.
\begin{corollary}\label{cor:Dmin-ns}
  Let $(M,g)$ be a nonspin asymptotically flat Riemannian manifold of
  order $\tau >0$. Then the minimal extension of the Dirac operator $D$
  on $M\setminus V$ satisfies:
 \begin{enumerate}
  \item $\Dmin (D) \subset H_{\rho}^1(M\setminus V,S)$,
  \item Given $\psi \in H_{\rho}^1(M\setminus V, S)$, $\eta \psi \in
    \Dmin(D)$ for all $\eta \in \sC^{\infty}_0(M)$,
  \item If the scalar curvature is nonnegative, then the null-space of
    the Dirac operator on the minimal domain is trivial.
 \end{enumerate}
\end{corollary}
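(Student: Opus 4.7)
The plan is to address the three claims in sequence, with claim (1) providing the key inequality driving (2) and (3). For (1), my strategy is to show that on the dense subspace $\sC^{\infty}_0(M\setminus V, S)$ the graph norm of $D$ controls the $H^1_\rho$ norm. I would apply the integral Lichnerowicz identity~\eqref{boch1} to a smooth compactly supported spinor to obtain $\|\nabla\psi\|^2 = \|D\psi\|^2 - \tfrac{1}{4}(R\psi,\psi)$. Since $R$ is smooth on the compact part of $M$ and decays like $\sO(\rho^{-\tau-2})$ at infinity by~\eqref{eq:sc-decay}, $R$ is bounded on $M$, so $\|\nabla\psi\|^2 \leq \|D\psi\|^2 + C\|\psi\|^2$. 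Because $\rho \geq 1$ globally, one also has $\|\psi/\rho\|_{L^2} \leq \|\psi\|_{L^2}$, and combining yields
\[
  \|\psi\|_{H^1_\rho} \leq C\bigl(\|\psi\|_{L^2} + \|D\psi\|_{L^2}\bigr),
\]
so every graph-Cauchy sequence in $\sC^{\infty}_0(M\setminus V, S)$ is $H^1_\rho$-Cauchy with the same limit, which gives~(1).

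For (2), I would fix $\eta \in \sC^{\infty}_0(M)$ and $\psi \in H^1_\rho(M\setminus V, S)$ and choose $\psi_n \in \sC^{\infty}_0(M\setminus V, S)$ converging to $\psi$ in the $H^1_\rho$ norm. Each $\eta \psi_n$ still lies in $\sC^{\infty}_0(M\setminus V, S)$ because $\psi_n$ is already supported away from $V$. On $\supp(\eta)$ the weight $\rho$ is bounded, so $\|\eta(\psi_n-\psi_m)\|_{L^2}$ is controlled by $\|\psi_n-\psi_m\|_{L^2_\rho}$ and tends to $0$. By the Leibniz rule
\[
  D(\eta\psi_n) - D(\eta\psi_m) = c(\grad\eta)(\psi_n-\psi_m) + \eta\, D(\psi_n-\psi_m),
\]
the first summand is compactly supported and controlled as above, while the second is bounded in $L^2$ by $\|\eta\|_\infty\|\nabla(\psi_n-\psi_m)\|_{L^2}$ via the pointwise estimate $|D\varphi| \leq C|\nabla\varphi|$. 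Both tend to $0$, so $\eta\psi_n$ is Cauchy in the graph norm and its graph-norm limit must equal $\eta\psi$, placing $\eta\psi \in \Dmin(D)$. For (3), any $\psi \in \Dmin(D)$ with $D\psi = 0$ lies in $H^1_\rho(M\setminus V, S)$ by (1), and the injectivity statement already established in Proposition~\ref{prop:boch1} then forces $\psi = 0$.

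The one subtlety to track is the interplay between the unweighted norms defining the graph norm of $D$ and the weighted norms defining $H^1_\rho$. The inclusion $L^2 \hookrightarrow L^2_\rho$ coming from $\rho \geq 1$, together with the uniform (not merely $L^1$) boundedness of $R$, is what makes the graph norm dominate the $H^1_\rho$ norm. Consequently none of the Hardy-type inequalities from Section~\ref{sec:pe} are required for this corollary; they will enter later to control the behavior of Witten spinors near $V$.
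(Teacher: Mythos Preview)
Your proof is correct and follows essentially the same approach as the paper: for (1) you use the Lichnerowicz identity~\eqref{boch1} together with the boundedness of $R$ (from smoothness on $K$ and the decay~\eqref{eq:sc-decay} at infinity) and the inequality $\rho\geq 1$ to show that the graph norm of $D$ dominates the $H^1_\rho$ norm on $\sC^\infty_0(M\setminus V,S)$, exactly as the paper does. The paper treats (2) and (3) as immediate, whereas you spell them out; your added detail is correct and in fact your formulation of (1) (deriving the norm inequality and applying it to differences) is slightly cleaner than the paper's phrasing.
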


\begin{proof}
 The only claim that needs an argument is the first one. 

Let $\psi \in \Dmin(D)$. Then there exists $\{\psi_j\} \subset
 \sC^{\infty}_0 (M\setminus V, S)$ converging to $\psi$ in the graph
 norm of $D$. Since $\rho \geq 1$, it follows that $\psi_j \to \psi$
 in the $L^2_{\rho}$-norm.  Moreover, the Lichnerowicz
 formula~\eqref{boch1} gives
 \[
   \|D \psi_j \|^2 = \| \nabla \psi_j \|^2 + \frac{1}{4} (R\psi_j, \psi_j).
 \]
 Since the scalar curvature is bounded, the sequence $\{\|\nabla
   \psi_j\|\}$ is convergent.  Thus, $\{\psi_{j}\}$ converges in
 $H^1_{\rho}$-norm to $\psi$ and hence $\psi \in H^1_{\rho}(M\setminus
 V, S)$.
\end{proof}
\begin{remark}\label{rem:Dmaxmin}
 In the case of a complete Riemannian spin manifold, the maximal and
 the minimal extension coincide,~\cite[Theorem 1.17]{gl}.   
\end{remark}

\subsection{Growth conditions for the Witten spinor}
We now show that an asymptotically constant harmonic spinor
constructed as in Section \ref{sec:WittenProof} ({\em assuming
  existence of a solution to (\ref{eq:u})}) satisfies the growth
  conditions of a Witten spinor. This result will be used in the proof
  of our Theorem A. We start with the following  
  regularity result on the asymptotically flat ends.

\begin{proposition}\label{prop:bochend}
  Let $(M,g)$ be a nonspin Riemannian manifold which is asymptotically
  flat of order $\tau>0$. If $\frac{v}{\rho}\in L^2(M\setminus V,S)$
  and $Dv\in L^2(M\setminus V,S)$, then $\nabla v\in
  L^2(M_l,S\lvert_{M_l})$, for every end $(M_l,Y_l)$ of $M$.
\end{proposition}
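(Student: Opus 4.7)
The plan is to apply the integral Lichnerowicz formula of Proposition~\ref{prop:boch1} to the cutoff spinor $\eta_R v$ and pass to the limit $R\to\infty$. Fix the end $M_l$ and choose $\phi\in C^\infty(M)$ equal to $1$ on $M_l\cap\{\rho\geq T_0\}$ for some $T_0>T$, and vanishing off a slightly larger neighborhood of that set; since $\rho\equiv 1$ near $V$ and $T_0>1$, the support of $\phi$ lies in $M\setminus V$. Let $\chi_R(\rho)$ be a radial cutoff that is $1$ for $\rho\leq R$, $0$ for $\rho\geq 2R$, with $|d\chi_R|\leq 2/R$, and set $\eta_R:=\phi\chi_R$. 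The hypotheses $v/\rho\in L^2$ and $Dv\in L^2$ give $v\in L^2_{\mathrm{loc}}$ and $Dv\in L^2_{\mathrm{loc}}$, so elliptic regularity for $D$ on the open manifold $M\setminus V$ gives $v\in H^1_{\mathrm{loc}}(M\setminus V)$. Hence $\eta_R v$ is $H^1$ with compact support in $M\setminus V$; mollification places $\eta_R v\in\Dmin(D)\subset H^1_\rho(M\setminus V,S)$.

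Applying~\eqref{boch1} to $\eta_R v$ and using the identities $D(\eta_R v)=c(d\eta_R)v+\eta_R Dv$ and $\nabla(\eta_R v)=d\eta_R\otimes v+\eta_R\nabla v$, the pointwise relation $|c(d\eta_R)v|^2=|d\eta_R|^2|v|^2=|d\eta_R\otimes v|^2$ cancels the pure quadratic-in-$d\eta_R$ terms. Young's inequality on the surviving cross terms yields
\begin{equation*}
  \tfrac12\|\eta_R\nabla v\|^2\leq 2\|\eta_R Dv\|^2+3\|d\eta_R\otimes v\|^2+\tfrac14\int\eta_R^2|R|\,|v|^2.
\end{equation*}

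The main task is to bound the right-hand side uniformly in $R$. The first term is at most $2\|Dv\|^2<\infty$. Writing $d\eta_R=\chi_R\,d\phi+\phi\,d\chi_R$, the $d\phi$ piece is supported on a fixed compact subset of $M\setminus V$ where $v\in H^1_{\mathrm{loc}}$, and the $d\chi_R$ piece satisfies
\begin{equation*}
  \int\phi^2|d\chi_R|^2|v|^2\leq \frac{4}{R^2}\int_{R\leq\rho\leq 2R}|v|^2 \leq 16\int_{\rho\geq R}\frac{|v|^2}{\rho^2}\to 0
\end{equation*}
as $R\to\infty$ by dominated convergence, since $v/\rho\in L^2$. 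For the curvature term, expansion~\eqref{eq:sc-decay} gives $|R|=\sO(\rho^{-\tau-2})$ on $M_l$, so $\int\eta_R^2|R|\,|v|^2$ is bounded by a constant multiple of $\|v/\rho\|^2$ plus a bounded contribution from the compact transition region where $R$ is smooth and $v$ is $L^2_{\mathrm{loc}}$. Monotone convergence as $R\to\infty$ then gives $\nabla v\in L^2(M_l\cap\{\rho\geq T_0\})$. The residual piece $M_l\cap\{\rho<T_0\}$ is relatively compact in $M\setminus V$, so local elliptic regularity gives $\nabla v\in L^2$ there as well, completing the argument. The only subtle point is the bookkeeping at infinity; the weighted condition $v/\rho\in L^2$ is precisely what suppresses the $d\chi_R$ contribution in the limit.
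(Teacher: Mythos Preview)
Your argument is correct and follows essentially the same route as the paper: apply the integral Lichnerowicz formula to a compactly supported cutoff of $v$ on the end, use Young's inequality to isolate $\|\eta_R\nabla v\|^2$, bound the error terms via $\|Dv\|^2$, $\|v/\rho\|^2$, and the decay $R=\sO(\rho^{-\tau-2})$, and pass to the limit. Your explicit invocation of interior elliptic regularity to justify $\eta_R v\in H^1_\rho$ is a careful touch that the paper glosses over.
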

\begin{proof}
  Fix an asymptotically flat end $M_l$, and let $\eta$ be a smooth
  cutoff function supported in $M_l$ in the region $\rho \geq L$ and
  identically $1$ in the region $\rho \geq 2L$, for some $L>T$.  Next
  choose a sequence $\{\gamma_j\}$ of smooth cutoff functions on $M$,
  compactly supported in the region $\rho \leq 2j$, identically equal
  to $1$ in the region $\rho \leq j$ and so that $|d\gamma_j| \leq
  \frac{2}{\rho}$. Let \( \eta_j: = \eta \gamma_j \).  Since $\eta_jv
  \in H^1_{\rho}(M\setminus V, S)$, the Lichnerowicz
  formula~\eqref{boch1} gives
\begin{align*}
   \| D (\eta_jv)\|^2 
   & = \| \nabla (\eta_jv)\|^2 + \frac{1}{4}(R \eta_jv, \eta_jv)\\
   & =  \| \eta_j\nabla v\|^2+\| |d\eta_j|v\|^2 +2 (\eta_j\nabla
   v,d\eta_j\otimes v ) + \frac{1}{4}(R \eta_jv, \eta_jv)\\
   & \geq \frac{1}{2}\| \eta_j\nabla v\|^2-\| |d\eta_j|v\|^2 + \frac{1}{4}(R \eta_jv, \eta_jv).
\end{align*}
We expand the left-hand side of the above as
$$\| D (\eta_jv)\|^2 = \| \eta_jD v\|^2+\| |d\eta_j|v\|^2+2( \eta_jDv,c(d\eta_j)v),$$ 
and after rearranging, we obtain
\begin{align*}
 \frac{1}{2} \|\eta_j \nabla v\|^2
 &  \leq \| \eta_jD v\|^2 +2( \eta_jDv,c(d\eta_j)v) 
     +2 \| |d\eta_j| v\|^2 
 - \frac{1}{4}(R \eta_jv, \eta_jv)\\
 &  \leq \|D v\|^2 + C \|\frac{v}{\rho}\|^2,
\end{align*}
for $C > 0$ a positive constant independent of $j$ and $v$. Here we used the 
fact that since the manifold is asymptotically flat of order $\tau>0$, 
the scalar curvature $R$ behaves like $\sO(\rho^{-\tau -2})$ on the
asymptotically flat ends (see identity~\eqref{eq:sc-decay}).
Hence we may take the limit as $j\rightarrow\infty$ to deduce that
$\eta \nabla v\in L^2(M_l,S\lvert_{M_l})$.
\end{proof}
 
\begin{corollary}\label{prop:vH1rho}
 Let $(M,g)$ a nonspin Riemannian manifold which is asymptotically
 flat of order $\tau >0$.
 Let $u \in H^1_{\rho} (M\setminus V, S)$ be a smooth spinor so that
 $D^2 u \in L^2 (M\setminus V, S)$.
Then the spinor $v: = Du \in \Dmax(D)$ and $\nabla v \in
 L^2(M_l,S\lvert_{M_l})$ for each end $(M_l,Y_l)$ of $M$.
\end{corollary}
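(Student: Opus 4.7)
The plan is to deduce this corollary almost directly from Proposition~\ref{prop:boch1} and Proposition~\ref{prop:bochend}, with the only work being the verification of the hypotheses of those results for $v=Du$.

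First I would establish that $v\in\Dmax(D)$. By Proposition~\ref{prop:boch1}, the Dirac operator restricts to a bounded linear map
\[
 D:H^1_\rho(M\setminus V,S)\longrightarrow L^2(M\setminus V,S),
\]
so the hypothesis $u\in H^1_\rho(M\setminus V,S)$ immediately gives $v=Du\in L^2(M\setminus V,S)$. On the other hand, $Dv=D^2u\in L^2(M\setminus V,S)$ by assumption. Together these two facts say precisely that $v\in\Dmax(D)$.

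Next, to obtain the end estimate, I would apply Proposition~\ref{prop:bochend} to $v$. That proposition requires the two inputs $\tfrac{v}{\rho}\in L^2(M\setminus V,S)$ and $Dv\in L^2(M\setminus V,S)$. The second input has just been established. For the first, recall that by construction $\rho$ has been extended so that $\rho\geq 1$ on $M$; combined with $v\in L^2(M\setminus V,S)$ this yields
\[
 \bigl\|\tfrac{v}{\rho}\bigr\|_{L^2}\leq \|v\|_{L^2}<\infty.
\]
Hence both hypotheses of Proposition~\ref{prop:bochend} are satisfied, and the conclusion $\nabla v\in L^2(M_l,S\lvert_{M_l})$ for each asymptotically flat end $M_l$ follows.

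There is no real obstacle here: the corollary is essentially a bookkeeping consequence of the two propositions, and the only substantive point is to notice that $\rho\geq 1$ lets one upgrade membership in $L^2$ to membership in $L^2_\rho$, so that the end-regularity result of Proposition~\ref{prop:bochend} becomes applicable to $v$ even though $v$ itself was only assumed to lie in $L^2$ rather than in the weighted space $L^2_\rho$.
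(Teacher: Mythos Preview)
Your proof is correct and matches the paper's approach: the corollary is stated without proof in the paper, as it is an immediate consequence of Proposition~\ref{prop:bochend} once one notes that $v=Du\in L^2$ (by Proposition~\ref{prop:boch1}) and hence $\frac{v}{\rho}\in L^2$ since $\rho\geq 1$. You have simply made explicit the bookkeeping the paper leaves to the reader.
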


Recall from the introduction that a spinor is called constant near
infinity if it is constant on each end $M_l$ with respect to a frame
induced by the chosen asymptotically flat coordinate chart
$(M_l,Y_l)$. Let $\psi_0$ be a smooth spinor, constant near infinity
and vanishing in a neighborhood of $V$. Since the coefficients of the
spin connection associated to the asymptotically flat metric $g$
differ from the coefficients of the spin connection associated to the
Euclidean metric by terms which decay like $\sO(\rho^{-\tau-1})$, it
follows that
\begin{equation}\label{eq:cs}
  \rho^{\tau+1} |D\psi_0| \quad \text{and} \quad 
  \rho^{\tau+2} |D^2 \psi_0|
\end{equation}
are bounded on $M\setminus V$.

As in the case of asymptotically flat {\em spin} manifolds (see
Section~\ref{sec:WittenProof}), we will construct the Witten spinor in
Theorem A by solving $D^2u = -D\psi_0,$ and setting $\psi = \psi_0
+Du.$ As a corollary to Corollary \ref{prop:vH1rho}, we see that
$\psi$ satisfies the conditions in Definition~\ref{def:ws} of a Witten
spinor:
\begin{corollary}\label{cor:scws}
 Let $(M,g)$ a nonspin Riemannian manifold which is asymptotically
 flat of order $\tau >\frac{n-2}{2}$.   
 Let $\psi_0$ be a smooth spinor on
 $M\setminus V$ which is constant at infinity and supported away from $V$.
 Assume that there exists $u \in H^1_{\rho}(M\setminus
 V, S)$ so that
 \begin{equation*}
   D^2 u = -D \psi_0.
 \end{equation*}
  Let $v = Du$. Then the spinor
 \[
   \psi: = v + \psi_0
 \]
 is a Witten spinor.
\end{corollary}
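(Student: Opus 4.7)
The plan is to verify the three defining properties of a Witten spinor from Definition~\ref{def:ws} directly for $\psi := v + \psi_0 = Du + \psi_0$, using that $\psi - \psi_0 = v$. None of the three checks is deep; the work has already been done in Proposition~\ref{prop:bochend} and Corollary~\ref{prop:vH1rho}, and here we only have to assemble the pieces.

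First I would dispose of condition (2), strong harmonicity, which is essentially algebraic: $D\psi = Dv + D\psi_0 = D^2 u + D\psi_0 = 0$ by the hypothesis $D^2 u = -D\psi_0$.

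For condition (1), $\tfrac{\psi - \psi_0}{\rho} = \tfrac{v}{\rho} \in L^2(M\setminus V, S)$: because $u \in H^1_\rho(M\setminus V, S)$, we have $\nabla u \in L^2$ by definition of the weighted norm (\ref{eq:H1rho}), and then the pointwise bound $|Du| \leq \sqrt{n}\,|\nabla u|$ gives $v = Du \in L^2(M\setminus V, S)$. Since $\rho \geq 1$ on $M$, this immediately yields $\tfrac{v}{\rho} \in L^2$.

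For condition (3), that $\nabla v \in L^2(M_l, S|_{M_l})$ on each asymptotically flat end, I would invoke Corollary~\ref{prop:vH1rho} applied to $u$. The only hypothesis of that corollary not already assumed is $D^2 u \in L^2(M\setminus V, S)$, which I would verify as follows: since $D^2 u = -D\psi_0$ and $\psi_0$ is supported away from $V$, the function $|D\psi_0|$ is smooth and compactly supported inside the compact part of $M$, while on each end the constancy of $\psi_0$ at infinity together with the estimate (\ref{eq:cs}) gives $|D\psi_0| = \sO(\rho^{-\tau-1})$. Consequently $\|D\psi_0\|_{L^2}^2$ is bounded by a finite interior contribution plus an end tail of the form
\[
 \int_T^{\infty} \rho^{-2\tau - 2}\,\rho^{n-1}\,d\rho,
\]
which is finite precisely when $2\tau + 2 > n$, i.e.\ $\tau > \tfrac{n-2}{2}$---our standing hypothesis. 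Thus $D^2 u \in L^2$, Corollary~\ref{prop:vH1rho} applies, and condition (3) follows. Since all three conditions have been verified, $\psi$ is a Witten spinor asymptotic to $\psi_0$. The main point worth flagging is simply that the mass-decay lower bound $\tau > \tfrac{n-2}{2}$ enters in exactly one place, namely to guarantee $D\psi_0 \in L^2$; the rest is bookkeeping.
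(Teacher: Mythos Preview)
Your proof is correct and follows essentially the same approach as the paper's: verify $D\psi=0$ directly, use $u\in H^1_\rho$ to get $v=Du\in L^2$ (hence $v/\rho\in L^2$), and invoke Corollary~\ref{prop:vH1rho} for $\nabla v\in L^2(M_l)$ after checking $D^2u=-D\psi_0\in L^2$ via the decay $|D\psi_0|=\sO(\rho^{-\tau-1})$ and $\tau>\frac{n-2}{2}$. The paper is terser---it packages conditions (1) and (3) together as ``$v\in H^1_\rho$''---but the substance and the single use of the order hypothesis are identical.
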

\begin{proof}
  By construction $\psi$ is in the null-space of $D$. From
  Definition~\ref{def:ws} of Witten spinors, we only need to
  check that $v\in H^1_{\rho}(M\setminus V, S)$.
  Since the spinor $\psi_0$ is constant at infinity, it follows that
  $\rho^{\tau +1} |D\psi_0|$ is bounded. Since $\tau > \frac{n-2}{2}$,
  $\rho^{\tau +1} |D\psi_0| \in L^2 (M\setminus V, S)$, and thus the
  spinor $u$ satisfies the hypothesis of
  Proposition~\ref{prop:vH1rho}. Hence $v \in H^1_{\rho}(M\setminus V,
  S)$.
\end{proof}

\subsection{Obtaining the mass from a Witten spinor}

For spinors that are not in $H^1_{\rho}(M\setminus V, S)$, the
integral Lichnerowicz formula~\eqref{boch1} need not hold,
because the integration by parts introduces boundary terms. For the
Witten spinors, the  boundary terms arising from  the
asymptotically flat ends of $M$ give
  exactly the mass. 
\begin{proposition}\label{prop:mass}
  Let $(M,g)$ be a nonspin Riemannian manifold which is asymptotically
  flat of order $\tau >\frac{n-2}{2}$.  Let $\psi_0$ be a constant
  spinor on the asymptotically flat ends of $M$ with $|\psi_0|\to 1$
  at infinity on each of them. Let $\psi$ be a Witten
  spinor on $M \setminus V$ asymptotic to $\psi_0$.
  Assume there exists a smooth cutoff function $\chi$ supported in a
  neighborhood of $V$ and equal to $1$ in a smaller neighborhood of
  $V$ so that $\chi \psi \in \Dmin(D)$.
 Then
 \begin{equation}\label{eq:mass-pos}
   \int_{M\setminus V} |\nabla \psi|^2 + \frac{R}{4} |\psi|^2 
    = \frac{c(n)}{4} \mass(M,g).
 \end{equation}
  Moreover, if $R\in L^1(M)$ and nonnegative, the mass is
   finite and nonnegative. 
\end{proposition}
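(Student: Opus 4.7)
The plan is to apply the pointwise Lichnerowicz identity to a smooth approximation of $\psi$ that vanishes in a neighborhood of $V$, integrate over $\{\rho\leq L\}\setminus V$, and pass to a limit in which the singular-boundary contribution at $V$ disappears while the spherical boundary at infinity produces the mass via Bartnik's classical computation. The main obstacle is precisely this ideal-boundary term at $V$: without the hypothesis $\chi\psi\in\Dmin(D)$, integration by parts against $\psi$ would produce an uncontrolled contribution along the singular set. The $\Dmin$ hypothesis is what permits the replacement of $\chi\psi$ by spinors vanishing in a full neighborhood of $V$, forcing this term to be zero at every stage of the approximation.

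Since $\chi\psi\in\Dmin(D)$, choose $\phi_j\in\sC^{\infty}_0(M\setminus V,S)$ with $\phi_j\to\chi\psi$ in the graph norm; by Corollary~\ref{cor:Dmin-ns} this convergence also holds in $H^1_\rho$, and we may arrange the $\phi_j$ to be supported in a fixed enlargement of $\supp\chi$. Arranging $\supp\chi$ to lie inside the region where $\psi_0=0$ gives $\chi\psi_0\equiv 0$, and combined with conditions (1) and (3) of Definition~\ref{def:ws} on each end this yields $\psi-\psi_0\in H^1_\rho(M\setminus V,S)$. Set
$$\psi_j:=\phi_j+(1-\chi)\psi.$$
Since $1-\chi$ vanishes near $V$ and each $\phi_j$ is compactly supported in $M\setminus V$, each $\psi_j$ vanishes in some open neighborhood of $V$. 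Using $D\psi=0$,
$$D\psi_j=D\phi_j-c(d\chi)\psi\;\xrightarrow{L^2}\;c(d\chi)\psi-c(d\chi)\psi=0,$$
and similarly $\nabla\psi_j\to\nabla\psi$ in $L^2(M\setminus V,S)$ while $\psi_j\to\psi$ in $L^2_\rho$.

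Fix $L$ large enough that $\supp\chi$ and every $\supp\phi_j$ lie in $\{\rho<L\}$. Because $\psi_j$ vanishes near $V$, integrating the pointwise Lichnerowicz identity $|\nabla\psi_j|^2+\tfrac{R}{4}|\psi_j|^2-|D\psi_j|^2=\div W_{\psi_j}$ over $\{\rho\leq L\}\setminus V$ produces only a boundary term at $\{\rho=L\}$, where $\phi_j=\chi=0$ and hence $W_{\psi_j}=W_\psi$. Letting $j\to\infty$ gives
$$\int_{\{\rho\leq L\}\setminus V}\!\!\left(|\nabla\psi|^2+\tfrac{R}{4}|\psi|^2\right)=\int_{\{\rho=L\}}\langle W_\psi,\nu\rangle\,d\sigma.$$
The left integrand lies in $L^1(M\setminus V)$: on the ends use $\nabla(\psi-\psi_0)\in L^2$, $|\nabla\psi_0|^2=\sO(\rho^{-2\tau-2})\in L^1$ (since $\tau>(n-2)/2$), and $R|\psi|^2\leq CR\in L^1$; near $V$ use $\nabla(\chi\psi)\in L^2$, $\chi\psi\in L^2$, and boundedness of $R$. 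The right-hand side is Bartnik's classical computation: writing $\psi=\psi_0+v$ with $v\in H^1_\rho$ and invoking the decay of the connection one-forms of $g$, the surface integral reduces on each end to $\tfrac{1}{4}\int_{S^{n-1}_L}\tfrac{x^j}{\rho}(\partial_i g_{ij}-\partial_j g_{ii})\,d\sigma+o(1)$, which by~\eqref{eq:adm_mass} sums to $\tfrac{c(n)}{4}\mass(M,g)$ as $L\to\infty$. This proves~\eqref{eq:mass-pos}; under $R\geq 0$ the left integrand is nonnegative, so $\mass(M,g)\geq 0$, and $R\in L^1$ makes the left side finite.
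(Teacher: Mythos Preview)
Your proof is correct and follows the same strategy as the paper: use the hypothesis $\chi\psi\in\Dmin(D)$ to eliminate the ideal-boundary contribution at $V$, then extract the mass from the spherical boundary at infinity. You make the first step explicit via the approximants $\psi_j=\phi_j+(1-\chi)\psi$, while the paper simply asserts that the $\Dmin$ condition kills the $V$-contribution; conversely, the paper details the analysis at infinity --- splitting $\int_{S_L}\langle\nabla_\nu\psi+c(\nu)D\psi,\psi\rangle$ into three pieces, handling the $\langle(\nabla_\nu+c(\nu)D)v,\psi_0\rangle$ cross-term by integration by parts on the sphere together with the symmetry of the second fundamental form, and showing the remaining terms lie in $L^1([T,\infty),d\rho/\rho)$ --- whereas you defer this to Bartnik's classical computation.

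One small caution: your bound $R|\psi|^2\leq CR$ on the ends presumes $|\psi|$ is pointwise bounded, which does not follow directly from $v=\psi-\psi_0\in H^1_\rho$. It is cleaner to argue in the order the paper implicitly uses: establish the identity on $\{\rho\leq L\}$, show the \emph{surface} integral converges as $L\to\infty$ (this is the cited Bartnik computation, which only needs $\frac{v}{\rho},\nabla v,\nabla\psi_0\in L^2$ on the ends), and conclude that the volume integral converges to the same limit.
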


\begin{remark}\label{rem:psi}
 When the asymptotic values $\psi_{0l}$ of the Witten spinor $\psi$ in
 Proposition~\ref{prop:mass} on the asymptotically flat ends $M_l$ of
 $M$ have not necessarily norm $1$, we obtain
 \begin{equation}\label{eq:mass-pos-gen}
   \int_{M\setminus V} |\nabla \psi|^2 + \frac{R}{4} |\psi|^2 
    = \frac{c(n)}{4} \sum_{l}\|\psi_{0l}\|^2\mass(M_l,g).
 \end{equation}
\end{remark}

\begin{proof}[Proof of Proposition~\ref{prop:mass}]
  The condition $\chi \psi \in \Dmin(D)$ %
  implies that, as an ideal boundary, $V$ makes no contribution when
  integrating the divergence term on the right-hand side of the
  pointwise Lichnerowicz formula~\eqref{eq:pLich}. Since $\psi$ is a
  Witten spinor, $D\psi =0$. Integrating~\eqref{eq:pLich} on
  $M\setminus V$ for this $\psi$, and applying the divergence theorem
  to its right-hand side, we obtain
 \begin{equation}\label{eq:mint}
  \int_{M\setminus V} |\nabla \psi|^2 + \frac{R}{4}|\psi|^2 
  = \sum_{l=1}^L \lim_{\rho\to \infty} \int_{S^{n-1}_{\rho,l}}
  \<\nabla_{\nu}\psi +c(\nu)D\psi, \psi\> d\sigma.
 \end{equation}            
 Here $S^{n-1}_{\rho,l}$ is the sphere of radius $\rho$ in the
 asymptotically flat coordinate chart $(M_l, Y_l)$ with outward normal
 vector $\nu$ and volume form $d\sigma$.  Let $v = \psi - \psi_0$. We
 split the integral on the right-hand side of~\eqref{eq:mint} into
 three integrals
\begin{align}\label{eq:b-1}
  \int_{S^{n-1}_{\rho,l}} \<\nabla_{\nu} \psi_0 + c(\nu)D\psi_0,\psi_0\> d\sigma
     + \int_{S^{n-1}_{\rho,l}} \<  \nabla_{\nu} v + c(\nu)\nabla_{\nu}v,\psi_0\> d\sigma
     + \int_{S^{n-1}_{\rho,l}} \<\nabla_{\nu} \psi,v\> d\sigma.
\end{align}
As follows from Witten's proof (see~\cite{bartnik, pt, lp}), the first
integral converges to $\frac{c(n)}{4}\mass(M_l,g)$ as $\rho \to
\infty$. It remains to prove that the other two converge to $0$ as
$\rho \to \infty$.

In the second integral, we rewrite the integrand in an orthonormal
frame $\{e_i\}_{1\leq i\leq n}$ on the end $M_l$,  with $e_1 = \nu$ as 
\begin{align}\label{eq:b-12}
\sum_{j=2}^n \<\frac{1}{2}[c(\nu),c(e_j)]\nabla_{e_j}v ,\psi_0\>  
= &\sum_{j=2}^ne_j\<\frac{1}{2}[c(\nu),c(e_j)] v ,\psi_0\>
-  \sum_{j=2}^n\< \frac{1}{2}[c(\nu)c(\nabla_{e_j}e_j)] v ,\psi_0\>\nonumber \\
&-  \sum_{j=2}^n\<\frac{1}{2}[c(\nabla_{e_j}\nu),c(e_j)]v ,\psi_0\> 
 - \sum_{j=2}^n\<\frac{1}{2}[c(\nu),c(e_j)]v ,\nabla_{e_j}\psi_0\>.
\end{align}
We recognize the first two sums on the right-hand side of~\eqref{eq:b-12} as the divergence (on the sphere) of the vector field $$U:=\sum_{j=2}^n\< \frac{1}{2}[c(\nu),c(e_j)]v ,\psi_0\>e_j.$$ Hence these terms integrate to zero on the sphere. 
The third term on the right-hand side of~\eqref{eq:b-12}
vanishes due to the symmetry of the second fundamental
form $A$ of $S^{n-1}_{\rho,l} \subset M_l$. Explicitly, we have
\begin{equation*}
  \sum_{j=2}^n [c(\nabla_{e_j}\nu),c(e_j)]
   =  \sum_{i,j=2}^n A(e_{j}, e_{i})[  c(e_{i}) ,c(e_{j})]  =0.
\end{equation*}
Thus, it remains to evaluate
\begin{multline}\label{eq:b-3}
 \lim_{\rho\to \infty}\int_{S^{n-1}_{\rho,l}}  \sum_{j=2}^n\< \frac{1}{2} [c(\nu),
 c(e_j)]v, \nabla_{e_j}\psi_0\> d\sigma 
+ \lim_{\rho\to \infty}\int_{S^{n-1}_{\rho,l}} \<\nabla_{\nu} \psi,v\> d\sigma\\
=\lim_{\rho\to \infty}\int_{S^{n-1}_{\rho,l}} 
   \left( \< v,\nabla_{\nu}\psi_0+c(\nu)D\psi_0\>
+  \<\nabla_{\nu} \psi,v\>\right) d\sigma.
\end{multline}
Since $\psi_0$ is asymptotically constant,
$\nabla \psi_0 = \sO(\rho^{-\tau-1})$, and since $\tau>
\frac{n-2}{2}$, $\nabla \psi_0 \in L^2(M\setminus V, S)$. Moreover since $\psi$ is a
Witten spinor, $\frac{v}{\rho} \in L^2(M\setminus V, S)$ and $\nabla
\psi \in L^2 (M_l)$ for all asymptotically flat ends $(M_l,Y_l)$.
Hence, as a function of $\rho$
\[
  \int_{S^{n-1}_{\rho}} \left(\<  v, \nabla_{\nu}\psi_0+c(\nu)D\psi_0\> +
  \<\nabla_{\nu} \psi, v\> \right) d\sigma  \in L^1([T, \infty),\frac{d\rho}{\rho}),
\]
which implies that the integral converges to $0$ as $\rho \to \infty$. 
\end{proof}

\section{Growth estimates near $V$ for spinors in the minimal
  domain}\label{sec:lbe}

We now turn to the study of spinors near $V$. This will occupy the
next two sections. We assume that $(M,g)$ is a nonspin asymptotically
flat Riemannian manifold with nonnegative scalar curvature.  In this
section, we derive growth estimates near each stratum of $V$ for
spinors in the minimal domain. For this we use the Lichnerowicz
formula together with the radial and angular estimates near $V$
obtained in Section~\ref{sec:pe}.  We also give conditions
guaranteeing a spinor is in the minimal domain.

\begin{lemma}
 Let $u\in \Dmin(D)$ with $u$ supported in $W=B_{\e}(Y) \in
 \TRC(V^2)$. Then 
 \begin{equation}\label{eq:Lich}
  \|Du\|^2 \geq \frac{1}{4} \|\frac{u}{r}\|^2 + \frac{1}{4}
  \|\frac{u}{r \ln(\oner)}\|^2 - C_Y\|\frac{u}{ r^{1/2} }\|^2,
 \end{equation}
 for some constant $C_Y >0$, depending on $Y$ and independent of $u$.
\end{lemma}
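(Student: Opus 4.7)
The idea is to combine three ingredients: the integral Lichnerowicz formula on $H^1_\rho$, the radial Hardy-type estimate near the codimension-$2$ stratum, and the sharper angular estimate arising from the nontrivial holonomy of the spin connection around $V^2$.

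First, since $u \in \Dmin(D)$, Corollary~\ref{cor:Dmin-ns}(1) places $u$ in $H^1_\rho(M\setminus V, S)$, so Proposition~\ref{prop:boch1} applies and gives
\begin{equation*}
\|Du\|^2 \;=\; \|\nabla u\|^2 + \tfrac{1}{4}(Ru, u) \;\geq\; \|\nabla u\|^2,
\end{equation*}
where the inequality uses $R \geq 0$. It therefore suffices to bound $\|\nabla u\|^2$ from below by the right-hand side of \eqref{eq:Lich}.

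Second, I would work in the adapted coordinates $(y, r, \theta)$ on $W$ from Sections~\ref{ssec:gV} and \ref{ssec:near2}, and build a (measurable) orthonormal frame containing the radial vector $\partial/\partial r$ and the angular vector $e_\theta$, with the remaining frame vectors tangent to $Y$. Pointwise orthogonal expansion of $|\nabla u|^2$ in this frame and discarding the tangential contribution yields
\begin{equation*}
\|\nabla u\|^2 \;\geq\; \|\nabla_{\partial/\partial r} u\|^2 + \|\nabla_{e_\theta} u\|^2.
\end{equation*}
To the first term I apply the radial estimate \eqref{eq:poinc2} of Corollary~\ref{cor:poincb} (the case $k_b=2$); to the second term I apply the angular estimate \eqref{L2-theta} of Proposition~\ref{prop:angular}. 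Summing gives the two main terms $\tfrac{1}{4}\|u/r\|^2$ and $\tfrac{1}{4}\|u/(r\ln(1/r))\|^2$, plus error terms of the shape $C_Y\|u/r^{1/2}\|^2$ and $C_Y\|u/(r^{1/2}\ln^{1/2}(1/r))\|^2$. Since $r \leq 1/2$ on $W$ (Remark~\ref{rem:rb}), one has $\ln(1/r) \geq \ln 2$, so the logarithmic error is dominated by $(\ln 2)^{-1}\|u/r^{1/2}\|^2$ and can be absorbed into a single $C_Y\|u/r^{1/2}\|^2$ (after enlarging $C_Y$).

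The only nontrivial bookkeeping, which is the main potential obstacle, is to check that $u$ lies in the $H^1_\rho$-closure of $\sC^\infty_0(W\setminus V, S)$, as required to apply Corollary~\ref{cor:poincb} and Proposition~\ref{prop:angular}. Starting from an approximating sequence for $u$ in the graph norm of $D$, supplied by the definition of $\Dmin(D)$, I would multiply by a fixed smooth cutoff function equal to $1$ on $\supp u$ and compactly supported in $W$; the resulting sequence lies in $\sC^\infty_0(W\setminus V, S)$, and converges to $u$ in $H^1_\rho$ because the Lichnerowicz formula converts graph-norm control into $\|\nabla \cdot\|$-control (cf.\ the proof of Corollary~\ref{cor:Dmin-ns}). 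With this minor step handled, assembling the Lichnerowicz bound with the radial and angular estimates produces \eqref{eq:Lich} immediately.
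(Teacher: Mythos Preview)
Your proposal is correct and follows essentially the same approach as the paper: apply the Lichnerowicz formula to get $\|Du\|^2 \geq \|\nabla u\|^2$ (using $R\geq 0$), then split off the radial and angular parts of $\|\nabla u\|^2$ and invoke Corollary~\ref{cor:poincb} and Proposition~\ref{prop:angular} respectively. Your write-up is in fact more explicit than the paper's in justifying the orthogonal decomposition of $|\nabla u|^2$, the absorption of the $\|u/(r^{1/2}\ln^{1/2}(1/r))\|^2$ error into $\|u/r^{1/2}\|^2$ via $r\leq 1/2$, and the density step needed to apply the radial estimate; the paper leaves these implicit.
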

\begin{proof}
  Since $u \in \Dmin(D)$ and since the scalar curvature is nonnegative,
 the Lichnerowicz formula~\eqref{boch1} gives
 \[
  \|D u\|^2 \geq \|\nabla u\|^2 .
 \]
 The estimate~\eqref{L2-theta} for the angular
 derivative and the estimate~\eqref{eq:poinc2} for the radial
 derivative give
 \begin{equation*}
   \|D u\|^2 \geq \frac{1}{4} \|\frac{u}{r}\|^2 
- C_1   \|\frac{u}{r^{1/2}}\|^2 
                  + \frac{1}{4}\|\frac{u}{r\ln(\frac{1}{r})}\|^2
                  - C_2 \|\frac{u}{r^{1/2}\ln^{1/2}(\frac{1}{r})}\|^2.
   \end{equation*}
   Here the constant $C_1$ and $C_2$ depend on the relatively compact
   subset $Y \subset V^2$. Taking $C_Y: =
   C_1 + C_2$, the result follows.
 \end{proof}

\begin{lemma}
  Let $u\in \Dmin(D)$ be supported in $W = B_{\e}(Y) \in
  \TRC(V^{k_b})$ with $k_b>2$.  Then
 \begin{equation}\label{eq:Lichkb}
   \|Du\|^2 \geq \frac{(k_b-2)^2}{4} \|\frac{u}{r_b}\|^2 -C_Y\|\frac{u}{r_b^{1/2}}\|^2,
 \end{equation}
 where $C_Y>0$ is a positive constant depending on $Y$ and independent
 of $u$.
\end{lemma}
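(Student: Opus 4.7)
The proof proceeds exactly as in the $k_b=2$ case, only simpler: in codimension $k_b>2$ the radial Hardy inequality from Corollary~\ref{cor:poincb} already supplies the sharp constant $(k_b-2)^2/4$, so no angular input is needed (nor is one available, since the transverse holonomy obstruction is specific to $V^2$).

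First, since $u\in\Dmin(D)\subset H^1_\rho(M\setminus V, S)$ by Corollary~\ref{cor:Dmin-ns}(1), the integral Lichnerowicz formula~\eqref{boch1} applies to $u$, and combined with $R\geq 0$ it gives $\|Du\|^2\geq \|\nabla u\|^2$. Second, the unit radial vector $\tfrac{\partial}{\partial r_b}$ is well-defined on $W$ by Section~\ref{ssec:gV}, so completing it to an orthonormal frame yields the pointwise bound $|\nabla u|^2\geq |\nabla_{\partial/\partial r_b} u|^2$, and hence $\|\nabla u\|^2\geq \|\nabla_{\partial/\partial r_b} u\|^2$.

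Third, to apply estimate~\eqref{eq:poincb} from Corollary~\ref{cor:poincb}, we must place $u$ in the $H^1_\rho$-closure of $\sC_0^\infty(W\setminus V, S)$. By definition of $\Dmin(D)$ there is a sequence $\{u_j\}\subset \sC_0^\infty(M\setminus V, S)$ converging to $u$ in the graph norm of $D$, and therefore, by Corollary~\ref{cor:Dmin-ns}, in $H^1_\rho$-norm as well. Fixing a smooth cutoff $\chi\in \sC_0^\infty(W)$ that equals $1$ on a neighborhood of $\supp u$, the sequence $\{\chi u_j\}\subset \sC_0^\infty(W\setminus V, S)$ converges in $H^1_\rho$ to $\chi u = u$. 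Corollary~\ref{cor:poincb} now gives
$$\|\nabla_{\partial/\partial r_b} u\|^2 \geq \frac{(k_b-2)^2}{4}\,\|\tfrac{u}{r_b}\|^2 - C_Y\,\|\tfrac{u}{r_b^{1/2}}\|^2,$$
and stringing the three inequalities together yields~\eqref{eq:Lichkb}.

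I do not anticipate any real obstacle: the only point requiring bookkeeping is the cutoff argument in the third step, and this is entirely routine. Structurally the proof differs from that of~\eqref{eq:Lich} only by the absence of the angular term, reflecting the geometric fact that in codimension $k_b>2$ the stratum $V^{k_b}$ imposes no holonomy on the spin bundle and the whole lower bound must come from the radial Hardy inequality alone.
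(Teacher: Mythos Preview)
Your proof is correct and follows exactly the same approach as the paper's: the Lichnerowicz formula with $R\geq 0$ gives $\|Du\|^2\geq\|\nabla u\|^2$, and then the radial estimate~\eqref{eq:poincb} yields the conclusion. The paper's proof is a one-line reference to these two ingredients; your additional bookkeeping (the intermediate step $\|\nabla u\|^2\geq\|\nabla_{\partial/\partial r_b}u\|^2$ and the cutoff argument placing $u$ in the $H^1_\rho$-closure required by Corollary~\ref{cor:poincb}) simply makes explicit what the paper leaves implicit.
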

\begin{proof}
  Since $u$ is supported in $W\in \TRC(V^{k_b})$ with
  $k_b>2$, the Lichnerowicz formula and the radial
  estimate~\eqref{eq:poincb} give the desired estimate.
 \end{proof}

As a consequence of the previous two lemmas, we have
\begin{corollary}\label{cor:Dmin}
  Let $u \in \Dmin(D)$. Then for
  any $b$ so that $V^{k_b}$ is nonempty,
 \begin{equation}\label{eq:Dmin}
   \frac{u}{r_b} \in L^2(W\setminus V,  S\lvert_{W\setminus V}) \quad \text{for all} \quad W\in \TRC(V^{k_b}).
 \end{equation}
\end{corollary}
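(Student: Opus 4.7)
The plan is to localize $u$ to a tubular neighborhood slightly larger than $W$, apply the lower bounds of \eqref{eq:Lich} and \eqref{eq:Lichkb} to the localized spinor, and then absorb the error term $\|\chi u/r_b^{1/2}\|^2$ by Cauchy--Schwarz on smooth approximants.

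First, enlarge $W = B_{\e}(Y)$ slightly: choose a relatively compact $Y' \subset V^{k_b}$ with $\overline Y \subset Y'$ and some $\e < \e' < \e(Y')$ so that $W' := B_{\e'}(Y') \in \TRC(V^{k_b})$ contains $\overline W$. Pick $\chi \in \sC_0^{\infty}(M)$ with $0\leq \chi \leq 1$, $\chi \equiv 1$ on $W$, and $\supp \chi \subset W'$. By Corollary~\ref{cor:Dmin-ns}, $u \in H^1_{\rho}(M\setminus V, S)$ and $\chi u \in \Dmin(D)$; moreover $\chi u$ is supported in $W' \in \TRC(V^{k_b})$. Applying \eqref{eq:Lich} (if $k_b=2$) or \eqref{eq:Lichkb} (if $k_b>2$) to $\chi u$ gives an estimate of the form
$$
\|D(\chi u)\|^2 \;\geq\; c_{k_b}\, \|\chi u/r_b\|^2 \;-\; C_{Y'}\, \|\chi u/r_b^{1/2}\|^2,
$$
with $c_{k_b} = (k_b-2)^2/4$ when $k_b>2$ and $c_2=1/4$. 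The left-hand side is finite, since $D(\chi u) = c(d\chi)u + \chi Du$ and $u, Du \in L^2$ with $d\chi$ bounded.

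To absorb the error term, we pass through an approximating sequence. By the definition of $\Dmin(D)$ together with Proposition~\ref{prop:boch1} and the boundedness of $R$, there exists $u_n \in \sC_0^{\infty}(M\setminus V, S)$ with $u_n \to u$ in $H^1_\rho$. Then $v_n := \chi u_n \in \sC_0^{\infty}(W' \setminus V, S)$, and all relevant norms for $v_n$ are a priori finite. The Cauchy--Schwarz inequality gives
$$
\|v_n/r_b^{1/2}\|^2 \;=\; \int \frac{|v_n|^2}{r_b}\, dV \;\leq\; \|v_n\|\cdot\|v_n/r_b\|,
$$
and AM--GM lets us absorb half of $c_{k_b}\|v_n/r_b\|^2$ into the analogous inequality applied to $v_n$, producing the clean bound
$$
\|v_n/r_b\|^2 \;\leq\; \tfrac{2}{c_{k_b}} \|Dv_n\|^2 + \tfrac{C_{Y'}^2}{c_{k_b}^2}\|v_n\|^2.
$$
The right-hand side is uniformly bounded in $n$ since $Dv_n \to D(\chi u)$ and $v_n \to \chi u$ in $L^2$. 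Fatou's lemma then gives $\|\chi u/r_b\|_{L^2}^2 < \infty$, and since $\chi \equiv 1$ on $W$, the desired conclusion $u/r_b \in L^2(W\setminus V, S\lvert_{W\setminus V})$ follows.

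The main subtlety is the absorption step: the inequalities \eqref{eq:Lich} and \eqref{eq:Lichkb} contain a negative term $-C_Y\|u/r_b^{1/2}\|^2$ that is not manifestly bounded by the graph norm of $u$, and Cauchy--Schwarz can only be applied legitimately once all norms are a priori finite. The smooth-approximation step supplies precisely this a priori finiteness, after which Fatou's lemma transfers the uniform bound from the $v_n$ to $\chi u$.
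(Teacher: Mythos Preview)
Your proof is correct. Both your argument and the paper's localize $u$ to a slightly larger tube via a cutoff and invoke \eqref{eq:Lich}/\eqref{eq:Lichkb}; the only real difference is in how the nuisance term $C_{Y'}\|\chi u/r_b^{1/2}\|^2$ is absorbed. The paper shrinks the radius $\e$ of the tube so that the pointwise inequality $C_{Y'}/r_b \leq \tfrac{1}{2}c_{k_b}/r_b^2$ (respectively $C_{Y'}/r \leq \tfrac{1}{4}/(r\ln(1/r))^2$ when $k_b=2$) holds on the support, making the absorption automatic; it then implicitly recovers the statement for larger tubes from the fact that $r_b$ is bounded below off the small tube and $u\in L^2$. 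You instead keep $\e$ fixed and use Cauchy--Schwarz plus AM--GM on smooth approximants, where all quantities are finite, followed by Fatou. Your route is slightly more self-contained (it handles arbitrary $W$ directly and makes the limiting step explicit), while the paper's shrink-then-extend device is consistent with techniques used elsewhere in the paper. Either way the substance is the same.
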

\begin{proof}
 Let $W \in \TRC(V^{k_b})$. From definition~\eqref{eq:TRC} there
 exists a relatively compact subset $Y \subset V^{k_b}$ and $\e <
 \e(Y)$ so that $W= B_{\e}(Y)$. Without loss of generality we can
 assume that $u$ is supported in an open subset $W' = B_{2\e} (Y')$ with
 $Y'$ a slightly bigger relatively compact subset of $V^{k_b}$
 containing $Y$. 

 We consider the estimates~\eqref{eq:Lich} and~\eqref{eq:Lichkb}
 applied to $u$ supported in $W'$. Without loss of generality, we can
 choose $\e$ very small so that in the case $b=2$, the negative term
 in~\eqref{eq:Lich} is absorbed by the second positive term, while in
 the case $b>2$ it is absorbed by a small amount of the positive
 term. In both cases, it follows that $\|\frac{u}{r_b}\|_{L^2}$ is
 finite. In particular, $\frac{u}{r_b} \in L^2 (W\setminus V,
 S\lvert_{W\setminus V})$.
\end{proof}

We also have a useful partial converse result. 

\begin{lemma}\label{lem:conv}
  Let $u \in \Dmax (D)$. If for all $b$ so that $V^{k_b}$ is nonempty
  and for all $W\in \TRC(V^{k_b})$, $\frac{ u }{r_b} \in
  L^2(W\setminus V, S\lvert_{W\setminus V})$, then
  $u\in \Dmin(D).$
\end{lemma}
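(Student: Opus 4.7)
The plan is to show $u\in\Dmin(D)$ by exhibiting a sequence $\psi_n\in\sC^\infty_0(M\setminus V,S)$ converging to $u$ in the graph norm of $D$. I would combine a cutoff at infinity, a cutoff near $V$ adapted to the stratification, and Friedrichs mollification.

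For the asymptotic cutoff, take the standard $\gamma_j$ from the proof of Proposition~\ref{prop:bochend}: $\gamma_j=1$ for $\rho\leq j$, supported in $\{\rho\leq 2j\}$, with $|d\gamma_j|\leq 2/\rho$. Since both $u$ and $Du$ lie in $L^2$, dominated convergence gives $\gamma_j u\to u$ and $\gamma_j Du\to Du$ in $L^2$, while the commutator satisfies $\|c(d\gamma_j)u\|_{L^2}\leq (2/j)\|u\|_{L^2}\to 0$. Hence $\gamma_j u\to u$ in the graph norm.

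The heart of the argument is to construct Lipschitz cutoffs $\chi_\epsilon:M\to[0,1]$ that vanish in a neighborhood of $V$, are identically $1$ off a slightly larger neighborhood, converge to $1$ pointwise on $M\setminus V$ as $\epsilon\to 0$, and are controlled on each tubular neighborhood $W\in\TRC(V^{k_b})$ meeting $\supp d\chi_\epsilon$ by $|d\chi_\epsilon|\leq C/r_b$, with $C$ independent of $\epsilon$. I would build $\chi_\epsilon$ by downward induction on codimension, exploiting the iterated cone structure of $V$ from Theorem~\ref{thm:V}. At the deepest stratum $V^{k_d}$, which is closed and compact (no deeper strata lie in its closure, and $V\subset K$), pick a standard radial cutoff $\chi_\epsilon^{(d)}$ in $r_d$ on a fixed tubular neighborhood of $V^{k_d}$, equal to $0$ for $r_d\leq\epsilon$ and to $1$ for $r_d\geq 2\epsilon$, with $|d\chi_\epsilon^{(d)}|\leq C/r_d$. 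Inductively, once $\chi_\epsilon^{(b+1)},\dots,\chi_\epsilon^{(d)}$ have been constructed, the portion of $V^{k_b}$ that still supports the product is contained in $\{\chi_\epsilon^{(b+1)}\cdots\chi_\epsilon^{(d)}=1\}$, hence bounded away from the higher-codimension strata $\bigcup_{c>b}V^{k_c}$; by the definition~\eqref{eq:TRC} of $\TRC(V^{k_b})$ this portion admits a tubular neighborhood in $\TRC(V^{k_b})$, on which $\chi_\epsilon^{(b)}$ is defined as a radial cutoff in $r_b$. Setting $\chi_\epsilon:=\prod_{b=2}^{d}\chi_\epsilon^{(b)}$ and using the conical structure so that in overlaps either the radial distances are comparable or the corresponding cutoffs have disjoint transition regions, one obtains the required gradient bound.

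With $\chi_\epsilon$ in hand, dominated convergence gives $\chi_\epsilon u\to u$ and $\chi_\epsilon Du\to Du$ in $L^2$ as $\epsilon\to 0$. The commutator term satisfies
\[
\|c(d\chi_\epsilon)u\|_{L^2}^{2}\leq C^{2}\sum_{b}\int_{W_b\cap\,\supp d\chi_\epsilon}\frac{|u|^{2}}{r_b^{2}},
\]
where each integrand is dominated by the $L^1$ function $C^{2}|u/r_b|^{2}$ (by the hypothesis on $u$) and is supported on a set shrinking to $V$ as $\epsilon\to 0$; dominated convergence then yields $\|c(d\chi_\epsilon)u\|_{L^2}\to 0$. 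Hence $\chi_\epsilon u\to u$ in the graph norm. A diagonal sequence $\gamma_{j_n}\chi_{\epsilon_n}u$ is in $L^2$, compactly supported in the open manifold $M\setminus V$, and lies in $\Dmax(D)$; standard Friedrichs mollification in coordinate charts produces smooth compactly supported approximants in $\sC^\infty_0(M\setminus V,S)$ converging to $u$ in the graph norm, so $u\in\Dmin(D)$.

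The main obstacle is the construction of $\chi_\epsilon$ with gradient bounded by $1/r_b$ on each $W\in\TRC(V^{k_b})$ simultaneously. The hypothesis is stratum-by-stratum, while near points where several strata accumulate no single distance function to $V$ exhibits the correct gradient behavior. The downward induction circumvents this by ensuring that when the $r_b$-cutoff is built, one is working on a subset of $V^{k_b}$ bounded away from the higher codimension strata already handled, so that $r_b$ is locally the relevant distance and the already-constructed factors do not spoil the bound.
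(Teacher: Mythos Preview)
Your overall strategy matches the paper's: reduce to cutoffs near $V$, work inductively from the deepest stratum outward, and control $[D,\chi]u$ via the hypothesis $u/r_b\in L^2$ together with dominated convergence. The gap is in coupling all strata to a single parameter $\epsilon$. In your construction the tubular neighborhood on which $\chi_\epsilon^{(b)}$ lives has base $Y_b^\epsilon = V^{k_b}\cap\{\chi_\epsilon^{(b+1)}\cdots\chi_\epsilon^{(d)}=1\}$, and as $\epsilon\to 0$ this base exhausts $V^{k_b}$, creeping toward $\overline{V^{k_{b+1}}}$. Hence no fixed $W\in\TRC(V^{k_b})$ contains $\supp d\chi_\epsilon^{(b)}$ for all small $\epsilon$, and you have no fixed $L^1$ majorant $|u/r_b|^2$ on which to run dominated convergence. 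Your remark that ``in overlaps the radial distances are comparable'' is the right instinct for patching this (dominate $|u|/r_b$ near the corner by $|u|/r_{b+1}$, which is controlled by the hypothesis for the deeper stratum), but that comparability is a nontrivial geometric fact about the iterated cone that you have not established; and getting the scales to match --- tube radius $2\epsilon$ versus the normal injectivity radius of $Y_b^\epsilon$, which is only $O(\epsilon)$ near $V^{k_{b+1}}$ --- is already delicate.

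The paper avoids all of this by decoupling the parameters. It first fixes a cutoff $\gamma_m(r_p)$ at the deepest stratum; then $\gamma_m(r_p)u$ vanishes on a \emph{fixed} neighborhood of $V^{k_p}$, so the relevant piece of $V^{k_{p-1}}$ is a \emph{fixed} relatively compact set, a fixed $W\in\TRC(V^{k_{p-1}})$ contains all the supports, and dominated convergence in a second parameter $\mu\to\infty$ gives $\gamma_\mu(r_{p-1})\gamma_m(r_p)u\to\gamma_m(r_p)u$ in graph norm. Iterating through the strata and extracting a diagonal sequence yields the approximation in $\Dmin(D)$. Two minor differences worth noting: the paper takes $|d\gamma_m|\le 1/(t\ln(1/t))$ rather than $C/t$ (either suffices under the present hypothesis, since $r_b<1/2$), and it does not mollify --- it simply observes that once a cutoff is supported away from $V$, completeness of $M$ minus a neighborhood of $V$ gives $\chi u\in\Dmin(D)$ directly.
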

\begin{proof}
  It suffices to construct a sequence of spinors in $\Dmin(D)$ that
  converge to $u$ in the graph norm of $D$.  Note that since the
  closures of the asymptotically flat ends of the manifold are
  complete, $u \in \Dmax(D)$ implies that $\chi u \in \Dmin(D)$ for
  any smooth, $C^1$-bounded $\chi$ supported away from $V$.
  Therefore, we can construct our desired sequence, by using cutoff
  functions supported in the complement of smaller and smaller
  neighborhoods of $V$.

  Let $\{\gamma_m \}_m$ be a sequence of smooth functions
  $\gamma_m:(0,1)\rightarrow\IR$ satisfying
\begin{enumerate}
\item $0\leq \gamma_m(t)\leq 1$ vanishes in a neighborhood of $0$,
\item $\gamma_m(t)= 1 $ for  $t>2e^{-e^{m}}$, and 
\item $|d\gamma_m|<\frac{1}{t\ln(\frac{1}{t})}$ for all $m$.
\end{enumerate}
Piecewise differentiable $\gamma_m$ satisfying these properties are
easily constructed by letting $\gamma'_m(t)$ be the product of
$\frac{1}{ t\ln(\frac{1}{t})}$ 
and the characteristic function of the
interval $[e^{-e^{m+1}},e^{-e^{m}}]$. Smoothing the characteristic
function of the interval suffices to construct smooth $\gamma_m$. 

Let $2=k_2<k_3<\cdots <k_{p}$ denote the codimensions for which
$V^{k_{j}}$ is nonempty. The highest codimension stratum, $V^{k_p},$
is a compact subset of $M$.  Take $W^{k_p}: = B_{\e_p}(V^{k_p}) \in
\TRC(V^2)$ for some $\e_p>0$.  Extend $r_p$ from $W^{k_p}$ to $M$ as a
smooth, nonnegative eventually constant function, bounded above by
$\frac{1}{2}$. Then $\{\gamma_m(r_{p})u\}_m$ converges in $L^2$ to
$u$. By hypothesis, $\frac{|u|}{r_{p}}\in L^2(W^{k_p})$, and therefore
$|[D,\gamma_m(r_p)] u|\leq \frac{|u|}{r_p\ln(\frac{1}{r_p})}\in L^2.$
Since
\[
  D (\gamma_m(r_p) u) = [D,\gamma_m(r_p)] u + \gamma_m(r_p) Du,
\]
and $\gamma_m\rightarrow 1$ pointwise, Lebesgue's dominated convergence theorem implies $[D,
\gamma_m(r_p)]u$ converges to $0$ in $L^2$-norm, and
$D(\gamma_m(r_{p})u)$ converges in $L^2$ to $Du$ as
$m\rightarrow\infty$.

Consider now $\gamma_m(r_p) u$ for $m$ fixed. By construction, this
vanishes on $B_{\epsilon(m,p)} (V^{k_p})\subset W^{k_p}$ for some
$\epsilon(m,p)>0$. Observe that for any $\epsilon >0$ such that
$B_{\epsilon }( V^{k_{p-1}}\setminus
B_{\frac{1}{2}\epsilon(m,p)}(V^{k_p}))\in \TRC(V^{k_{p-1}})$,  the set
$B_{\epsilon }( V^{k_{p-1}}\setminus B_{\frac{1}{2}\epsilon(m,p)}(V^{k_p}))
\cup B_{\epsilon(m,p)} (V^{k_p})$ is a neighborhood of $V^{k_{p-1}}$.
Extend $r_{p-1}$ from $B_{\epsilon }( V^{k_{p-1}}\setminus
B_{\frac{1}{2}\epsilon(m,p)}(V^{k_p})) $ as a smooth, nonnegative,
eventually constant function on $M$, bounded above by $\frac{1}{2}$.
Then each of the spinors
$\{\gamma_{\mu}(r_{p-1})\gamma_m(r_p)u\}_{\mu}$ is supported outside a
neighborhood of $\overline{V^{k_{p-1}}}$. For $\epsilon$ fixed and
$\mu$ sufficiently large, $[D,\gamma_\mu(r_{p-1})]\gamma_m(r_p) u$ is
supported in $B_{\epsilon }( V^{k_{p-1}}\setminus
B_{\frac{1}{2}\epsilon(m,b)} (V^{k_p}))$ and satisfies
$$\left|[D,\gamma_\mu(r_{p-1})]\gamma_m(r_p) u\right|\leq \frac{|\gamma_m(r_p) u|}{r_{p-1}\ln(\frac{1}{r_{p-1} })}
 \in L^2(B_{\epsilon }( V^{k_{p-1}}\setminus B_{\frac{1}{2}\epsilon(m,b)}(V^{k_p}))).$$
 Hence we may again apply the Lebesgue dominated convergence theorem to conclude that  
$\gamma_\mu(r_{p-1})\gamma_m(r_p) u\stackrel{L^2}{\rightarrow}\gamma_m(r_p) u$ and 
$D(\gamma_\mu(r_{p-1})\gamma_m(r_p)u)\stackrel{L^2}{\rightarrow}D(\gamma_m(r_p) u)$
as $\mu \to \infty$.  
 
Proceeding inductively backwards on $k_b$, we construct a sequence of
 spinors  in the minimal domain of
  $D$ which converge to $u$ in the graph norm of $D$. Hence $u \in
\Dmin(D)$.
\end{proof}

\section{Improved integral estimates for harmonic spinors near $V$}\label{sec:iie}
For harmonic spinors, we can improve the growth estimates of Section \ref{sec:lbe}.  
In this section, we use the lower bound estimates derived in
Section~\ref{sec:lbe} to obtain weighted integral estimates for
spinors $u$ in the minimal domain of $D$ satisfying $D^2 u =0$ in a
neighborhood of $V$, and for the spinors $v = Du$. We conclude with a
result which gives a sufficient condition for $v$ to be in the minimal
domain of $D$.

Our main tool is a technique of Agmon,~\cite{ag}, which we
  now present.  If $u \in \Dmin
(D)$, then $f u \in \Dmin(D)$ for any bounded, piecewise
differentiable function $f$, with bounded derivative. Hence we have
the following identity
\begin{equation}\label{eq:u1}
   (D^2 u, f^2 u)
    = \|D(f u)\|^2 -\| [D, f] u\|^2.
\end{equation}
Combining this identity with the Lichnerowicz formula, we have 
\begin{equation}\label{eq:u01}
   (D^2 u, f^2 u)
    = \|\nabla(f u)\|^2 +\frac{1}{4}(R fu, fu) -\| [D,f] u\|^2.
\end{equation}
We refer to either of the expressions (\ref{eq:u1}) or (\ref{eq:u01})
as the {\em Agmon identity}. In this section we apply this identity,
together with the radial and angular estimates which we derived in
Section~\ref{sec:pe}, to obtain our desired estimates.

\subsection{Estimates near the codimension $2$ stratum of $V$}
We prove first estimates near the codimension $2$ stratum $V^2$.
\begin{lemma}\label{lem:u_est}
 If $u \in \Dmin (D)$ with $D^2 u =0$ in a neighborhood  of $\overline{V^2}$, then
\begin{equation}\label{eq:u_est}
 \frac{u}{r^{3/2} \ln^{1/2+a}(\frac{1}{r})} \in L^2(W\setminus V,
 S\lvert_{W\setminus V})
\end{equation}
 for all $a>0$ and for all $W \in \TRC(V^2)$.
\end{lemma}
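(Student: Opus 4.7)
The plan is to apply Agmon's identity~\eqref{eq:u01} to $\phi u$ with the weight $\phi(r) = r^{-1/2}\ln^{-a}(1/r)$, chosen so that the leading $r^{-3}\ln^{-2a}(1/r)$ singularity of $|d\phi|^2$ exactly matches the leading coefficient in the angular Hardy estimate (Proposition~\ref{prop:angular}); the cancellation of these leading terms will expose the logarithmic improvement coming from the radial Hardy estimate (Corollary~\ref{cor:poincb} with $k_b = 2$).

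Concretely, I would fix a smooth cutoff $\chi$ with $\chi \equiv 1$ on $W$ and compactly supported in a neighborhood of $\overline{V^2}$ where $D^2 u = 0$. For small $\epsilon > 0$, let $\phi_\epsilon$ be a smooth bounded truncation of $\phi$ that vanishes on $\{r \leq \epsilon\}$ and equals $\phi$ on $\{r \geq 2\epsilon\}$, with a cutoff profile chosen so that the transition errors are controlled uniformly (see the remark on obstacles below). Since $\chi\phi_\epsilon$ is a smooth compactly supported function on $M \setminus V^2$ and $u \in H^1_\rho(M\setminus V, S)$ by Corollary~\ref{cor:Dmin-ns}, we have $\chi\phi_\epsilon u \in \Dmin(D)$. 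Because $(D^2 u, (\chi\phi_\epsilon)^2 u) = 0$, Agmon's identity combined with the Lichnerowicz formula yields
\begin{equation*}
\int \bigl|d(\chi\phi_\epsilon)\bigr|^2 |u|^2\, dV \;=\; \|\nabla(\chi\phi_\epsilon u)\|^2 \,+\, \tfrac{1}{4}(R\,\chi\phi_\epsilon u,\,\chi\phi_\epsilon u).
\end{equation*}

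I would then bound $\|\nabla(\chi\phi_\epsilon u)\|^2$ from below by summing the angular estimate~\eqref{L2-theta} and the radial estimate~\eqref{eq:poinc2} on $W$; the lower-order $\|\phi_\epsilon u/r^{1/2}\|^2$ and $\|\phi_\epsilon u/(r^{1/2}\ln^{1/2}(1/r))\|^2$ correction terms, as well as the contributions from $d\chi$ and the transition region, are uniformly bounded by $C\|u/r\|^2_W < \infty$ using Corollary~\ref{cor:Dmin}. A direct computation gives
\begin{equation*}
|d\phi|^2 \;=\; \tfrac{1}{4}(\phi/r)^2 \,-\, a\,(\phi/r)^2/\ln(1/r) \,+\, a^2\,(\phi/r)^2/\ln^2(1/r),
\end{equation*}
so after the $\tfrac{1}{4}\|\phi_\epsilon u/r\|^2$ terms cancel from both sides, the inequality reduces to
\begin{equation*}
a\,\bigl\|\phi_\epsilon u/(r\ln^{1/2}(1/r))\bigr\|_W^2 \;\leq\; \bigl(a^2 - \tfrac{1}{4}\bigr)\,\bigl\|\phi_\epsilon u/(r\ln(1/r))\bigr\|_W^2 \,+\, C'.
\end{equation*}
For $a \in (0, 1/2)$ the coefficient $a^2 - 1/4$ is strictly negative, so the right-hand side is bounded by $C'$ uniformly in $\epsilon$. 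Since $\|\phi_\epsilon u/(r\ln^{1/2}(1/r))\|_W^2 = \|u/(r^{3/2}\ln^{a+1/2}(1/r))\|_{W \cap \{\phi_\epsilon = \phi\}}^2$, monotone convergence as $\epsilon \to 0$ gives the claim for $a \in (0, 1/2)$, and monotonicity of $\ln^{-(1+2a)}(1/r)$ in $a$ (for $r$ small) extends the result to all $a > 0$.

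The main obstacle is the careful arrangement of the cutoffs $\chi$ and $\phi_\epsilon$: since the leading $\tfrac{1}{4}(\phi/r)^2$ coefficient exactly saturates the angular Hardy constant, no error in this coefficient can be tolerated, and the commutator/transition terms introduced by $d(\chi\phi_\epsilon) \neq \chi\,d\phi_\epsilon$ must be shown to be absorbed by the a priori finite quantity $\|u/r\|^2$ rather than by the very norm we are trying to control. A logarithmic-scale cutoff profile (with $|d\zeta_\epsilon|$ bounded by an appropriate multiple of $1/(r\ln(1/r))$ in the transition annulus) makes this absorption possible.
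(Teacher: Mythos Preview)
Your overall strategy---apply Agmon's identity with the weight $\phi(r)=r^{-1/2}\ln^{-a}(1/r)$, combine the angular estimate~\eqref{L2-theta} and the radial estimate~\eqref{eq:poinc2} so that the leading $\tfrac{1}{4}(\phi/r)^2$ terms cancel, and read off the logarithmic gain from the $-a(\phi/r)^2/\ln(1/r)$ cross term---is exactly the paper's approach, and your computation of $|d\phi|^2$ and the resulting inequality are correct.

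The gap is in the truncation. You take $\phi_\epsilon$ to \emph{vanish} on $\{r\le\epsilon\}$; the paper instead freezes the weight to a constant there,
\[
\mu_m(r)=\begin{cases} r^{-1/2}\ln^{-a}(1/r), & r>1/m,\\[2pt] m^{1/2}\ln^{-a}(m), & r\le 1/m,\end{cases}
\]
so that $d\mu_m\equiv 0$ on $\{r<1/m\}$ and $|d\mu_m|^2=|d\phi|^2$ exactly on $\{r>1/m\}$. There is then \emph{no} transition error at all, and since $\mu_m$ is bounded and Lipschitz while $\zeta u\in\Dmin(D)$, one still has $\mu_m\zeta u\in\Dmin(D)$. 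Your proposed logarithmic-scale cutoff does not rescue the ``zero below'' scheme: writing $\phi_\epsilon=\zeta_\epsilon\phi$ with $|d\zeta_\epsilon|\le C/(r\ln(1/r))$, the error $\int\chi^2\phi^2|d\zeta_\epsilon|^2|u|^2$ is of order $\int_{\text{transition}} r^{-3}\ln^{-2a-2}(1/r)\,|u|^2$, and the factor $r^{-1}\ln^{-2a-2}(1/r)$ is unbounded as the transition annulus moves toward $r=0$, so this is \emph{not} dominated by $\|u/r\|^2$. Nor can it be absorbed into the positive term $(\tfrac14-a^2)\|\zeta_\epsilon\phi u/(r\ln(1/r))\|^2$ on the other side, since that term carries the factor $\zeta_\epsilon^2$, which vanishes on part of the transition annulus. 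Switching to the constant-below truncation removes the difficulty entirely.

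One minor point: the cutoff $\chi$ should be taken with support in some $W'\in\TRC(V^2)$ containing $\overline{W}$ (not ``in a neighborhood of $\overline{V^2}$'', which may meet higher-codimension strata), so that Corollary~\ref{cor:poincb} and Proposition~\ref{prop:angular} apply on its support.
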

\begin{proof}
 From the definition~\eqref{eq:TRC}, given $W\in \TRC(V^2)$,  there exists a
 relatively compact subset $Y$ of $V^2$ and $\e< \e(Y)$ so that $W=
 B_{\e}(Y)$.
 Without loss of generality we can assume that $D^2 u=0$ in an open
 set $W'\in \TRC(V^2)$ containing $\widebar W$.

Let $\zeta\in C_0^\infty(W') $ with $0\leq \zeta \leq 1$,
$\zeta\equiv 1$ on $W$.
%
 Given $a>0$, we define the sequence of functions
 \[
   \mu_m(r): = 
   \begin{cases}
      r^{-1/2}\ln^{-a}(\frac{1}{r})& \text{for $r> \frac{1}{m}$}\\
      m^{1/2}\ln^{-a}(m)  & \text{for $r\leq \frac{1}{m}$}.
   \end{cases}
 \]
 Then, the Agmon identity~\eqref{eq:u1} applied to $\mu_m\zeta$,
\[
  0= (D^2u, \mu_m^2\zeta^2 u) = \|D(\mu_m\zeta u)\|^2- \|[D,\mu_m\zeta]u\|^2,
\]
together with the estimate~\eqref{eq:Lich} give
 \begin{equation}\label{qe2}
    0  \geq \frac{1}{4} \|\frac{\mu_m \zeta u}{r}\|^2 
         + \frac{1}{4}\|\frac{\mu_m \zeta u}{r\ln (\frac{1}{r})}\|^2 
         - \|[D,\mu_m\zeta] u\|^2 
         - C_{W'}\|\frac{\mu_m\zeta u}{r^{1/2}}\|^2,
 \end{equation}
 with the constant $C_{W'}$ depending on $W'$.
Expanding $[D,\mu_m\zeta]u$, we see that for large enough $m$,
\begin{multline*}
\|[D,\mu_m\zeta]u\|^2 
\leq 
\frac{1}{4}\|\frac{\chi_m\mu_m\zeta u}{r}\|^2
-a\|\frac{\chi_m\mu_m\zeta u}{r\ln^{1/2}(\frac{1}{r})}\|^2
+a^2\|\frac{\chi_m\mu_m\zeta u}{r\ln(\frac{1}{r})}\|^2\\
 + \|d\zeta\|_{L^{\infty}(W')}^2\|\frac{u}{r^{1/2}}\|_{L^2(W')}^2 + C_{\zeta,\e}\|\frac{u}{r}\|_{L^2(W')}^2,
\end{multline*}
with $\chi_m$ the characteristic function of the interval
$r>\frac{1}{m}$ and $C_{\zeta, \e}$ a constant depending on
$\|d\zeta\|_{L^{\infty}(W')}$.
 By Corollary \ref{cor:Dmin}, $\|\frac{u}{r}\|_{L^2(W')}^2$ is
 finite. Since $\frac{\mu_m\zeta}{r^{1/2}} < \frac{1}{r}$, plugging this
 expansion back into (\ref{qe2}) gives
\begin{multline*}
  (C_{\zeta,\e}+C_{W'})\|\frac{u}{r}\|_{L^2(W')}^2 + \|d\zeta\|_{L^{\infty}(W')}^2\|\frac{u}{r^{1/2}}\|_{L^2(W')}^2\\
\geq 
a\|\frac{\chi_m\mu_m\zeta u}{r\ln^{1/2}(\frac{1}{r})}\|^2 
+ \frac{1}{4}  \|\frac{\chi_m\mu_m \zeta u}{r\ln(\frac{1}{r})}\|^2
 -a^2\|\frac{\chi_m\mu_m\zeta u}{r\ln(\frac{1}{r})}\|^2.
\end{multline*}
 Taking the limit as $m \to \infty$, it follows that
 $$\frac{u}{r^{3/2} \ln^{1/2+a}(\frac{1}{r})} \in L^2(W\ \setminus V,
  S\lvert_{W\setminus V})$$
for all $a>0$.
\end{proof}
\begin{lemma}\label{lem:v_est}
 Let $v = Du$ with $u$ as in the previous lemma. Then
 \begin{equation}\label{eq:v_est}
  \frac{v}{r^{1/2}\ln^{1/2+a}(\frac{1}{r})} \in L^2(W \setminus V,
   S\lvert_{V\setminus W})
 \end{equation}
 for all $a>0$ and for all $W \in \TRC(V^2)$.
\end{lemma}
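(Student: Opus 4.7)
The strategy is to apply the Agmon identity~\eqref{eq:u1} to a weighted test function $f_m u$, where the weight matches the desired bound on $v$ and the gradient of the weight matches the known bound on $u$ from Lemma~\ref{lem:u_est}. The key observation is that if $D^2 u = 0$ on the support of the test function, the identity collapses to a statement about $v$ alone.

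Given $W \in \TRC(V^2)$, I first choose a smooth cutoff $\zeta \in C_0^\infty(W')$ with $\zeta \equiv 1$ on $W$, where $W' \in \TRC(V^2)$ contains $\widebar{W}$ and is itself contained in the neighborhood of $\overline{V^2}$ where $D^2 u = 0$. Mirroring the construction in the proof of Lemma~\ref{lem:u_est}, I introduce a sequence of bounded Lipschitz functions $\mu_m(r)$ with $\mu_m(r) = r^{-1/2}\ln^{-(1/2+a)}(1/r)$ for $r > 1/m$ and constant for $r \leq 1/(2m)$, monotonically increasing to $r^{-1/2}\ln^{-(1/2+a)}(1/r)$. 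Set $f_m := \mu_m \zeta$. Since $u \in \Dmin(D)$ and $f_m$ is bounded with bounded derivative, $f_m u \in \Dmin(D)$, which is what licenses the Agmon identity.

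Applying~\eqref{eq:u1} to $f_m u$, the left-hand side $(D^2 u, f_m^2 u)$ vanishes because $\supp f_m \subset \supp \zeta$ lies in the region where $D^2 u=0$. Expanding $D(f_m u) = f_m v + c(df_m) u$ and $[D, f_m] u = c(df_m) u$, and using $|c(df_m) u| = |df_m|\,|u|$, the identity reduces to
\[
\|f_m v\|^2 + 2\,\mathrm{Re}\,(f_m v, c(df_m) u) = 0,
\]
so Cauchy--Schwarz gives the fundamental estimate
\[
\|f_m v\| \leq 2\,\||df_m|\,u\|.
\]
I then split $|df_m|^2 \leq 2|\mu_m'|^2 \zeta^2 + 2\mu_m^2 |d\zeta|^2$. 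A direct computation gives $|\mu_m'(r)| \leq C r^{-3/2} \ln^{-(1/2+a)}(1/r)$ uniformly in $m$ on $\{r > 1/m\}$, so the first term contributes at most a constant multiple of $\|u/(r^{3/2}\ln^{1/2+a}(1/r))\|_{L^2(W')}^2$, which is finite by Lemma~\ref{lem:u_est} with the same exponent $a$. The second term is harmless because $d\zeta$ is supported in a region bounded away from $V^2$, where $\mu_m$ is uniformly bounded, so it is controlled by $\|u\|_{L^2(\supp d\zeta)}^2$. Passing to the limit $m \to \infty$ via Fatou's lemma then yields $\|v/(r^{1/2}\ln^{1/2+a}(1/r))\|_{L^2(W\setminus V)} < \infty$, which is the claim since $\zeta \equiv 1$ on $W$.

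The delicate point is the justification of the integration by parts hidden inside the Agmon identity: one is forced to work with the bounded regularizations $\mu_m$ rather than the target weight itself, precisely so that $f_m u$ remains in $\Dmin(D)$ and no boundary term from $V^2$ appears. The other essential ingredient is the exact matching between the $\ln^{-(1/2+a)}$ weight we want to establish for $v$ and the $\ln^{-(1/2+a)}$ power appearing in Lemma~\ref{lem:u_est}, which is what allows the right-hand side of the Cauchy--Schwarz bound to be absorbed by the input estimate rather than losing an additional $\ln$ factor.
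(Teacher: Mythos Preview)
Your proof is correct and uses the same ingredients as the paper: the Agmon identity~\eqref{eq:u1} with the weight $\mu_m\zeta$, where $\mu_m$ is a truncation of $r^{-1/2}\ln^{-(1/2+a)}(1/r)$, together with the input estimate from Lemma~\ref{lem:u_est}.

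The organization differs slightly, and your version is in fact more direct. The paper first shows that $\|D(\mu_m\zeta u)\|^2 = \|[D,\mu_m\zeta]u\|^2$ is uniformly bounded, concludes $D(\mu_\infty\zeta u)\in L^2$, then separately verifies $[D,\mu_\infty\zeta]u\in L^2$ using Lemma~\ref{lem:u_est} a second time, and finally writes $\mu_\infty\zeta v$ as the difference of these two $L^2$ sections. You instead expand $\|D(f_m u)\|^2 - \|[D,f_m]u\|^2 = \|f_m v\|^2 + 2\,\mathrm{Re}\,(f_m v, c(df_m)u)$ immediately, so the commutator norms cancel and Cauchy--Schwarz gives $\|f_m v\|\leq 2\||df_m|\,u\|$ in one stroke. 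This avoids the two-step detour and requires only one appeal to Lemma~\ref{lem:u_est}. A minor remark: your description of $\mu_m$ as ``monotonically increasing'' is not quite right (the target weight $r^{-1/2}\ln^{-(1/2+a)}(1/r)$ is decreasing in $r$ for small $r$), but the paper's piecewise-constant truncation works just as well and the pointwise convergence needed for Fatou is clear either way.
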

\begin{proof}
 Let $W'$ and $\zeta$ be as in the proof of Lemma~\ref{lem:u_est}.
 We modify the sequence of functions in the proof of the preceding lemma to 
 \[
   \mu_m (r):
   = \begin{cases}
        r^{-1/2} \ln^{-1/2-a} (\frac{1}{r}) & \text{for $r>\frac{1}{m}$}\\
        m^{1/2} \ln^{-1/2-a} (m) & \text{for $r \leq \frac{1}{m}$}.
     \end{cases}
 \]
The Agmon identity~\eqref{eq:u1} applied to $\mu_m\zeta$ gives
 \begin{equation}\label{eq:09.29.2012}
   \| D(\mu_m\zeta u)\|^2 = \|[D,\mu_m\zeta] u\|^2.
 \end{equation}
The right-hand side is bounded by
 \begin{multline*}
   \|[D,\mu_m\zeta]u\|^2 
  \leq 
  \frac{1}{4}\|\frac{\chi_m\mu_m\zeta u}{r}\|^2
   - (\frac{1}{2} + a) \|\frac{\chi_m\mu_m\zeta u}{r\ln^{1/2}(\frac{1}{r})}\|^2
   + (\frac{1}{2}+a)^2\|\frac{\chi_m\mu_m\zeta u}{r\ln(\frac{1}{r})}\|^2\\
 + \|d\zeta\|_{L^{\infty}(W')}^2\|\frac{u}{r^{1/2}}\|_{L^2(W')}^2 
 + C_{\zeta,\e}\|\frac{u}{r}\|_{L^2(W')}^2,
 \end{multline*}
 with $\chi_m$ the characteristic function of the interval
 $r>\frac{1}{m}$ and $C_{\zeta,\e}$ a constant depending on
 $\|d\zeta\|_{L^2(W')}$ and $\e$. Since by Lemma~\ref{lem:u_est} we
 have $\frac{|u|}{r^{3/2} \ln^{1/2 +a} (\frac{1}{r})} \in L^2(W')$, it
 follows that $\| [D, \mu_m\zeta] u\|^2$ is uniformly bounded as
 $m\rightarrow\infty$. Hence, taking the limit as $m\rightarrow\infty$
 in~\eqref{eq:09.29.2012} we obtain
 $$
   D (\frac{\zeta u}{ r^{1/2}\ln^{1/2+a}(\frac{1}{r})}) \in
   L^2(W\setminus V, S\lvert_{W\setminus V}).
 $$
 Since the function $\mu$ defined by $\mu(r): = r^{-1/2}
 \ln^{-1/2-a}(\frac{1}{r})$ satisfies $|d\mu| \leq \frac{C(a)}{r^{3/2}
 \ln^{1/2+a}(\frac{1}{r})}$ for a positive constant $C(a)$ depending
 on $a$, it follows also from Lemma~\ref{lem:u_est} that 
 \[
   [D, \frac{\zeta}{r^{1/2} \ln^{1/2+a}(\frac{1}{r})}]u \in
   L^2(W\setminus V, S\lvert_{W\setminus V}).
 \]
 Writing
 $$
   D (\frac{\zeta u}{r^{1/2}\ln^{1/2+a}(\frac{1}{r})}) 
   =  \frac{\zeta Du}{r^{1/2}\ln^{1/2+a}(\frac{1}{r})} 
+ [D, \frac{\zeta(r)}{r^{1/2} \ln^{1/2+a}(\frac{1}{r})}] u,
 $$ expresses $\frac{\zeta
v}{r^{1/2}\ln^{1/2+a}(\frac{1}{r})}$ as the difference of
$L^2$-sections. Hence it is square integrable.
\end{proof}

\subsection{Estimates near the higher codimension strata of $V$}
Now we prove similar estimates to~\eqref{eq:u_est}
and~\eqref{eq:v_est} near the higher
codimension strata of $V$. These estimates will be in terms of $r_b$,
the distance to the stratum $V^{k_b}$. 

\begin{lemma}\label{lem:u_estb}
  If $u \in \Dmin (D)$ and $D^2 u =0$ in a neighborhood of $\overline{V^{k_b}}$ in $M\setminus V$
  with $k_b >2$, then 
\begin{equation}\label{eq:u_estb}
 \frac{u}{r_b^{k_b/2}\ln^{1/2+a}(\frac{1}{r_b})} \in L^2(W \setminus V ,  S\lvert_{W\setminus V})
\end{equation}
 for all $W \in \TRC(V^{k_b})$ and for all $a>0$.
\end{lemma}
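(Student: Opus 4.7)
The plan is to mirror closely the proof of Lemma~\ref{lem:u_est}, replacing the estimate~\eqref{eq:Lich} (codimension $2$) with its higher codimension analog~\eqref{eq:Lichkb}, and replacing the test weight $r^{-1/2}\ln^{-a}(\tfrac{1}{r})$ by the scaling natural to codimension $k_b>2$, namely $r_b^{-(k_b-2)/2}\ln^{-a}(\tfrac{1}{r_b})$.

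First I would fix $W\in\TRC(V^{k_b})$ with $W=B_{\epsilon}(Y)$ for some relatively compact $Y\subset V^{k_b}$ and some $\epsilon<\epsilon(Y)$, and enlarge slightly to $W'\in\TRC(V^{k_b})$ with $\overline{W}\subset W'$ on which $D^2u=0$. Pick $\zeta\in C_0^\infty(W')$ with $0\le\zeta\le 1$ and $\zeta\equiv 1$ on $W$. For $a>0$ define the truncation
\begin{equation*}
\mu_m(r_b):=\begin{cases} r_b^{-(k_b-2)/2}\ln^{-a}(\tfrac{1}{r_b}), & r_b>\tfrac{1}{m},\\[1pt] m^{(k_b-2)/2}\ln^{-a}(m), & r_b\le\tfrac{1}{m},\end{cases}
\end{equation*}
and apply the Agmon identity~\eqref{eq:u1} to $f=\mu_m\zeta$. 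Since $D^2u=0$ on $\supp(\mu_m\zeta)$ this gives $\|D(\mu_m\zeta u)\|^2=\|[D,\mu_m\zeta]u\|^2$. Combining with~\eqref{eq:Lichkb} applied to $\mu_m\zeta u\in\Dmin(D)$, I obtain
\begin{equation*}
\tfrac{(k_b-2)^2}{4}\|\tfrac{\mu_m\zeta u}{r_b}\|^2 -C_Y\|\tfrac{\mu_m\zeta u}{r_b^{1/2}}\|^2 \le \|[D,\mu_m\zeta]u\|^2.
\end{equation*}

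Next I would expand the commutator. Using $|dr_b|=1$ from~\eqref{drb} and the identity
\begin{equation*}
|\mu_m'(r_b)|^2=\tfrac{(k_b-2)^2}{4}\tfrac{\mu_m^2}{r_b^2}-(k_b-2)a\tfrac{\mu_m^2}{r_b^2\ln(\tfrac{1}{r_b})}+a^2\tfrac{\mu_m^2}{r_b^2\ln^2(\tfrac{1}{r_b})}
\end{equation*}
valid for $r_b>\tfrac{1}{m}$, together with Peter--Paul on the cross terms coming from $d\zeta$, I find
\begin{equation*}
\|[D,\mu_m\zeta]u\|^2 \le \tfrac{(k_b-2)^2}{4}\|\tfrac{\chi_m\mu_m\zeta u}{r_b}\|^2 -(k_b-2)a\|\tfrac{\chi_m\mu_m\zeta u}{r_b\ln^{1/2}(\tfrac{1}{r_b})}\|^2 + a^2\|\tfrac{\chi_m\mu_m\zeta u}{r_b\ln(\tfrac{1}{r_b})}\|^2 + C_{\zeta,\epsilon}\|\tfrac{u}{r_b}\|_{L^2(W')}^2,
\end{equation*}
where $\chi_m$ is the characteristic function of $\{r_b>\tfrac{1}{m}\}$. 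The leading $\tfrac{(k_b-2)^2}{4}$-terms cancel on both sides (up to the bounded piece where $r_b\le\tfrac{1}{m}$, which is absorbed), and by Corollary~\ref{cor:Dmin} we have $\|\tfrac{u}{r_b}\|_{L^2(W')}<\infty$. Rearranging yields, uniformly in $m$,
\begin{equation*}
(k_b-2)a\,\|\tfrac{\chi_m\mu_m\zeta u}{r_b\ln^{1/2}(\tfrac{1}{r_b})}\|^2 \le a^2\|\tfrac{\chi_m\mu_m\zeta u}{r_b\ln(\tfrac{1}{r_b})}\|^2 + (\text{terms bounded independently of }m).
\end{equation*}

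The second term on the right is controlled by the first for $a<(k_b-2)$ (and then the result for larger $a$ follows a fortiori), so the monotone convergence theorem applied as $m\to\infty$ gives $\tfrac{\mu\zeta u}{r_b\ln^{1/2}(\tfrac{1}{r_b})}\in L^2$ with $\mu=r_b^{-(k_b-2)/2}\ln^{-a}(\tfrac{1}{r_b})$. Since on $W$ we have $\zeta\equiv 1$, this says precisely $\tfrac{u}{r_b^{k_b/2}\ln^{1/2+a}(\tfrac{1}{r_b})}\in L^2(W\setminus V, S\lvert_{W\setminus V})$, as claimed. The main point requiring care will be the bookkeeping of the cancellation between the positive leading term from~\eqref{eq:Lichkb} and the corresponding negative leading term from $|\mu_m'|^2$ in the commutator; the borderline nature of this cancellation is exactly why the logarithmic factor $\ln^{1/2+a}$ appears, in complete parallel with the codimension $2$ case.
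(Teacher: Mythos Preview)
Your argument has a genuine gap: the claimed bound $C_{\zeta,\epsilon}\|\tfrac{u}{r_b}\|_{L^2(W')}^2$ for the $d\zeta$-contribution to $\|[D,\mu_m\zeta]u\|^2$ is not valid. The support of $d\zeta$ lies in $W'\setminus W$ and therefore contains points arbitrarily close to $V^{k_b}$ (over the portion of $Y'$ outside $\overline Y$), so $r_b$ is \emph{not} bounded below on $\supp(d\zeta)$. Since $k_b\ge 6$ for every higher stratum, the weight satisfies $\mu_m\sim r_b^{-(k_b-2)/2}$ with $(k_b-2)/2\ge 2>1$, and hence $\|\mu_m[D,\zeta]u\|^2$ involves $\|r_b^{-(k_b-2)/2}u\|^2$ on a region where $r_b\to 0$. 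Corollary~\ref{cor:Dmin} only gives $\tfrac{u}{r_b}\in L^2$, which does not control this. The same problem afflicts the lower-order term $C_Y\|\tfrac{\mu_m\zeta u}{r_b^{1/2}}\|^2$ coming from~\eqref{eq:Lichkb}: this is essentially $\|u/r_b^{(k_b-1)/2}\|^2$, again uncontrolled a priori. Consequently the ``terms bounded independently of $m$'' in your rearranged inequality are not actually bounded.

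The paper repairs this with a preliminary bootstrap (its Step~1): using the Agmon identity with the simpler weights $\mu_m=r_b^{-\alpha}$ and starting from $\tfrac{u}{r_b}\in L^2$, one shows inductively that $\tfrac{u}{r_b^{\alpha}}\in L^2(W\setminus V)$ for every $\alpha<k_b/2$. Only after this is established does your Step-2 argument (which is essentially the paper's Step~2) go through, because both of the problematic error terms above are then finite. The underlying reason your codimension-$2$ template does not transfer directly is that there the critical weight $r^{-1/2}\ln^{-a}$ is \emph{less} singular than the $r^{-1}$ already provided by Corollary~\ref{cor:Dmin}, whereas in higher codimension the critical weight $r_b^{-(k_b-2)/2}\ln^{-a}$ is \emph{more} singular than $r_b^{-1}$, and the gap between them must be bridged before the borderline logarithmic cancellation can be exploited.
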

\begin{proof}
The proof is similar to the proof of Lemma \ref{lem:u_est},
with~\eqref{eq:Lichkb} replacing \eqref{eq:Lich}.
It consists of two steps. In the first step we show that 
\begin{equation}\label{eq:step1}
 \frac{u}{r_b^{\alpha}}  \in L^2(W \setminus V ,  S\lvert_{W\setminus V})
\end{equation}
for all $\alpha < k_b/2$ and for all $W \in \TRC(V^{k_{b}}).$ In
the second step we prove~\eqref{eq:u_estb}.

{\em Step 1:}
Let $W \in \TRC(V^{k_b})$. By~\eqref{eq:TRC}, this means that there exists $Y$ a
relatively compact subset of $V^{k_b}$ and $\e<\e(Y)$ so that $W= B_{\e}(Y)$.
Without loss of generality, we can assume that $D^2 u =0$ in an open
set $W' \in \TRC(V^{k_b})$ containing $\widebar{W}$.

Let $\zeta\in C_0^\infty(W') $ with $0\leq \zeta \leq 1$ and
$\zeta\equiv 1$ on $W$.
Then the Agmon identity~\eqref{eq:u1} applied to the sequence of functions
$\mu_m\zeta$, with $\mu_m$ defined as
\[
  \mu_m(r_b):
   = \begin{cases}
         r_b^{-\alpha} & \text{for
         $r_b>\frac{1}{m}$}\\
         m^{\alpha} & \text{for $r_b \leq \frac{1}{m}$},
     \end{cases}
\]
gives
\[
  0 = \|D(\mu_m\zeta u)\|^2 - \|[D,\mu_m\zeta]u\|^2.
\]
Applying~\eqref{eq:Lichkb} to the first term, we obtain
\begin{equation}\label{eq:ukb}
  0 \geq \frac{(k_b-2)^2}{4}
  \|\frac{\mu_m \zeta u}{r_b}\|^2 - C_{W'}\|\frac{\mu_m \zeta u}{r_b^{1/2}}\|^2 
   - \|[D,\mu_m\zeta ] u\|^2
\end{equation}
with $C_{W'}$ a constant depending on $W'$.
Expanding $[D,\mu_m\zeta]u$, we have
\[
  \|[D,\mu_m\zeta]u\|^2\leq 
   \alpha^2 \|\frac{\mu_m \zeta u}{r_b}\|^2 
   + \|d\zeta\|_{L^{\infty}(W')}^2 \|\mu_m   u\|_{L^2(W')}^2 
   + 2\alpha \|d\zeta\|_{L^{\infty}(W')} \|\frac{\chi_m \chi_{d\zeta} u}{r_b^{\alpha+1/2}}\|^2.
\] 
Here $\chi_m$ denotes the characteristic function of $r> \frac{1}{m}$
and $\chi_{d\zeta}$ the characteristic function of the support of
$d\zeta$.
Plugging this into~\eqref{eq:ukb}, we obtain
\begin{multline*}
  \|d\zeta\|_{L^{\infty}(W')}^2 \|\mu_m   u\|_{L^2(W')}^2 
 + 2\alpha  \|d\zeta\|_{L^{\infty}(W')} \|\frac{\chi_m \chi_{d\zeta} u}{r_b^{\alpha+\frac{1}{2}}}\|^2 
 + C_{W'}\|\frac{\mu_m u}{r_b^{1/2}}\|_{L^2(W')}^2 \\
 \geq  \frac{(k_b-2)^2}{4}
  \|\frac{\mu_m \zeta u}{r_b}\|^2 - \alpha^2 \|\frac{\mu_m\zeta  u}{r_b}\|^2 .
\end{multline*}
Assuming that $\frac{u}{r_b^{\alpha + 1/2}} \in
L^2(W' \setminus V, S\lvert_{W'\setminus V} )$, it
follows that the left-hand side above is bounded by a constant
independent of $m$. Hence we can take the limit as $m\to \infty$ and conclude
\[
  \frac{u}{r_b^{\alpha+1}}  \in L^2(W \setminus V , S\lvert_{W\setminus V})
\]
as long as $\alpha < (k_b-2)/2$.

By Corollary~\ref{cor:Dmin}, we know that $\frac{u}{r_b} \in L^2(W', S\lvert_{W'\setminus V})$.
Starting from here and bootstrapping using the above argument, we
obtain~\eqref{eq:step1}.  

{\em  Step 2:}
We have the same argument as in the proof of the first step, only that
now we take
\[
  \mu_m (r_b):
   = \begin{cases}
        r_b^{-(k_b-2)/2} \ln^{-a}(\frac{1}{r_b}) & \text{for
         $r_b>\frac{1}{m}$}\\
        m^{(k_b-2)/2} \ln^{-a}(m)  & \text{for $r_b \leq \frac{1}{m}$},
     \end{cases}
\]
for $a>0$.
This time, expanding $[D,\mu_m\zeta]u$, we obtain
\begin{align*}
 \|[D,\mu_m\zeta]u\|^2
 &= \|[D,\mu_m]\zeta u\|^2 + 2([D,\mu_m]\zeta u, \mu_m[D,\zeta] u)
 + \|\mu_m[D,\zeta] u\|^2\\
 & \leq
\frac{(k_b-2)^2}{4} \|\frac{ \mu_m \zeta u}{r_b}\|^2 
- (k_b-2) a \|\frac{\chi_m\mu_m \zeta u}{r_b \ln^{1/2}(\frac{1}{r_b})}\|^2
+ a^2 \|\frac{\mu_m \zeta u}{r_b \ln(\frac{1}{r_b})}\|^2 \\
 & \quad
+ \frac{k_b-2}{2}\|d\zeta\|_{L^{\infty}(W')} \|\frac{\mu_m
 \chi_m\zeta^{1/2}u}{r_b^{1/2}}\|^2
+ a \|d\zeta\|_{L^{\infty}(W')} \|\frac{\mu_m
 \chi_m\zeta^{1/2}u}{r_b^{1/2}\ln^{1/2}(\frac{1}{r_b})}\|^2\\
& \quad
+  \|d\zeta\|_{L^{\infty}(W')}^2 \|\mu_m \chi_{d\zeta} u\|^2 .
\end{align*}
Plugging this into~\eqref{eq:ukb}, we obtain
\begin{align*}
(k_b-2) a\  &\|\frac{\chi_m\mu_m \zeta u}{r_b \ln^{1/2}(\frac{1}{r_b})}\|^2
 - a^2 \|\frac{\mu_m \zeta u}{r_b  \ln(\frac{1}{r_b})}\|^2 \\
 & \leq  
\frac{k_b-2}{2}\|d\zeta\|_{L^{\infty}(W')} \|\frac{\mu_m
 \chi_m\zeta^{1/2}u}{r_b^{1/2}}\|^2
+ a \|d\zeta\|_{L^{\infty}(W')} \|\frac{\mu_m
 \chi_m\zeta^{1/2}u}{r_b^{1/2}\ln^{1/2}(\frac{1}{r_b})}\|^2\\
 & \quad +  \|d\zeta\|_{L^{\infty}(W')}^2 \|\mu_m \chi_{d\zeta} u\|^2 
+ C_{W'}\|\frac{\mu_m\zeta u}{r_b^{1/2}}\|^2 .
\end{align*}
By the first step, we know that the right-hand side is bounded by a
constant independent of $m$. Therefore we can take limit as $m\to
\infty$ and conclude
\[
   \frac{u}{r_b^{k_b/2} \ln^{1/2+a}(\frac{1}{r_b})} \in L^2(W \setminus V , S\lvert_{W\setminus V})
\]
for all $a>0$.
\end{proof}
For $v = Du$ we have now the following estimate near the higher
codimension strata.
\begin{lemma}\label{lem:v_estb}
 Let $v = Du$, with $u$ as in Lemma~\ref{lem:u_estb}. Then
 \begin{equation}\label{eq:v_estb}
  \frac{v}{r_b^{(k_b-2)/2}\ln^{1/2+a}(\frac{1}{r_b})} \in L^2(W \setminus V , S\lvert_{W\setminus V})
 \end{equation}
 for all $W \in \TRC(V^{k_b})$ and all $a>0$.
\end{lemma}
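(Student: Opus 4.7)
The plan is to mimic the proof of Lemma~\ref{lem:v_est}, replacing $r$ by $r_b$, adjusting the exponents to reflect the codimension $k_b$, and feeding in the sharp pointwise integrability for $u$ supplied by Lemma~\ref{lem:u_estb}. Fix $W = B_\epsilon(Y)\in\TRC(V^{k_b})$ and choose $W'\in\TRC(V^{k_b})$ with $\overline{W}\subset W'$ and $D^2u=0$ on $W'\setminus V$. Pick $\zeta\in\sC_0^\infty(W')$ with $0\le\zeta\le 1$ and $\zeta\equiv 1$ on $W$. For the chosen $a>0$, define the truncated weight
\begin{equation*}
\mu_m(r_b):=\begin{cases} r_b^{-(k_b-2)/2}\ln^{-1/2-a}(\tfrac{1}{r_b}) & \text{for } r_b>\tfrac{1}{m},\\ m^{(k_b-2)/2}\ln^{-1/2-a}(m) & \text{for } r_b\le\tfrac{1}{m}.\end{cases}
\end{equation*}
Since $u\in\Dmin(D)$ and $\mu_m\zeta$ is bounded with bounded derivative, $\mu_m\zeta u\in\Dmin(D)$. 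Because $D^2u=0$ on $\supp\zeta\setminus V$, the Agmon identity~\eqref{eq:u1} reduces to
\begin{equation*}
\|D(\mu_m\zeta u)\|^2=\|[D,\mu_m\zeta]u\|^2.
\end{equation*}

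Next I would bound the commutator uniformly in $m$. A direct computation gives $|[D,\mu_m\zeta]u|\le |d\mu_m|\,\zeta|u|+\mu_m|d\zeta|\,|u|$. On $\{r_b>1/m\}$ we have $|d\mu_m|\le C_a\, r_b^{-k_b/2}\ln^{-1/2-a}(1/r_b)$, so the first summand is pointwise dominated by
\begin{equation*}
\frac{C_a\,|u|}{r_b^{k_b/2}\ln^{1/2+a}(1/r_b)},
\end{equation*}
which lies in $L^2(W'\setminus V)$ by Lemma~\ref{lem:u_estb}. The second summand is supported where $|d\zeta|\ne 0$; there $r_b$ stays bounded away from $0$, so $\mu_m$ is uniformly bounded and the term is controlled by $\|d\zeta\|_{L^\infty}\|\mu_m u\chi_{\supp d\zeta}\|_{L^2}$, which by Corollary~\ref{cor:Dmin} and the boundedness of $\mu_m$ on $\supp d\zeta$ is uniformly bounded in $m$. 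Hence $\|D(\mu_m\zeta u)\|_{L^2}$ is bounded independently of $m$.

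Finally I would split
\begin{equation*}
\mu_m\zeta v \;=\; \mu_m\zeta\,Du \;=\; D(\mu_m\zeta u)-[D,\mu_m\zeta]u.
\end{equation*}
Both terms on the right are uniformly bounded in $L^2(W'\setminus V)$ by the previous step, so $\|\mu_m\zeta v\|_{L^2}$ is uniformly bounded. Since $\mu_m(r_b)\nearrow r_b^{-(k_b-2)/2}\ln^{-1/2-a}(1/r_b)$ pointwise as $m\to\infty$, Fatou's lemma yields $\frac{\zeta v}{r_b^{(k_b-2)/2}\ln^{1/2+a}(1/r_b)}\in L^2(W'\setminus V)$, which restricts to the desired estimate~\eqref{eq:v_estb} on $W$.

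The only delicate point — the ``main obstacle'' — is that one cannot get a uniform-in-$m$ bound on $\|D(\mu_m\zeta u)\|$ using only Corollary~\ref{cor:Dmin}; one really needs the sharper weighted integrability of Lemma~\ref{lem:u_estb} at the borderline exponent $k_b/2$ together with the logarithmic gain $\ln^{1/2+a}$. Once that improved bound is in hand, everything else reduces to the same Agmon cutoff argument used for the codimension $2$ case in Lemma~\ref{lem:v_est}.
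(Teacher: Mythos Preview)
Your proof is correct and follows essentially the same approach as the paper, which simply states that the argument is ``similar to the proof of Lemma~\ref{lem:v_est}, using the result of Lemma~\ref{lem:u_estb}.'' Your choice of truncated weight $\mu_m$, use of the Agmon identity, uniform bound on the commutator via Lemma~\ref{lem:u_estb}, and passage to the limit via Fatou all mirror the codimension~$2$ argument exactly as intended.
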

The proof is similar to the proof of Lemma~\ref{lem:v_est}, using the
result of Lemma~\ref{lem:u_estb}.

\subsection{A sufficient condition for $v$ to be in $\Dmin(D)$}

The following result gives a sufficient condition for the spinor $v =
Du$ to be in the minimal domain of $D$.
\begin{proposition}\label{prop:v_min}
 Let $v = Du$ with $u \in \Dmin(D)$ and $D^2 u = 0$ in a
 neighborhood of $V$. 
If $\frac{v}{r^{1/2}\ln^{1/2}(\frac{1}{r})}\in L^2(W \setminus V , S\lvert_{W\setminus V})$ for all  $W\in TRC(V^2),$
then  $\eta v \in \Dmin(D)$, for all
  $\eta \in \sC^{\infty}_0(M)$. 
\end{proposition}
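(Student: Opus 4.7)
The plan is to verify the hypotheses of Lemma~\ref{lem:conv} for the spinor $\eta v$, namely that $\eta v\in\Dmax(D)$ and that $\frac{\eta v}{r_b}\in L^2(W\setminus V,S\lvert_{W\setminus V})$ for every stratum $V^{k_b}$ and every $W\in\TRC(V^{k_b})$. Membership in $\Dmax(D)$ is straightforward: since $u\in\Dmin(D)$, we have $v=Du\in L^2$, while
\begin{equation*}
 D(\eta v)=c(d\eta)\,v+\eta D^2u
\end{equation*}
is in $L^2$ because $c(d\eta)v\in L^2$ (bounded coefficient times an $L^2$-spinor) and $\eta D^2u$ vanishes in the neighborhood of $V$ on which $D^2u=0$, while off that neighborhood $u$ is smooth by interior elliptic regularity and $\eta$ has compact support.

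For the decay near a stratum of codimension $k_b\geq 6$, I would invoke Lemma~\ref{lem:v_estb}, which gives $\frac{v}{r_b^{(k_b-2)/2}\ln^{1/2+a}(1/r_b)}\in L^2(W\setminus V)$ for any $a>0$. Since $r_b\leq 1/2$ on $W$ (Remark~\ref{rem:rb}) and $k_b-4\geq 2$, the factor $r_b^{k_b-4}\ln^{1+2a}(1/r_b)$ is uniformly bounded on $W$; writing $\frac{|v|^2}{r_b^2}=\frac{|v|^2}{r_b^{k_b-2}\ln^{1+2a}(1/r_b)}\cdot r_b^{k_b-4}\ln^{1+2a}(1/r_b)$ then yields $\frac{v}{r_b}\in L^2(W\setminus V)$ immediately.

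The substantive case is $k_b=2$. Here I would use the hypothesis together with the Fourier decomposition in the normal angle (Lemma~\ref{hol}) and the harmonicity $Dv=D^2u=0$ in a neighborhood of $V^2$. Choose a contractible $W'\in\TRC(V^2)$ containing $\overline{W}$ on which $Dv=0$ and expand $v=\sum_{k\in 1/2+\IZ}v^k(y,r)e^{ik\theta}$. To leading order near $r=0$ the radial system $Dv=0$ has, in each Fourier sector, two fundamental solutions with asymptotics $r^{|k|-1/2}$ and $r^{1/2-|k|}$. For $|k|\geq 3/2$, local $L^2$-integrability at $r=0$ already selects the regular solution, giving $|v^k|=O(r^{|k|-1/2})=O(r)$. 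For $|k|=1/2$ both branches are locally square-integrable, but the hypothesis kills the singular one: if the $e^{\pm i\theta/2}$ component of $v$ had leading term $A(y)\,r^{-1/2}$ with $A\not\equiv 0$, then by Parseval in $\theta$ this would contribute
\begin{equation*}
 c\int_Y|A(y)|^2\int_0^\epsilon\frac{dr}{r\ln(1/r)}\,dy=+\infty
\end{equation*}
to $\int_W|v|^2/(r\ln(1/r))\,dV$, contradicting the assumption. The surviving modes therefore all satisfy $|v^k|=O(r^{1/2})$ locally uniformly in $y$; summing by Parseval over $k$, and using Lemma~\ref{arbfour} to pass from a local trivialization of $S_0$ to the global estimate, gives $\frac{v}{r}\in L^2(W\setminus V)$, and consequently $\frac{\eta v}{r}\in L^2(W\setminus V)$. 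With the decay verified on every stratum, Lemma~\ref{lem:conv} delivers $\eta v\in\Dmin(D)$.

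The main obstacle will be turning the heuristic Fourier-mode asymptotics at $V^2$ into a rigorous $L^2$ estimate: the metric near $V^2$ is only quasi-isometric to a product, so the radial system for $v^k$ carries $O(r)$ perturbations that a priori couple the modes and distort the leading indicial exponents. I expect to control these perturbations by a regular-singular-point (Frobenius) argument in the normal disks, using compactness of the base slice $Y\subset V^2$ to get uniform constants; alternatively, one can iterate the Agmon identity $(D^2u,f^2u)=\|D(fu)\|^2-\|[D,f]u\|^2$ with cutoffs $f=\mu\zeta$ tailored to bootstrap the hypothesized decay on $v$, using the sharp angular estimate of Proposition~\ref{prop:angular}, which encodes the $-1$ holonomy of the maximal spin structure and is the structural reason the hypothesis suffices.
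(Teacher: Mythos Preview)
Your reduction to Lemma~\ref{lem:conv}, the verification that $\eta v\in\Dmax(D)$, and the treatment of the higher-codimension strata via Lemma~\ref{lem:v_estb} all match the paper. The divergence is entirely at $V^2$, where the paper does \emph{not} attempt any Fourier or Frobenius analysis.

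Instead, the paper works directly with the $L^2$ estimate~\eqref{eq:Lich}. It applies~\eqref{eq:Lich} to $\gamma_m\mu_j\zeta v$, where $\gamma_m$ are the logarithmic cutoffs from the proof of Lemma~\ref{lem:conv} (so $|d\gamma_m|\leq \frac{1}{r\ln(1/r)}$ and $\gamma_m$ vanishes near $r=0$), $\mu_j(r)=\min(j^{1/2}r^{1/2},1)$, and $\zeta$ is a bump function as usual. The cutoff $\gamma_m$ forces the spinor into $\Dmin(D)$, so~\eqref{eq:Lich} is available. Since $Dv=0$ on $\operatorname{supp}\zeta$, one has $D(\gamma_m\mu_j\zeta v)=[D,\gamma_m\mu_j\zeta]v$, and the terms involving $[D,\gamma_m]$ are dominated, on the region $r<1/j$, by $j^{1/2}\frac{|v|}{r^{1/2}\ln(1/r)}$; this is $L^2$ precisely by the hypothesis, and the supports of $d\gamma_m$ shrink to nothing, so these terms vanish as $m\to\infty$ by dominated convergence. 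What survives after $m\to\infty$ is an inequality bounding $\frac{1}{4}\|\frac{(1-\chi_j)\zeta v}{r}\|^2$ (with $\chi_j$ the characteristic function of $r<1/j$) by terms controlled by $\|v\|_{L^2(W')}$; sending $j\to\infty$ gives $\frac{v}{r}\in L^2(W)$.

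Your Fourier-mode route is the right heuristic picture, and your last paragraph correctly identifies the Agmon alternative, which \emph{is} the paper's method. But the mode-by-mode argument as written has real gaps you flagged yourself: the $O(r)$ metric perturbations couple modes and shift indicial behavior, the passage from ``each $v^k=O(r^{1/2})$'' to an $L^2$ bound on $\sum_k v^k/r$ requires summability in $k$ that you have not established, and Lemma~\ref{arbfour} only controls individual components up to $O(\|rv\|_{L^2})$, which does not obviously survive the division by $r$. The Agmon-with-double-cutoff argument bypasses all of this and uses the hypothesis in exactly one place, which is why it is the cleaner route.
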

Note that the hypothesis is stronger than the estimate we obtained in
Lemma~\ref{lem:v_est}; in particular,  it rules out $|v|$ growing like
$r^{-1/2}$ near $V^2$.
\begin{proof}
We show that $v$ satisfies the hypotheses of Lemma~\ref{lem:conv}.  By
Lemma \ref{lem:v_estb} we have the desired estimates near the higher
codimension strata $V^{k_b}$ with $k_b >2$. The only difficult stratum
is $V^2$.
Let $W\in \TRC(V^2),$ and let $W' \in \TRC(V^{2})$ containing
$\widebar{W}$.  Let $\zeta\in C_0^\infty(W') $ with $0\leq \zeta \leq
1$ and $\zeta\equiv 1$ on $W$. Let $\{\gamma_m(r)\}_m$ be the sequence of
cutoff functions described in Lemma~\ref{lem:conv}. Define another 
sequence of bounded, piecewise differentiable functions by
\[
  \mu_j(r): 
   = \begin{cases}
       j^{1/2}r^{1/2} & \text{for    $r<\frac{1}{j}$}\\
       1  & \text{for $r \geq \frac{1}{j}$},
     \end{cases}
\]
and let  $\chi_j$ denote the characteristic function of the support of
$d\mu_j$. 
The estimate~(\ref{eq:Lich}) applied to $\gamma_m\mu_j \zeta v\in
\Dmin(D)$ gives
\begin{equation}\label{recap}
\|D(\gamma_m\mu_j\zeta v)\|^2  \geq
\frac{1}{4}\|\frac{\gamma_m\mu_j\zeta v}{r}\|^2 
- C_{W'}\|\frac{\gamma_m\mu_j\zeta  v}{r^{1/2}}\|^2.
\end{equation}
On the other hand, since $Dv =0$ on the support of $\zeta$, 
\begin{align*}
  \|D(\gamma_m\mu_j \zeta  v)\|^2 
  & = \|[D,\gamma_m]\mu_j \zeta v +\frac{\chi_j}{2r}c(e_r)\gamma_m\mu_j\zeta v+ \gamma_m\mu_j[D,\zeta] v\|^2\\
  & = \|[D,\gamma_m]\mu_j\zeta v \|^2 
      +\frac{1}{4}\|\frac{\gamma_m\chi_j\mu_j\zeta  v}{r}\|^2
      +\| \gamma_m\mu_j[D,\zeta] v\|^2 \\
  & \quad 
      +2([D,\gamma_m]\mu_j\zeta  v , \gamma_m\mu_j[D,\zeta] v+\frac{\chi_j}{2r}c(e_r)\gamma_m\mu_j\zeta v)
      +2( \gamma_m\mu_j[D,\zeta]v,\frac{\chi_j}{2r}c(e_r)\gamma_m\mu_j\zeta v). 
\end{align*}
Since $\frac{v}{r^{1/2}\ln^{1/2}(\frac{1}{r})}\in L^2(W')$ by hypothesis, the Lebesgue
dominated convergence theorem implies that
\[
 \lim_{m\to \infty}\left(
  ( [D,\gamma_m]\mu_j\zeta v , \gamma_m\mu_j[D,\zeta]v+\frac{\chi_j}{2r}c(e_r)\gamma_m\mu_j\zeta v)
+ \|[D,\gamma_m]\mu_j\zeta v \|^2
 \right) =0.
\]
Hence, substituting the above expression for $  \|D(\gamma_m\mu_j
\zeta  v)\|^2$ into~\eqref{recap} 
and taking the limit as $m\rightarrow\infty$ yields
\[
 \|\mu_j [D,\zeta] v\|^2
 +2( \mu_j[D,\zeta] v,\frac{\chi_j}{2r}c(e_r)\mu_j\zeta v)
 \geq 
\frac{1}{4}\|\frac{(1-\chi_j)\mu_j \zeta v}{r}\|^2 - C_{W'}\|\frac{ \mu_j\zeta v}{r^{1/2}}\|^2.
\]
Taking now the limit as $j\rightarrow\infty$ gives 
$$\frac{1}{4}\|\frac{ \zeta  v}{r}\|^2 \leq C_{W'}\|\frac{ \zeta v}{r^{1/2}}\|^2
 +\|[D,\zeta]  v\|^2, $$
and therefore $\frac{ v}{r}\in L^2(W,  S\lvert_{W\setminus V}).$
  We may now apply Lemma~\ref{lem:conv} to deduce $v\in \Dmin(D)$. 
\end{proof}

\section{Estimates on the asymptotically flat ends of $M$}\label{sec:af}
In this section, we show that if $u \in H^{1}_{\rho}(M\setminus V, S)$
is a smooth spinor with $D^2u$ satisfying certain decay estimates,
then $u$ satisfies weighted integral estimates on the
asymptotically flat ends of $M$. These results are used to
prove Theorem A. Such estimates are not new (see~\cite{bartnik, pt}), but we choose to obtain them here using
Agmon's identity in a manner similar to the proof in Section~\ref{sec:iie} of our weighted integral estimates near $V$.

We first record the extension of Agmon's
identity~\eqref{eq:u1} from $\Dmin(D)$ to $H^1_{\rho}(M\setminus V,S)$.
\begin{lemma}\label{lem:agmon_af}
  Let $u \in H^1_{\rho}(M\setminus V, S)$ be a smooth spinor. Then for
  any bounded, piecewise differentiable function $f$, with bounded
  derivatives, satisfying $|df| u  \in L^2(M\setminus V,S)$  and the pointwise inner-product $\langle D^2 u, f^2 u \rangle(x) \in L^1
  (M\setminus V)$, we have the
  following identities in $L^2$-norm on $M\setminus V$:
 \begin{equation}\label{eq:u2}
   (D^2 u, f^2 u) = \|D(fu)\|^2 - \|[D,f]u\|^2 = \|\nabla(fu)\|^2
   +\frac{1}{4}(Rfu, fu)- \|[D,f]u\|^2.
  \end{equation}
\end{lemma}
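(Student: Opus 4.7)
The strategy is to obtain the second equality from the integral Lichnerowicz formula (Proposition~\ref{prop:boch1}) applied to $fu$, and to obtain the first equality by approximating $u$ with spatial cutoffs that place it in $\Dmin(D)$, applying the Agmon identity~\eqref{eq:u1}, and passing to the limit.

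First I verify that $fu \in H^1_{\rho}(M\setminus V, S)$. Boundedness of $f$ gives $\|fu\|_{L^2_{\rho}} \leq \|f\|_{\infty} \|u\|_{L^2_{\rho}}$, while the Leibniz rule $\nabla(fu) = f\nabla u + df \otimes u$ places $\nabla(fu)$ in $L^2$ using boundedness of $f$ together with the hypothesis $|df|u \in L^2$. Proposition~\ref{prop:boch1} applied to $fu$ then yields
$$\|D(fu)\|^2 = \|\nabla(fu)\|^2 + \tfrac{1}{4}(R\,fu, fu),$$
which is the second equality in~\eqref{eq:u2}.

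For the first equality, pick cutoffs $\chi_k \in \sC_0^{\infty}(M)$ with $0 \leq \chi_k \leq 1$, $\chi_k \equiv 1$ on $\{\rho \leq k\}$, $\chi_k \equiv 0$ on $\{\rho \geq 2k\}$, and $|d\chi_k| \leq C/\rho$. By Corollary~\ref{cor:Dmin-ns}(2), $\chi_k u \in \Dmin(D)$, and the Agmon identity~\eqref{eq:u1} applied to $\chi_k u$ with weight $f$ gives
$$(D^2(\chi_k u),\, f^2 \chi_k u) = \|D(f\chi_k u)\|^2 - \|[D,f]\chi_k u\|^2.$$
The right-hand side converges to $\|D(fu)\|^2 - \|[D,f]u\|^2$: indeed $f\chi_k u \to fu$ in $H^1_{\rho}$ (each term of $\nabla(f\chi_k u) = \chi_k f\nabla u + \chi_k\, df\otimes u + f\, d\chi_k \otimes u$ converges in $L^2$ by dominated convergence, the last term using $|f\, d\chi_k\, u| \leq \|f\|_{\infty} C|u|/\rho \in L^2$), and $[D,f]\chi_k u = c(df)\chi_k u \to c(df)u$ in $L^2$ by dominated convergence with dominant $|df|\,|u|$. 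For the left, expand $D^2(\chi_k u) = \chi_k D^2 u + R_k$, where the commutator remainder $R_k$ involves $\nabla d\chi_k$ and $d\chi_k \cdot \nabla$, is supported in $\{k \leq \rho \leq 2k\}$, and is pointwise controlled by $C(|u|/\rho^2 + |\nabla u|/\rho)$. The main term produces $\int \chi_k^2 f^2 \langle D^2 u, u\rangle$, converging to $(D^2 u, f^2 u)$ by dominated convergence with dominant $|\langle D^2 u, f^2 u\rangle| \in L^1$. The remainder, paired with $f^2 \chi_k u$, is bounded by $C\|f\|_{\infty}^2$ times tail integrals of $|u|^2/\rho^2$ and $|\nabla u||u|/\rho$ on $\{k \leq \rho \leq 2k\}$; Cauchy--Schwarz together with $u \in L^2_{\rho}$ and $\nabla u \in L^2$ shows these are tails of globally $L^1$ quantities, hence vanish as $k \to \infty$.

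The main technical point is the treatment of $R_k$, specifically the term containing second derivatives of $\chi_k$; the chosen decay $|d\chi_k| \leq C/\rho$ and $|\nabla d\chi_k| \leq C/\rho^2$, combined with the weighted integrability guaranteed by $u \in H^1_{\rho}$, is precisely what makes the cutoff argument close. No further argument near the ideal boundary $V$ is required because Corollary~\ref{cor:Dmin-ns}(2) already permits the cutoff $\chi_k$ to be identically $1$ in a neighborhood of $V$.
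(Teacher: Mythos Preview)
Your proof is correct and follows the same cutoff-at-infinity strategy as the paper. The only organizational difference is that the paper places the cutoff into the weight (applying the Agmon identity with weight $\eta_j f$ to $u$ itself) rather than onto $u$; this sidesteps the expansion of $D^2(\chi_k u)$ and the second-order commutator $R_k$, so only first-order terms $[D,\eta_j]$ need to be controlled, but the limiting argument is otherwise identical.
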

\begin{proof}
Since $u \in H^1_{\rho}(M\setminus V, S)$,  $\eta u \in  \Dmin(D)$ for all $\eta \in \sC_0^{\infty}(M).$
Choose a sequence of smooth compactly supported functions $\eta_j \in
\sC^{\infty}_0(M)$ so that $0\leq \eta_j \leq 1$ is supported in the region where
$\rho(x)\leq 2j$, is identically $1$ where
$\rho(x) \leq j$, and satisfies $|d\eta_j|\leq \frac{2}{\rho}$.
Then by Agmon's identity~\eqref{eq:u1}, we have 
\begin{align*}
(D^2 u, f^2 \eta_j^2u) 
  & = \|D(\eta_jfu)\|^2 - \|[D,\eta_jf]u\|^2 \\
  & = \|\eta_j D(fu) + [D, \eta_j] fu\|^2 - \|[D,\eta_j f] u\|^2.
\end{align*}
 Since $\frac{u}{\rho}$ and $D(fu)$ are both in $L^2(M\setminus V,S)$,
 and since $|d\eta_j|< \frac{2}{\rho}$, the Lebesgue dominated
 convergence theorem gives
\[
 2 (\eta_j D(fu), [D,\eta_j]fu) + \|[D,\eta_j] fu\|^2 \to 0 
 \quad \text{as $j\to \infty$}.
\]
Hence, we may thus take the limit as $j\to\infty$ of the preceding equalities to get 
$$(D^2 u, f^2  u) = \|D( fu)\|^2 - \|[D, f]u\|^2.$$
The second part of the identity~\eqref{eq:u2} follows similarly, using the Lichnerowicz formula~\eqref{boch1}.
\end{proof}
We first illustrate the use of this Agmon identity to derive, in our 
context, the standard result that $\rho^{a+1} D^2 u \in L^2$ implies
$\rho^{a-1} u \in L^2$.
\begin{proposition}\label{udecay}
 Let $(M^n,g)$ be a nonspin Riemannian manifold which is asymptotically
 flat of order $\tau >0$ and has nonnegative scalar curvature.
 Let $u \in H^1_{\rho} (M\setminus V, S)$ be a smooth spinor.  If
 there exists a positive real number $0\leq a < \frac{ n-2 }{2}$ so that
 $\rho^{a+1} D^2u \in L^2(M_l, S\lvert_{M_l})$ for each asymptotically
 flat end $M_l$ of $M$, then
 \begin{equation}\label{eq:udecay} 
  \rho^{a-1} u \in L^2(M\setminus V, S).
 \end{equation}
\end{proposition}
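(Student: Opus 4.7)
The proof is a weighted commutator estimate built from the Agmon identity of Lemma~\ref{lem:agmon_af} together with the asymptotic Hardy inequality of Proposition~\ref{prop:af_poinc}. The restriction $a<(n-2)/2$ is exactly what is needed to make the Hardy constant dominate the commutator constant. The argument is carried out on each asymptotically flat end $M_l$ with a truncated power weight, and then passed to the limit.

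For each $m>2T$, I would introduce a bounded piecewise smooth weight $\mu_m=\mu_m(\rho)$ which vanishes on $\{\rho\le T\}$ (localizing it away from $V$ and the compact core), agrees with $\min(\rho^a,m^a)$ for $\rho\ge 2T$, and satisfies $\mu_m\le\rho^a$ and $|d\mu_m|\le Ca\,\rho^{a-1}\mathbf 1_{\{\rho\le m\}}+C\mathbf 1_{\{T\le\rho\le 2T\}}$. Since $\mu_m$ and $|d\mu_m|$ are bounded and $\mu_m^2/\rho^{a+1}\le \mu_m/\rho\le m^a/\rho$, the two integrability hypotheses of Lemma~\ref{lem:agmon_af} follow from $\rho^{a+1}D^2u\in L^2(M_l)$ and $u/\rho\in L^2(M\setminus V)$. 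Applying (\ref{eq:u2}), Kato's inequality $|\nabla(\mu_m u)|\ge |d|\mu_m u||$, Proposition~\ref{prop:af_poinc} to $|\mu_m u|$ on $M_l$, and $R\ge 0$ yields
\begin{equation*}
  \tfrac{(n-2)^2}{4}\,\|\mu_m u/\rho\|^2 - C_l\,\|\mu_m u/\rho^{1+\tau/2}\|^2 \;\le\; (D^2u,\mu_m^2u)+\|[D,\mu_m]u\|^2.
\end{equation*}

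The commutator satisfies $\|[D,\mu_m]u\|^2\le a^2\|\mu_m u/\rho\|^2+C$, where $C$ absorbs the bounded contributions from the cutoff region $\{T\le\rho\le 2T\}$ (using that $\mu_m/\rho=\rho^{a-1}$ on $\{2T\le\rho\le m\}$ so that $|d\mu_m|\le a\,\mu_m/\rho$ there). The source term is controlled by Cauchy--Schwarz using $\mu_m^2/\rho^{a+1}\le\mu_m/\rho$, giving
\begin{equation*}
  |(D^2u,\mu_m^2u)|\le \|\rho^{a+1}D^2u\|_{L^2(M_l)}\,\|\mu_m u/\rho\|_{L^2(M_l)}.
\end{equation*}
Setting $\delta:=\tfrac{(n-2)^2}{4}-a^2>0$ and combining the preceding estimates with AM--GM on the cross term, one obtains
\begin{equation*}
  \tfrac{\delta}{2}\|\mu_m u/\rho\|^2\;\le\; \tfrac{1}{2\delta}\|\rho^{a+1}D^2u\|^2+C_l\|\mu_m u/\rho^{1+\tau/2}\|^2+C.
\end{equation*}

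To dispose of the Hardy error, fix $L$ so that $C_l\rho^{-\tau}<\delta/8$ for $\rho\ge L$ and split the integral at $\rho=L$: the inner piece is bounded independently of $m$ by $\int_{T\le\rho\le L}|u|^2$, which is finite since $u\in H^1_\rho(M\setminus V,S)$ forces $u\in L^2_{\mathrm{loc}}$, while the outer piece is absorbed into the left-hand side. This yields a bound on $\|\mu_m u/\rho\|^2_{L^2(M_l)}$ uniform in $m$. Because $\mu_m(\rho)/\rho$ increases pointwise to $\rho^{a-1}$ on $\{\rho\ge 2T\}$ as $m\to\infty$, Fatou's lemma gives $\rho^{a-1}u\in L^2(M_l)$ on every end; combined with the trivial estimate on the compact part (where $\rho$ is bounded and $u\in L^2_{\mathrm{loc}}(M\setminus V)$) this proves (\ref{eq:udecay}). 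The main delicate point is the tight coupling between $a<(n-2)/2$, the Hardy constant $(n-2)^2/4$, and the constant $a^2$ produced by the commutator $[D,\mu_m]$: without the strict inequality $\delta$ would vanish and the whole scheme would break, so the bookkeeping of error terms---the $C_l$-correction to Hardy and the boundary piece of $|d\mu_m|$---must be tight enough to leave the leading $\delta\|\mu_m u/\rho\|^2$ term genuinely positive.
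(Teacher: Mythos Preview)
Your argument is correct and follows essentially the same approach as the paper: apply the Agmon identity with a truncated power weight localized to the end, use nonnegative scalar curvature and the asymptotic Hardy inequality to get the $(n-2)^2/4$ constant, bound the commutator by $a^2\|\mu_m u/\rho\|^2$ plus a harmless compactly supported term, and let $m\to\infty$. The only cosmetic difference is that the paper keeps the radial cutoff $\eta$ separate from the truncated weight $\mu_m$, whereas you merge them into a single weight; your treatment of the Hardy error term $C_l\|\mu_m u/\rho^{1+\tau/2}\|^2$ (splitting at $\rho=L$ and absorbing the outer piece) is slightly more explicit than the paper's, which simply lets the choice of $L$ in $\mathrm{supp}\,\eta$ do the work.
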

\begin{proof}
 Fix an asymptotically flat end $M_l$ of $M$. 
 Let $\eta$ be a cutoff function, $0\leq\eta\leq 1$,  supported in the region of $M_l$
 where $\rho(x) \geq L$ and identically $1$ in the region
 where $\rho(x) \geq 2L$, for some $L >T$ to be fixed later.
For each positive integer $m$ consider 
\[
  \mu_m(\rho): =
  \begin{cases}
     \rho^a & \text{if $\rho \leq m$},\\
     m^a & \text{if $\rho >m$}.
  \end{cases}
\]
Then~\eqref{eq:u2} applied to the functions $\mu_m \eta$ gives
\[
  (D^2u,\mu_m^2\eta^2u) = \|D(\mu_m\eta u)\|^2 - \||[D,\mu_m\eta]u\|^2.
\]
Since we assume that the scalar curvature is nonnegative 
\[
  ( D^2u,\mu_m^2\eta^2u) \geq \|\nabla(\mu_m\eta u)\|^2 - \|[D,\mu_m\eta]u\|^2.
\]
Applying the Hardy inequality~\eqref{eq:af_poinc} to the term
$\|\nabla (\mu_m \eta u)\|$ on the
asymptotically flat end $M_l$ of
$M$, we obtain
\begin{equation}\label{eq:udecay-1}
 (D^2u,\mu_m^2\eta^2u)
  \geq \frac{(n-2)^2}{4}  \|\frac{\mu_m\eta u}{\rho}\|^2 
   - \|[D,\mu_m\eta]u\|^2
   - C_l\|\frac{\mu_m\eta  u}{\rho^{1+\frac{\tau}{2}}}\|^2.
\end{equation}
We expand the term $\|[D,\mu_m\eta]u\|^2$ into
\begin{align*}
 \|[D,\mu_m\eta]u\|^2
  & = \|[D,\mu_m]\eta u\|^2+ 2 ( [D,\mu_m]\eta u, \mu_m[D, \eta] u) +
  \|\mu_m [D,\eta]  u\|^2\\
  & \leq \|[D,\mu_m]\eta u\|^2 + C_{\eta, a} \|\chi_{d\eta}u\|^2\\
  & =  a^2\|\chi_m \frac{\mu_m\eta u}{\rho}\|^2 
     + C_{\eta, a} \|\chi_{d\eta}u\|^2.
\end{align*}
for $m$ large enough. Here $\chi_m$ is the characteristic function of
the set $\rho(x) \leq m$, $\chi_{d\eta}$ is the characteristic
function of the support of $d\eta$, while $C_{\eta,a}>0$ is a constant
depending on $\eta$ and $a$, and independent of $u$ and $m$.
Plugging this into~\eqref{eq:udecay-1}
we have
\begin{align*}\label{eq:udecay-2}
  &\|\rho^{1+a} D^2u\|\  \|\eta\frac{\mu_m}{\rho}u\|
    \geq ( \mu_m\rho D^2u,\frac{\mu_m}{\rho} \eta^2u) \\
  & \geq \left(\frac{(n-2)^2}{4} -a^2\right) \|\chi_m\frac{\mu_m\eta u}{\rho}\|^2 
        + \frac{(n-2)^2}{4} \|(1-\chi_m)\frac{\mu_m\eta u}{\rho}\|^2 
        - C_{\eta, a} \|\chi_{d\eta}u\|^2- C_l\|\frac{\mu_m\eta  u}{\rho^{1+\frac{\tau}{2}}}\|^2.
\end{align*}
Hence  for $a\in [0,\frac{ n-2 }{2})$  fixed, $  \|\chi_m\frac{\mu_m\eta u}{\rho}\| $ is uniformly bounded as $m\rightarrow\infty$. Therefore, $\rho^{a-1} \eta u \in L^2 (M\setminus V, S)$.
\end{proof}
In the case when $D^2 u$ vanishes on the asymptotically flat ends, the
previous result is simply the integral version of the
 fact that $L^2$ spinors $u$ that satisfy $D^2u =0$ at infinity, decay pointwise like
$\sO(\rho^{-n+2})$ on the ends, and thus 
$\rho^{\frac{n}{2}-2-\e}u\in L^2(M\setminus V, S)$ for all $\e>0$.
This result can be sharpened for spinors that satisfy $Du =0$ at infinity; such spinors decay
like $\sO(\rho^{-n+1})$ on the ends, and thus $\rho^{\frac{n}{2} -
  1-\e}u \in L^2(M\setminus V, S)$.
 We provide the integral version of this estimate too.

\begin{proposition}\label{udecay2}
 Let $(M,g)$ be a nonspin Riemannian manifold which is asymptotically
 flat of order $\tau >0 $ and has nonnegative scalar curvature.
 Assume $u$ is a smooth spinor so that $Du$ is compactly supported and
 $\rho^{-b} u \in L^2(M\setminus V, S)$ for some $b\in [0, \frac{n-2}{2})$.
 Then,  
 \[
   \rho^{\frac{n}{2}- 1-\e}u \in L^2 (M\setminus V, S)
 \]
 for all $\e>0$.
\end{proposition}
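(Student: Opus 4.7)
The plan is to adapt the Agmon-identity bootstrap of Proposition~\ref{udecay}, but to take advantage of the stronger hypothesis that $Du$ itself (not merely $D^2u$) is compactly supported. In particular, $Du \equiv 0$ outside a compact set, which lets us treat $u$ as a genuine harmonic spinor at infinity. This first-order equation supplies one extra power of decay over what Proposition~\ref{udecay} gives.

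First, I would fix an asymptotically flat end $M_l$, a compact set $K \supset \supp(Du)$, and a smooth cutoff $\eta$ supported in $M_l\setminus K$ with $\eta \equiv 1$ for $\rho \geq 2L$, for $L$ large enough. For a parameter $\alpha$ to be optimized and the truncation $\mu_m$ of $\rho^\alpha$ at level $m^\alpha$ (exactly as in the proof of Proposition~\ref{udecay}), I apply Lemma~\ref{lem:agmon_af} with $f = \mu_m\eta$. Since both $Du$ and $D^2u$ vanish on $\supp(\eta)$, the identity reduces (using $R\ge 0$) to
\[
  \|\nabla(\mu_m\eta u)\|^2 \leq \||d(\mu_m\eta)|\, u\|^2.
\]
Expanding $\nabla(fu)=df\otimes u + f\nabla u$ and integrating the cross term by parts (which produces a multiple of $\int \Delta(f^2)|u|^2$) yields, after cancellation, the weighted Lichnerowicz identity for harmonic spinors
\[
  \int f^2|\nabla u|^2 \leq \int (f\Delta f)|u|^2 + \text{cutoff error}.
\]
For $f\approx\eta\rho^\alpha$ on the end, the right-hand side behaves like $\alpha(2\alpha+n-2)\int \rho^{2\alpha-2}|u|^2 + \text{error}$.

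The key analytic step is to pair this with a lower bound on $\int f^2|\nabla u|^2$. The standard route, using Kato's inequality $|d|u||\le|\nabla u|$ and the asymptotically flat Hardy inequality (Proposition~\ref{prop:af_poinc}), gives $\int \rho^{2\alpha}|\nabla u|^2 \geq \tfrac{(n-2+2\alpha)^2}{4}\int \rho^{2\alpha-2}|u|^2$, which only reaches $\alpha<(n-2)/2$ and merely reproduces Proposition~\ref{udecay}. The extra power is obtained by invoking the refined Kato inequality for harmonic spinors, $|\nabla u|^2 \geq \tfrac{n}{n-1}|d|u||^2$, valid pointwise where $Du=0$. This sharpens the Hardy lower bound to $\tfrac{n(n-2+2\alpha)^2}{4(n-1)}\int\rho^{2\alpha-2}|u|^2$, so the crucial coefficient becomes
\[
\frac{n(n-2+2\alpha)^2}{4(n-1)} - \alpha(2\alpha+n-2) = \frac{(n-2+2\alpha)(n-2)(n-2\alpha)}{4(n-1)},
\]
which is positive precisely for $\alpha<n/2$.

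Bootstrapping from the hypothesis $\rho^{-b}u\in L^2$ with $b<(n-2)/2$ in successive increments of $\tau/2$ (controlled by the $O(\rho^{-\tau/2})$ slack in Proposition~\ref{prop:af_poinc}, exactly as in the proof of Proposition~\ref{udecay}), one then concludes $\rho^{\alpha-1}u\in L^2$ for every $\alpha<n/2$, that is, $\rho^{n/2-1-\e}u\in L^2$ for all $\e>0$. The main obstacle is the boundary term at infinity arising from the integration-by-parts step: at each bootstrap stage, the sphere-integral contribution $\rho^{2\alpha+n-2}\int_{S^{n-1}_\rho}|u|^2\,d\sigma$ must be shown to vanish along a sequence $\rho_j\to\infty$, which is handled by a standard averaging argument using the inductive bound on $\rho^{\alpha-1}u$ from the previous stage. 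A secondary technical point is the justification of the refined Kato inequality in this setting; this is where the genuine first-order Dirac structure—as opposed to merely the second-order Dirac Laplacian—supplies the improvement.
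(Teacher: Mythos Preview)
Your argument is sound and lands on the correct threshold $\alpha<n/2$, but it is organized differently from the paper's proof. The paper does \emph{not} invoke the refined Kato inequality. Instead it splits $\nabla$ into its radial and tangential parts and uses $Du=0$ directly in the form
\[
|c(\nu)\nabla_\nu u|^2=\Big|\sum_{j\geq 2}c(e_j)\nabla_{e_j}u\Big|^2\leq (n-1)\,|\nabla^0 u|^2,
\]
which yields $\|\nabla(\mu_m\eta u)\|^2\geq \|\nabla_\nu(\mu_m\eta u)\|^2+\tfrac{1}{n-1}\|\mu_m\eta\nabla_\nu u\|^2$. The first piece is controlled by the ordinary Hardy inequality~\eqref{eq:af_poinc}, while the second is handled by a \emph{weighted} radial Hardy inequality with weight $\rho^a$ and constant $\tfrac{(n+2a-2)^2}{4}$; the resulting coefficient condition $\tfrac{(n-2)^2}{4}+\tfrac{(n+2a-2)^2}{4(n-1)}-a^2>0$ factors to the same $a<n/2$ that you obtain. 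Your refined-Kato route and the paper's radial/tangential route are two packagings of the same $\tfrac{n}{n-1}$ gain coming from the first-order equation; each buys the extra power over Proposition~\ref{udecay}. The paper's version has the practical advantage that no integration by parts on $|u|^2$ is needed---the two Hardy inequalities are applied directly to $\mu_m\eta u$ and to $\nabla_\nu u$, and the sole boundary contribution (at $\rho=m$ in the weighted Hardy) has a favorable sign---so the ``averaging along a sequence $\rho_j\to\infty$'' step you flag as the main obstacle is avoided entirely.

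Two small corrections to your sketch: the intermediate expression should be $\tfrac{1}{2}\!\int(\operatorname{div}\nabla f^2)|u|^2=\int(|df|^2+f\,\operatorname{div}\nabla f)|u|^2$, not $\int(f\Delta f)|u|^2$, although your final coefficient $\alpha(2\alpha+n-2)$ is correct; and the bootstrap is not ``in increments of $\tau/2$''---the $O(\rho^{-\tau})$ terms are lower-order errors absorbed by shrinking the support of $\eta$, and a single pass with any fixed $\alpha<n/2$ (and the tail weight $\rho^{-b}$ built into $\mu_m$ to keep everything in $L^2$) already gives $\rho^{\alpha-1}u\in L^2$.
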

\begin{proof}
  Let $\eta$ be a radial cutoff function supported in the region of
  $M_l$  where   $\rho(x) \geq L$ and identically $1$ in the region  where $\rho(x) > 2L$.
  We choose $L >T$ large enough so that on the support of $\eta$ we have $Du=0$. 
 For a fixed $a > b$ and for each positive integer $m$, we
  consider the sequence of functions
 \[
  \mu_m(\rho): =
  \begin{cases}
     \rho^a & \text{if $\rho \leq m$},\\
    \frac{m^{b+a}}{\rho^b} & \text{if $\rho >m$}.
  \end{cases}
 \]
 Since $Du=0$ on the support of $\eta$, it follows that 
 \begin{equation*}
   D(\mu_m\eta u) = [D,\mu_m\eta]u \in L^2(M\setminus V, S).
 \end{equation*}
 Since by construction $\mu_m \eta u \in L^2(M\setminus V, S)$, it
 follows that $\mu_m \eta u \in \Dmin(D)\subset H^1_{\rho}(M\setminus
 V)$.  Therefore we can apply both the Lichnerowicz
 formula~\eqref{boch1} and the Hardy
 inequality~\eqref{eq:af_poinc} to this sequence of spinors.

  The Lichnerowicz formula gives
 \begin{equation}\label{eq:ud2-lich}
  \| [D,\mu_m\eta] u\|^2 \geq \|\nabla (\mu_m\eta u)\|^2.
\end{equation}
Now we want to make use of the so-far unused orthogonal directions
to $\nu$, the unit vector in the direction of
$\frac{\p}{\p\rho}$, in the term $||\nabla (\mu_m\eta u)||^2$. For this,
let $\nabla^0$ denote the covariant derivative in directions
orthogonal to $\nu$. The equation $Du=0$ implies that we have the
pointwise inequality
\begin{align*}
 \left|\mu_m\eta c(\nu)\nabla_{\nu}u\right|^2 
  & = \left|\mu_m \eta  \left(D- c(\nu)\nabla_{\nu}\right)u\right|^2\\
  & \leq (n-1) \left|\mu_m\eta\nabla^0u\right|^2.
\end{align*}
With this,~\eqref{eq:ud2-lich} becomes
\begin{equation}\label{eq:bRad}
  \| [D,\mu_m\eta] u\|^2 \geq \|\nabla_{\nu} (\mu_m\eta u)\|^2 + \frac{1}{n-1}\|\mu_m\eta\nabla_{\nu}u\|^2
\end{equation}
For sufficiently large $m$, the last term on the right-hand side is
greater than $\frac{1}{n-1} \|\chi_m\rho^a \eta \nabla_{\nu} u\|^2$, with
$\chi_m$ the characteristic function of the set $\rho \leq m$. For
this we have the following weighted Hardy inequality:
$$
\|\chi_m \rho^a\eta\nabla_{\nu}u\|^2\geq \frac{(n+2a-2)^2}{4}\|
\frac{\chi_m\rho^a\eta u}{\rho}\|^2 - C_1\|\frac{\chi_m\rho^a \eta
  u}{\rho^{1+\frac{\tau}{2}}}\|^2.$$ This inequality is proved in the
same fashion as the Hardy inequality in
Proposition~\ref{prop:af_poinc}.  We remark that the boundary term
corresponding to $\rho=m$ arising in the bounded domain $1 \leq \rho
\leq m$ of this Hardy inequality may be discarded because its
sign is fixed and helps rather than hurts the estimate.  The other
boundary term does not contribute, since $\eta$ vanishes there.

Expanding $[D,\mu_m\eta]u$ in~\eqref{eq:bRad}, and using the
Hardy inequality~\eqref{eq:af_poinc} for the term
$\|\nabla_{\nu}(\mu_m\eta u)\|^2$ and the above weighted Hardy inequality for $\frac{1}{n-1}\|\mu_m\eta\nabla_{\nu}u\|^2$,
we obtain
\begin{align*}
 \|\mu_m|d\eta|u\|^2 & + 2(|u|\eta d \mu_m ,|u|\mu_md \eta) \\
  & \geq
  \frac{(n-2)^2}{4}\|\frac{\mu_m\eta u}{\rho}\|^2 +
  \frac{(n+2a-2)^2}{4(n-1)}\| \frac{\chi_m\mu_m\eta u}{\rho}\|^2 
 - \|\eta d\mu_m u\|^2
 -C_2\|\frac{\mu_m\eta   u}{\rho^{1+\frac{\tau}{2}}}\|^2 \\
  &\geq    
  \left(\frac{(n-2)^2}{4} +   \frac{(n+2a-2)^2}{4(n-1)} -a^2\right)
    \| \frac{\chi_m\mu_m\eta u}{\rho}\|^2 \\
   & \quad \quad \quad 
+   \left(\frac{(n-2)^2}{4} -b^2\right)
    \| \frac{(1-\chi_m)\mu_m\eta u}{\rho}\|^2 
 -C_2\|\frac{\mu_m\eta   u}{\rho^{1+\frac{\tau}{2}}}\|^2.
\end{align*}
 Since $d\eta$ is compactly supported, the  left-hand side is uniformly bounded  as $m\to\infty$.  Moreover, since $b< \frac{n-2}{2}$, after
eventually shrinking the support of $\eta$ by choosing $L$ larger, the
negative term on the right-hand side can be absorbed into the first
two terms, as long as $a$ satisfies
\begin{equation}\label{eq:a}
   \frac{(n-2)^2}{4} + \frac{(n+2a -2)^2}{4(n-1)}- a^2 >0.
\end{equation}
Equivalently for $n>2$,
\begin{equation}\label{eq:ab}
   \frac{n^2}{4} - \frac{n}{2} >a^2-a,
\end{equation}
which (for $n>2$) holds for all $a\in [0,\frac{n}{2}).$
Hence taking the limit as $m\to \infty$, it follows that 
\begin{equation}\label{eq:udec}
  \rho^{a-1}\eta u \in L^2(M\setminus V, S)
\end{equation}
as long as $a>\frac{n}{2}$. Thus $\rho^{\frac{n}{2}-1-\e} u \in
  L^2(M\setminus V, S)$ for all $\e >0$.
\end{proof}

For the proof of Theorem D, we also need information on the behaviour
of the eigenfunctions of $D^2$ with negative eigenvalues on the
asymptotically flat ends of $M$. The following proposition which gives
exponential decay for these eigenfunctions follows immediately from
the proof of~\cite[Theorem 4.1, p. 52]{ag}.

\begin{proposition}\label{udecay3}
 Let $(M,g)$ be a nonspin Riemannian manifold which is asymptotically
 flat of order $\tau >0 $ and has nonnegative scalar curvature.
 Assume $u\in \Dmax(D)$ satisfies  $D^2u = -\lambda^2u,$ for some
 $\lambda > 0$. Then
 \begin{equation}\label{eq:udecay3}
   e^{(\lambda -a)\rho} u \in L^2(M\setminus V, S)
 \end{equation}
 for all $a >0$.
\end{proposition}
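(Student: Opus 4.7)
The plan is to adapt Agmon's exponential decay technique via the Agmon identity of Lemma~\ref{lem:agmon_af}. Since $u\in\Dmax(D)$ satisfies the elliptic equation $D^2 u = -\lambda^2 u$, elliptic regularity implies $u$ is smooth away from $V$. First I would apply Proposition~\ref{prop:bochend} to $v=u$: $u\in\Dmax(D)\subset L^2$ combined with $\rho\geq 1$ gives $u/\rho\in L^2$, and $Du\in L^2$ by definition of $\Dmax(D)$; hence $\nabla u\in L^2(M_l,S\lvert_{M_l})$ on each asymptotically flat end. Consequently, for any smooth cutoff $\eta$ supported on $M_l$ in the region $\rho\geq L$ and identically one for $\rho\geq 2L$, the localized spinor $\eta u$ lies in $H^1_\rho(M\setminus V,S)$, making Lemma~\ref{lem:agmon_af} applicable.

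Fix $a>0$ and set $\alpha:=\lambda-a$. To respect the boundedness hypothesis of Agmon's identity, I will work with the Lipschitz truncated weights
\[
f_m(\rho):=\min(e^{\alpha\rho},m),
\]
which satisfy $|df_m|\leq \alpha f_m|d\rho|$ and vanish on the set $\{f_m=m\}$. Applying Lemma~\ref{lem:agmon_af} to $\eta u$ with weight $f_m$, and discarding the nonnegative quantity $\|D(f_m\eta u)\|^2=\|\nabla(f_m\eta u)\|^2+\tfrac14(Rf_m\eta u,f_m\eta u)$ (using $R\geq 0$), yields
\[
(D^2(\eta u),f_m^2\eta u) \geq -\|[D,f_m]\eta u\|^2.
\]
Expanding $D^2(\eta u) = -\lambda^2 \eta u + [D^2,\eta]u$ and observing that $[D^2,\eta]u$ is supported on the compact annulus $L\leq\rho\leq 2L$, on which $f_m\leq e^{2\alpha L}$ uniformly in $m$, produces the key inequality
\[
\|[D,f_m]\eta u\|^2 \geq \lambda^2\|f_m\eta u\|^2 - C_0,
\]
with $C_0$ independent of $m$.

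On the other hand, since $[D,f_m]=c(df_m)$ and Clifford multiplication preserves norms,
\[
\|[D,f_m]\eta u\|^2 = \int|df_m|^2|\eta u|^2 \leq \alpha^2 \sup_{\rho\geq L}|d\rho|^2 \cdot \|f_m\eta u\|^2.
\]
Asymptotic flatness gives $|d\rho|^2 = 1 + O(\rho^{-\tau})$, so by choosing $L$ large depending on $a$ we may arrange $\alpha^2\sup_{\rho\geq L}|d\rho|^2 < \lambda^2$. Combining the two estimates gives a bound $(\lambda^2-\alpha^2\sup|d\rho|^2)\|f_m\eta u\|^2\leq C_0$ uniform in $m$, and monotone convergence yields $e^{\alpha\rho}\eta u \in L^2$. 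Running this on each end and noting that $1-\eta$ together with the region near $V$ is bounded in $\rho$ (where $u$ is already $L^2$), we conclude $e^{(\lambda-a)\rho}u\in L^2(M\setminus V,S)$.

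The main technical delicacy is keeping the constants $m$-independent: the commutator $[D^2,\eta]u$ appears only on a compact annulus where $f_m$ is controlled by $e^{2\alpha L}$ rather than by $m$, so $C_0$ genuinely does not depend on the truncation; and the asymptotic-flatness estimate $\alpha\sqrt{\sup_{\rho\geq L}|d\rho|^2}<\lambda$ supplies the strict gap needed to absorb the commutator on the right into the positive term on the left. Once these two points are in place, the passage $m\to\infty$ is by monotone convergence.
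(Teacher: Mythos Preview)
Your argument is correct and is essentially the same Agmon-type argument the paper indicates: apply the Agmon identity with a truncated exponential weight $\mu_m(\rho)=e^{(\lambda-a)\rho}$ (capped at a constant for large $\rho$), use $R\geq 0$, and exploit that the commutator produces only $(\lambda-a)^2\|f_m\eta u\|^2$ up to lower order, which is strictly dominated by the $\lambda^2\|f_m\eta u\|^2$ coming from the eigenvalue equation. Your write-up is in fact more careful than the paper's one-line sketch: you verify via Proposition~\ref{prop:bochend} that $\eta u\in H^1_\rho$ so Lemma~\ref{lem:agmon_af} applies, and you isolate why the $d\eta$-contribution is $m$-independent. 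The only cosmetic difference is that the paper packages $\mu_m\eta$ as a single weight on $u$ (as in Proposition~\ref{udecay}) while you put $\eta$ on the spinor and $f_m$ as the weight; the resulting commutator bookkeeping is equivalent. You may also want to note the trivial case $a\geq\lambda$, where $e^{(\lambda-a)\rho}\leq 1$ and the conclusion is immediate from $u\in L^2$.
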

\begin{proof} The proof is exactly like the proof of
  Proposition~\ref{udecay} with the functions $\mu_m$ defined as
\[
  \mu_m(\rho): =
  \begin{cases}
     e^{(\lambda - a)\rho} & \text{if $\rho \leq m$},\\
    e^{(\lambda - a)m} & \text{if $\rho >m$}.
  \end{cases}
 \]
\end{proof}

\subsection{More estimates on the asymptotically flat ends of $M$}

As a further application of Agmon's
identity~\eqref{eq:u2}, we use it to derive
$L^p$-estimates for spinors on the asymptotically flat ends of
$M$. These results will not be used for the proof of our main
theorems but are useful in the proof of pointwise estimates.

\begin{proposition}\label{prop:ulp}
 Let $(M,g)$ be a nonspin Riemannian manifold which is asymptotically
 flat of order $\tau>0$ and has nonnegative scalar curvature. Let $u \in
 H^1_{\rho}(M\setminus V, S)$ be a smooth spinor so that $\rho^{b+1}
 |D^2 u|$ is bounded for some $b\geq 1$. Then 
 \begin{equation}\label{eq:ulp}
   \frac{|u|^p}{\rho} \in L^2(M_l)
 \end{equation}
 for all $p\geq 1$ and for all asymptotically flat ends $(M_l, Y_l)$
 of $M$.
\end{proposition}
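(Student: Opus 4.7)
My plan is to prove the result by induction on $p$, via a Moser-type iteration on each asymptotically flat end that combines the Agmon identity of Lemma~\ref{lem:agmon_af}, Kato's inequality, and the Hardy inequality of Proposition~\ref{prop:af_poinc}. The base case $p = 1$ will be immediate: $u/\rho \in L^2(M_l)$ is included in the hypothesis $u \in H^1_\rho(M\setminus V, S)$.

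For the inductive step, I will assume $|u|^q/\rho \in L^2(M_l)$ for some integer $q\geq 1$ and fix a smooth cutoff $\eta$ supported in $M_l\cap\{\rho\geq L\}$ and identically $1$ where $\rho\geq 2L$, with $L$ large. I will apply Lemma~\ref{lem:agmon_af} to $f=\phi_k(|u|)\eta$, where $\phi_k$ is a smooth, bounded, piecewise-differentiable approximation of $t\mapsto t^q$ (making $f$ and $fu$ legitimate test functions), and pass to the limit $k\to\infty$. The key algebraic inputs are the pointwise identity $|[D,f]u|^2 = |df|^2|u|^2$ for Clifford multiplication, and the elementary relation $q|u|^{q-1}d|u| = \tfrac{q}{q+1}|u|^{-1}d|u|^{q+1}$, which combined with Kato's inequality $|\nabla(fu)|\geq|d(f|u|)| = |d(|u|^{q+1}\eta)|$ produce an absorption constant $1-q^2/(q+1)^2>0$ valid for all $q\geq 0$. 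In the limit this yields
\begin{equation*}
\Bigl(1-\frac{q^2}{(q+1)^2}\Bigr)\|d(|u|^{q+1}\eta)\|^2 \;\leq\; |(D^2u, |u|^{2q}\eta^2 u)| \;+\; C\,\||u|^{q+1}\,d\eta\|^2,
\end{equation*}
and combining with the Hardy inequality on $M_l$ and absorbing the lower-order error $\||u|^{q+1}\eta/\rho^{1+\tau/2}\|^2$ for $L$ large produces
\begin{equation*}
c_q\,\||u|^{q+1}\eta/\rho\|^2 \;\leq\; \|\rho^{b+1}D^2u\|_\infty\int_{M_l}|u|^{2q+1}\eta^2\rho^{-b-1} \;+\; C\,\||u|^{q+1}d\eta\|^2.
\end{equation*}

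Since $b\geq 1$ implies $\rho^{-b-1}\leq\rho^{-2}$ on the support of $\eta$, Cauchy--Schwarz gives
\begin{equation*}
\int|u|^{2q+1}\eta^2\rho^{-2} \;\leq\; \||u|^q/\rho\|_{L^2(M_l)}\,\||u|^{q+1}\eta/\rho\|_{L^2},
\end{equation*}
and the first factor is finite by the inductive hypothesis. Young's inequality then absorbs the resulting linear-in-$\||u|^{q+1}\eta/\rho\|$ term into the left-hand side. The cutoff term $\||u|^{q+1}d\eta\|^2$ is finite because $d\eta$ is supported on the compact annulus $\{L\leq\rho\leq 2L\}$, on which the pointwise bound on $D^2u$ together with the local $L^2$ control of $u$ produces a pointwise bound on $|u|$ by standard interior elliptic regularity (Moser), hence on $|u|^{q+1}$. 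This advances the induction from $q$ to $q+1$. Non-integer values of $p$ follow from integer values by Hölder interpolation.

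The main technical obstacle I expect is justifying rigorously the passage to the limit $k\to\infty$ of the truncation $\phi_k\to t^q$: one must verify that the hypotheses $|df|u\in L^2$ and $\langle D^2u, f^2u\rangle \in L^1$ required by Lemma~\ref{lem:agmon_af} hold uniformly along the regularization, and that the limiting identities hold (in particular that the limit still gives the absorption constant $1-q^2/(q+1)^2$). A secondary concern is that this constant degenerates as $q\to\infty$, so the implicit constants $c_q$ in the induction blow up; however, only finitely many iterations are needed to reach any fixed $p$, so this does not obstruct the proof.
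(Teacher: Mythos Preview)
Your approach is essentially the paper's: Moser iteration on each end via the Agmon identity (Lemma~\ref{lem:agmon_af}), Kato's inequality, and the Hardy inequality~\eqref{eq:af_poinc}, with the same absorption constant $1-q^2/(q+1)^2$ (which the paper records as the inequality $(\tfrac{d}{dt}(tf))^2-(tf')^2\geq \tfrac{1}{(p+1)^2}(\tfrac{d}{dt}(tf))^2$ for its regularization $f(t)=t^p/(1+at^p)$). The paper's induction advances by $\tfrac12$ rather than by $1$, because it bounds the source term directly as $\int \rho^{-2}|u|^{2p+1}=\|\,|u|^{p+1/2}/\rho\,\|^2$, whereas you split $|u|^{2q+1}=|u|^q\cdot|u|^{q+1}$ via Cauchy--Schwarz and absorb; both are fine.

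There is one genuine gap you should fix. You plan to apply Lemma~\ref{lem:agmon_af} with $f=\phi_k(|u|)\eta$, but the hypothesis $\langle D^2u,f^2u\rangle\in L^1(M\setminus V)$ is \emph{not} satisfied on an end merely because $\phi_k$ is bounded: with $|\phi_k|\leq C_k$ you are reduced to $\rho^{-b-1}|u|\in L^1(M_l)$, and Cauchy--Schwarz against $|u|/\rho\in L^2$ would require $\rho^{-b}\in L^2(M_l)$, i.e.\ $b>n/2$, which is not assumed. The paper handles this by inserting a second, compactly supported radial cutoff $\gamma_j$ (so $f_j=\eta\gamma_j\,f(|u|)$), applying the identity for each $j$, and sending $j\to\infty$ using $|d\gamma_j|\leq 2/\rho$ and dominated convergence before letting the regularization parameter go to its limit. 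Add this compact cutoff to your $f$ and your argument goes through; without it, Lemma~\ref{lem:agmon_af} cannot be invoked as stated. (Your appeal to interior elliptic regularity for the pointwise bound of $|u|$ on the annulus $\mathrm{supp}\,d\eta$ is correct and matches the paper's treatment of the $d\eta$-terms.)
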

\begin{proof}
  Fix an asymptotically flat end $M_l$, and let $0\leq \eta\leq 1$
  be a smooth cutoff function supported on $M_l$ in the region
  $\rho(x) \geq L$ and equal to $1$ in the region $\rho(x) \geq 2L$.
  We choose a sequence $\{\gamma_j\}$ of smooth cutoff functions, $0\leq \gamma_j\leq 1$,
  compactly supported on $M$, supported in the region $\rho(x) \leq
  2j$, identically $1$ in the region $\rho(x) \leq j$, and so that
  $|d\gamma_j| \leq \frac{2}{\rho}$.  Let $\eta_j: = \eta
  \gamma_j$. 

  We consider the bounded positive smooth function $f: (0, + \infty) \to (0,+\infty)$
  \begin{equation}\label{eq:f-1}
     f(t) = \frac{t^p}{1 + at^p},
  \end{equation}
  for some positive constant $a>0$ and $p \geq 0$. Observe that this function satisfies
  \begin{equation}\label{eq:f-2}
    \left(\frac{d}{dt} (t f(t))\right)^2 - \left(t \frac{d}{dt}
      f(t)\right)^2 \geq \frac{1}{(p+1)^2} \left(\frac{d}{dt} (tf(t))\right)^2.
  \end{equation}
   Define the sequence of bounded functions on $M$,
 \[
   f_j (x): = \eta_j(x) f(|u(x)|),
 \]
 to which we apply Agmon's identity~\eqref{eq:u2}:
 \begin{align*}
   (D^2 u, f_j^2 u)
    & \geq \|\nabla (f_j u)\|^2 - \|[D,f_j] u\|^2\\
    & \geq \|\eta_j \nabla (f(|u|)u) \|^2 
         + 2 (\eta_j \nabla (f(|u|) u), d\eta_j \otimes f(|u|) u)
         + \|f(|u|) |u|d \eta_j \|^2\\
    & \quad
         - \| f(|u|) |u|d\eta_j\|^2 
         - 2 ( d\eta_j \otimes f(|u|) u, \eta_j d(f(|u|))\otimes u)
         - \|\eta_j |u|d(f(|u|))\|^2\\
   & \geq \|\eta_j d(f(|u|) |u|)\|^2 
         + 2 ( \eta_j \nabla (f(|u|) u), d\eta_j \otimes f(|u|) u)
         - 2 (d\eta_j \otimes f(|u|) u, \eta_j d(f(|u|))\otimes u)\\
&\quad
         - \|\eta_j |u|d(f(|u|))\|^2,
 \end{align*}
 where on the last line we have used Kato's
 inequality~\eqref{eq:kato}. To estimate the left-hand side we use the
 boundedness of $\rho^{b+1} |D^2 u|$ from the hypothesis, while on the
 right-hand side we use the inequality~\eqref{eq:f-2} to estimate the first and the
 fourth term, and we group together the second and the third, 
 to get
 \begin{align*}
  \|\rho^{ b+1}D^2u\|_{L^{\infty}}\ \|\rho^{-b-1}\eta_j^2 f(|u|)^2 |u|\|_{L^{1}}
   & \geq \frac{1}{(p+1)^2} \|\eta_j d(f(|u|) |u| )\|^2
          + \frac{1}{2} \< d \eta_j^2 , f(|u|)^2 d  |u|^2 \>.
 \end{align*}
 Observe that 
 \[
  \eta_j(x)^2 f(|u|(x))^2 |u|(x) \leq 
  \begin{cases}
    |u|(x)^2 & \text{if $|u(x)| \leq 1$}\\
    a^{-2}|u|(x)^2 & \text{if $|u(x)| \geq 1$}.
  \end{cases}
 \]
 Note that since $b\geq 1$, $\rho(x)^{-b-1} |u(x)|^2 \in L^1(M)$.
 Hence the term $\|\rho^{-b-1}\eta_j^2 f(|u|)^2 u\|_{L^{1}}
 $ converges to $\|\rho^{-b-1} \eta^2 f(|u|)^2 u\|_{L^1}$
 as $j\to \infty$. Moreover, for $j\geq 2L$ we have $d\eta_j^2 =
 d\eta^2 + d\gamma_j^2$, and since all the terms containing
 $d\gamma_j$ converge to $0$ as $j\to \infty$, we can take the limit
 as $j\to \infty$ to obtain
 \[
  \|\rho^{b+1}D^2u\|_{L^{\infty}}\ \|\rho^{-b-1}\eta^2 f(|u|)^2 u\|_{L^{1}}
    \geq \frac{1}{(p+1)^2} \|\eta  d(f(|u|) |u| )\|^2
          + \frac{1}{2} (d \eta^2 , f(|u|)^2 d  |u|^2 ).
 \]
 To apply the Hardy inequality~\eqref{eq:af_poinc}, we
 rewrite the above as
 \begin{align*}
  \|\rho^{ b+1}D^2u\|_{L^{\infty}}\ & \|\rho^{-b-1}\eta^2 f(|u|)^2
  |u|\|_{L^{1}}\\
  &  \geq \frac{1}{(p+1)^2} \|d(\eta f(|u|)|u|)\|^2 
        - \frac{2}{(p+1)^2} ( |u|d(\eta f(|u|)) ,  f(|u|)  |u|d\eta)\\
   &     \quad \quad \quad
         + \frac{1}{(p+1)^2} \|  f(|u|) |u|d\eta\|^2
          + \frac{1}{2} ( d \eta^2 , f(|u|)^2 d  |u|^2 )\\
  &\geq \frac{(n-2)^2}{4(p+1)^2} \|\frac{\eta f(|u|) |u|}{\rho}\|^2
          - C_l \|\frac{\eta f(|u|) |u|}{\rho^{1+\tau/2}}\|^2 
   - \frac{1}{(p+1)^2} \| f(|u|) |u|d\eta\|^2 \\
  &   \quad \quad \quad 
   + \frac{1}{2} (1- \frac{1}{(p+1)^2}) \<d \eta^2 , f(|u|)^2 d |u|^2 \>   
   - \frac{1}{2(p+1)^2} ( d(\eta^2), |u|^2 d  f^2(|u|)).
 \end{align*}
 Since $d\eta$ is
 compactly supported, all the terms containing $d\eta$ are bounded by
 a constant $C_1 = C_1(p, \|\chi_{d\eta} |u|\|_{L^{\infty}},
 \|\chi_{d\eta} d(|u|)\|_{L^{\infty}}) >0$ which is independent of
 $j$ and of $a$. Here
 $\chi_{d\eta}$ denotes the characteristic function of the support of $d\eta$.
 Thus
 \begin{align*}\label{eq:ulp-2}
 \|\rho^{ b+1}D^2u\|_{L^{\infty}}\ \|\rho^{-b-1}\eta^2 f(|u|)^2 u\|_{L^{1}} + C_1
 & \geq \frac{(n-2)^2}{4(p+1)^2} \|\frac{\eta f(|u|) u}{\rho}\|^2
        - C_l \|\frac{\eta f(|u|) u}{\rho^{1+\tau/2}}\|^2
\end{align*}
 Since $\tau>0$, we can choose $L$ large enough so that the negative term
 on the right-hand side is absorbed into the positive term. Therefore, 
 \[
   C_2 \|\rho^{-b-1}\eta^2 f(|u|)^2 u\|_{L^{1}} + C_3 \geq \|\frac{\eta f(|u|) u}{\rho}\|^2
 \]
 for positive constants $C_2, C_3$ independent of $a$.  Note that
 since $tf(t)^2 \leq t^{2p+1}$, applying this to the left-hand side of
 the above, we obtain
 \[
   C_2 \| \frac{\eta |u|^{p +\frac{1}{2}}}{\rho}\|^2 + C_3 \geq    \|\frac{\eta f(|u|) u}{\rho}\|^2
 \]
 for all $a>0$. Assuming that $\frac{\eta |u|^{p+\frac{1}{2}}}{\rho}
 \in L^2(M_l)$ and taking the limit as $a\to 0$, it follows that 
 $\frac{\eta |u|^{p+1}}{\rho} \in L^2(M_l)$.  Now the argument follows
 by induction, since we know that $\frac{u}{\rho} \in L^2(M\setminus
 V, S)$.
\end{proof}
 
\section{Coercivity for the Dirac operator}\label{sec:coercive}
In this section we prove 
 two coercivity results. The first  one is 
 for the Dirac operator on $H^1_{\rho}(M\setminus V, S)$:
 
\begin{theorem}\label{thm:D-coercive}
  Let $(M,g)$ be a nonspin Riemannian manifold which is asymptotically
  flat of order $\tau >0$ and has nonnegative scalar curvature. Let
  $S$ be the spinor bundle of a maximal spin structure on $M\setminus
  V$, with $V$ a stratified space given by Theorem~\ref{thm:V}, and
  $D$ be the corresponding Dirac operator.
 Then, there exists a constant $\lambda >0$ so that 
 \begin{equation}\label{eq:coercive}
   \| D u\| \geq \lambda \|\frac{u}{\rho}\|
 \end{equation}
 for all $u$ in $H^1_{\rho}(M\setminus V,S)$.
\end{theorem}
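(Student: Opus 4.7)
The plan is to argue by contradiction. Suppose the estimate fails: then there is a sequence $\{u_j\} \subset H^1_\rho(M\setminus V, S)$ with $\|u_j/\rho\| = 1$ and $\|Du_j\| \to 0$. Applying the Lichnerowicz formula~\eqref{boch1} together with the hypothesis $R \geq 0$ yields $\|\nabla u_j\|^2 \leq \|Du_j\|^2 \to 0$, so $\{u_j\}$ is bounded in $H^1_\rho(M\setminus V, S)$. After passing to a subsequence we obtain a weak limit $u_j \rightharpoonup u_\infty$ in $H^1_\rho$; the boundedness of $D: H^1_\rho \to L^2$ (Proposition~\ref{prop:boch1}) combined with $\|Du_j\|\to 0$ forces $Du_\infty = 0$, and the injectivity clause of Proposition~\ref{prop:boch1} then gives $u_\infty = 0$. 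The contradiction will come from upgrading the weak convergence to strong $L^2_\rho$-convergence, thereby forcing $\|u_\infty/\rho\| = 1$.

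To obtain strong convergence on the bounded part of $M$ I would fix an exhaustion $M=\bigcup_k U_k$ by bounded open sets with smooth boundary, apply the Rellich compactness in Lemma~\ref{lem:rellich} to the embedding $H^1_\rho(U_k\setminus V, S)\hookrightarrow L^2_\rho(U_k\setminus V, S)$ on each $U_k$, and extract by a standard diagonal argument a subsequence with $u_j \to u_\infty$ strongly in $L^2_\rho(U_k\setminus V, S)$ for every $k$. What remains, and is the main obstacle, is a uniform tail bound ruling out $L^2_\rho$-mass escaping to infinity along the asymptotically flat ends.

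The hard part will be this tail control, and I would handle it using the end Hardy inequality~\eqref{eq:af_poinc}. Let $\eta_L$ be a smooth cutoff equal to $1$ on $\{\rho > 2L\}$, supported in $\{\rho > L\}\cap M_l$, with $|d\eta_L|$ concentrated in the annulus $\{L<\rho<2L\}$ and bounded by $C/\rho$. Applying~\eqref{eq:af_poinc} to $\eta_L|u_j|$ (after approximating $u_j$ by compactly supported smooth spinors to legitimize the application) and using Kato's inequality~\eqref{eq:kato}, we obtain
\begin{equation*}
\|\tfrac{\eta_L u_j}{\rho}\|^2 \leq \tfrac{8}{(n-2)^2}\bigl(\|\nabla u_j\|^2 + \||u_j|\,d\eta_L\|^2\bigr) + C_l \|\tfrac{\eta_L u_j}{\rho^{1+\tau/2}}\|^2.
\end{equation*}
On the support of $\eta_L$ we have $\rho^{-\tau/2}\leq L^{-\tau/2}$, so for $L$ sufficiently large the last term is absorbed into the left-hand side. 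Estimating $\||u_j|\,d\eta_L\|^2 \leq C\|u_j/\rho\|^2_{L^2(\{L<\rho<2L\})}$ and summing over the finitely many ends gives
\begin{equation*}
\|\tfrac{u_j}{\rho}\|^2_{L^2(\{\rho > 2L\})} \leq C\bigl(\|\nabla u_j\|^2 + \|u_j/\rho\|^2_{L^2(\{L<\rho<2L\})}\bigr).
\end{equation*}
Sending $j\to\infty$, the first term vanishes and the second converges (by Lemma~\ref{lem:rellich} on the bounded annulus) to $\|u_\infty/\rho\|^2_{L^2(\{L<\rho<2L\})}$, which in turn tends to $0$ as $L\to\infty$ since $u_\infty \in L^2_\rho$. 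This promotes $u_j\to u_\infty$ to strong convergence in $L^2_\rho(M\setminus V, S)$, and therefore $\|u_\infty/\rho\|=1$, the desired contradiction.
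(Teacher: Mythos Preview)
Your argument is correct and is close in spirit to the paper's proof, but it is organized more efficiently in one respect: the treatment of the region near $V$.

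The paper's proof proceeds exactly as you do on the asymptotically flat ends, obtaining the same Hardy-type tail estimate. Where the two proofs diverge is in ruling out concentration of $L^2_\rho$-mass near $V$. The paper applies Lemma~\ref{lem:rellich} only to obtain strong convergence on compact subsets of $M\setminus V$, and then invokes the structural estimates~\eqref{eq:Lich} and~\eqref{eq:Lichkb} from Section~\ref{sec:lbe} (which rely on the angular estimate for the maximal spin structure and the radial Hardy inequalities) to show that $\|u_j/r_b\|_{L^2(W)}$ stays bounded on each $W\in\TRC(V^{k_b})$, and hence that no mass can pile up against~$V$. You instead observe that Lemma~\ref{lem:rellich} already delivers compactness on all of $U_k\setminus V$ for bounded $U_k\subset M$, so no separate treatment of $V$ is needed. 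This is a genuine simplification: your route avoids the dependence on Section~\ref{sec:lbe}, and in particular does not use the nontrivial holonomy of the spin structure around~$V^2$. What the paper's approach buys is an explicit quantitative control ($\|u_j/r_b\|$ bounded) that is of independent interest and reappears later, but it is not logically required for the coercivity statement itself.

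One small technical point: as written, you invoke Lemma~\ref{lem:rellich} for the restrictions $u_j|_{U_k\setminus V}$, but these need not lie in $H^1_\rho(U_k\setminus V,S)$, which by definition is the closure of $\sC^\infty_0(U_k\setminus V,S)$ and hence encodes vanishing on $\partial U_k$. The fix is routine: choose a smooth cutoff $\chi_k$ equal to $1$ on $U_k$ and supported in $U_{k+1}$; then $\chi_k u_j\in H^1_\rho(U_{k+1}\setminus V,S)$ is bounded there, Lemma~\ref{lem:rellich} applies, and you recover strong $L^2_\rho$-convergence of $u_j$ on $U_k\setminus V$.
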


 As a consequence we derive an invertibility result for the Dirac
Laplacian. 

\begin{corollary}\label{cor:invt}
  Assume the hypotheses of Theorem~\ref{thm:D-coercive} hold. Then for
  each smooth spinor $\Psi$ on $M\setminus V$ so that $\rho \Psi \in
  L^2(M\setminus V, S)$, there exists a unique spinor $\Phi \in
  H^1_{\rho}(M\setminus V, S)$ so that $D^2 \Phi = \Psi$.
\end{corollary}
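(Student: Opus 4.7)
\textbf{Proof proposal for Corollary \ref{cor:invt}.}

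The plan is a standard application of the Lax--Milgram theorem to the bilinear form
\[
 a(\Phi,\eta) := (D\Phi, D\eta)
\]
on the Hilbert space $H^1_{\rho}(M\setminus V, S)$. Once boundedness and coercivity of $a$ are established, together with the boundedness of the linear functional $\ell(\eta) := (\Psi,\eta)$, Lax--Milgram will produce a unique $\Phi \in H^1_\rho(M\setminus V, S)$ with $a(\Phi,\eta) = \ell(\eta)$ for every $\eta \in H^1_\rho(M\setminus V, S)$. Testing against $\eta \in C_0^\infty(M\setminus V, S)$ shows $D^2\Phi = \Psi$ in the distributional sense, and elliptic regularity of $D^2$ on the smooth manifold $M\setminus V$ then promotes this to a strong solution.

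First I would verify that $a$ is bounded. By the integral Lichnerowicz formula (Proposition \ref{prop:boch1}) and the observation that $R\rho^2$ is bounded on $M\setminus V$ (in the compact part $R$ is bounded and $\rho$ is bounded; on each asymptotically flat end $R=\sO(\rho^{-\tau-2})$ by \eqref{eq:sc-decay}), we obtain
\[
 \|D\psi\|^2 = \|\nabla\psi\|^2 + \tfrac{1}{4}(R\psi,\psi) \leq \|\nabla\psi\|^2 + C\|\tfrac{\psi}{\rho}\|^2 \leq C'\,\|\psi\|_{H^1_\rho}^2,
\]
so that $|a(\Phi,\eta)| \leq \|D\Phi\|\,\|D\eta\| \leq C'\,\|\Phi\|_{H^1_\rho}\|\eta\|_{H^1_\rho}$. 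For the functional $\ell$, using $\rho\geq 1$ and $\rho\Psi\in L^2$,
\[
 |\ell(\eta)| = |(\rho\Psi, \tfrac{\eta}{\rho})| \leq \|\rho\Psi\|\,\|\tfrac{\eta}{\rho}\| \leq \|\rho\Psi\|\,\|\eta\|_{H^1_\rho}.
\]

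The coercivity is where Theorem \ref{thm:D-coercive} enters. From the theorem, $\|D\Phi\|^2 \geq \lambda^2 \|\tfrac{\Phi}{\rho}\|^2 = \lambda^2\|\Phi\|_{L^2_\rho}^2$. On the other hand, since $R\geq 0$, Proposition \ref{prop:boch1} gives $\|D\Phi\|^2 \geq \|\nabla\Phi\|^2$. Adding these two bounds,
\[
 2\,a(\Phi,\Phi) \geq \|\nabla\Phi\|^2 + \lambda^2 \|\Phi\|_{L^2_\rho}^2 \geq \min(1,\lambda^2)\,\bigl(\|\nabla\Phi\|^2 + \|\Phi\|_{L^2_\rho}^2\bigr) \geq \tfrac{1}{2}\min(1,\lambda^2)\,\|\Phi\|_{H^1_\rho}^2,
\]
so $a$ is coercive on $H^1_\rho(M\setminus V, S)$. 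I do not expect any genuine obstacle here: the only subtle point is that coercivity needs to control both $\|\nabla\Phi\|$ and $\|\Phi/\rho\|$, and this is exactly what combining Theorem \ref{thm:D-coercive} with the nonnegativity of $R$ (via the Lichnerowicz formula) provides.

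Finally, uniqueness follows either from coercivity directly or from the injectivity statement in Proposition \ref{prop:boch1}: if $D^2\Phi = 0$ with $\Phi\in H^1_\rho$, then $a(\Phi,\Phi)=0$ in the limit after testing against a sequence of cutoffs, or more cleanly, applying Theorem \ref{thm:D-coercive} to $u:=D\Phi$ (which lies in $H^1_\rho$ by Corollary \ref{prop:vH1rho} applied on each end together with the interior regularity of $\Phi$) forces $D\Phi=0$, and another application of the theorem forces $\Phi=0$.
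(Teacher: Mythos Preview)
Your proof is correct and follows essentially the same route as the paper: Lax--Milgram applied to the bilinear form $a(\Phi,\eta)=(D\Phi,D\eta)$ on $H^1_\rho(M\setminus V,S)$, with coercivity obtained by combining Theorem~\ref{thm:D-coercive} with the Lichnerowicz formula and $R\geq 0$, and boundedness of the linear functional from $\rho\Psi\in L^2$. The paper's argument is word-for-word the same, except that your final paragraph on uniqueness is superfluous---uniqueness already comes directly from Lax--Milgram (equivalently, from coercivity), so there is no need to invoke Corollary~\ref{prop:vH1rho}.
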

\begin{proof}[Proof of Corollary~\ref{cor:invt}]
 Let $B$ be the bilinear form  on $H^1_{\rho}(M\setminus V, S)$
  defined by
  \[
    B(u,v): = (Du, Dv).
  \]
  Clearly $B$ is bounded on $H^1_{\rho}(M\setminus V, S)$. 
  By~\eqref{eq:coercive}  and the
  Lichnerowicz formula~\eqref{boch1}, we have
   \[
    B(u,u) \geq  \frac{1}{2}\|\nabla u\|^2 + \frac{\lambda^2}{2} \|\frac{u}{\rho}\|^2
           \geq  C  \|u\|_{H^1_{\rho}}^2,
  \]
  with $C = \min\{\frac{1}{2}, \frac{\lambda^2}{2}\}$. Thus, 
   $B$ satisfies the conditions of the
  Lax-Milgram Lemma. 
   We  apply this 
 lemma 
   to the linear functional on $H^1_{\rho}(M\setminus V, S)$,
   \[
     L(v): = (\Psi, v),
   \]
    which is bounded, since $\rho \Psi \in L^2(M\setminus V, S)$. 
     Hence,  
 there exists a unique $\Phi \in H^1_{\rho}(M\setminus V, S)$
   so that
   \[
   B(\Phi,v) = L(v).
   \]
 In particular,
  \[
    (D^2\Phi, v) = (\Psi, v)
  \]
   for all $v \in \sC^{\infty}_0(M\setminus V, S)$. This implies that
   $\Phi$ is a weak solution to $D^2 \Phi = \Psi$. Since $\Psi$ is
   smooth, elliptic regularity implies that $\Phi$ is smooth and
  is thus a strong solution.
\end{proof}

\subsection*{Proof of  Theorem~\ref{thm:D-coercive}}
Because $\Dmin(D)$ is dense in $H^1_{\rho}(M\setminus V,
S)$, it suffices to prove inequality (\ref{eq:coercive}) for
$u\in \Dmin(D)$. By  Corollary~\ref{cor:Dmin-ns}, 
the null-space of
$D$ on its minimal domain is trivial.  We need to show that $0$ is
not in the essential spectrum of $D$. We prove this by contradiction.

Let $\{u_j\}$ be an infinite $L^2_{\rho}$-orthonormal sequence of
sections with $u_j\in \Dmin(D)$ satisfying $\|D u_j \|_{L^2}\to 0$.
Since $R\geq 0$,  the Lichnerowicz formula~\eqref{boch1} implies that
$\{u_j\}$ is a bounded sequence in $H^1_{\rho}(M\setminus V, S)$.
By the compactness Lemma~\ref{lem:rellich}, we may pass to a
subsequence (still denoted $\{u_j\}$) which converges strongly on
compacta in $L_{\rho}^2$ and weakly in $H^1_{\rho}$ to a section $u
\in H^1_{\rho}(M\setminus V, S)$. Since $D\colon H^1_{\rho}(M\setminus V,S)
\to L^2(M\setminus V,S)$ is bounded, weak $H^1_{\rho}$-convergence
implies that $u$ lies in the null-space of $D$. By
 Corollary~\ref{cor:Dmin-ns}, 
 it follows that $u =0.$ We show that our
hypotheses prohibit this and arrive at a contradiction.

Consider the sequence $\{u_j\}\subset \Dmin(D)$ which converges to zero
strongly on compacta in $L^2_{\rho}$ and weakly in $H^1_{\rho}$.
Next, we observe that the sequence must also converge
to zero in $L^2_{\rho}$-norm on the asymptotically flat ends. To see this, choose an end, $M_l$, and let $\eta$ be a
cutoff function, which is supported in $M_l$ and identically $1$ in a neighborhood of infinity in $M_l$.
Clearly $\|D(\eta u_j)\|_{L^2} \to 0$. Moreover the Lichnerowicz
formula and Kato's inequality combined with the Hardy inequality~\eqref{eq:af_poinc} on this
asymptotically flat end give
\[
  \|D (\eta u_j)\|^2\geq \|\nabla (\eta u_j)\|^2 \geq \|d|\eta u_j|\|^2\geq
  \frac{(n-2)^2}{4} \|\frac{\eta u_j}{\rho}\|^2 - C_l \|\frac{\eta u_j}{\rho^{1+\tau/2}}\|^2.
\]
Shrinking the support of $\eta$, we can absorb the negative term
above and conclude that $\|\frac{\eta u_j}{\rho}\|_{L^2} \to 0$,
as claimed.
 Since $\rho=1$ in a neighborhood of $V$, it follows
  that the $L^2$-mass of the sequence accumulates in an arbitrarily
small neighborhood of $V$.  We show that this cannot happen. 

We first show that for any stratum $V^{k_b}$ of $V$ and any $W\in
\TRC(V^{k_b})$, the sequence $\{\|\frac{u_j}{r_b}\|_{L^2(W)}\}$ is
bounded. To see this, 
let $W'\in \TRC(V^{k_b})$ be an open set
containing $W$, and $\zeta \in \sC^{\infty}_0(W')$ with $0\leq \zeta
\leq 1$ so that $\zeta \equiv 1$ on
$W$. 
Since 
\[
  \|D(\zeta u_j)\|^2\leq 2 \|[D,\zeta]u_j\|^2 + 2
   \|Du_j\|^2,
\]
our hypothesis implies that $\{D(\zeta u_j)\}$ is uniformly bounded in
$L^2$-norm.  Since $\zeta u_j \in \Dmin(D)$, when $k_b>2$ the
estimate~\eqref{eq:Lichkb} gives
\begin{equation*}\label{eq:closebnd-b}
  \frac{(k_b-2)^2}{4}\|\frac{\zeta u_j}{r_b}\|^2 
\leq \| D(\zeta u_j)\|^2
+ C_{W'}\|\frac{\zeta u}{r_b^{1/2}}\|^2,
\end{equation*}
while for $k_b =2$ the estimate~\eqref{eq:Lich} gives 
\begin{equation*}\label{eq:closebnd-2}
 \frac{1}{4} \|\frac{\zeta u_j}{r}\|^2 \leq \|D(\zeta u_j)\|^2 +
 C_{W'}\|\frac{\zeta u}{r^{1/2} }\|^2,
\end{equation*}
with $C_{W'}$ a positive constant depending on $W'$.  Shrinking the
radii of the tubular neighborhoods $W$ and $W'$, we can absorb the
last terms into the left-hand side terms of each of the above
formulas, and conclude that the sequence $\{\|\frac{\zeta
  u_j}{r_b}\|_{L^2}\}$ is uniformly bounded.

Since in $L^2$-norm the sequence $\{u_j\}$ converges to zero on any
compact subset of $M\setminus V$, while
$\{\|\frac{u_j}{r_b}\|_{L^2(W)}\}$ is uniformly bounded 
 for  
any $W \in \TRC(V^{k_b})$, it follows that $\{u_j\}$ converges to zero
in $L^2$-norm on any $W \in \TRC(V^{k_b})$. Thus the $L^2$-mass
of the sequence cannot accumulate in arbitrarily small neighborhoods
of $V$, contradicting the above.  \qed

\subsection{A second coercivity result}\label{sec:2coerc}
As we will see in the next section, the invertibility result of
Corollary~\ref{cor:invt} suffices to prove Theorem A, the existence of
Witten spinors, in all the cases except the case when $n=4$ and $\tau
\in (\frac{n-2}{2}, \frac{n}{2}]$. The corollary is insufficient for
this exceptional case because the spinor $\Psi$ for which we need to
apply Corollary~\ref{cor:invt} is only in $L^2(M\setminus V, S)$ and
not in $\rho L^2(M\setminus V, S)$. To cover the exceptional case, we
prove a coercivity result on a weighted Hilbert space with weight
shifted from that of $H^1_{\rho}(M\setminus V, S)$.

Define $\sH(M\setminus V, S)$ to be 
 the closure of $\sC^{\infty}_0(M\setminus V, S)$ in the norm
 \begin{equation}\label{eq:2Hilb}
   \|u\|^2 + \|\rho \nabla u\|^2.
 \end{equation}
 Note that  \( \sH(M\setminus V, S) \subset \Dmin(D) \subset
  H^1_{\rho}(M\setminus V, S), \) and therefore $D$
  has trivial null-space on $\sH$. 
\begin{theorem}\label{thm:2-coerc}
 Assume that $(M,g)$ satisfies the hypotheses of
 Theorem~\ref{thm:D-coercive}. Then there exists a constant $\lambda
 >0$ so that 
 \begin{equation}\label{eq:2-coerc}
  \|\rho Du\| \geq \lambda \|u\|
 \end{equation}
 for all $u$ in $\sH(M\setminus V, S)$.
\end{theorem}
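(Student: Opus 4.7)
The plan is to argue by contradiction, following the structure of the proof of Theorem~\ref{thm:D-coercive}.  Suppose no such $\lambda>0$ exists; then there is a sequence $\{u_j\}\subset\sH$ with $\|u_j\|=1$ and $\|\rho Du_j\|\to 0$, which by density we may take to consist of smooth compactly supported spinors.  Applying the Lichnerowicz identity~\eqref{boch1} to $\rho u_j\in H^1_\rho(M\setminus V,S)$ together with $R\geq 0$, Cauchy--Schwarz, and the expansions $D(\rho u_j)=\rho Du_j+c(d\rho)u_j$, $\nabla(\rho u_j)=\rho\nabla u_j+d\rho\otimes u_j$ yields
\[
\|\rho\nabla u_j\|^2\leq C_1\|\rho Du_j\|^2+C_2\|u_j\|^2,
\]
so $\{u_j\}$ is bounded in $\sH$.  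A local Rellich argument (the $\sH$-norm is equivalent to the $H^1$-norm on relatively compact sets) yields a subsequence converging weakly in $\sH$ and strongly in $L^2_{\mathrm{loc}}$ to some $u\in\sH$.  Since $\rho\geq 1$, $\|Du_j\|\leq\|\rho Du_j\|\to 0$, so the limit lies in $\Dmin(D)$ with $Du=0$; hence $u\equiv 0$ by Corollary~\ref{cor:Dmin-ns}, and $u_j\to 0$ in $L^2_{\mathrm{loc}}$.  Mass cannot accumulate near $V$, since $\rho\equiv 1$ there and the stratum-wise estimates from Section~\ref{sec:lbe} combined with Lichnerowicz apply as in the proof of Theorem~\ref{thm:D-coercive}.

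It remains to rule out mass escape on each asymptotically flat end $M_l$.  For this I would establish a Hardy--Dirac inequality
\[
\|u\|_{L^2}\leq C\|\rho Du\|_{L^2},\qquad u\in\sC^\infty_0(\{\rho\geq L_0\}\cap M_l,S),
\]
for $L_0$ sufficiently large, by combining the weighted identity
\[
\|\rho Du\|^2=\|\rho\nabla u\|^2-\int(|d\rho|^2+\rho\Delta\rho)|u|^2+\tfrac14\int R\rho^2|u|^2-2\Re(\rho Du,c(d\rho)u)
\]
(derived from $\|\rho Du\|^2=(u,D(\rho^2 Du))$ via integration by parts and the Lichnerowicz formula) with the asymptotic expansion $|d\rho|^2+\rho\Delta\rho=n+O(\rho^{-\tau})$ on the end, and with the scalar Hardy estimate $\|u\|\leq(\tfrac{2}{n}+o(1))\|\rho\nabla u\|$ obtained from a Proposition~\ref{prop:af_poinc}-type integration by parts and Kato's inequality.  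Applied to $\eta u_j$ with $\eta$ a cutoff supported in $\{\rho\geq L_0\}$ and equal to $1$ on $\{\rho\geq 2L_0\}$, this estimate, together with $\|\rho D(\eta u_j)\|\leq\|\rho Du_j\|+C_\eta\|u_j\|_{L^2(\supp d\eta)}\to 0$, gives $\|u_j\|_{L^2(\{\rho\geq 2L_0\})}\to 0$; combined with the earlier $L^2_{\mathrm{loc}}$ convergence this contradicts $\|u_j\|=1$.

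The hard part will be making the Hardy--Dirac inequality actually yield a positive constant in the borderline dimensions.  The naive absorption scheme above leaves the margin $n(n-4)/4$ after applying Hardy to eliminate $\|\rho\nabla u\|^2$, which is strictly positive only for $n\geq 5$.  The critical dimension is precisely $n=4$---exactly the case for which Corollary~\ref{cor:invt} is insufficient and Theorem~\ref{thm:2-coerc} is needed---so a more delicate argument is required there.  I expect the resolution to exploit either a sharper treatment of $2\Re(\rho Du,c(d\rho)u)$ via the anticommutator relation $\{D,c(d\rho)\}=-(n-1)/\rho-2\nabla_{(d\rho)^\sharp}+O(\rho^{-\tau-1})$ together with the nonnegative scalar-curvature contribution $\tfrac14\int R\rho^2|u|^2$, or an alternative Fredholm-theoretic argument that shows $\rho D\colon\sH\to L^2$ has closed range in the spirit of Bartnik's analysis on weighted Sobolev spaces and then invokes triviality of the kernel (again via Corollary~\ref{cor:Dmin-ns}).
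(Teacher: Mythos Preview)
Your overall strategy---contradiction, boundedness in $\sH$, Rellich on compacta, exclusion of mass near $V$ via the stratum-wise estimates---matches the paper's and is fine. The genuine gap is exactly where you flag it: your Hardy--Dirac inequality on the ends gives only the margin $n(n-4)/4$, which collapses precisely at $n=4$, the one dimension for which this theorem is invoked. Your two proposed rescues are speculative and neither is carried out; the anticommutator identity you write down does not by itself close the gap, and a Fredholm/closed-range argument would still need a quantitative estimate at infinity that you have not supplied.

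The paper's resolution is an algebraic regrouping that you are missing. Starting from
\[
\|\rho Du\|^2=\|D(\rho u)\|^2-2\bigl(D(\rho u),[D,\rho]u\bigr)+\|[D,\rho]u\|^2,
\]
apply Lichnerowicz and split $\nabla=\nabla_\nu+\nabla^0$ with $\nu$ the unit radial vector. The key observation is that the tangential part of the cross term combines with $\|\nabla^0(\rho u)\|^2$ and $(n-1)\|[D,\rho]u\|^2$ into the nonnegative sum of squares
\[
\sum_{\sigma=2}^{n}\bigl\|\nabla_{e_\sigma}(\rho u)+c(e_\sigma)[D,\rho]u\bigr\|^2,
\]
which can simply be discarded. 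What remains is
\[
\|\rho Du\|^2\geq \|\nabla_\nu(\rho u)\|^2-2(\rho\nabla_\nu u,u)-n\|u\|^2-C\|\rho^{-\tau/2}u\|^2.
\]
Now the radial cross term is handled by a direct integration by parts: $-2(\rho\nabla_\nu u,u)=-(\nabla_\nu|u|^2,\rho)=n\|u\|^2+O(\|\rho^{-\tau/2}u\|^2)$, so the $-n\|u\|^2$ cancels exactly. One is left with $\|\nabla_\nu(\rho u)\|^2$ minus lower-order terms, and the standard Hardy inequality \eqref{eq:af_poinc} gives the constant $\tfrac{(n-2)^2}{4}$, strictly positive for all $n\geq 3$, including $n=4$. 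The point is that the perfect-square regrouping absorbs the tangential part of the dangerous cross term for free, so you never have to pay for it by Cauchy--Schwarz; this is what your scheme lacks.
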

\begin{proof}
  The proof is similar to the proof of Theorem~\ref{thm:D-coercive}
  except that we need new estimates on the asymptotically flat
  ends of $M$, where we have modified the norms of our Hilbert space.

  Assume there exists an infinite $L^2$-orthonormal sequence $\{u_j\}$
  in $\sH(M\setminus V,S)$ so that $\|\rho Du_j\|_{L^2} \to 0$. The sequence is clearly a bounded sequence in
  $H^1_{\rho}(M\setminus V, S)$. The same argument as in
  Theorem~\ref{thm:D-coercive} then gives that the sequence converges
  strongly to zero in $L^2$-norm on compacta in $M\setminus V$ and on
  any compact neighborhood of $V$. Therefore the $L^2$-mass of the
  sequence must accumulate on the asymptotically flat ends of $M$.

  We show that there exist constants $A>0$, $L>0$ so that 
  \begin{equation}\label{eq:af_coerc}
    \|\rho Du \| \geq A \|u\|
  \quad \text{for all $u \in \sH(M\setminus V, S)$ with $\supp (u)
    \subset \{x\in M_l:\rho(x)>L\}$,}
  \end{equation}
  where $M_l$ is an asymptotically flat end of $M$.

  Assuming this for the moment, let $\eta$ be a cutoff function
  supported on one of the ends. Then $\|\rho D(\eta
  u_j)\| \to 0$, and then~\eqref{eq:af_coerc} shows that the $L^2$-mass of
  the sequence $\{u_j\}$ cannot accumulate on the asymptotically flat
  ends either. Thus we reach a contradiction.

  It remains to show~\eqref{eq:af_coerc}.  Since $\sC^{\infty}_{0} (M\setminus V,
  S)$ is dense in $\sH(M\setminus V, S)$, it suffices to
  prove~\eqref{eq:af_coerc} for $u \in \sC^{\infty}_{0} (M\setminus V,
  S)$. We write
  \[
    \|\rho Du\|^2 
    = \|D (\rho u)\|^2 - 2 (D(\rho u), [D, \rho] u) + \|[D,\rho] u\|^2.
  \]
  Let $\nu$ denote the unit vector in the direction of
  $\frac{\p}{\p \rho}$, and let $\nabla^0$ denote the covariant
  derivative in directions $\{e_{\sigma}\}_{\sigma = 2, \ldots ,  n}$ orthogonal to $\nu$. 
  We apply the Lichnerowicz formula to the first term on the
  right-hand side and expand the cross-term, to 
  \begin{align*}
    \|\rho Du\|^2 
 &  \geq \|\nabla_{\nu} (\rho u)\|^2 + \|\nabla^0 (\rho u)\|^2 
       - 2 ( c(\nu)\nabla_{\nu} (\rho u), [D,\rho] u)\\
 & \quad \quad \quad
     - 2 ( (D- c(\nu)\nabla_{\nu})(\rho u), [D, \rho] u) 
       + \|[D,\rho] u\|^2.
  \end{align*}
Observe that we can group 
\[
 \|\nabla^0 (\rho u)\|^2 
- 2 ((D- c(\nu)\nabla_{\nu})(\rho u), [D, \rho] u)
 + (n-1) \|[D,\rho] u\|^2 
  = \sum_{\sigma=2}^{n} \|\nabla_{e_{\sigma}}  (\rho u) +  c(e_{\sigma})[D,\rho]u\|^2,
\]
and obtain
  \begin{align*}
    \|\rho Du\|^2 
 &  \geq \|\nabla_{\nu} (\rho u)\|^2 + \sum_{\sigma=2}^{n}
 \|\nabla_{e_{\sigma}}  (\rho u) +  c(e_{\sigma})[D,\rho]u\|^2\\
 & \quad \quad \quad
       - 2 ( c(\nu)\nabla_{\nu} (\rho u), [D,\rho] u)
       - (n-2) \|[D,\rho] u\|^2
  \end{align*}
Since the metric is asymptotically flat of order $\tau$,
$|[D, \rho] | = 1 + \sO(\rho^{-\tau})$, and thus
\[
  \|\rho Du\|^2 
  \geq \|\nabla_{\nu} (\rho u)\|^2
   - 2 (\rho \nabla_{\nu} u, u)
   - n \|u\|^2 - C_1 \|\frac{u}{\rho^{\tau/2}}\|^2,
\]
for some constant $C_1 >0$ independent of $u$. To handle the term  $-2\<
\rho \nabla_{\nu} u, u\>$, we integrate by parts to rewrite it as
\begin{align*}
  - ( \nabla_{\nu} |u|^2, \rho)
  & = (|u^2|, \rho^{1-n}\nabla_{\nu}(\rho^n) )- C_2 \|\frac{u}{\rho^{\tau/2}}\|^2 \\
  & = n \|u\|^2 - C_3\|\frac{u}{\rho^{\tau/2}}\|^2 .
\end{align*}
The error terms
arise from the deviation of the metric from the Euclidean metric.  Thus, we obtain
\[
  \|\rho Du \|^2 \geq \|\nabla_{\nu}(\rho u)\|^2 - C \|\frac{u}{\rho^{\tau/2}}\|^2.
\]
 with $C >0$ a constant independent of $u$. Now the desired inequality
 follows using the Hardy inequality~\eqref{eq:af_poinc} on the asymptotically flat
 end and choosing $L$ sufficiently large so that
 the lower order term can be absorbed into  $\frac{(n-2)^2}{4} \|u\|^2$.
\end{proof}
As a consequence, 
 we have the following invertibility result, analogous to Corollary~\ref{cor:invt}.
\begin{corollary}\label{cor:2invt}
 Assume that the hypotheses of Theorem~\ref{thm:2-coerc} hold. Then
 for each smooth spinor $\Psi \in L^2(M\setminus V, S)$, there exists
 a unique spinor $\Phi$ in $\sH(M\setminus V, S)$ so that 
 $D(\rho^2 D \Phi) = \Psi$.
\end{corollary}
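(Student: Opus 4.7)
The plan is to repeat the Lax--Milgram argument of Corollary~\ref{cor:invt} with the weight shifted: I will work on the Hilbert space $\sH(M\setminus V, S)$ with the sesquilinear form
\[
B(u,v) := (\rho Du, \rho Dv)
\]
and the linear functional $L(v) := (\Psi, v)$. Boundedness of $B$ on $\sH$ follows from the pointwise bound $|Du|\leq \sqrt{n}\,|\nabla u|$, which gives $\|\rho Du\|\leq \sqrt{n}\,\|\rho\nabla u\|\leq \sqrt{n}\,\|u\|_{\sH}$; boundedness of $L$ follows from $|L(v)|\leq \|\Psi\|\,\|v\|\leq \|\Psi\|\,\|v\|_{\sH}$, using that the $L^2$-norm is controlled by the $\sH$-norm.

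The substantive step, which I expect to be the main obstacle, is coercivity: showing that $\|\rho Du\|^2$ dominates the full $\sH$-norm $\|u\|^2+\|\rho\nabla u\|^2$. Theorem~\ref{thm:2-coerc} directly gives $\|u\|^2\leq \lambda^{-2}\|\rho Du\|^2$, so it remains to bound $\|\rho\nabla u\|^2$. For $u\in \sC^{\infty}_0(M\setminus V, S)$, I would apply the integral Lichnerowicz formula~\eqref{boch1} to the spinor $\rho u\in \Dmin(D)$ and expand both sides using $D(\rho u)=\rho Du + c(d\rho)u$ and $\nabla(\rho u)=\rho\nabla u+d\rho\otimes u$. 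The pointwise identity $|c(d\rho)u|^2=|d\rho\otimes u|^2$ makes the $|d\rho|^2|u|^2$ terms cancel, leaving
\[
\|\rho Du\|^2 - \|\rho\nabla u\|^2 = 2\,\Re(\rho\nabla u, d\rho\otimes u) - 2\,\Re(\rho Du, c(d\rho)u) + \tfrac{1}{4}(R\rho u,\rho u).
\]
Since $|d\rho|$ is bounded on $M$ by asymptotic flatness and $R\geq 0$, Cauchy--Schwarz with a small parameter $\epsilon$ absorbs the cross terms into $\epsilon(\|\rho Du\|^2+\|\rho\nabla u\|^2)+C_\epsilon \|u\|^2$; reabsorbing the surviving $\|u\|^2$ via Theorem~\ref{thm:2-coerc} yields $\|\rho\nabla u\|^2\leq C\|\rho Du\|^2$ on $\sC^{\infty}_0(M\setminus V, S)$, and hence on all of $\sH$ by density. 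This gives the desired coercivity $B(u,u)\geq c\|u\|_{\sH}^2$.

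With boundedness and coercivity established, Lax--Milgram produces a unique $\Phi\in \sH(M\setminus V, S)$ satisfying $B(\Phi, v)=L(v)$ for all $v\in \sH$. Testing against $v\in \sC^{\infty}_0(M\setminus V, S)$ gives $(\rho^2 D\Phi, Dv)=(\Psi,v)$, so $\Phi$ is a distributional solution of $D(\rho^2 D\Phi)=\Psi$. Away from $V$, the operator $u\mapsto D(\rho^2 Du)$ has smooth coefficients and principal symbol $\rho^2|\xi|^2\cdot\mathrm{Id}$, hence is elliptic; interior elliptic regularity, applied to the smooth right-hand side $\Psi$, upgrades $\Phi$ to a smooth strong solution, and the Lax--Milgram uniqueness is exactly the uniqueness claimed in the corollary.
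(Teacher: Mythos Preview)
Your proof is correct and follows essentially the same Lax--Milgram strategy as the paper's own argument: same bilinear form $B(u,v)=(\rho Du,\rho Dv)$ on $\sH$, same functional $L(v)=(\Psi,v)$, and the same reduction of coercivity to bounding $\|\rho\nabla u\|^2$ via the Lichnerowicz formula applied to $\rho u$ together with Theorem~\ref{thm:2-coerc}. The only cosmetic difference is that you exploit the exact cancellation of the $\||d\rho|u\|^2$ terms when expanding both sides of the Lichnerowicz identity, whereas the paper instead writes the chain $\|\rho Du\|^2\geq \tfrac{1}{2}\|D(\rho u)\|^2-\||d\rho|u\|^2\geq \tfrac{1}{4}\|\rho\nabla u\|^2-\tfrac{3}{2}\||d\rho|u\|^2$ and then splits $\|\rho Du\|^2=\e\|\rho Du\|^2+(1-\e)\|\rho Du\|^2$ before absorbing; both routes lead to the same coercivity conclusion.
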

\begin{proof}
 The only difference from the proof of Corollary~\ref{cor:invt} is
 that we now take $B$ to be the bilinear form on $\sH(M\setminus V, S)$
 defined as 
 \[
   B(u, v): = (\rho Du, \rho Dv),
 \]
 and apply the Lax-Milgram Lemma to the bounded linear  
  functional  $L$ on $\sH(M\setminus V, S)$ defined to be
 $L(v): = (\Psi, v)$.

 The only argument which requires a slightly different 
  justification is showing that the bilinear form $B$ is
 coercive. For this, let $\e >0$ small to be chosen later, and bound
 \begin{equation}\label{eq:2-coerc-1}
   \|\rho Du \|^2 \geq  \e \|\rho Du\|^2 + (1-\e) \lambda^2 \|u\|^2
 \end{equation}
 using~\eqref{eq:2-coerc}. To estimate the first term on the
 right-hand side, note that since $u\in \sH(M\setminus V, S)$, it
 follows that $\rho u \in H^1_{\rho}(M\setminus V,S)$.  Thus using
 the Lichnerowicz formula~\eqref{boch1}, we have 
\begin{align*}
     \|\rho Du \|^2
    &= \|D(\rho u) \|^2 +\| |d\rho| u\|^2 -2( D(\rho u),c(d\rho)u) \\
    & \geq \frac{1}{2}\|D(\rho u)\|^2 -\| |d\rho| u\|^2  \\
    & \geq \frac{1}{2}\|\nabla(\rho u)\|^2 -\| |d\rho| u\|^2  \\
    & \geq  \frac{1}{2}\|\rho\nabla u\|^2+\frac{1}{2}\| |d \rho|  u\|^2
+(\rho\nabla u,d\rho\otimes u)-\| |d\rho| u\|^2  \\
    & \geq  \frac{1}{4}\|\rho\nabla  u\|^2-\frac{3}{2}\| |d \rho| u\|^2 
  \end{align*}
  Choosing $\e$ so that 
   $\frac{3\e}{2}\|d \rho\|^2_{L^{\infty} (M) }<\frac{1}{2}(1-\e)
    \lambda^2$ gives the coercivity of the bilinear form $B$ on the
  Hilbert space $\sH(M\setminus V, S)$.
\end{proof}

\section{Proof of our main results}\label{sec:main}

In this section we prove our main theorems stated in the
Introduction. 
Since we are assuming that the manifold $M$ is
orientable and nonspin, its dimension must be $n\geq 4$ (as every
orientable $3$-manifolds is automatically spin).  
For the proof of Theorem A, the existence and construction of the
Witten spinor is separated into two cases depending on the order of
convergence, $\tau,$ of the asymptotically flat metric to a Euclidean
metric.  The reason for this is that, for $\tau > \frac{n}{2}$, a
spinor $\psi_0$ supported on an end and constant in a frame induced
from an asymptotically flat coordinate system satisfies $\rho D\psi_0
\in L^2(M\setminus V, S)$; the existence of the Witten spinor is then
an immediate consequence of Corollary~\ref{cor:invt}. However, if
$\tau \in (\frac{n-2}{2}, \frac{n}{2}]$, then $\rho D\psi_0$ need not
be $L^2$, but $\rho D^2\psi_0$ is still square
integrable. Establishing the existence of the Witten spinor from this
weaker hypothesis is a two step procedure, provided that $n\geq 5$. In
the case $n=4$ and $\tau \in (\frac{n-2}{2}, \frac{n}{2}]$ the proof
requires further refinement.

The proofs of Theorem B and Theorem C are based on the form of the
Witten spinor derived in Theorem A. As a consequence, we separate
these proofs into cases, according to the construction we use for the
Witten spinor.

We conclude with the proof of Theorem D.

Without loss of generality, we can assume that the radius defining function
$\rho$ in Definition~\ref{def:asy_flat} is identically $1$ in a
compact neighborhood of $V$.

\subsection{Proof of Theorem A}\label{sec:ThmA}
Let $\psi_0$ be a smooth spinor which is constant on the
asymptotically flat ends of $M$ and supported outside a
neighborhood of $V$. It follows (see~\eqref{eq:cs}) that
$\rho^{\tau+1} |D\psi_0|$ is bounded on $M\setminus V$. We
separate the construction into two cases, according to whether
$\frac{n-2}{2} < \tau \leq \frac{n}{2}$ or $\tau > \frac{n}{2}$.

If $\tau >\frac{n}{2}$, then $\rho D\psi_0 \in L^2(M\setminus V, S)$,
and thus the spinor $D\psi_0$ satisfies the hypothesis of
Corollary~\ref{cor:invt}. Hence, there exists a unique $u\in
H^1_{\rho}(M\setminus V,S)$ so that
\[
  D^2 u = - D\psi_0.
\]
From Corollary~\ref{cor:scws}, it follows that the spinor $\psi: = Du +
\psi_0$ is a Witten spinor.

If $\tau \in (\frac{n-2}{2}, \frac{n}{2}]$, more work is required to
construct the desired Witten spinor. In this case,
$D^2\psi_0$ satisfies the hypothesis of
Corollary~\ref{cor:invt}. Hence there exists a unique $w\in
H^1_{\rho}(M\setminus V,S)$ so that 
\[
  D^2 w = - D^2\psi_0.
\]
Let 
\[
  W: = w + \psi_0.
\]
Then $D^2W = 0,$ $DW \in L^2(M\setminus V, S)$, and $DW$ is in the
null-space of the maximal extension of the Dirac operator. If in fact
$DW =0$, then $W$ is the desired Witten spinor. If $DW\not = 0$, then
we modify $W$ further.  Let $\eta$ be a smooth cutoff function,
vanishing in a compact neighborhood of $V$ where $\rho = 1$ and
identically $1$ outside a bigger compact neighborhood of $V$. Without
loss of generality, we can assume that $\eta$ is $1$ on the support of
$\psi_0$.  Since $DW \in L^2(M\setminus V, S)$ is strongly harmonic,
it follows that $\eta DW \in H^1_{\rho}(M\setminus V, S)$ and $D(\eta
DW)$ is compactly supported. By Proposition~\ref{udecay2},
$\rho^{\frac{n}{2}-1-\e} \left(\eta DW\right) \in L^2(M\setminus V,
S)$ for all $\e>0$. In particular, when $n\geq 5$ we have $\rho D(\eta
W) \in L^2(M\setminus V, S)$ and by Corollary~\ref{cor:invt} there
exists a unique $u \in H^1_{\rho} (M\setminus V, S)$ so that
\[
  D^2 u = - D(\eta W).
\]
Then, as in the previous case, Corollary~\ref{cor:scws} gives that the spinor $\psi: = Du +
\eta W = Du + \eta w + \psi_0$ is a Witten spinor.

We are left to analyze the case $\tau \in (\frac{n-2}{2},
\frac{n}{2}]$ and $n=4$. In this case, we use our second coercivity
result in Section~\ref{sec:2coerc} to construct the Witten
spinor. Since $D\psi_0 \in L^2(M\setminus V, S)$, by
Corollary~\ref{cor:2invt} there exists a unique $u \in L^2(M\setminus
V, S)$ with $\rho \nabla u \in L^2(M\setminus V, S)$ so that 
\begin{equation}\label{eq:4-1}
  D (\rho^2 Du) = - D\psi_0.
\end{equation}
 We set $v:= \rho^2 Du$ and let $\psi:= v + \psi_0$. Since
  $\rho Du \in L^2(M\setminus V, S)$, then $\frac{v}{\rho} \in
  L^2(M\setminus V, S)$. Moreover $Dv =- D\psi_0 \in L^2(M\setminus V,
  S)$ and then Proposition~\ref{prop:bochend} gives $\nabla v \in
  L^2(M_l,S\lvert_{S_l})$ for all asymptotically flat ends $(M_l,Y_l)$
  of $M$. Hence $\psi$ is a Witten spinor.   
\qed

\begin{remark}
 Note that the spinor $\psi - \psi_0$ satisfies the hypothesis of
 Proposition~\ref{udecay2} with $b = 1$. Thus $\rho^{\frac{n}{2}-1-\e}
 (\psi-\psi_0) \in L^2(M\setminus V, S)$  for all $\e >0$ .
\end{remark}

\begin{remark}\label{adumbration}
  Note that if we set $\Phi = w + \psi_0$, with $w\in
  H^1_{\rho}(M\setminus V,S)$ a solution to $D^2 w = - D^2 \psi_0$,
  $\Phi$ is in the minimal domain of $D$ near $V$ and is weakly
  harmonic. If $\Phi$ is also strongly harmonic, then by
  Proposition~\ref{prop:mass}, the positive mass theorem holds for
  $M$. If $\Phi$ is not strongly harmonic, then $D\Phi$ is a nonzero
  strongly harmonic $L^2$-spinor, necessarily in $\Dmax(D)
  \setminus\Dmin(D).$
\end{remark} 

\subsection{Proof of Theorem B}\label{sec:ThmB}
The proof of this theorem is a consequence of the estimates we derived
in Section~\ref{sec:iie}. 

Consider the Witten spinor $\psi$ given by the proof of Theorem A.
Thus $\psi = Du + \psi_0$ in the case when $\tau > \frac{n}{2}$, $\psi
= Du + \eta w + \psi_0$ in the case $\tau \in (\frac{n-2}{2},
\frac{n}{2}]$ and $n\geq 5$; while $\psi = \rho^2 Du + \psi_0$ in the
case $\tau \in (\frac{n-2}{2}, \frac{n}{2}]$ and $n=4$.  Both $\psi_0$
and $\eta w$ vanish in a neighborhood of $V$ (where $\rho
=1$). Hence $\psi = Du$ in this neighborhood, and therefore also $D^2 u =0$ there.  Moreover, by construction we have have $\chi u \in
\Dmin(D)$ for all $\chi \in \sC^{\infty}_0(M)$. Thus after multiplying
by a cutoff function supported in the region where $D^2 u=0$
and which is identically $1$ in a smaller neighborhood of $V$, $u$
satisfies the hypothesis of Lemma~\ref{lem:u_est} and
Lemma~\ref{lem:u_estb}. 
 Then 
Lemma~\ref{lem:v_est} gives the desired estimate 
 for $\psi$ 
near $V^2$, while
Lemma~\ref{lem:v_estb} gives
 the 
estimates near the higher codimension
strata $V^{k_b}$.  \qed

\subsection{Proof of Theorem C}\label{sec:CorC}
The main ingredients for this proof are Proposition~\ref{prop:mass}
and Proposition~\ref{prop:v_min}

Let $\psi_0$ be a smooth spinor which is constant on the
asymptotically flat ends of $M$ and supported outside a neighborhood
of $V$. We assume that $|\psi_0| \to 1$ on each of the asymptotically
flat ends of $M$. (The case when $|\psi_0| \to 1$ only on one of the
ends $M_l$ of $M$ and it converges to $0$ on all the others, follows
similarly.)

Let $\psi$ be the Witten spinor constructed in
Theorem A and which satisfies~\eqref{eq:wish}. Thus $\psi = Du +
\psi_0$ in the case when $\tau > \frac{n}{2}$, $\psi = Du + \eta w +
\psi_0$ in the case $\tau \in (\frac{n-2}{2}, \frac{n}{2}]$ and $n\geq
5$, while $\psi = \rho^2 Du + \psi_0$ in the case $\tau \in
(\frac{n-2}{2}, \frac{n}{2}]$ and $n=4$.
Recall that $\eta$ is a smooth function vanishing in a neighborhood of
$V$ where $\rho =1$.  Moreover, in the first two cases $u \in
H^1_{\rho}(M\setminus V, S)$, while in the third case $u \in
L^2(M\setminus V, S)$ and $\rho \nabla u \in L^2(M\setminus V, S)$.

We want to show that the spinor $\psi$ satisfies the conditions of
Proposition~\ref{prop:mass}. Then, since the scalar curvature is
nonnegative, the nonnegativity of the mass follows from
formula~\eqref{eq:mass-pos}.  

Let $\chi$ be a smooth cutoff function on $M$ which is supported in a
neighborhood of $V$ where $\rho =1$ and $\psi_0 =0$.  We need to show
that $\chi \psi\in \Dmin(D)$. Note that $\chi \psi = \chi Du$ in all
the cases. 
From the properties of $\chi$ and $u$, it is clear that $\chi u \in
H^1_{\rho}(M\setminus V, S)$. Then, from
Corollary~\ref{cor:Dmin-ns}(2) it follows that $\chi u \in \Dmin(D)$,
while from the construction of $\psi$ it follows that $\chi D^2 u
=0$. Since $\chi \psi = D(\chi u) - [D, \chi]u$, it is enough to show
that $D(\chi u) \in \Dmin(D)$. This follows as a consequence of
Proposition~\ref{prop:v_min} applied to $\bar{u}: = \chi u$ and
$\bar{v}: = D \bar u$, since $\bar{u} \in \Dmin(D)$, $D^2 \bar{u} =0$
in a neighborhood of $V^2$, and $\frac{|\bar
  v|}{r^{1/2}\ln^{1/2}(\frac{1}{r})}\in L^2 (W)$ for all $W \in
\TRC(V^2)$ by the assumption~\eqref{eq:wish}.

It remains to show that the mass cannot be zero. If the mass were
zero, then from~\eqref{eq:mass-pos} it would follow that the
Witten spinor $\psi$ is covariantly constant, i.e. $\nabla \psi
=0$. This implies that $d|\psi|^2 =0$, and thus $|\psi|$ is constant
on $M\setminus V$. Since $\psi$ is asymptotic to $\psi_0$ near the
asymptotically flat ends, it follows that $|\psi| >0$. On the other
hand, $\psi$ is in minimal domain of the Dirac operator near $V^2$,
condition which, via Corollary~\ref{cor:Dmin}, forces $|\psi|$ to be
arbitrarily small near $V^2$. Therefore we obtain a contradiction.
\qed

\begin{remark}
  Note that from the positivity of the total mass of the manifold we
  cannot conclude the positivity of the individual asymptotically
  flat ends.  The positivity of the mass of each end
  follows exactly as above, once we know that for each asymptotically flat end
  $M_l$ there exists a Witten spinor  which is asymptotic to
  $\psi_{0l}$ with $|\psi_{0l}| \to 1$ on $M_l$, vanishes at infinity
  on all the other ends of $M$, and satisfies~\eqref{eq:wish}.
\end{remark}

\subsection{Proof  of Theorem D}\label{sec:proofTD}

We analyze the space of strongly harmonic $L^2$-spinors on $M\setminus
V$.  From Corollary~\ref{cor:Dmin-ns}(3), we know that such a nonzero
spinor cannot be in $\Dmin (D)$. Hence we introduce the following
definition:
\begin{definition}
 A harmonic spinor $\psi \in L^2(M\setminus V,
 S)$  is called {\em singular} if $\psi \in \Dmax(D)
 \setminus \Dmin(D)$. Let $H_{\sing}\subset L^2(M\setminus V,
 S)$ denote the space of singular
 harmonic spinors.
\end{definition}
We have seen in Remark~\ref{adumbration} that singular harmonic
spinors are the obstruction to extending Witten's proof of the
positive mass conjecture to nonspin manifolds using weakly harmonic
spinors. In this proof we show that not only do singular harmonic
spinors exist but that, in fact, there is an infinite dimensional
space of them.

\paragraph{Step 1:}
We first reduce the problem to showing that it is enough to construct
infinitely many spinors $\psi \in \Dmax(D)$ with
$\nabla \psi \notin L^2(M\setminus V, S)$ which are linearly
independent in $\Dmax(D)/\Dmin(D)$.

On $\Dmax(D)$ we have the inner-product
\(
  \<\!\< \psi, \phi\>\!\>_1: = \<\psi,\phi\>_{L^2} + \<D\psi,
  D\phi\>_{L^2}
\)
and the degenerate inner-product
\(
  \<\!\< \psi, \phi\>\!\>_2: =  \<D\psi, D\phi\>_{L^2}.
\)
Let $\sA$ and $\sB$ be the orthogonal complements of $\Dmin(D)$ in
$\Dmax(D)$ with respect each of them
\[
  \Dmax(D) = \Dmin(D) \oplus \sA
  \quad \text{and} \quad
  \Dmax(D) = \Dmin(D) \oplus \sB.
\]
It is clear that $\sA \,\iso \,\Dmax(D)/\Dmin(D)$. Moreover
\begin{equation}\label{eq:oc}
   \sA = \{\psi \in \Dmax(D) \mid D^2\psi = - \psi\}
 \quad \text{and} \quad
   \sB = \{\psi \in \Dmax(D) \mid D^2 \psi =0\}.
\end{equation}
Each element $\psi$ in $\sA$ is an eigenfunction of $D^2$ with
negative eigenvalue $-1$. From Proposition~\ref{udecay3} it follows that
$\psi$ decays exponentially on each of the asymptotically flat ends of
$M$.  Then, by Corollary~\ref{cor:invt} there exists a unique
$u_{\psi} \in \Dmin(D)$ so that $D^2 u_{\psi} = \psi$.  We use this to
construct an isomorphism from $\sA$ to $\sB$.

\begin{lemma}\label{lem:T}
 The map $T:\sA \to \sB$ defined as $T(\psi) = \psi+ u_{\psi}$ is 
 an isomorphism.
\end{lemma}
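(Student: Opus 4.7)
The plan is to verify in turn that $T$ is well-defined, linear, injective, and surjective, leveraging the orthogonal direct sum decompositions $\Dmax(D) = \Dmin(D) \oplus \sA = \Dmin(D) \oplus \sB$ set up in \eqref{eq:oc}, together with the identifications of $\sA$ and $\sB$ as the $-1$-eigenspace and null-space of $D^2$ inside $\Dmax(D)$.

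For well-definedness, given $\psi \in \sA$, exponential decay at infinity (Proposition~\ref{udecay3}) gives $\rho\psi \in L^2(M\setminus V, S)$, so Corollary~\ref{cor:invt} produces a unique $u_\psi \in H^1_\rho(M\setminus V, S)$ with $D^2 u_\psi = \psi$. One must then upgrade $u_\psi \in H^1_\rho$ to $u_\psi \in \Dmin(D)$. Near each stratum $V^{k_b}$ the required estimate $u_\psi/r_b \in L^2$ is straightforward: multiplying $u_\psi$ by a smooth cutoff $\chi$ supported near $V$ yields $\chi u_\psi \in \Dmin(D)$ by Corollary~\ref{cor:Dmin-ns}(2), and then Corollary~\ref{cor:Dmin} delivers the bound. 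At infinity one bootstraps Proposition~\ref{udecay} using the rapid (in fact exponential) decay of $\psi$ to obtain $u_\psi \in L^2$; Lemma~\ref{lem:conv} then concludes $u_\psi \in \Dmin(D)$. Consequently $T(\psi) = \psi + u_\psi \in \Dmax(D)$ and $D^2 T(\psi) = -\psi + \psi = 0$, so $T(\psi) \in \sB$. Linearity is immediate from the uniqueness in Corollary~\ref{cor:invt}.

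For injectivity, suppose $T(\psi) = 0$. Then $\psi = -u_\psi$ lies in both $\sA$ and $\Dmin(D)$; since $\sA$ is the $\<\!\<\cdot,\cdot\>\!\>_1$-orthogonal complement of $\Dmin(D)$ in $\Dmax(D)$, this intersection is trivial, forcing $\psi = 0$. For surjectivity, take $\phi \in \sB \subset \Dmax(D)$ and apply the direct sum decomposition $\Dmax(D) = \Dmin(D) \oplus \sA$ to write $\phi = u + \psi$ uniquely with $u \in \Dmin(D)$ and $\psi \in \sA$. Applying $D^2$ and using $D^2\phi = 0$ together with $D^2\psi = -\psi$ yields $D^2 u = \psi$; uniqueness in Corollary~\ref{cor:invt} then forces $u = u_\psi$, so that $T(\psi) = \psi + u_\psi = \phi$.

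The main obstacle is the verification that $u_\psi \in \Dmin(D)$ rather than merely $H^1_\rho(M\setminus V, S)$. Near $V$ the argument above is painless, but at infinity one must promote the $L^2_\rho$-integrability of $u_\psi$ to genuine $L^2$-integrability. For $n \geq 5$ a single application of Proposition~\ref{udecay} with $a$ close to $(n-2)/2$ suffices, since $\rho^{a+1}\psi \in L^2$ for all $a$ by the exponential decay. In dimension four the resulting bound is borderline, and one must exploit the exponential decay of $\psi$ more delicately---e.g., by running a weighted Agmon identity with an exponential weight parallel to the proof of Proposition~\ref{udecay3}---to squeeze out the required integrability at infinity.
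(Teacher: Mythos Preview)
Your argument is correct and follows the paper's proof step for step: well-definedness from $D^2(T\psi)=0$, linearity from uniqueness of $u_\psi$, injectivity from $\sA \cap \Dmin(D) = \{0\}$, and surjectivity by decomposing $\phi \in \sB$ along $\Dmax(D) = \Dmin(D) \oplus \sA$ and then invoking the uniqueness clause of Corollary~\ref{cor:invt} (uniqueness in $H^1_\rho$, which contains $\Dmin(D)$) to identify the $\Dmin(D)$-component with $u_\psi$.

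The one place you go beyond the paper is in justifying $u_\psi \in \Dmin(D)$. The paper simply asserts this in the sentence immediately preceding the lemma (writing ``by Corollary~\ref{cor:invt} there exists a unique $u_\psi \in \Dmin(D)$'', even though that corollary strictly speaking only yields $u_\psi \in H^1_\rho$), and then uses it freely in the proof. You instead supply an argument via Corollary~\ref{cor:Dmin-ns}(2) and Corollary~\ref{cor:Dmin} near $V$, Proposition~\ref{udecay} at infinity, and Lemma~\ref{lem:conv} to glue, and you correctly flag the borderline case $n=4$. This extra care is appropriate, but it concerns the setup preceding the lemma rather than the lemma's own proof, which the paper treats as taking $u_\psi \in \Dmin(D)$ as given.
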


\begin{proof}
 Since $D^2 (T\psi) = D^2 \psi + D^2 u_{\psi} =0$, the map $T$ is
 well-defined.  The uniqueness of $u_{\psi}$ gives that $T$ is linear.
 To show injectivity, note that if $\psi + u_{\psi} =0$ then it
 follows that $u_{\psi} \in \sA$, as $D^2 u_{\psi} = -
 u_{\psi}$. Since $u_{\psi} \in \Dmin(D)$,~\eqref{eq:oc} gives
 $u_{\psi}=0$ and thus $\psi=0$.

 To show surjectivity, let $\phi \in \sB$. From the orthogonal
 decomposition of $\Dmax(D) = \Dmin(D) \oplus \sA$, there exist
 unique $u\in \Dmin(D)$ and $\psi \in \sA$ so that 
 \[
   \phi = u + \psi.
 \]
 Since $D^2 \phi =0$, it follows that
 $D^2 u =  \psi$. By Corollary~\ref{cor:invt} there exists a
 unique solution $u_{\psi}$ to $D^2 u= \psi$ in the minimal domain of
 $D$. Thus we must have $u = u_{\psi}$ and hence $\phi = T(\psi)$.
\end{proof}

We consider now the Dirac operator restricted to $\sB$,
\[
  D\lvert_{\sB}: \sB \to L^2(M\setminus V, S).
\]
From~\eqref{eq:oc} it follows that $\Range(D\lvert_{\sB}) \subset H_{\sing}$.
Moreover, $\Ker (D\lvert_{\sB}) \subset H_{\sing}$. We write
\begin{equation}
 \sB \ \iso \ \Ker (D\lvert_{\sB}) \oplus \Range (D\lvert_\sB).
\end{equation}
Therefore to show that $H_{\sing}$ is infinite dimensional, it is
enough to show that either $\Ker(D\lvert_{\sB})$ or
$\Range(D\lvert_{\sB})$ is infinite dimensional.  In other words, we
need to show that $\sB$ is infinite dimensional. Via Lemma~\ref{lem:T}
this is equivalent to showing that $\sA$ is infinite dimensional.  To
show this, it is enough to construct infinitely many spinors $\psi \in
\Dmax(D)$ with $\nabla \psi \notin L^2(M\setminus V, S)$ which are
linearly independent in $\Dmax(D)/\Dmin(D)$.

Since, as we mentioned above, for spinors supported on the asymptotically flat ends of $M$
the maximal and minimal domain of $D$ coincide, the construction
reduces to a construction near $V$. To perform this construction, we
need to understand a bit better the geometry near $V^2$, and thus
expand on the material in Sections~\ref{ssec:gV} and~\ref{ssec:near2}.

\paragraph{Step 2:} We now take a closer look at the geometry near $V^2$.
We continue with the set-up in Section~\ref{ssec:gV}: Let $W =
B_{\e}(Y) \in \TRC(V^2)$ be contractible and so that $(Y,\phi)$ is a
coordinate neighborhood in $V^2$ with coordinates $\phi(y) = (y^1,
\ldots, y^{n-2})$ and also a trivializing chart for the normal bundle
$N^2$ to $V^2$ in $M$. This gives the coordinates $(t^1, t^2, y^1,
\ldots, y^{n-2})$ on $W$, and upon introducing the normal distance
function $r$ and the angular function $\theta$ in the normal disks to
$Y$, it gives the polar coordinates $(r, \theta, y^{1}, \ldots,
y^{n-2})$ on $W\setminus V$. We consider the corresponding frame $\{
\frac{\partial}{\partial r},\frac{1}{r} \frac{\partial}{\partial
  \theta}, \frac{\partial}{\partial y^1}, \ldots,
\frac{\partial}{\partial y^{n-2}}\}$, to which we apply the
Gram-Schmidt procedure, to obtain the orthonormal frame $\{e_r,
e_{\theta}, e_{3}, \ldots, e_n\}$. From~\eqref{drb} we have $e_r =
\frac{\partial}{\partial r}$, and then the definition
of $\theta$, Gau{\ss} Lemma, and~\eqref{eq:ov} give $e_{\theta} = \frac{1}{r}
\frac{\partial}{\partial\theta} + \sO(r^2 e_\theta)$ and $\< e_{\theta},
\frac{\partial}{\partial y^j}\> = \sO(r)$. Since from~\eqref{eq:rp}
$e_r$ is perpendicular to $\frac{\partial}{\partial y^j}$, it follows
$e_j = \frac{\partial}{\partial y^j} + \sum_{3\leq i \leq j-1}
\sO(e_i) + \sO(re_{\theta})$ for all $3 \leq j \leq n$. Moreover
\begin{equation}\label{eq:dr}
 e_r (r) =1, \quad e_{\theta}(r) = 0, \quad \text{and}\quad e_j(r)
 = 0 \quad \text{for all $3 \leq j \leq n$},
 \end{equation}
while
\begin{equation}\label{eq:dtheta}
 e_r(\theta) =0, \quad e_{\theta}(\theta) = \frac{1}{r}+ O(r) \quad
 \text{and}\quad e_j(\theta) = \sO(1)
 \quad \text{for all $3 \leq j \leq n$}.
\end{equation}
Concerning the various covariant derivatives, we
have
\begin{equation}\label{eq:cd}
 \nabla_{e_r} e_r=0, \quad
 \nabla_{e_r} e_{\theta} = \sO(re_{\theta}),\quad
 \nabla_{e_{\theta}} e_r = -\frac{1}{r} e_{\theta} + \sO(re_{\theta}).
\end{equation}
Writing $D_Y = \sum_{j=3}^n c(e_j) \nabla_{e_j}$, the Dirac operator
on $W\setminus V$ is
\[
  D = c(e_r) \nabla_{e_r} + c(e_{\theta}) \nabla_{e_{\theta}} + D_Y.
\]
As discussed in Section~\ref{ssec:near2}, on $W$ we have an
identification of sections of $S$ with multivalued sections of $S_0$,
the trivial spinor bundle on $W$.  Moreover, $ic(e_r)c(e_{\theta})$
squares to $1$ and is radially covariant constant. It thus gives a decomposition
of the spin bundle $S$ into $\pm 1$-eigenspaces
\[
  S\lvert_{W\setminus V} = S^{+} \oplus S^{-}
\]
preserved under parallel transport.

\paragraph{Step 3:} We proceed now to construct infinitely many
spinors $\psi \in \Dmax(D)$ with $\nabla \psi \notin L^2(M\setminus V,
S)$ which are linearly independent in $\Dmax(D)/\Dmin(D)$.

Let $\psi^{\pm \frac{1}{2}}(y)$ be two radially covariant constant 
extensions of smooth sections of $S_0\lvert_Y$ in $S^{\pm}$ on
$W\setminus Y$. Let also $\chi(r)$ be a smooth function supported in
$W$ and equal to $1$ in a smaller tubular neighborhood of $Y$.  Using
this input data, we consider the smooth spinor $\psi$ on $S$ supported
in $W$, whose Fourier mode decomposition~\eqref{eq:fourier_modes} is
\begin{align}
\psi(y,r,\theta) 
= \chi(r) &
   \left(
     r^{-1/2} e^{i\theta/2}  \psi^{\frac{1}{2}}(y) + 
     r^{-1/2}   e^{-i\theta/2} \psi^{-\onehalf}(y)
    \right.\nonumber \\
&\quad \left.  +
      c(e_r) r^{1/2} e^{i\theta/2}D_Y\psi^{\frac{1}{2}}(y)
     + c(e_r) r^{1/2}e^{-i\theta/2}
     D_Y\psi^{-\onehalf}(y)
     \right).
\end{align}
It is clear that $\psi \notin \Dmin(D)$ because $|\nabla \psi| \geq c
r^{-3/2}$ for small $r$, $r^{-3/2} \notin L^2((0,1), rdr)$,
and thus does not satisfy Corollary~\ref{cor:Dmin-ns}(1).
We now show that $\psi$ is in $\Dmax(D)$. We have
\begin{align*}
D \psi (y,r,\theta)& =
    \quad\left(-\frac{1}{2r} c(e_r) + \frac{i}{2r} c(e_{\theta})\right)
    \chi(r)  r^{-1/2} e^{i\theta/2} \psi^{\frac{1}{2}}(y)  \\
& \quad
   +  \left(-\frac{1}{2r} c(e_r) - \frac{i}{2r}
      c(e_{\theta})\right)  \chi(r)  r^{-1/2} e^{i\theta/2}
    \psi^{-\frac{1}{2}}(y) \\
 &\quad
     +\left(-\frac{1}{2} - \frac{i}{2} c(e_r)c(e_{\theta})\right)
        \chi(r) r^{-1/2} e^{i\theta/2}D_Y\psi^{\frac{1}{2}}(y)\\ 
 & \quad
    +\left(-\frac{1}{2} + \frac{i}{2}c(e_r) c(e_{\theta})\right) 
       \chi(r) r^{-1/2}e^{-i\theta/2}D_Y\psi^{-\onehalf}(y)\\
&\quad 
   + \chi(r) r^{1/2} e^{i\theta/2}D_Y(c(e_r)D_Y\psi^{\frac{1}{2}})(y) 
   +  \chi(r) r^{1/2}D_Y(c(e_r)e^{-i\theta/2} D_Y\psi^{-\onehalf})(y)
    +  \sO(r^{-1/2}),
\end{align*}
where the error term $\sO(r^{-1/2})$ arises from the difference
between the Riemannian metric on $W$ and the product metric on the
tubular neighborhood (see~\eqref{eq:dr},~\eqref{eq:dtheta}
and~\eqref{eq:cd}), as well as from the term containing the
derivatives of $\chi$.

Since $(-\frac{1}{2} - \frac{i}{2} c(e_r) c(e_{\theta})$ and
$(-\frac{1}{2} + \frac{i}{2} c(e_r) c(e_{\theta}))$ are the
projections onto the $\pm 1$-eigenspace of $i c(e_r) c(e_{\theta})$
respectively, the first two terms in the expression of $D\psi$ vanish,
giving $D\psi \in L^2(M\setminus V, S)$. Hence $\psi \in \Dmax(D)$.

Clearly we can construct infinitely many linearly independent elements
of $\Dmax(D)/\Dmin(D)$ in this fashion, just by choosing them to have
disjoint supports. 

\begin{remark}
  Note that if $\psi$ is the Witten spinor on $M\setminus V$
  asymptotic to $\psi_0$ constructed in Theorem A, then all the
  elements in $\psi + H_{\sing}$ are also Witten spinors asymptotic to
  $\psi_0$. Thus, the space of Witten spinors asymptotic to $\psi_0$
  is infinite dimensional.
\end{remark}

%
\newcommand{\etalchar}[1]{$^{#1}$}

\end{document}